\let\oldmarginpar\marginpar
\renewcommand\marginpar[1]
\newcommand{\la}{\langle}
\newcommand{\ra}{\rangle}
\newtheorem{theorem}{\bf Theorem}[section]
\newtheorem{lemma}[theorem]{\bf Lemma}
\newtheorem{remark}[theorem]{\bf Remark}
\newcommand{\CC}{{\Bbb C}}
\newcommand{\CP}{{\Bbb CP}}
\newcommand{\NN}{{\Bbb N}}
\newcommand{\PP}{{\Bbb P}}
\newcommand{\QQ}{{\Bbb Q}}
\newcommand{\RR}{{\Bbb R}}
\newcommand{\ZZ}{{\Bbb Z}}
\newcommand{\slie}{{\frak s}}
\newcommand{\ggreat}{>\kern-.7ex>}
\newcommand{\ssmall}{<\kern-.7ex<}
\newcommand{\qu}{/\kern-.7ex/}
\newcommand{\exh}{\to\kern-1.8ex\to}
\newcommand{\cC}{{\EuScript{C}}}
\newcommand{\gG}{{\EuScript{G}}}
\newcommand{\jJ}{{\EuScript{J}}}
\newcommand{\pP}{{\EuScript{P}}}
\newcommand{\sS}{{\EuScript{S}}}
\newcommand{\tT}{{\EuScript{T}}}
\newcommand{\GL}{\operatorname{GL}}
\newcommand{\Aut}{\operatorname{Aut}}
\newcommand{\Cov}{\operatorname{Cov}}
\newcommand{\Diff}{\operatorname{Diff}}
\newcommand{\Ext}{\operatorname{Ext}}
\newcommand{\Hom}{\operatorname{Hom}}
\newcommand{\id}{\operatorname{id}}
\newcommand{\Id}{\operatorname{Id}}
\newcommand{\Ker}{\operatorname{Ker}}
\renewcommand{\O}{\operatorname{O}}
\newcommand{\SO}{\operatorname{SO}}
\newcommand{\Symp}{\operatorname{Symp}}
\newcommand{\Tor}{\operatorname{Tor}}
\newcommand{\Vol}{\operatorname{Vol}}
\newcommand{\ov}{\overline}
\newcommand{\ord}{\operatorname{ord}}
\newcommand{\imag}{\mathbf{i}}
\title{Which finite groups act smoothly on a given $4$-manifold?}
\author{Ignasi Mundet i Riera}
\address{Facultat de Matem\`atiques i Inform\`atica\\
Universitat de Barcelona\\
Gran Via de les Corts Catalanes 585\\
08007 Barcelona \\
Spain}
\email{ignasi.mundet@ub.edu}
\author{Carles S\'aez--Calvo}
\address{Facultat de Matem\`atiques i Inform\`atica\\
Universitat de Barcelona\\
Gran Via de les Corts Catalanes 585\\
08007 Barcelona \\
Spain. Barcelona Graduate School of Mathematics (BGSMath)}
\email{csaez@crm.cat}
\thanks{Both authors have been partially supported
by the (Spanish) MEC Project MTM2015-65361-P. The second author acknowledges financial support from the Spanish Ministry of Economy and Competitiveness, through the Mar\'{\i}a de Maeztu Programme for Units of Excellence in R\&D (MDM-2014-0445).}
\date{January 11, 2019}
\begin{document}

\maketitle

\begin{abstract}
We prove that for any closed smooth $4$-manifold $X$ there exists a constant $C$ with the
property that each finite subgroup $G<\Diff(X)$ has a subgroup
$N$ which is abelian or nilpotent of class $2$, and which satisfies $[G:N]\leq C$.
We give sufficient conditions on $X$ for $\Diff(X)$ to be Jordan, meaning that there
exists a constant $C$ such that any finite subgroup $G<\Diff(X)$ has an abelian subgroup
$A$ satisfying $[G:A]\leq C$. Some of these conditions are homotopical,
such as having nonzero Euler characteristic or nonzero signature, others are geometric,
such as the absence of embedded tori of arbitrarily large self-intersection arising as
fixed point components of periodic diffeomorphisms. Relying on these results, we prove
that: (1) the symplectomorphism group of any closed symplectic $4$-manifold is Jordan, and
(2) the automorphism group of any almost complex closed $4$-manifold is Jordan.
\end{abstract}

\tableofcontents

\section{Introduction}

\subsection{Main results}
One of the most basic problems in the theory of finite transformation groups is to determine
which finite groups act smoothly and effectively on a given closed manifold.
This is in general a very difficult problem, and in dimension $4$ it has been solved for
very few manifolds: these include $S^4$ \cite{CKS, MZ2}, $\CP^2$ \cite{HL,W}, flat manifolds
as the torus $T^4$ \cite{LR} or (for trivial reasons) the asymmetric manifolds
(see \cite{Sch} for the construction of infinitely many asymmetric closed $4$-manifolds).

A simpler problem is to provide restrictions on which finite groups act smoothly and
effectively on a given closed 4-manifold, with the aim of narrowing as much as possible
the list of potential finite groups of symmetries of the given manifold.
One may hope that for any fixed 4-manifold the algebraic structure of the
finite groups acting effectively on it cannot be arbitrarily complicated. This has
been confirmed for manifolds with $b_1=0$ in \cite{Mc1}, where it is proved
that for such manifolds any finite group acting effectively but trivially on homology is abelian and
can be generated by one or two elements (see  \cite{MZ1} for more precise results on the
case $b_1=0$ and $b_2=2$). A weaker form of this result has been extended in \cite{M4} to
4-manifolds with nonzero Euler characteristic.

In this paper we prove a theorem that materializes the previous hope for all
effective smooth actions on closed 4-manifolds. To state this theorem we recall some standard terminology.
A group $G$ is said to be nilpotent of class at most $2$ if $[a,[b,c]]=1$ for every $a,b,c\in G$.
Equivalently, $G/Z(G)$ is abelian, where $Z(G)\leq G$ is the center of $G$.
For example, any abelian group is nilpotent of class at most $2$.
Note that in the literature on nilpotent Lie algebras the analogous property is sometimes called $2$-step nilpotency.

\begin{theorem}
\label{thm:2-step-nilpotent}
Let $X$ be a closed smooth $4$-manifold. There exists a constant $C$ such that every group $G$ acting in a smooth and effective way on $X$ has a subgroup $G_0 \leq G$ such that
$[G:G_0]\leq C$ and:
\begin{enumerate}
\item $G_0$ is nilpotent of class at most $2$,
\item $[G_0,G_0]$ is a (possibly trivial) cyclic group,
\item $X^{[G_0,G_0]}$ is either $X$ or a disjoint union of embedded tori.
\end{enumerate}
\end{theorem}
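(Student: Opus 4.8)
The plan is to combine the structure theory of finite groups acting on $4$-manifolds with the classical fact (due to Mann–Su, refined by others) that the fixed-point sets of such actions are highly constrained by the homology of $X$. The starting point is a Jordan-type input: every closed smooth $4$-manifold admits a constant $C_1$ such that each finite $G<\Diff(X)$ contains a subgroup $A$ of index at most $C_1$ that acts trivially on $H^*(X;\ZZ)$, and moreover one can arrange $A$ to be ``small'' in the sense that the rank of $A$ and of $H_1(A)$ are bounded in terms of $\dim X$ alone; this follows from bounding the $p$-rank of $p$-subgroups using Mann–Su's theorem together with Minkowski's bound for the action on homology modulo $p$. So I would first reduce to the case where $G$ acts trivially on $H^*(X;\ZZ)$, passing to such a subgroup and absorbing the index into the final constant $C$.

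Next, for $G$ acting trivially on homology, the key is to control the commutator subgroup. I would invoke the results of \cite{Mc1} and \cite{M4}: when $b_1(X)=0$ a finite group acting trivially on homology is abelian of rank $\leq 2$, while in general (from \cite{M4} and the machinery behind it) one gets that $[G,G]$ is abelian, indeed a cyclic group generated by an element whose fixed locus has a controlled normal bundle. The conceptual reason is that the commutator subgroup consists of diffeomorphisms that are ``homologically trivial and lie deep in the group'', and by the analysis of fixed-point data (Lefschetz fixed point formula plus the $G$-signature theorem) their fixed sets, if nonempty and $2$-dimensional, must be unions of tori — a surface of genus $g\geq 2$ contributes too negatively to the signature defect, and a sphere of nonzero self-intersection would force nontrivial action on $H_2$. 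Thus one shows $X^{[G,G]}$ is either all of $X$ (the trivial-fixed-locus case, where $[G,G]$ acts freely off a codimension-$\geq 2$ set but then a further argument shows it is trivial or cyclic acting by rotations) or a disjoint union of embedded tori.

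Then I would establish $2$-step nilpotency. Having shown $[G,G]$ is cyclic, say generated by $z$, and that the fixed locus of $z$ is well behaved, one studies the action of $G$ on a tubular neighborhood of $X^{z}$. On each torus component $T$, the induced action of $G$ on the normal $\SO(2)$- or $\SO(2)\times\SO(2)$-structure gives a homomorphism to a compact abelian group, and the local rotation data forces $z$ to be central: concretely, $g z g^{-1}$ and $z$ have the same fixed set and the same (first-order) rotation numbers along it, hence coincide because a cyclic group of rotations in a fixed normal bundle is rigid. Therefore $[G,G]\leq Z(G)$, which is exactly nilpotency of class $\leq 2$. Setting $G_0$ to be this homologically-trivial subgroup of bounded index then yields all three conclusions simultaneously.

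The main obstacle I anticipate is the case where $X^{[G,G]}=\varnothing$ or is of codimension $\geq 2$ but not a union of tori a priori — i.e., ruling out that $[G,G]$ could be a non-cyclic $p$-group (or a product of cyclic groups) acting almost freely. Here one cannot appeal directly to homological triviality of individual elements being restrictive; instead one needs the full force of the bounded-rank estimates plus a careful localization argument, possibly a slice-theorem reduction to a linear action near a point of large isotropy, together with Smith theory to control $\ZZ/p$-fixed sets. Managing the interaction of several primes — showing the whole commutator subgroup, not just its Sylow pieces, is cyclic — and doing so with constants depending only on $X$, is the delicate technical heart; I expect the proof to devote most of its length to this, with the nilpotency step (once $[G,G]$ is cyclic) being comparatively short.
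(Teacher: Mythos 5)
There are genuine gaps here, and they sit at the heart of the theorem rather than at its periphery. The most serious one is your second step: you assert that for a finite group $G$ acting trivially on cohomology, ``$[G,G]$ is abelian, indeed a cyclic group'' follows from \cite{M4} and its machinery. No such statement is available, and the theorem itself does not claim it: the conclusion is only that some subgroup $G_0\leq G$ of \emph{bounded index} has cyclic commutator subgroup, and producing that $G_0$ is precisely what the proof must do. Establishing that the family of commutator subgroups is uniformly Jordan is the main technical step of the paper (Lemma \ref{lemma:Jordan-commutator}), and it requires two ingredients absent from your outline: (i) the reduction, via the CFSG-based theorem of \cite{MT} (Theorem \ref{thm:TM}), from arbitrary finite groups to $p$-groups and to extensions of a $q$-group by a normal $p$-group — this is exactly the ``interaction of several primes'' you flag as delicate, and it is not handled by Smith theory or slice arguments; and (ii) a mechanism forcing cyclic prime-power subgroups of commutators to have fixed points. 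For (ii) you propose the Lefschetz formula and the $G$-signature theorem, but both are vacuous in exactly the hard case: the case $\chi(X)\neq 0$ is already settled by \cite{M4}, so one may assume $\chi(X)=0$, whence every CT diffeomorphism has Lefschetz number zero, and when moreover $\sigma(X)=0$ the $G$-signature theorem gives nothing. The paper's substitute (Lemma \ref{lemma:commutator-fixed-points}) uses $b_2(X)\neq 0$ to produce line bundles $L_\alpha,L_\beta$ with $\la c_1(L_\alpha)c_1(L_\beta),[X]\ra=1$, lifts the action of a cyclic commutator subgroup to them via the central-extension result of \cite{M7}, and runs an equivariant localization argument; the remaining case $b_2(X)=0$ needs an entirely separate treatment (Section \ref{s:b-2-zero}, via the rotation morphism and Serre spectral sequence estimates for free actions), which your proposal does not address at all.

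Two smaller points. Your claim that components of genus $\geq 2$ are excluded because they ``contribute too negatively to the signature defect'' does not work when $\sigma(X)=0$; in the paper such components are not excluded but rather exploited: a component $Z$ with $\chi(Z)\neq 0$ yields a point of $Z$ fixed by a bounded-index subgroup, hence by linearization and Jordan's classical theorem an abelian subgroup of bounded index, and the torus case is simply the residual one. And your centrality argument (``$gzg^{-1}$ and $z$ have the same fixed set and rotation numbers, hence coincide'') is not valid as stated: agreement of two diffeomorphisms to first order along a common fixed surface does not force them to be equal on $X$. What makes the paper's version work is that the linearization map from the stabilizer of an invariant surface to the automorphism group of its normal bundle is \emph{injective} (Lemma \ref{lemma:lin-invariant-surface}), so commutation need only be checked in $\Aut(N)$, where a finite-order automorphism covering the identity of a connected base acts by a constant scalar (Lemma \ref{lemma:commuten-infinitesimalment}); and even then one only obtains centrality inside the bounded-index subgroup preserving each torus component and its orientation, not in all of $G$.
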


As a qualitative statement this theorem is as good as possible. Namely, if one replaces
"nilpotent of class at most $2$" by "abelian" then the statement is no longer true.
For example, it is false for $T^2\times S^2$, because this manifold has non Jordan
diffeomorphism group (see below). In contrast, in dimensions lower than $4$ the
previous theorem does hold with "nilpotent of class at most $2$" replaced by
"abelian" (the one dimensional case is elementary; see \cite{M1} for dimension $2$
and \cite{Z2} for dimension $3$).

The following result complements Theorem \ref{thm:2-step-nilpotent} by relating
the algebraic structure of nilpotent groups
of class at most 2 to the geometry of their potential smooth actions on a given oriented
4-manifold. We are interested on the following invariant
of a finite group $G$:
$$\alpha(G)=\min\{[G:A]\mid A\leq G\text{ abelian}\}.$$
The number $\alpha(G)$ may be understood as a measure of how far $G$ is from being abelian.
In the next theorem and in a few other sections of this paper
we are going to use the following standard fact. If $X$ is a closed, connected and
oriented $4$-manifold and $\Sigma\subset X$ is an embedded closed orientable curve, then
picking an orientation of $\Sigma$ we obtain a homology class $[\Sigma]\in H_2(X)$
whose self intersection can be identified with an integer. This
integer is independent of the orientation of $\Sigma$ and will be denoted
by $\Sigma\cdot\Sigma$.

\begin{theorem}
\label{thm:2-nilpotent-alpha}
Let $X$ be a closed, connected and oriented smooth $4$-manifold. There exist
a constant $C$ and a
function $f:\NN\to\NN$ (both $C$ and $f$ depend on $X$)
satisfying $\lim_{n\to\infty}f(n)=\infty$ and for every
finite nilpotent group $N$ of class at most $2$ acting in a smooth and effective way on $X$
and satisfying $\alpha(N)\geq C$
there is some $g\in[N,N]$ satisfying:
     \begin{enumerate}
     \item the order of $g$ satisfies $\ord(g)\geq f(\alpha(N))$,
     \item $X^g$ is a nonempty disjoint union of embedded tori $T_1,\dots,T_s\subset X$,
     \item for every $i$ we have $|T_i\cdot T_i|\geq C\,\alpha(N)$.
     \end{enumerate}
\end{theorem}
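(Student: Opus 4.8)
The plan is to take $g$ to be a generator of the derived subgroup after a controlled reduction, read off item (2) directly from Theorem~\ref{thm:2-step-nilpotent}, and extract items (1) and (3) from an analysis of the action on a tubular neighbourhood of each fixed torus.

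First I would apply Theorem~\ref{thm:2-step-nilpotent} to $G=N$, getting $G_0\leq N$ with $[N:G_0]\leq C_1$, with $[G_0,G_0]$ cyclic, and with $X^{[G_0,G_0]}$ equal to $X$ or to a disjoint union of embedded tori. Since $\alpha(G_0)\geq\alpha(N)/C_1$, choosing the constant $C$ in the statement large enough forces $G_0$ to be nonabelian, so $[G_0,G_0]\neq 1$; as the action is effective, $X^{[G_0,G_0]}\neq X$, hence $X^{[G_0,G_0]}=X^{g_0}=T_1\sqcup\dots\sqcup T_s$ is a nonempty disjoint union of embedded tori, where $\langle g_0\rangle=[G_0,G_0]$. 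Setting $g:=g_0\in[N,N]$ gives (2). Passing to a further subgroup of index $\leq 2$ I may also assume the action is orientation preserving, and Smith theory applied to a power of $g_0$ of prime order bounds $s$ in terms of $X$ (each torus $T_j$ is a whole $2$--dimensional component of $X^{g_0^{m/p}}$). From now on I rename $G_0$ to $N$, write $m=\ord(g_0)$ and $[N,N]=\langle g_0\rangle$, and it remains to prove (1) and (3) with constant and function depending only on $X$, the factors $C_1$ and $s$ being absorbed at the end.

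Next, fix a torus $T_i$ and let $H\leq N$ be its stabiliser, so $[N:H]\leq s$ and $\alpha(H)\geq\alpha(N)/s$; $H$ acts on a tubular neighbourhood of $T_i$, identified with the total space of a complex line bundle $\nu_i\to T_i$ of Euler number $n_i=T_i\cdot T_i$. The kernel $K_i$ of $H\to\Diff(T_i)$ fixes $T_i$ pointwise, hence acts on $\nu_i$ by a fibrewise rotation which, being of finite order, is constant; thus $K_i$ embeds in $S^1$, is cyclic, and contains $g_0$. A finite subgroup of $\Diff(T^2)$ contains a translation subgroup of index $\leq 12$, so there is $N_i\leq H$ with $[H:N_i]\leq 12$ and $Q_i:=N_i/K_i\cong\ZZ/a_i\times\ZZ/b_i$ acting on $T_i$ by translations; then $\alpha(N_i)\geq\alpha(N)/(12 s)$. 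Two elementary facts about $N_i$: the preimage in $N_i$ of a cyclic direct factor of $Q_i$ is a central extension of a cyclic group by the cyclic group $K_i$, hence abelian, giving $\alpha(N_i)\leq\min(a_i,b_i)$; and $[N_i,N_i]\leq K_i\cap[N,N]=\langle g_0\rangle$ is cyclic of order $d_i\mid m$, while the preimage of a maximal isotropic subgroup of $Q_i$ for the commutator pairing is abelian of index $d_i$, so also $\alpha(N_i)\leq d_i$.

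Item (1) now follows at once: $m\geq d_i\geq\alpha(N_i)\geq\alpha(N)/(12 s)$, so one may take $f$ linear. For item (3) the crucial point is the divisibility $a_i\mid n_i$ and $b_i\mid n_i$. To see $a_i\mid n_i$, pick $\sigma\in N_i$ covering a translation of $T_i$ of order $a_i$ generating the first factor of $Q_i$; then $\sigma^{a_i}$ lies in the gauge group of $\nu_i$, and writing $\sigma$ as a standard quasiperiodic lift times a genuine gauge transformation $\mu\colon T^2\to S^1$, a winding number computation gives $a_i\, w(\mu)+(0,n_i)=w(\sigma^{a_i})=0$ in $H^1(T^2;\ZZ)\cong\ZZ^2$, the term $(0,n_i)$ being the winding of the $a_i$-th power of the standard lift and forced by $c_1(\nu_i)=n_i$; hence $a_i\mid n_i$, and symmetrically $b_i\mid n_i$. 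Therefore, when $n_i\neq 0$, $|T_i\cdot T_i|=|n_i|\geq\max(a_i,b_i)\geq\alpha(N_i)\geq\alpha(N)/(12 s)$, which is (3). The remaining, and I expect hardest, step is to exclude $n_i=0$: then the divisibility is vacuous, the neighbourhood of $T_i$ is $T^2\times D^2$, and no purely local obstruction prevents $\alpha(N_i)$ from being large, so one must argue globally — the $\langle g_0\rangle$-action on all of $X$, with $m$ large and $X^{g_0}$ a union of fixed tori one of which has trivial normal bundle, should be ruled out by the Atiyah--Singer $G$-signature theorem together with the constraints it imposes on the self-intersections and normal rotation numbers of the $T_j$. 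Establishing this last point, and thereby making the constant $C$ in (3) uniform in $N$, is the main difficulty.
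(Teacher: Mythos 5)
Your reduction via Theorem \ref{thm:2-step-nilpotent} and your analysis of the stabiliser of a fixed torus acting on its normal line bundle follow the same architecture as the paper (items (1) and (3) are obtained there from Lemma \ref{lemma:line-bundle-on-torus}, which is essentially your divisibility computation), but there is a critical gap at the very first step: nothing in Theorem \ref{thm:2-step-nilpotent} guarantees that $X^{[G_0,G_0]}$ is \emph{nonempty} --- ``a disjoint union of embedded tori'' includes the empty union, i.e.\ the case where $\langle g_0\rangle$ acts freely. You assert nonemptiness without argument, yet every subsequent step (the stabiliser $H$, the bundle $\nu_i$, the bound $m\geq d_i\geq\alpha(N_i)$) presupposes the existence of a fixed torus. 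Ruling out a free action of a large cyclic subgroup of $[N,N]$ is the hard global content of the theorem: the paper proves it in Lemma \ref{lemma:commutator-fixed-points} by choosing line bundles $L_\alpha,L_\beta$ with $\langle c_1(L_\alpha)c_1(L_\beta),[X]\rangle=1$ (using $b_2(X)\neq 0$ and Poincar\'e duality), lifting the cyclic action to them, and running an equivariant Chern class/localization argument; the case $b_2(X)=0$ is disposed of separately by Theorem \ref{thm:b-2-zero}. The same input drives item (1) in the paper: $[N,N]$ is split into cyclic factors of prime power order, Lemma \ref{lemma:commutator-fixed-points} supplies in each a bounded-index subgroup with nonempty fixed point set, and Lemma \ref{lemma:proceedings} gives $\sharp[N,N]\geq h(\alpha(N))$.

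Secondly, you misplace the remaining difficulty. The case $n_i=0$ does not need a global Atiyah--Singer argument (which, in particular when $\sigma(X)=0$, does not obviously forbid a fixed torus with trivial normal bundle); it is excluded by the very local analysis you already set up: for a degree-zero line bundle over $T^2$ the commutator pairing on the group of lifted translations vanishes, so $N_i$ would be abelian and $\alpha(N)$ bounded, contradicting $\alpha(N)\geq C$. This is exactly what statement (1) of Lemma \ref{lemma:line-bundle-on-torus} (that is, \cite[Proposition 2.10]{M5}) provides: the single bound $\alpha(H)\leq 12\max\{1,|T_i\cdot T_i|\}$ covers both the exclusion of $n_i=0$ and the quantitative estimate in item (3) at once. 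So the step you identify as the main obstacle is routine, while the step you pass over in one clause is where the real work lies.
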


\subsection{Jordan property}
The results in this paper are closely related to a question of \'E. Ghys on the Jordan property of diffeomorphism groups. Recall (see \cite{Po0}) that a group $\gG$ is said to be Jordan if there is some number $C$ such that any finite
subgroup $G\leq\gG$ satisfies $\alpha(G)\leq C$. If such $C$ exists, it is called a Jordan constant
for $\gG$. If $\gG$ is Jordan, the optimal Jordan constant of $\gG$ is defined to be the maximum
of $\alpha(G)$ as $G$ runs along the set of the finite subgroups of $\gG$.
The most basic examples of Jordan group are the linear groups
$\GL(n,\RR)$: this is the content of a theorem of C. Jordan (see Theorem \ref{thm:classical-Jordan} below).

Ghys \cite{Gh} raised the question around 30 years ago
of whether diffeomorphism groups of closed manifolds are Jordan. A number of papers
have been written on this question in the last few years \cite{CPS,M1,M4,M6,Z2}.
In dimension at most $3$ any closed manifold has Jordan diffeomorphism group
\cite{M1,Z2}. In contrast, in dimension $4$ (and higher) one encounters
both manifolds with Jordan diffeomorphism group
(for example, those with nonzero Euler characteristic \cite{M4}, or the torus $T^4$ \cite{M1}) and manifolds with non Jordan diffeomorphism group \cite{CPS} (for example $T^2\times S^2$).

\begin{remark}
Given the existence of closed manifolds for which Ghys's question has
a negative answer, one may wonder whether there exists a weakening of
Jordan's property that is satisfied by the diffeomorphism groups of all
closed manifolds. Ghys himself asked in 2015 \cite{Gh2} 
whether for every closed smooth manifold $M$ there exists a constant
$C$ such that any finite subgroup $G<\Diff(M)$ has a nilpotent subgroup
$N\leq G$ with $[G:N]\leq C$. Theorem \ref{thm:2-step-nilpotent}
implies that this is true if $M$ is $4$-dimensional, and suggests
that one could perhaps bound the nilpotency class of $N$ as a function of
the dimension of $M$.
\end{remark}

It is a natural and very interesting problem to determine which closed 4-manifolds
have Jordan diffeomorphism group. The next theorem gives a partial solution to this problem by
providing necessary conditions for a 4-manifold to have non Jordan diffeomorphism group.
The statement actually applies more generally
to subgroups of the group of diffeomorphisms: this will
be crucial later when considering automorphisms of geometric structures
(see Subsection \ref{ss:geometric-structures} below).

\begin{theorem}
\label{thm:criterion-degree}
Let $X$ be a closed connected oriented smooth $4$-manifold, and let $\gG$ be a subgroup
of $\Diff(X)$. If $\gG$ is not Jordan then there exists a sequence $(\phi_i)_{i\in\NN}$ of elements
of $\gG$ such that:
\begin{enumerate}
\item each $\phi_i$ has finite order $\ord(\phi_i)$,
\item $\ord(\phi_i)\to\infty$ as $i\to\infty$,
\item all connected components of $X^{\phi_i}$ are embedded tori,
\item for every $C>0$ there is some $i_0$ such that if $i\geq i_0$ then any connected
component $\Sigma\subseteq X^{\phi_i}$ satisfies
$|\Sigma\cdot\Sigma|\geq C,$
\item we may pick for each $i$ two connected components $\Sigma_i^-,\Sigma_i^+\subseteq X^{\phi_i}$
in such a way that the resulting homology classes $[\Sigma_i^{\pm}]\in H_2(X)$ satisfy
$\Sigma_i^{\pm}\cdot \Sigma_i^{\pm}\to\pm\infty$ as $i\to\infty$.
\end{enumerate}
\end{theorem}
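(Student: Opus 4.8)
The plan is to derive this from Theorems \ref{thm:2-step-nilpotent} and \ref{thm:2-nilpotent-alpha} by a clean pigeonhole argument on the fixed-point data of a single carefully chosen element. Suppose $\gG\leq\Diff(X)$ is not Jordan; then there is a sequence of finite subgroups $G_i\leq\gG$ with $\alpha(G_i)\to\infty$. Applying Theorem \ref{thm:2-step-nilpotent} to each $G_i$ (here we need $\gG$ itself need not be finite, but each $G_i$ is, so the theorem applies to the action of $G_i$ on $X$), we get a subgroup $N_i:=(G_i)_0\leq G_i$ of index at most $C$ that is nilpotent of class at most $2$, with $[N_i,N_i]$ cyclic and $X^{[N_i,N_i]}$ either all of $X$ or a disjoint union of embedded tori. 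Since $[G_i:N_i]\leq C$, a standard elementary inequality gives $\alpha(N_i)\geq\alpha(G_i)/C\to\infty$ (if $A\leq N_i$ is abelian of minimal index in $N_i$, it is also abelian in $G_i$, and $[G_i:A]=[G_i:N_i][N_i:A]\leq C\,\alpha(N_i)$). In particular for $i$ large we have $\alpha(N_i)\geq C'$ for whatever constant the orientation-dependent Theorem \ref{thm:2-nilpotent-alpha} requires, so that theorem yields an element $g_i\in[N_i,N_i]\subseteq\gG$ whose fixed locus $X^{g_i}$ is a nonempty disjoint union of embedded tori $T_1^{(i)},\dots,T_{s_i}^{(i)}$ with $\ord(g_i)\geq f(\alpha(N_i))$ and $|T_j^{(i)}\cdot T_j^{(i)}|\geq C''\alpha(N_i)$ for every $j$. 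Note this also forces the $X$-alternative in Theorem \ref{thm:2-step-nilpotent}(3) to be excluded once $\alpha(N_i)$ is large, since a torus component cannot equal $X$ unless $X$ is a torus, and even then self-intersections would be zero.

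Setting $\phi_i:=g_{n_i}$ for a suitable subsequence $n_i$ then gives properties (1), (2), (3) and (4): finiteness of order is automatic since $g_i$ lies in a finite group; $\ord(\phi_i)\geq f(\alpha(N_{n_i}))\to\infty$ because $f(n)\to\infty$ and $\alpha(N_{n_i})\to\infty$; all components of $X^{\phi_i}$ are embedded tori by construction; and for any $C>0$, once $C''\alpha(N_{n_i})\geq C$ every component $\Sigma\subseteq X^{\phi_i}$ has $|\Sigma\cdot\Sigma|\geq C$, which is (4). The only remaining work is (5): we must select two components $\Sigma_i^{\pm}$ with self-intersections tending to $+\infty$ and $-\infty$ respectively. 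Here I would use the fact that $X^{\phi_i}$ is the fixed locus of a periodic diffeomorphism of a closed oriented $4$-manifold, so the components, with their normal bundles, satisfy a constraint coming from the $G$-signature theorem (or, more elementarily, from the observation that the sum of the self-intersections of the fixed surfaces is controlled by topological invariants of $X$, which are bounded). Concretely, since each $|T_j^{(i)}\cdot T_j^{(i)}|$ is large but some fixed linear combination of the $T_j^{(i)}\cdot T_j^{(i)}$ (dictated by the signature defect or by $\sum b_\pm$) stays bounded, there must be at least one component with large positive self-intersection and at least one with large negative self-intersection; choosing these as $\Sigma_i^+$ and $\Sigma_i^-$ gives (5) after passing to a further subsequence.

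The main obstacle is exactly this last point: proving that the fixed tori cannot all have self-intersections of the same sign. I expect to handle it via the Atiyah--Singer $G$-signature formula applied to a prime-order subgroup $\langle h\rangle\leq\langle g_i\rangle$ (for instance $h$ of order a large prime dividing $\ord(g_i)$, which exists since $\ord(g_i)\to\infty$ — or, if $\ord(g_i)$ is a prime power, $h$ a generator of the order-$p$ subgroup): the $g$-signature $\operatorname{sign}(h,X)$ is a bounded quantity (it is an algebraic integer in a cyclotomic field with bounded absolute value, being a character value on the finite-dimensional space $H^2(X)$), while the contribution of each fixed torus $T_j$ to the fixed-point side of the formula is a rotation-number-weighted multiple of $T_j\cdot T_j$. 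Since all these local contributions would have the same sign if all $T_j\cdot T_j$ did (for a fixed rotation angle), and their magnitudes grow like $\alpha(N_i)$, the boundedness of $\operatorname{sign}(h,X)$ is violated unless both signs occur. Making this quantitative — controlling the rotation numbers and ruling out cancellation from varying rotation angles — is where the real content lies, and I would expect the bulk of the proof to be devoted to it, possibly by first passing to a subsequence along which the relevant rotation numbers stabilize.
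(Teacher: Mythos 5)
Your proposal follows the paper's proof exactly: the paper derives (1)--(4) from Theorems \ref{thm:2-step-nilpotent} and \ref{thm:2-nilpotent-alpha} via the same index inequality $\alpha(N_i)\geq\alpha(G_i)/C$, and obtains (5) from the Atiyah--Singer $G$-signature theorem through Lemma \ref{lemma:positius-i-negatius}, whose content (boundedness of the signature defect versus growing local contributions $\sin^{-2}(\theta_k/2)\,S_k\cdot S_k$, with the rotation angles controlled by passing to a suitable power of $\phi_i$ via an elementary root-of-unity lemma) is precisely the argument you sketch and the difficulty you correctly isolate. The approach is the same; the only cosmetic difference is that the paper applies the formula to powers of $\phi_i$ itself rather than to a prime-order element of $\langle\phi_i\rangle$, which avoids having to compare components of $X^{\phi_i}$ with those of the larger set $X^{h}$.
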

\begin{proof}
This is a consequence of Theorems \ref{thm:2-step-nilpotent} and \ref{thm:2-nilpotent-alpha},
together with Lemma \ref{lemma:positius-i-negatius}.
\end{proof}

In the next result we collect a few sufficient conditions for the diffeomorphism
group of a closed 4-manifold to be Jordan. We denote by $\chi(X)$, $\sigma(X)$
the Euler characteristic and the signature of a connected, oriented and closed manifold $X$.

\begin{theorem}
\label{thm:non-Jordan} Let $X$ be a connected, closed, oriented and smooth
$4$-manifold. If $X$ satisfies any of the following conditions
then $\Diff(X)$ is Jordan:
\begin{enumerate}
\item $\chi(X)\neq 0$,
\item $\sigma(X)\neq 0$,
\item $b_2(X)=0$,
\item $b_2^+(X)>1$ and $X$ has some nonzero Seiberg--Witten
    invariant,
\item $b_2^+(X)>1$ and $X$ has some symplectic structure.
\end{enumerate}
\end{theorem}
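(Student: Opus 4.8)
The plan is to prove the contrapositive in each case, using Theorem~\ref{thm:criterion-degree} as the engine: if $\Diff(X)$ were not Jordan, it supplies a sequence $(\phi_i)_{i\in\NN}$ of finite-order diffeomorphisms with $\ord(\phi_i)\to\infty$, such that every connected component of $X^{\phi_i}$ is an embedded torus and there are components $\Sigma_i^{\pm}\subseteq X^{\phi_i}$ with $\Sigma_i^{\pm}\cdot\Sigma_i^{\pm}\to\pm\infty$; in particular every component $\Sigma\subseteq X^{\phi_i}$ satisfies $|\Sigma\cdot\Sigma|\to\infty$. I would then show that each of (1)--(5) rules out such a sequence. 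Condition~(3) is the quickest: if $b_2(X)=0$ then $H_2(X;\QQ)=0$, so every embedded closed surface represents a torsion class and therefore has self-intersection $0$, contradicting the divergence of $|\Sigma\cdot\Sigma|$.

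For condition~(4) I would invoke Seiberg--Witten theory: if $b_2^+(X)>1$ and some Seiberg--Witten invariant of $X$ is nonzero, say for a $\mathrm{Spin}^c$ structure with first Chern class $c$, then the adjunction inequality applies to every smoothly embedded closed oriented surface $\Sigma\subset X$ with $\Sigma\cdot\Sigma\geq 0$ and $[\Sigma]\neq 0$, giving $2g(\Sigma)-2\geq\Sigma\cdot\Sigma+|c\cdot[\Sigma]|$. For an embedded torus this forces $\Sigma\cdot\Sigma\leq 0$, so $X$ contains no embedded torus of positive self-intersection and the components $\Sigma_i^+$ of Theorem~\ref{thm:criterion-degree}(5) cannot exist. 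Condition~(5) then reduces to~(4) by Taubes's theorem that a closed symplectic $4$-manifold with $b_2^+>1$ has nonzero Seiberg--Witten invariant for its canonical $\mathrm{Spin}^c$ structure.

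Condition~(1) is the theorem of \cite{M4}, which I would simply cite (it can also be recovered in the present framework, the point being that $\chi(X^{\phi_i})=0$ for a disjoint union of tori while fixed-point and Smith-theoretic considerations tie $\chi(X^{\phi_i})$ to $\chi(X)$ once $\ord(\phi_i)$ is large). The substantive case is~(2), for which I would use the Atiyah--Singer $G$-signature theorem. Fix a $\phi_i$-invariant metric, so $H^2(X;\RR)$ splits $\langle\phi_i\rangle$-equivariantly into a maximal positive definite subspace $H^+$ and a maximal negative definite subspace $H^-$; then the equivariant signature $\operatorname{sign}(g,X):=\operatorname{tr}(g^*|H^+)-\operatorname{tr}(g^*|H^-)$ satisfies $|\operatorname{sign}(g,X)|\leq b_2(X)$ for all $g\in\langle\phi_i\rangle$, while $\operatorname{sign}(1,X)=\sigma(X)$. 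Since every component of $X^{\phi_i}$ is a torus there are no isolated fixed points, and the $G$-signature theorem expresses, for every $k$ prime to $n_i:=\ord(\phi_i)$,
\[
\operatorname{sign}(\phi_i^k,X)=-\sum_{j}\csc^2\!\Big(\frac{\pi k\,m_j}{n_i}\Big)\,T_j\cdot T_j ,
\]
the sum over the fixed tori $T_j$, where $m_j$ is the rotation number of $\phi_i$ on the normal bundle of $T_j$. A first observation is that $\gcd(m_j,n_i)=1$ for every $j$, since otherwise a proper power of $\phi_i$ would fix a neighbourhood of $T_j$ pointwise and hence, $X$ being connected, equal the identity, bounding $n_i$. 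Summing the displayed formula over the $\varphi(n_i)$ residues $k$ prime to $n_i$, using the classical identity $\sum_{l=1}^{n-1}\csc^2(\pi l/n)=(n^2-1)/3$ together with $|\operatorname{sign}(\phi_i^k,X)|\leq b_2(X)$, forces $\sum_j T_j\cdot T_j=0$ for $i$ large. In the model case where $n_i$ is prime, every $k\in\{1,\dots,n_i-1\}$ is prime to $n_i$, so feeding $\sum_j T_j\cdot T_j=0$ into the identity $\sigma(X)=n_i\,\sigma^{\langle\phi_i\rangle}-\sum_{k=1}^{n_i-1}\operatorname{sign}(\phi_i^k,X)$ (where the invariant-part signature $\sigma^{\langle\phi_i\rangle}$ is an integer) gives $\sigma(X)=n_i\,\sigma^{\langle\phi_i\rangle}$, so $n_i\mid\sigma(X)$ and therefore $\sigma(X)=0$ --- the desired contradiction.

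The main obstacle is to carry this through when $n_i$ is not prime: then the fixed-point set $X^{\phi_i^k}$ of a non-primitive power $\phi_i^k$ may be strictly larger than $X^{\phi_i}$, so the clean formula above is unavailable for such $k$ and the full sum $\sum_{k=1}^{n_i-1}\operatorname{sign}(\phi_i^k,X)$ is not directly controlled. I would handle this either by organizing the contributions of all powers by descending order and analysing the signature defects along every singular stratum of $X/\langle\phi_i\rangle$ inductively, or --- probably more cleanly --- by first reducing to the prime case: a Mann--Su type bound makes the Sylow ranks of finite subgroups of $\Diff(X)$ uniformly bounded, which (together with the cyclicity of $[N_i,N_i]$ from Theorem~\ref{thm:2-step-nilpotent}) forces the non-Jordan witnesses to contain elements of prime order tending to infinity, to which the computation above applies verbatim. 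In all of this, the function of property~(5) of Theorem~\ref{thm:criterion-degree} --- the self-intersections blowing up with \emph{both} signs --- is precisely to preclude the cancellations that would otherwise leave the $\csc^2$-weighted sums, and hence the argument, inconclusive.
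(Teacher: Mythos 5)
Your treatments of (1), (4) and (5) coincide with the paper's: (1) is quoted from \cite{M4}, (4) combines Theorem~\ref{thm:criterion-degree} with the adjunction inequality to exclude tori of positive self-intersection, and (5) reduces to (4) by Taubes. The problems are in (2) and (3). For (3) your argument is circular: the paper proves Theorem~\ref{thm:criterion-degree} as a consequence of Theorems~\ref{thm:2-step-nilpotent} and~\ref{thm:2-nilpotent-alpha}, and when $b_2(X)=0$ those two theorems are in turn deduced from Theorem~\ref{thm:b-2-zero} --- which is exactly statement (3). For a manifold with $b_2(X)=0$ the content of Theorem~\ref{thm:criterion-degree} \emph{is} the Jordan property (its conclusion is vacuously impossible there), so invoking it proves nothing; the paper instead establishes (3) by the independent and substantial argument of Section~\ref{s:b-2-zero} (rotation morphism, lifting to the abelian cover, equivariant spectral-sequence estimates for free actions of $(\ZZ/p^r)^d$).

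For (2), your $G$-signature computation is sound when $\ord(\phi_i)$ is prime, but the reduction of the general case to the prime case is a genuine gap. Mann--Su (Theorem~\ref{thm:MS}) bounds the rank of elementary abelian subgroups, not the exponent of cyclic ones: the orders $n_i\to\infty$ may be powers of a single small prime (say $n_i=2^{k_i}$), in which case $\la\phi_i\ra$ contains no element of large prime order at all; and even when it does, the fixed-point set of that element can be strictly larger than $X^{\phi_i}$ and need not consist of tori, so the computation does not apply ``verbatim''. The alternative stratification suggestion is not a proof. Note also that your closing appeal to item (5) of Theorem~\ref{thm:criterion-degree} would reintroduce circularity, since item (5) rests on Lemma~\ref{lemma:positius-i-negatius}, which assumes $\sigma(X)=0$ (for $\sigma(X)\neq 0$ the paper obtains item (5) only because Theorem~\ref{thm:signature-nonzero-jordan} makes the hypothesis of Theorem~\ref{thm:criterion-degree} vacuous); fortunately your divisibility argument does not actually use it. The paper's route for (2) sidesteps all of this: for a CT finite-order $\phi$ one has $\sigma(\phi,X)=\sigma(X)\neq 0$, hence $X^{\phi}\neq\emptyset$ by the $G$-signature theorem (Theorem~\ref{thm:Atiyah-Singer-G-0}), with no restriction on $\ord(\phi)$; this qualitative fixed-point statement is then substituted for Lemma~\ref{lemma:commutator-fixed-points} in the machinery of Lemma~\ref{lemma:Jordan-commutator} to yield the Jordan property directly (Theorem~\ref{thm:signature-nonzero-jordan}). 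If you want to salvage your approach to (2), you would need an argument for composite orders; the paper's Theorem~\ref{thm:Atiyah-Singer-G} is the relevant tool, but closing the argument from it still requires controlling cancellations among the $\sin^{-2}(\theta_k/2)\,S_k\cdot S_k$, which is precisely what the paper avoids having to do.
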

\begin{proof}
By the main result in \cite{M4}, if $\chi(X)\neq 0$ then $\Diff(X)$ is Jordan.
By Theorem \ref{thm:signature-nonzero-jordan} if $\sigma(X)\neq 0$ then $\Diff(X)$ is Jordan.
By Theorem \ref{thm:b-2-zero} if $b_2(X)=0$ then $\Diff(X)$ is Jordan.

Assume that $b_2^+(X)>1$ and that $X$ has some nonzero Seiberg--Witten
invariant, and let us prove that $\Diff(X)$ is Jordan. If $\Diff(X)$ were not Jordan, then
by Theorem \ref{thm:criterion-degree}
there would exist in $X$ some embedded torus of positive self-intersection.
According to the adjunction formula for Seiberg--Witten invariants (see e.g. \cite{KM}, \cite[Theorem 1.1]{OS}; here we are using $b_2^+(X)>1$), if $\slie$ is a Spin$^c$ structure on $X$  with nonzero Seiberg--Witten invariant then the first Chern class
$c_1(\slie)\in H^2(X;\ZZ)$
of the determinant line bundle of $\slie$
satisfies the following formula for any embedded surface $\Sigma$ of positive genus
and positive self-intersection:
$$\chi(\Sigma) + \Sigma \cdot \Sigma \leq -|\la c_1(\slie),\Sigma\ra|.$$
This formula cannot be true if $\Sigma$ is an embedded torus of positive self-intersection,
and hence all Seiberg--Witten invariants of $X$ are trivial, contradicting our hypothesis.

If $X$ has $b_2^+(X)>1$ and admits a symplectic structure, then it has some non-vanishing Seiberg--Witten invariant by Taubes' theorem \cite{T1}. Therefore
$\Diff(X)$ must be Jordan by the previous argument.
\end{proof}

Hence if $X$ is a closed 4-manifold such that $\Diff(X)$ is not Jordan,
and $H^*(X;\QQ)$ is not isomorphic to $H^*(T^2\times S^2;\QQ)$ as a graded vector space,
then all Seiberg--Witten invariants of $X$ are zero.

\subsection{Geometric structures}
\label{ss:geometric-structures}

The existence of 4-manifolds whose diffeomorphism group is not Jordan leads
naturally to the consideration of Jordan's property for subgroups of the
diffeomorphism group. The most natural examples of such subgroups are
the automorphism groups of geometric structures
such as symplectic structures (one may consider here the symplectomorphism group
or its subgroup of Hamiltonian diffeomorphisms), or complex structures
(see \cite{PS2,PS3}; this question is connected to a whole set of results about Jordan property in the algebraic world, see \cite{MZ,Po2,PS,S3,S2} and the references therein).
This makes sense not only in dimension $4$ but in higher dimensions.
It turns out that there exist many closed manifolds whose diffeomorphism group is not
Jordan but which admit some structure (symplectic or complex) whose automorphism
group is Jordan. The most basic example is $T^2\times S^2$ \cite{CPS,M5,PS2}, but
there are infinitely many higher dimensional examples \cite{M6,M7}.

In this paper we address this question in dimension $4$ for almost
complex structures and for symplectic structures.

The following theorem extends the main result of \cite{PS2,PS3} from complex structures
to almost complex structures.

\begin{theorem}
\label{thm:almost-complex}
Let $X$ be a closed and connected smooth $4$-manifold, and let $J$ be an almost complex structure on $X$.
Let $\Aut(X,J)\subset\Diff(X)$ be the group of diffeomorphisms preserving $J$. Then $\Aut(X,J)$
is Jordan.
\end{theorem}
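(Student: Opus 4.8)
The plan is to reduce to Theorem \ref{thm:criterion-degree} by showing that a putative non-Jordan sequence of automorphisms of $(X,J)$ cannot produce embedded tori of arbitrarily large self-intersection arising as fixed point sets of finite order automorphisms. Suppose, for contradiction, that $\Aut(X,J)$ is not Jordan. Orientability is not an issue: an almost complex manifold is canonically oriented, so we may apply Theorem \ref{thm:criterion-degree} and obtain a sequence $(\phi_i)$ of finite order elements of $\Aut(X,J)$ with $\ord(\phi_i)\to\infty$, all components of $X^{\phi_i}$ embedded tori, and with components $\Sigma_i^+\subseteq X^{\phi_i}$ satisfying $\Sigma_i^+\cdot\Sigma_i^+\to+\infty$. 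The key extra ingredient, which the smooth case lacks, is that the fixed locus of a finite order automorphism of an almost complex manifold is an almost complex (hence even-dimensional) submanifold: indeed, at a fixed point $p$, the differential $d\phi_i(p)$ is a complex-linear automorphism of $(T_pX,J_p)$ of finite order, so its fixed subspace is a complex subspace, and $X^{\phi_i}$ is a closed almost complex submanifold of $(X,J)$. In particular each torus $T=\Sigma_i^+$ inherits an almost complex structure, so the normal bundle $\nu_T$ of $T$ in $X$ is a complex line bundle whose degree equals the self-intersection number $T\cdot T$.

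The obstruction I would exploit is that $T$, being a $2$-torus, has trivial tangent bundle, so $c_1(TX)|_T=c_1(\nu_T)$, and $\langle c_1(TX),[T]\rangle = c_1(\nu_T)[T]=T\cdot T$. But $c_1(TX)=c_1(X,J)$ is a \emph{fixed} cohomology class, independent of $i$, so the integers $\langle c_1(X,J),[\Sigma_i^+]\rangle = \Sigma_i^+\cdot\Sigma_i^+$ cannot tend to $+\infty$ unless the homology classes $[\Sigma_i^+]\in H_2(X;\ZZ)$ are unbounded — which is consistent with $\Sigma_i^+\cdot\Sigma_i^+\to\infty$, so this alone is not yet a contradiction. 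To close the argument I would pair with the \emph{negative} side of Theorem \ref{thm:criterion-degree}: the same reasoning applied to $\Sigma_i^-$ gives $\langle c_1(X,J),[\Sigma_i^-]\rangle = \Sigma_i^-\cdot\Sigma_i^-\to-\infty$. Now I would use that $\phi_i$ acts on $H_2(X;\ZZ)$ fixing $[\Sigma_i^\pm]$ (the tori lie in $X^{\phi_i}$), and more importantly invoke a bound: since $X^{\phi_i}$ is a nonempty almost complex submanifold, the $G$-signature theorem or the Lefschetz fixed point formula relates $\sigma(X)$, $\chi(X)$ and the normal data of $X^{\phi_i}$; concretely $\chi(X)=\chi(X^{\phi_i})=0$ forces $\chi(X)=0$, and the Atiyah–Singer $G$-signature formula expresses $\sigma(X)$ in terms of the self-intersections $\Sigma\cdot\Sigma$ weighted by roots of unity, giving a relation that cannot hold once the self-intersections are large in absolute value on both signs while $\ord(\phi_i)\to\infty$.

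The cleanest route, and the one I would follow, is the equivariant $K$-theory / index argument of \cite{PS2,PS3}: fix a $\phi_i$-invariant almost complex structure (already have $J$) and a compatible metric, and apply the holomorphic Lefschetz fixed point theorem (Atiyah–Singer) to the $\bar\partial$-type operator twisted appropriately; the contribution of a torus component $T$ with $T\cdot T=e$ involves the character $\sum_j \zeta^{m_j}/(1-\zeta^{a_j})$ with weights determined by $e$ and $\ord(\phi_i)$, and summing over all components must reproduce a fixed holomorphic Euler characteristic of $X$. Unbounded self-intersections with both signs, together with $\ord(\phi_i)\to\infty$, make these contributions grow without the cancellation that a fixed integer target would require. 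The main obstacle is packaging this last step cleanly: one must ensure the weights $a_j$ of the normal action are controlled (they are the rotation numbers of $d\phi_i$ on $\nu_T$, dividing $\ord(\phi_i)$) and that the growth estimate survives summation over the possibly increasing number of fixed tori. I expect this to be handled exactly as in \cite{PS2,PS3}, so in the write-up I would state the reduction to Theorem \ref{thm:criterion-degree} in full and then cite \cite{PS2,PS3} for the index-theoretic incompatibility of an almost complex structure with a sequence of finite-order automorphisms whose fixed tori have self-intersections tending to $\pm\infty$.
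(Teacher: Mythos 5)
There is a genuine gap here. Your reduction to Theorem \ref{thm:criterion-degree} and the observation that $X^{\phi}$ is an almost complex submanifold are both correct and match the paper, but none of the three routes you then sketch actually closes the argument. The $c_1$-pairing you admit is not a contradiction. The $G$-signature route cannot work: the formula $\sigma(X)=\sum_k \sin^{-2}(\theta_k/2)\,S_k\cdot S_k$ is perfectly consistent with fixed tori whose self-intersections are large of both signs --- indeed the paper's Lemma \ref{lemma:positius-i-negatius} uses exactly this formula to \emph{produce} the tori of both signs, and the non-Jordan smooth examples of \cite{CPS} on $T^2\times S^2$ realize such configurations, so no purely index-theoretic identity in $\sigma(X)$ or $\chi(X)$ can rule them out. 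Finally, deferring to \cite{PS2,PS3} does not work: those papers treat integrable complex structures and rest on the Enriques--Kodaira classification of compact complex surfaces, which is unavailable for a general almost complex $J$; the paper explicitly notes that its proof is "very different from that in \cite{PS2}" for precisely this reason.

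The missing idea is positivity of intersections of $J$-holomorphic curves (Lemma \ref{lemma:positive-intersection}, after Gromov and McDuff). Take a single finite-order $\phi\in\Aut(X,J)$ whose fixed locus contains an embedded torus $T$ with $T\cdot T<0$; $T$ is $J$-holomorphic. If $\zeta\in\Aut(X,J)$ acts trivially on $H^*(X)$, then $\zeta(T)$ is another $J$-holomorphic torus in the class $[T]$, and $\zeta(T)\cdot T=[T]\cdot[T]<0$ forces $\zeta(T)=T$ by positivity of intersections. Thus the \emph{fixed} torus $T$ is invariant under the entire cohomologically trivial subgroup. Given any finite $G<\Aut(X,J)$, Lemma \ref{lemma:minkowski} gives $G_0\leq G$ of bounded index acting trivially on cohomology, hence preserving $T$; linearizing along the normal bundle of $T$ and applying part (1) of Lemma \ref{lemma:line-bundle-on-torus} yields an abelian subgroup of index at most $12\,|T\cdot T|$ in $G_0$. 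This produces a uniform Jordan constant $12\,C\,|T\cdot T|$ depending only on the one torus $T$, contradicting the assumption that $\Aut(X,J)$ is not Jordan. Without the positivity-of-intersections step there is no mechanism forcing the fixed tori to be preserved by a bounded-index subgroup, and the argument does not go through.
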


The proof of Theorem \ref{thm:almost-complex} is based on Theorem \ref{thm:criterion-degree}
(whose proof on its turn is based on \cite{MT}, which uses the CFSG), and hence is very different from that in
\cite{PS2}, which is based on the classification of compact complex surfaces.

The following theorem generalizes the results in \cite{M5} to arbitrary symplectic 4-manifolds.

\begin{theorem}
\label{thm:symp}
For any closed symplectic $4$-manifold $(X, \omega)$ we have:
\begin{enumerate}
\item $\Symp (X, \omega)$ is Jordan.
\item If $X$ is not an $S^2$-bundle over $T^2$, then a Jordan constant for $\Symp (X, \omega)$
can be chosen independently of $\omega$.
\item If $b_1(X) \neq 2$, then $\Diff(X)$ is Jordan.
\end{enumerate}
\end{theorem}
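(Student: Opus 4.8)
The plan is to combine Theorem~\ref{thm:non-Jordan}, which disposes of all but one homological type of $X$, with the structure theory of symplectic $4$-manifolds with $b_2^+=1$, and then to feed the outcome into Theorems~\ref{thm:2-step-nilpotent} and \ref{thm:2-nilpotent-alpha}. First, since $\omega^2$ is a volume form we have $[\omega]^2>0$, so $b_2^+(X)\geq 1$. If $\chi(X)\neq 0$, or $\sigma(X)\neq 0$, or $b_2(X)=0$, or $b_2^+(X)>1$, then by Theorem~\ref{thm:non-Jordan} the group $\Diff(X)$, hence $\Symp(X,\omega)$, is Jordan, with a Jordan constant depending only on $X$ (so in particular independent of $\omega$). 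If none of these holds then $\sigma(X)=0$ forces $b_2^-(X)=b_2^+(X)=1$, hence $b_2(X)=2$, and then $\chi(X)=0$ forces $b_1(X)=2$. In particular, $b_1(X)\neq 2$ implies that one of the four conditions holds, proving (3); and it remains to prove (1) and (2) when $b_1(X)=2$ and $b_2(X)=2$. If moreover $X$ is an $S^2$-bundle over $T^2$ then $\Symp(X,\omega)$ is Jordan by \cite{M5}, which settles (1) in that case (and (2) claims nothing there). So assume henceforth that $b_1(X)=2$, $b_2(X)=2$ and $X$ is not an $S^2$-bundle over $T^2$.

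Under these hypotheses I would first argue that $X$ is minimal and is neither rational nor ruled: a non-minimal symplectic $4$-manifold with $b_1=2$ and $b_2=2$ would be a blow-up of a symplectic $4$-manifold with $b_1=2$ and $b_2=1$, which is excluded by the classification of symplectic $4$-manifolds with $b_2^+=1$, while a rational or ruled symplectic $4$-manifold with $b_1=2$ and $b_2=2$ is exactly an $S^2$-bundle over $T^2$. Since $K_X^2=2\chi(X)+3\sigma(X)=0$ (where $K_X$ is the canonical class of a compatible almost complex structure), the classification of Liu and T.-J.~Li then gives that the Kodaira dimension of $X$ is $0$ or $1$. The key geometric claim is that \emph{$X$ contains no embedded symplectic torus of positive self-intersection}: if $T\subset X$ is an embedded symplectic torus, the symplectic adjunction formula gives $K_X\cdot[T]=-T\cdot T$; if $\kappa(X)=0$ then $K_X$ is torsion and $T\cdot T=0$, while if $\kappa(X)=1$ then $K_X^2=0$ and $K_X\cdot[\omega]>0$, so $K_X$ lies in the closure of the forward cone determined by $[\omega]$ in $H^2(X;\RR)$, and were $T\cdot T>0$ then $[T]$ (which satisfies $[T]\cdot[\omega]=\int_T\omega>0$) would lie in the open forward cone, forcing $K_X\cdot[T]\geq 0$ by the light-cone inequality and contradicting $K_X\cdot[T]=-T\cdot T<0$.

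To finish, I would derive the Jordan property with a constant independent of $\omega$. Let $C_1$ and $C_2$ be the constants of Theorems~\ref{thm:2-step-nilpotent} and \ref{thm:2-nilpotent-alpha}, and fix $C$ large in terms of $C_1$, $C_2$ and the threshold in Lemma~\ref{lemma:positius-i-negatius} (all depending only on $X$). If some finite $G\leq\Symp(X,\omega)$ had $\alpha(G)\geq C$, then by Theorem~\ref{thm:2-step-nilpotent} there would be $G_0\leq G$ with $[G:G_0]\leq C_1$ and $G_0$ nilpotent of class at most $2$, so $\alpha(G_0)\geq\alpha(G)/C_1$; applying Theorem~\ref{thm:2-nilpotent-alpha} to the (effective) action of $G_0$ would produce an element $g\in[G_0,G_0]$ of finite order whose fixed-point set is a nonempty disjoint union of embedded tori with self-intersections of absolute value at least $C_2\,\alpha(G_0)$. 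Since $g$ is a symplectomorphism of finite order, $X^g$ is a symplectic submanifold, so these tori are symplectic, and Lemma~\ref{lemma:positius-i-negatius} would then supply among them an embedded torus of positive self-intersection, contradicting the claim. Hence $\alpha(G)<C$ for every finite $G\leq\Symp(X,\omega)$, which proves (1) and (2). I expect the main obstacle to be the geometric claim of the previous paragraph --- pinning down the Kodaira dimension and the canonical class of a symplectic $4$-manifold with $b_1=2$, $b_2=2$ that is not an $S^2$-bundle over $T^2$ --- which forces one to invoke the work of Liu and T.-J.~Li on symplectic $4$-manifolds with $b_2^+=1$; the rest is a formal consequence of the (CFSG-based) results of the earlier sections.
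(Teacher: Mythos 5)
Your reduction to the case $\chi(X)=\sigma(X)=0$, $b_2^{\pm}(X)=1$, $b_1(X)=2$, your treatment of statement (3) and of $S^2$-bundles over $T^2$ via \cite{M5}, and your endgame (deducing from a failure of the Jordan property the existence of a finite-order symplectomorphism with a fixed embedded torus of positive self-intersection, with a threshold depending only on $X$) all coincide with the paper's proof; your combination of Theorems \ref{thm:2-step-nilpotent} and \ref{thm:2-nilpotent-alpha} with Lemma \ref{lemma:positius-i-negatius} is exactly the content of Theorem \ref{thm:criterion-degree}, which the paper invokes directly. The genuine difference is the key geometric lemma that no such torus exists. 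The paper (Lemmas \ref{lemma:taubes} and \ref{lemma:symplectic-torus}) shows via the $b_2^+=1$ wall-crossing formula, Taubes' $SW^+(0)=1$ and Liu's Theorem B that $SW^+(c_1(K_X))$ or $SW^+(2c_1(K_X))$ is nonzero, represents $K$ or $2K$ by $J$-holomorphic curves for a $\phi$-invariant compatible $J$ using $SW\Rightarrow Gr$ \cite{T2}, and concludes by positivity of intersections; you instead argue through minimality, Kodaira dimension and the light-cone inequality.

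Your route has one real gap: the minimality of $X$. You need to exclude symplectic $4$-manifolds with $b_1=2$, $b_2=1$; such a manifold would be minimal (as $b_2^-=0$), neither rational nor ruled, with $K^2=2\chi+3\sigma=1>0$, hence a minimal symplectic manifold of general type with $b_2^+=1$ and $b_1=2$. Ruling this out is not an off-the-shelf consequence of ``the classification of symplectic $4$-manifolds with $b_2^+=1$'' (in the K\"ahler category it follows from $q\leq p_g$ for general type, but the symplectic analogue is not obvious), and without minimality your case division by $\kappa(X)$, which refers to the minimal model, does not directly control $K_X$. Fortunately the light-cone idea survives without minimality and without Kodaira dimension: one always has $K^2=2\chi+3\sigma=0$; if $K\cdot[\omega]<0$ then Liu's Theorem B \cite{Liu} makes $X$ a (blow-up of a) rational or ruled surface, hence, with $b_1=b_2=2$, an $S^2$-bundle over $T^2$, which is excluded; if $K\cdot[\omega]=0$ then $K$ vanishes in $H^2(X;\RR)$ because the orthogonal complement of $[\omega]$ is negative definite; and if $K\cdot[\omega]>0$ then $K$ lies in the closed forward cone, so the light-cone inequality gives $K\cdot[T]\geq 0$ for any embedded symplectic torus with $[T]^2>0$, contradicting the adjunction identity $K\cdot[T]=-T\cdot T$. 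With that repair your argument is correct and somewhat lighter than the paper's wall-crossing computation, though both ultimately rest on Taubes and Liu.
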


\begin{remark}
Regarding statement (2) in the previous theorem, note that for a Jordan group $G$
depending on a parameter $\omega$ the following two assertions are in general different:
\begin{enumerate}
\item[(i)] the optimal Jordan constant of $G$ does not depend on $\omega$,
\item[(ii)] one can pick a Jordan constant of $G$ which is independent of $\omega$.
\end{enumerate}
Of course (i) is stronger than (ii), and statement (2) in Theorem \ref{thm:symp} refers to (ii).
\end{remark}

Statement (2) in Theorem \ref{thm:symp} is sharp in the sense that
if $X$ is an $S^2$-bundle over $T^2$ then it is impossible to find some number $C$ which is a
Jordan constant for $\Symp(X,\omega)$ for all symplectic forms $\omega$ on $X$. More precisely, if
$X=T^2\times S^2$, Theorems 1.1 and 1.2 in \cite{M5} imply that the optimal Jordan constant for $\Symp(X,\omega)$ is equal to $\mu(\omega)+C_0(\omega)$, where
$\mu(\omega)=|12\la [\omega],T^2\ra/\la [\omega],S^2\ra|$ and
$C_0(\omega)$ is bounded independently of $\omega$, and if $X$ is the twisted $S^2$-bundle
over $T^2$ then the arguments in the proof of Lemma \ref{lemma:covering-lemma} allow
to obtain a similar estimate for the optimal Jordan constant for $\Symp(X,\omega)$ for
all $\omega$.

Statement (3) in Theorem \ref{thm:symp} is also sharp: this follows from
\cite{CPS} (see also \cite{M6}).

A similar theorem can be proved
for isometry groups of closed Lorentz 4-manifolds
(see \cite{M8}, which uses Theorem \ref{thm:criterion-degree} in this paper).
Note that isometry groups of closed Riemannian manifolds $(X,g)$
are always Jordan, because they are compact Lie groups and hence they can
be identified, by the Peter--Weyl theorem, with subgroups of some linear
group $\GL(n,\RR)$.
This is no longer true for Lorentz metrics: while their isometry group is a finite
dimensional Lie group, it may be noncompact and even have infinitely many connected
components (see the references in \cite{M8}).

\subsection{Main ideas of the proofs}
\label{ss:structure-proof}

The proof of Theorem \ref{thm:2-step-nilpotent} follows different routes depending
on whether $b_2(X)$ is zero or not. A common ingredient in both situations is the
main result in \cite{MT}. This result is concerned
with the following analogue of the Jordan property:
given positive integers $C$ and $d$,
a collection of finite groups $\cC$ satisfies $\jJ(C,d)$
if each $G\in\cC$ has an abelian subgroup $A$ such that $[G:A]\leq C$ and
$A$ can be generated by $d$ elements. Let $\tT(\cC)$ be the set of
all $G\in\cC$ which fit in an exact sequence of groups
$1\to P\to G\to Q\to 1$ such that the orders of $P$ and $Q$ are both
prime powers. The main result in \cite{MT} (see Theorem \ref{thm:TM} below)
states that if $\cC$ is closed under taking subgroups and $\tT(\cC)$
satisfies $\jJ(C,d)$ for some $C$ and $d$ then $\cC$ satisfies $\jJ(C',d)$
for some $C'$. We remark that this result uses the classification of finite simple
groups.

If $M$ is a closed manifold and
$\gG$ denotes the collection of all finite subgroups of $\Diff(M)$ then
$\Diff(M)$ is Jordan if and only if $\gG$ satisfies $\jJ(C,d)$ for some
$C$ and $d$: this is a nontrivial fact that follows from
a theorem of Mann and Su (see Theorem \ref{thm:MS} below).

In all the results stated in this introduction it suffices to consider connected
manifolds, because closed manifolds have finitely many connected components.
Let $X$ be a closed connected $4$-manifold.
By the main result in \cite{M5}, if the Euler characteristic of $X$ is nonzero then $\Diff(X)$ is Jordan. Consequently,
to prove Theorem \ref{thm:2-step-nilpotent} it suffices to consider the case
in which $X$ is connected and $\chi(X)=0$.

Assume first that $b_2(X)=0$. In this case we directly prove that $\Diff(X)$ is Jordan.
This is the main result in Section \ref{s:b-2-zero} (see Theorem \ref{thm:b-2-zero}),
and we next briefly explain the structure of the proof.
Let $\gG$ be the collection of all finite subgroups of $\Diff(X)$, and let
$\pP\subseteq\gG$ be the collection of $p$-groups (for all primes $p$).
Since $\chi(X)=0$ we have $b_1(X)=1$ so $H^1(X)\simeq\ZZ$. One can prove that if
a finite group $G$ acts on $X$ trivially on $H^1(X)$ then there exists a classifying
map $c:X\to S^1$ for a generator of $H^1(X)$ which is equivariant with respect to an
action of $G$ on $S^1$ given by a character $\rho:G\to S^1$. The latter is called the
rotation morphism, and is defined in Subsection \ref{ss:rotation}.
To study groups $G$ acting on $X$ trivially on $H^1(X)$ we consider separately
$\Ker\rho$ and $\rho(G)$. In particular we prove that if $G$ is an abelian $p$-group acting
freely on $X$ and
$\rho(G)$ is trivial then $G$ must contain a cyclic subgroup of bounded index. This is the main
ingredient in the proof that  $\pP$ satisfies $\jJ(C,d)$ for some $C$ and $d$. We deduce from this
that $\tT(\gG)$ satisfies $\jJ(C',d)$ using the main result in Section \ref{s:diffeomorphisms-normalizing}. Applying the main result in \cite{MT} we
conclude that $\Diff(X)$ is Jordan.

Now assume that $b_2(X)\neq 0$. The proof of Theorem \ref{thm:2-step-nilpotent} in this case
is contained in Section \ref{s:proofs-main-theorems}.
The main step consists in proving that the set $\gG_0$ of all finite subgroups of $\Diff(X)$
which are of the form $[G,G]$, where $G<\Diff(X)$ is finite, satisfies the property $\jJ(C,r)$
for some $C$ and $r$. To prove this we apply the main result in \cite{MT} to $\gG_0$, so it suffices
to prove $\jJ(C',r)$ for $\tT(\gG_0)$. For the purpose of proving Theorem \ref{thm:2-step-nilpotent} we may assume that $X$ is orientable (see Section \ref{s:first-simplifications}).
Choose one orientation and let $[X]$ denote the fundamental class. The assumption $b_2(X)\neq 0$, combined with Poincar\'e duality, implies the existence of line bundles $L_1,L_2\to X$ such that $\la c_1(L_1)c_1(L_2),[X]\ra=1$. It is proved in \cite{M7} that any $\Gamma\in\gG_0$ has a
central extension $\widehat{\Gamma}$ which acts on $L_1,L_2$ lifting the action of $\Gamma$.
If $\Gamma$ is cyclic a simple trick implies that the action of $\Gamma$ itself can be lifted
to $L_1,L_2$, and then the equality $\la c_1(L_1)c_1(L_2),[X]\ra=1$ prevents the action of
$\Gamma$ on $X$ from being free. This can be used to prove that any cyclic $p$-group
$\Gamma\in\gG_0$ has a subgroup of bounded index with nonempty fixed point set (this is Lemma \ref{lemma:commutator-fixed-points}), and from this one easily deduces that any $p$-group
$\Gamma\in\gG_0$ (cyclic or not) has an abelian subgroup of bounded index $B\leq\Gamma$
such that at least one of these properties is true: $X^B\neq\emptyset$, or $B$ fixes a nonorientable
embedded surface in $X$. This then leads to the proof that $\gG_0$ satisfies $\jJ(C',r)$
(Lemma \ref{lemma:Jordan-commutator}).

Theorem \ref{thm:2-nilpotent-alpha} on its turn follows from Theorem \ref{thm:2-step-nilpotent}
and from results on actions of finite groups on line bundles over closed surfaces
proved in \cite{M5} and recalled in Section \ref{s:surfaces-line-bundles}.

In the previous sketch we have mentioned some of the sections of the paper, and we
now explain the contents of the other ones. Section \ref{s:first-simplifications} proves
that for the purposes of the present paper we may assume that all closed manifolds
are orientable and that all finite group actions are trivial on cohomology.
Section \ref{s:abelian-groups} contains some auxiliary lemmas on finite abelian $p$-groups
with a bound on the number of generators; these results are used in the present paper
in combination with the theorem of Mann and Su. Section \ref{s:linearization} gathers some
basic consequences of the fact that a group action that preserves a submanifold induces
an action on the normal bundle of the submanifold by vector bundle automorphisms.
These results are crucial in many of our arguments, and this is the reason why our
results can not be automatically transferred from diffeomorphism to homeomorphism groups. The results in Section \ref{s:surfaces-line-bundles} refer to actions of finite groups on line bundles over closed
surfaces. Section \ref{s:non-free-actions} proves that if a finite abelian $p$-group acts smoothly
on a closed $4$-manifold and no element has an isolated fixed point then the homology of the complement of the set where the action is free is bounded independently of $p$ and
the  group action. This has some important consequences for non-free actions of finite $p$-groups,
which are also proved in Section \ref{s:non-free-actions}. Section \ref{s:diffeomorphisms-normalizing} proves a technical result that in many situations
allow to pass from property $\jJ(C,d)$ for the finite $p$-subgroups of $\Diff(X)$ to
property $\jJ(C',d)$ for the finite subgroups $G<\Diff(X)$ sitting
in an exact sequence of groups
$1\to P\to G\to Q\to 1$ where both $P$ and $Q$ have
prime power order.
The contents of Sections \ref{s:b-2-zero} and \ref{s:proofs-main-theorems} have already
been explained. Section \ref{s:Atiyah-Singer} extracts some consequences of the Atiyah--Singer
$G$-signature theorem. Sections \ref{s:almost-complex} and \ref{s:symplectomorphisms}
contain the proofs of Theorems
\ref{thm:almost-complex} and \ref{thm:symp} respectively.
The paper finishes with an appendix computing of the cohomology
of $(\ZZ/p^r)^d$ with coefficients in $\ZZ/n$ (for all $p,r,d,n$).

\subsection{Conventions and notation}
We fix here some basic terminology and notation.
Suppose that a group $G$ acts on a space $X$. We denote
by $X^G$ the set of points $x\in X$ such that $gx=x$.
If $g\in G$ then we denote by $X^g$ the set of points satisfying
$gx=x$. We say that a
subspace $Y\subseteq X$ is {\bf invariant} (or {\bf $G$-invariant}, or
{\bf preserved by $G$}) if for any
$y\in Y$ and $g\in G$ we have $gy\in Y$. This is a standard convention
(see e.g. Bredon, Chapter I, Section 1; or Brocker--tom Dieck Chap I,
Section 4).

For any space $X$ we denote the rational Betti numbers by
$b_j(X)=b_j(X;\QQ)=\dim_{\QQ}H_j(X;\QQ)$. Integer coefficients
will be implicitly assumed in homology and cohomology, so
we will denote $H_*(X)=H_*(X;\ZZ)$, and $H^*(X)=H^*(X;\ZZ)$.
Following the standard convention we denote by $\chi(X)$ the
Euler characteristic of $X$ and by $\sigma(X)$ the signature
of $X$ in case $X$ is a closed oriented manifold.

A continuous action of a group $G$ on a manifold $X$ induces an action on $H^*(X)$. We will say that the action is {\bf cohomologically trivial} ({\bf CT} for short),
if the induced action on $H^*(X)$ is trivial.
If $X$ is orientable and closed, then a CT action is orientation preserving.
An action of $G$ on $X$ is {\bf effective} if $g\cdot x = x$ for all $x \in X$ implies that $g=1$. We will write that an action is {\bf CTE} if it is CT and effective.

For any set $S$ we denote by $\sharp S$ the cardinal of $S$.

Whenever we say that a group $G$ can be generated by $d$ elements we mean that there is
a collection of {\it non necessarily distinct} elements $g_1,\dots,g_d$ which generate $G$.

All manifolds will be assumed by default to be {\bf smooth}.
A {\bf closed manifold} means a compact manifold without boundary.

\section{First simplifications}
\label{s:first-simplifications}

The following is a generalization of the results in \cite[Section 2.3]{M1}.

\begin{lemma}
\label{lemma:covering-lemma}
Let $X$ be a closed connected manifold
and let $X'\to X$ be an unramified covering of finite degree, where $X'$ is connected.
If $\Diff(X')$ is Jordan then $\Diff(X)$ is also Jordan.
Furthermore, if there exists a constant $C$ such that every finite subgroup $G \leq \Diff(X')$ has a nilpotent subgroup $H \leq G$ of class at most $2$ satisfying $[G:H] \leq C$, then the same is true for $\Diff(X)$
for a possibly different value of $C$.
\end{lemma}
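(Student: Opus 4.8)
The plan is to reduce the statement about $\Diff(X)$ to the corresponding statement about $\Diff(X')$ by the following classical observation: if $G \leq \Diff(X)$ is a finite subgroup, then we can build from the covering $X' \to X$ a related covering on which $G$ (or a bounded-index subgroup of it) acts. Concretely, let $\pi_1(X') \leq \pi_1(X)$ be the finite-index subgroup corresponding to the covering, let $N \trianglelefteq \pi_1(X)$ be its normal core (which is also of finite index, $[\pi_1(X):N] \leq [\pi_1(X):\pi_1(X')]!$), and let $p : \widetilde{X} \to X$ be the finite regular covering corresponding to $N$, which factors through $X'$. The advantage of working with a regular covering is that its deck group $D = \pi_1(X)/N$ is a genuine group of diffeomorphisms of $\widetilde X$, and $X = \widetilde X / D$.

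First I would show that a finite subgroup $G \leq \Diff(X)$ lifts, after passing to a subgroup of bounded index, to act on $\widetilde X$. Given $g \in G$ of finite order, consider the space of lifts of $g$ to homeomorphisms of $\widetilde X$; since $\widetilde X \to X$ is a regular cover with deck group $D$, this set is a $D$-torsor, so the group $\widehat G$ of all diffeomorphisms of $\widetilde X$ covering some element of $G$ fits into an exact sequence $1 \to D \to \widehat G \to G \to 1$. This $\widehat G$ need not be finite, but I can extract from it a finite subgroup mapping onto a large part of $G$: choosing, for each $g \in G$, one lift $\widehat g$ (of possibly infinite order in general — here one must be slightly careful), one uses instead that $\widehat G$ acts on $\widetilde X$ and contains $D$ as a normal subgroup of finite index, and by the Bieberbach-type/Mann–Su style argument (or directly: $\widehat G$ is residually finite as it acts faithfully and properly discontinuously-up-to-$D$... ) — more cleanly, I would invoke that $\widehat G$ contains a finite subgroup $G^\dagger$ with $\widehat G = D \cdot G^\dagger$ is \emph{not} automatic; the correct standard fact, used in \cite[Section 2.3]{M1}, is that there is a finite subgroup $\overline{G} \leq \widehat G$ with $\overline G \cap D$ of bounded order and $[\,G : \text{image of } \overline G\,]$ bounded. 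I would follow that route. The key point is that the index bound depends only on $[\pi_1(X):\pi_1(X')]$ and hence only on $X' \to X$.

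Once a bounded-index subgroup $G_1 \leq G$ is realized (up to a bounded kernel coming from $D$) as a finite group $\overline{G_1}$ of diffeomorphisms of $\widetilde X$, I apply the hypothesis — which holds for $\Diff(X')$, and hence also for $\Diff(\widetilde X)$, since $\widetilde X \to X'$ is itself a finite covering with $\widetilde X$ connected, so by the first part of the lemma (Jordan case) and by an identical lifting argument for the class-$2$-nilpotent case one transfers the property from $X'$ to $\widetilde X$ — no, more directly: I would prove the whole lemma with $X'$ replaced at the outset by the regular cover $\widetilde X$, observing that $\widetilde X \to X'$ being finite and $\widetilde X$ connected lets me assume $X' = \widetilde X$ is regular. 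Then $\overline{G_1}$ has a nilpotent-of-class-$\leq 2$ subgroup $H_1$ of index $\leq C$; pulling back along $\overline{G_1} \twoheadrightarrow G_1$ (a map with bounded kernel $\overline{G_1} \cap D$) and intersecting gives a subgroup of $G_1$, hence of $G$, of bounded index; the extension of a class-$\leq 2$ nilpotent group by a bounded central-ish piece need not be class $\leq 2$, so I pass once more to a bounded-index subgroup using Theorem/Definition-type arithmetic on $\alpha$ or simply: a group with a subgroup of index $\leq k$ that is nilpotent of class $\leq 2$ need not itself have one, but by a theorem on bounded-index subgroups (e.g. taking the intersection of conjugates) one finds a \emph{normal} such subgroup, and then... — actually the clean fix is that the preimage in $G_1$ of $H_1 \cap \overline{G_1}$-core is metabelian with bounded-class pieces; I would instead quote that it suffices to produce \emph{any} subgroup of bounded index with the required structure, and a class-$\leq 2$ nilpotent group has, for every $k$, only boundedly-many-indexed overgroups' worth of subgroups, so intersecting $H_1$ with the kernel still leaves class $\leq 2$ and bounded index. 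For the Jordan statement (first sentence) the same argument works verbatim with ``abelian'' in place of ``nilpotent of class $\leq 2$,'' and this is exactly \cite[Section 2.3]{M1}.

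The main obstacle I anticipate is the bookkeeping in the lifting step: showing that a finite group acting on $X$ gives rise, after passing to a bounded-index subgroup, to a finite group acting on the regular cover $\widetilde X$, with control on both the index and the size of the deck-group kernel — this is where connectedness of $\widetilde X$, regularity of the covering, and finiteness of $\pi_1(X)/N$ all get used, and where one must be careful that elements of finite order downstairs are covered by elements one can collectively organize into a finite group upstairs (rather than merely finite-order elements generating an infinite group). Everything after that is formal manipulation of bounded indices.
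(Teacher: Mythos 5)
Your overall strategy --- pass to a bounded-index subgroup of $G$ that lifts to a covering, apply the hypothesis to the lifted finite group, and transfer back --- is the same as the paper's, but your execution has a genuine gap at the transfer-back step, and you make the lifting step harder than it needs to be. The paper does not pass to the regular cover at all: it lets $G$ act by pullback on the finite set $\Cov_k(X)$ of isomorphism classes of degree-$k$ coverings (finite because $\pi_1(X)$ is finitely generated), takes $G_0$ to be the stabilizer of $[X']$ (so $[G:G_0]\leq\sharp\Cov_k(X)$), and then forms the fiber product $G_1=\{(g,\phi)\in G_0\times\Diff(X')\mid \pi\circ\phi=g\circ\pi\}$. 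This sits in $1\to\Aut(X')\to G_1\to G_0\to 1$, and $\Aut(X')$ is finite for \emph{any} finite covering (regular or not), so $G_1$ is automatically a finite subgroup of $\Diff(X')$ --- your worry that the group of all lifts "need not be finite" is misplaced for a finite covering, and the normal core / Bieberbach-style machinery is unnecessary.

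The real gap is at the end. Having produced $H_1\leq\overline{G_1}$ nilpotent of class $\leq 2$ with $[\overline{G_1}:H_1]\leq C$, the correct (and trivial) move is to push $H_1$ \emph{forward} along the surjection $q:\overline{G_1}\to G_1$: the image $q(H_1)$ is a quotient of $H_1$, hence still abelian (resp.\ nilpotent of class $\leq 2$), and $[G_1:q(H_1)]\leq[\overline{G_1}:H_1]\leq C$. You never state this; instead you worry about whether an \emph{extension} of a class-$\leq 2$ group by the bounded kernel is still class $\leq 2$ (it need not be, but that is the wrong direction --- nobody needs to take preimages here), and your proposed fix, ``intersecting $H_1$ with the kernel still leaves class $\leq 2$ and bounded index,'' is wrong as stated: $H_1\cap\ker q$ is a subgroup of the deck group, which maps to the identity in $G_1$ and gives you nothing. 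As written, the proof does not close; with the one-line replacement ``take $q(H_1)$'' it does, and then it coincides with the paper's argument.
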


\begin{proof}
Let $x_0\in X$ be a base point.
Since $X$ is closed, its fundamental group $\pi_1(X,x_0)$ is finitely generated.
Let $k$ be the degree of the covering $X' \rightarrow X$. Let $\Cov_k(X)$ be the set of isomorphism classes of non-necessarily connected unramified coverings of $X$ of degree $k$ (two coverings $X'\to X$ and
$X''\to X$ are isomorphic if there is a diffeomorphism $X'\to X''$ lifting the identity on $X$).
Let $S_k$ be the permutation group on $k$ letters, and consider the action of $S_k$ on $\Hom(\pi_1(X,x_0),S_k)$
by conjugation. There is a bijection
$$\Cov_k(X)\to \Hom(\pi_1(X,x_0),S_k)/S_k,$$
which sends each element of $\Cov_k(X)$ to its monodromy.
Since $\pi_1(X,x_0)$ is finitely generated, $\Hom(\pi_1(X,x_0),S_k)/S_k$ is finite, so $\Cov_k(X)$ is also finite.

Let now $[X'] \in \Cov_k(X)$ be the class of the covering $\pi:X' \rightarrow X$. Let $G \leq \Diff(X)$ be a finite subgroup. Then $G$ acts on $\Cov_k(X)$ by pullback. Let $G_0 \leq G$ be the stabilizer of $[X']$.
Since the orbit of $[X']$ in $\Cov_k(X)$ can be identified with $G/G_0$, we have
$[G:G_0] \leq \sharp \Cov_k(X)$.
Define
$$G_1 = \{(g, \phi) \in G_0 \times \Diff(X') \mid \pi \circ \phi = g \circ \pi \}.$$
We have an exact sequence:
$$1 \rightarrow \Aut(X') \xrightarrow{\rho} G_1 \xrightarrow{q} G_0 \rightarrow 1$$
where $\Aut(X') = \{ \phi \in \Diff(X') \mid \pi \circ \phi = \pi \}$ are the automorphisms of the covering, $\rho(\phi) = (1,\phi)$ and $q(g,\phi) = g$. The group $\Aut(X')$ is finite, hence so is $G_1$. The map $(g, \phi) \mapsto \phi$ defines an inclusion $G_1 \hookrightarrow \Diff(X')$.

If there exists an abelian (resp. nilpotent of class at most $2$)
subgroup $H \leq G_1$ satisfying $[G_1:H]\leq C$
then $q(H)$ is also abelian (resp. nilpotent of class at most $2$)
and satisfies $[G_0:q(H)]\leq C$, so
$[G:q(H)] \leq C \sharp \Cov_k(X).$ This proves the Lemma.
\end{proof}

As a corollary, we see that in order to prove that $\Diff(X)$ is Jordan, or to see that there is a constant $C$ such that any finite group $G \leq \Diff(X)$ has a nilpotent subgroup $H$ of
class at most $2$ with $[G:H] \leq C$, it is enough to show that some finite unramified covering of $X$ has that property. In particular, we may assume without loss of generality that $X$ is orientable.

The following result is a consequence of a classical theorem of Minkowski \cite{Min} which states  that the size of any finite subgroup of $\GL(k,\mathbb{Z})$ is bounded above by a constant depending only on $k$. For the proof see \cite[Lemma 2.6]{M4}.

\begin{lemma}
\label{lemma:minkowski}
Let $X$ be a closed manifold. There exists a constant $C$ such that for any continuous action on
$X$ of a finite group $G$ there is a subgroup $G_0\leq G$ satisfying $[G:G_0]\leq C$ and whose
action on $X$ is CT.
\end{lemma}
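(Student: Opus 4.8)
The plan is to reduce the statement to Minkowski's theorem on the boundedness of finite subgroups of $\GL(k,\ZZ)$. First I would recall that since $X$ is a closed manifold, its integral cohomology $H^*(X)=\bigoplus_j H^j(X)$ is a finitely generated abelian group; in particular each $H^j(X)$ has a finite free part of some rank $r_j$ and a torsion subgroup of some finite order $t_j$, and both $r_j$ and $t_j$ depend only on $X$. Let $N=\sum_j r_j$ and let $T$ be the product of the orders of the torsion subgroups of all the $H^j(X)$; these are constants attached to $X$.

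Next, given a continuous action of a finite group $G$ on $X$, one obtains the induced action on $H^*(X)$, i.e.\ a homomorphism $\Phi:G\to\prod_j\Aut(H^j(X))$. The action is CT precisely when $\Phi$ is trivial, so it suffices to bound the order of $\Im\Phi$ by a constant depending only on $X$ and then take $G_0=\Ker\Phi$, which satisfies $[G:G_0]=|\Im\Phi|$ and acts CT by construction. For each $j$, the automorphism group $\Aut(H^j(X))$ splits (up to finite ambiguity controlled by $T$) as the automorphisms of the free part $\ZZ^{r_j}$, which is $\GL(r_j,\ZZ)$, together with the automorphisms of the finite torsion part, whose order is at most $(t_j)!$ or some similar explicit bound in terms of $t_j$. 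By Minkowski's theorem there is a constant $M(r_j)$ bounding the order of any finite subgroup of $\GL(r_j,\ZZ)$; since $\Im\Phi$ is a finite subgroup of the product, its order is bounded by $C:=\prod_j M(r_j)\cdot(\text{torsion bound})$, a constant depending only on $X$.

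The main (and only real) obstacle is purely bookkeeping: making precise the reduction from $\Aut(A)$ for a finitely generated abelian group $A$ to $\GL(\operatorname{rk}A,\ZZ)$ together with a finite correction coming from the torsion, so that Minkowski's bound applies. This is entirely routine — a finite subgroup of $\Aut(A)$ maps to a finite subgroup of $\Aut(A/\Tor A)\cong\GL(r,\ZZ)$ with kernel embedding into $\Hom(A/\Tor A,\Tor A)\rtimes\Aut(\Tor A)$, all of whose pieces have size bounded in terms of $r$ and $|\Tor A|$ — and in fact the cited reference \cite[Lemma 2.6]{M4} carries it out, so I would simply invoke that. Taking $C$ to be the product over all $j$ of these bounds completes the proof.
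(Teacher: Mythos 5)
Your proposal is correct and follows exactly the route the paper intends: the paper gives no independent proof but states that the lemma is a consequence of Minkowski's theorem on finite subgroups of $\GL(k,\ZZ)$ and refers to \cite[Lemma 2.6]{M4}, which carries out precisely the reduction you describe (pass to the induced action on the finitely generated group $H^*(X)$, take $G_0=\Ker\Phi$, and bound the image inside $\Aut$ of a finitely generated abelian group via its free part plus finite torsion corrections). Nothing further is needed.
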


This implies that it suffices for our purposes
to consider smooth CTE actions
of finite groups. In particular these
actions are orientation preserving because here we only consider
closed manifolds (note that in \cite{M4} we consider more generally manifolds
with boundary, and for them a cohomologically trivial action need not be orientation preserving).

If a finite group $G$ acts smoothly and preserving the orientation on an oriented $4$-manifold $X$
then for every $g\in G$ each connected component of the fixed point set $X^g$ has even codimension.
Hence, if $X^g\neq X$ then $X^g$ is the union of finitely many points and finitely many disjoint
embedded closed and connected surfaces. We will use this fact repeatedly and without explanation
in the arguments that follow.

\section{Abelian groups with a bound on the number of generators}
\label{s:abelian-groups}

In this section we collect several lemmas involving finite abelian
groups and giving estimates on different quantities as a function of
the minimal number of generators of these abelians groups. These results
will be used in subsequent sections in combination with the classical theorem of Mann and Su,
that we recall in Subsection \ref{ss:MS}.

\subsection{Arbitrary finite groups}

\begin{lemma}
\label{lemma:MS-1} For any natural numbers $r,C$ there exists
a number $C'$ with the following property. Let
$G$ be a finite group and let $A\leq G$ be an abelian subgroup.
Suppose that $A$ can be generated by $r$ elements
and that $[G:A]\leq C$. Let
$\Aut_A(G)\leq \Aut(G)$ be the group of
automorphisms $\phi:G\to G$ satisfying $\phi(A)=A$. We have:
$$[\Aut(G):\Aut_A(G)]\leq C'.$$
\end{lemma}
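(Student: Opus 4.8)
The plan is to identify $\Aut_A(G)$ with the stabilizer of $A$ under the natural action of $\Aut(G)$ on the set of subgroups of $G$, and then to bound the size of the orbit of $A$ by a quantity depending only on $r$ and $C$.

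\textbf{Step 1: Reduction to counting subgroups.} Every $\phi\in\Aut(G)$ sends $A$ to a subgroup $\phi(A)\leq G$ with $[G:\phi(A)]=[G:A]\leq C$, so $\Aut(G)$ acts on the (finite) set $\mathcal{S}_{\leq C}(G)$ of subgroups of $G$ of index at most $C$, and directly from the definition $\Aut_A(G)$ is \emph{exactly} the stabilizer of $A$ for this action. By the orbit--stabilizer theorem, $[\Aut(G):\Aut_A(G)]$ equals the cardinality of the $\Aut(G)$-orbit of $A$, which is at most $\sharp\,\mathcal{S}_{\leq C}(G)$. Hence it suffices to bound $\sharp\,\mathcal{S}_{\leq C}(G)$ by a constant $C'=C'(r,C)$.

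\textbf{Step 2: $G$ is boundedly generated, and counting its finite-index subgroups.} Choosing $r$ generators of $A$ together with representatives of the at most $C$ cosets of $A$ in $G$ exhibits $G$ as generated by $d:=r+C$ elements. Now a standard counting argument applies: a subgroup $H\leq G$ of index $n\leq C$ arises as the stabilizer of a point in the transitive $G$-action on the $n$-element set $G/H$; labelling this set by $\{1,\dots,n\}$, the subgroup $H$ is recovered as $\psi^{-1}(\mathrm{Stab}_{S_n}(i))$ for some homomorphism $\psi:G\to S_n$ and some $i\in\{1,\dots,n\}$. Therefore the number of subgroups of index $n$ is at most $n\cdot\sharp\Hom(G,S_n)\leq n\,(n!)^d$, and summing over $n=1,\dots,C$ gives $\sharp\,\mathcal{S}_{\leq C}(G)\leq\sum_{n=1}^{C} n\,(n!)^d\leq C^2\,(C!)^{\,r+C}=:C'$, which depends only on $r$ and $C$. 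Combined with Step 1 this proves the lemma.

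There is no serious obstacle in this argument; the only two points requiring a little care are that $\Aut_A(G)$ must be the \emph{full} stabilizer of $A$ (not merely contained in it), which is immediate from the definition of $\Aut_A(G)$, and the uniform bound $d=r+C$ on the number of generators of $G$, which is precisely what makes the resulting subgroup count independent of $G$ itself.
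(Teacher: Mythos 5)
Your proof is correct, but it takes a genuinely different route from the paper's. You bound the orbit of $A$ under $\Aut(G)$ by the total number of subgroups of $G$ of index at most $C$, observing that $G$ is generated by $d=r+C$ elements and invoking the classical count of index-$n$ subgroups of a $d$-generated group via homomorphisms to $S_n$; each step (that $\Aut_A(G)$ is the full stabilizer, that coset representatives together with generators of $A$ generate $G$, and that a subgroup of index $n$ is recovered as $\psi^{-1}(\mathrm{Stab}_{S_n}(i))$ from its permutation representation on $G/H$) is sound, and the resulting bound $C^2(C!)^{r+C}$ depends only on $r$ and $C$ as required. The paper instead first manufactures a \emph{characteristic} subgroup: it sets $A_0=\{a^{C!}\mid a\in A\}$, notes that $A_0$ is contained in every subgroup of $A$ of index at most $C$ and hence in every $\phi(A)$, so that $A_1=\bigcap_{\phi\in\Aut(G)}\phi(A)$ is characteristic with $[G:A_1]\leq C(C!)^r$; then $\Aut(G)$ acts on the power set of $G/A_1$ and $\Aut_A(G)$ is the stabilizer of the subset $A/A_1$, giving the bound $2^{C(C!)^r}$. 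Your argument is the more standard and arguably more transparent one (it does not need $A$ to be abelian, only that $G$ is boundedly generated and $[G:A]\leq C$), while the paper's argument produces the characteristic subgroup $A_1$ as a structural byproduct; neither explicit constant is needed elsewhere, so the two proofs are interchangeable here.
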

\begin{proof}

Consider the map $\mu:A\to A$ defined as $\mu(a)=a^{C!}$
(we use multiplicative notation on $A$ and later on $G$), and let
$A_0=\mu(A)\leq A$.
Since $A$ can be generated by $r$ elements,
we have $[A:A_0]\leq (C!)^r$. Furthermore any
subgroup $B\leq A$ satisfying $[A:B]\leq C$ contains $A_0$.
Indeed, for any such $B$ and any $a\in A$ we have
$a^{[A:B]}\in B$, so $a^{C!}\in B$. In particular
we have $A_0\leq \phi(A)\cap A$ for every $\phi\in \Aut(G)$,
which implies $A_0\leq\phi(A)$.
So $A_1=\bigcap_{\phi\in\Aut(G)}\phi(A)$ satisfies $A_0\leq A_1\leq A$
and consequently
$$[A:A_1]\leq [A:A_0]\leq (C!)^r.$$
By its definition $A_1$ is clearly a characteristic subgroup of $G$
(i.e., it is invariant under the action of $\Aut(G)$ on $G$), so in particular
it is normal.

Let $\sS$ be the collection of all subsets of the quotient group $G/A_1$.
Since
$$\sharp G/A_1=[G:A_1]=[G:A][A:A_1]\leq C(C!)^r,$$
we can bound $\sharp\sS\leq C_1:=2^{C(C!)^r}$. The action of $\Aut(G)$ on $G$
induces an action on $G/A_1$ (because $A_1\leq G$ is characteristic)
which on its turn induces an action on $\sS$.
Denote by $[A]\in\sS$ the element corresponding to $A/A_1$ viewed
as a subset of $G/A_1$.
Then $\Aut_A(G)$ is the stabilizer of $[A]$, so we have
$[\Aut(G):\Aut_A(G)]\leq \sharp\sS\leq C_1$.
\end{proof}

\begin{lemma}
\label{lemma:quatre-u-dos}
For any natural numbers $r,C$ there exists
a number $C'$ with the following property.
Let $G$ be a finite group, let $G_0\trianglelefteq G$
be a normal subgroup, and let $A\leq G_0$ be an abelian subgroup.
Suppose that $[G_0:A]\leq C$ and that $A$ can be generated by $r$ elements.
Then the normalizer $N_G(A)$ of $A$ in $G$ satisfies
$$[G:N_G(A)]\leq C'.$$
\end{lemma}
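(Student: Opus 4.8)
The plan is to reduce the statement to Lemma~\ref{lemma:MS-1} by means of the conjugation action of $G$ on its normal subgroup $G_0$. Since $G_0\trianglelefteq G$, conjugation defines a homomorphism $c:G\to\Aut(G_0)$, where $c(g)$ is the automorphism $x\mapsto gxg^{-1}$ of $G_0$. The first observation is that an element $g\in G$ normalizes $A$ if and only if the automorphism $c(g)$ preserves $A$; in other words
$$N_G(A)=c^{-1}\bigl(\Aut_A(G_0)\bigr),$$
where $\Aut_A(G_0)\leq\Aut(G_0)$ is the stabilizer of $A$ in the sense of Lemma~\ref{lemma:MS-1}. Here it is essential that $A$ is contained in $G_0$, so that $c(g)$ restricted to $G_0$ is a well-defined automorphism for every $g\in G$.

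From this identity I would extract an index bound. Write $\ov G=c(G)\leq\Aut(G_0)$. Then $c$ descends to an isomorphism $G/\Ker c\xrightarrow{\ \sim\ }\ov G$ which carries $N_G(A)/\Ker c$ onto $\ov G\cap\Aut_A(G_0)$, so that $[G:N_G(A)]=[\ov G:\ov G\cap\Aut_A(G_0)]$. Applying the elementary inequality $[H:H\cap K]\leq[\Gamma:K]$, valid for any subgroups $H,K$ of a group $\Gamma$ (with $\Gamma=\Aut(G_0)$, $H=\ov G$, $K=\Aut_A(G_0)$), we obtain
$$[G:N_G(A)]\leq[\Aut(G_0):\Aut_A(G_0)].$$

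It then remains to bound $[\Aut(G_0):\Aut_A(G_0)]$, and this is exactly Lemma~\ref{lemma:MS-1} applied with $G_0$ in place of $G$: by hypothesis $A\leq G_0$ is abelian, can be generated by $r$ elements, and $[G_0:A]\leq C$, so Lemma~\ref{lemma:MS-1} provides a constant $C'$, depending only on $r$ and $C$, with $[\Aut(G_0):\Aut_A(G_0)]\leq C'$. Combining this with the previous display yields $[G:N_G(A)]\leq C'$, which is the desired conclusion.

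Since the whole argument is a formal manipulation once Lemma~\ref{lemma:MS-1} is in hand, I do not expect any serious obstacle. The only points that require a little care are the verification of the identity $N_G(A)=c^{-1}(\Aut_A(G_0))$ and the passage to the quotient by $\Ker c$, both of which rely on the normality of $G_0$ in $G$; everything else is standard index bookkeeping.
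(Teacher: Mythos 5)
Your proof is correct and is essentially identical to the paper's: the paper also defines the conjugation morphism $c:G\to\Aut(G_0)$, identifies $N_G(A)=c^{-1}(\Aut_A(G_0))$, and invokes Lemma~\ref{lemma:MS-1}. You merely spell out the index bookkeeping that the paper leaves implicit.
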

\begin{proof}
Let $c:G\to\Aut(G_0)$ be the morphism defined by the action of $G$ on $G_0$
given by conjugation. Then
$N_G(A)=c^{-1}(\Aut_{A}(G_0)),$
so the lemma follows from Lemma \ref{lemma:MS-1}.
\end{proof}

\begin{lemma}
\label{lemma:rang-del-quocient}
Let $1 \rightarrow Z \rightarrow G \stackrel{\pi}{\rightarrow} A \rightarrow 1$ be an exact sequence of finite groups, where $Z \leq G$ is central and $A$ is abelian.
Let $r$ be an integer such that every abelian subgroup of $G$ is generated by $r$ elements. Then $A$ is generated by $[r(\log_2 (\sharp Z) + 1)]$ elements.
\end{lemma}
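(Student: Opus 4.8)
The strategy is to produce a generating set for $A$ of controlled size by lifting a putative generating set to $G$ and exploiting the hypothesis that abelian subgroups of $G$ need few generators. First I would recall the standard fact (which follows from the theory of finite abelian groups, or from the existence of a composition series respecting the $p$-primary decomposition) that if $A$ needs at least $m$ generators then $A$ has a subgroup isomorphic to $(\ZZ/p)^m$ for some prime $p$; more precisely, writing $A$ as a product of cyclic groups, the number of cyclic factors is the minimal number of generators, and one can always find a subgroup of the form $(\ZZ/p)^k$ for each prime $p$ dividing $|A|$, where $k$ is the $p$-rank. So it suffices to bound the $p$-rank of $A$ for each prime $p$.

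Now fix a prime $p$ and let $V \leq A$ be an elementary abelian $p$-subgroup of rank $k$ equal to the $p$-rank of $A$. Let $H = \pi^{-1}(V) \leq G$. Then $H$ is a central extension $1 \to Z \to H \to V \to 1$ with $V$ elementary abelian of rank $k$. The key point is that in such an extension, $H$ contains an abelian subgroup whose image in $V$ is large: one can choose, among all $p$-elements lifting a basis of $V$, enough of them that commute. More concretely, I would pass to a Sylow $p$-subgroup situation or argue directly that the commutator pairing $V \times V \to Z$ (which is alternating and bilinear, taking values in the finite group $Z$, once we observe $[H,H] \leq Z$) has a radical of bounded corank: since the pairing factors through $Z$, which has at most $\log_2(\sharp Z)$ as its own minimal number of generators, a counting/linear-algebra argument over the relevant fields shows the radical $W \leq V$ has $\dim W \geq k - \log_2(\sharp Z)$ (each generator of the image of the pairing can "kill" at most one dimension of a nondegeneracy space). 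The preimage $\pi^{-1}(W)$ is then an abelian subgroup of $G$ (its commutators lie in $Z$ and in the radical, hence vanish), so by hypothesis it is generated by $r$ elements, which forces $\dim W \leq r$ after accounting for the central part — more carefully, $\pi^{-1}(W)$ is abelian with a quotient $W$ of rank $\dim W$, so $\mathrm{rk}(W) \leq \mathrm{rk}(\pi^{-1}(W)) \leq r$. Combining, $k \leq r + \log_2(\sharp Z)$.

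Finally, since this bound on the $p$-rank holds for every prime $p$, the minimal number of generators of $A$ — which equals the maximum over $p$ of the $p$-rank — is at most $r + \log_2(\sharp Z)$, and since it is an integer it is at most $[r(\log_2(\sharp Z)+1)]$ (using $r \geq 1$, as $A$ has abelian subgroups so $r\ge 1$, and this crude bound is what the statement asks for). The main obstacle I anticipate is making the "commutator pairing has radical of bounded corank" step precise: one must be careful that the pairing $[\,\cdot\,,\cdot\,]: V \times V \to Z$ need not land in a cyclic group and need not be linear over a single field, so the cleanest route is probably to induct, peeling off one generator of $\la [H,H] \ra \leq Z$ at a time and noting that each step cuts the rank of the radical by at most one, until one reaches a subgroup with trivial commutator, i.e. abelian. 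Getting the bookkeeping right there — rather than the algebra itself — is where the care is needed.
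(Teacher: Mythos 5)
Your setup --- reducing to the $p$-rank via $\pi^{-1}(A_p[p])$ and introducing the commutator pairing $\Omega\colon V\times V\to Z$ --- is exactly the paper's, but your key quantitative step is wrong. You claim the radical $W$ of $\Omega$ satisfies $\dim W\geq k-\log_2(\sharp Z)$ because ``each generator of the image of the pairing can kill at most one dimension.'' A single alternating form with values in $\ZZ/p$ can already be nondegenerate: its radical can be zero while $k$ is arbitrarily large. Concretely, take $G$ extraspecial of order $2^{1+2n}$, so $Z=\ZZ/2$, $A=(\ZZ/2)^{2n}$, and $\Omega$ is a nondegenerate symplectic form. Every abelian subgroup of $G$ has rank at most $n+1$ (its image in $A$ is isotropic, hence of rank at most $n$), so $r=n+1$ is an admissible choice; your bound would give $k\leq r+\log_2(\sharp Z)=n+2$, but $k=2n$, a contradiction for $n\geq 3$. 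So the additive bound you are aiming for is genuinely false, not just unproved. The same example defeats your fallback plan of peeling off one generator of $[H,H]$ at a time and losing ``at most one'' dimension of the radical per step: each component of $\Omega$ is an alternating form whose rank can be as large as $k$ (or $k-1$), not one.

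The paper's fix is to work with a maximal \emph{isotropic} subspace $I$ rather than the radical. Maximality forces $I=I^{\perp}=\Ker\bigl(\Omega_I\colon A\to\Hom(I,Z_\Omega)\bigr)$, whence $\dim I\geq k-\dim I\cdot\dim Z_\Omega$ and so $\dim I\geq k/(1+\log_2\sharp Z)$; since $\pi^{-1}(I)$ is abelian, $\dim I\leq r$, which gives the multiplicative bound $k\leq r(1+\log_2\sharp Z)$ appearing in the statement. Everything else in your outline (the reduction to elementary abelian $p$-groups, the well-definedness and bilinearity of $\Omega$, the fact that preimages of isotropic subspaces are abelian) is correct and coincides with the paper.
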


\begin{proof}
For any prime $p$ let $A_p\leq A$ denote the $p$-part (i.e., the subgroup of elements whose order
is a power of $p$), and let $s_p$ be the minimal number of generators of $A_p$. Let $A_p[p]\leq A_p$
be the $p$-torsion. Then $A_p[p]\simeq (\ZZ/p)^{s_p}$.
By the Chinese remainder theorem $A$ can be generated by $\max_p s_p$ elements, where $p$ runs over the set of prime numbers dividing $\sharp A$.
Hence it suffices to prove the
lemma when $A\simeq (\ZZ/p)^s$, because
the general case can be reduced to it replacing $G$ by
$\pi^{-1}(A_p[p])$ for every $p\mid\sharp A$.

Assume for the rest of the proof that $A\simeq(\ZZ/p)^s$.
Then $A$ has a natural structure of $s$-dimensional vector space over $\ZZ/p$.
Define a map $\Omega:A \times A \rightarrow Z$ by $\Omega(a,b) = [\tilde{a},\tilde{b}]$, where $\tilde{a},\tilde{b}$ are any lifts of $a,b \in A$ to $G$. This map is well-defined and it is a skew-symmetric bilinear form on $A$ because $Z$ is central. Hence the image of $\Omega$,
which we denote by $Z_{\Omega}$, is a $p$-group and all its nontrivial elements have order $p$.
That is, $Z_{\Omega}\simeq(\ZZ/p)^r$ for some $r$, so $Z_\Omega$ has a natural structure
of vector space over $\ZZ/p$.

For any vector subspace $I\subseteq A$ we denote $I^\perp = \{ a \in A \mid \Omega(a,i)=1 \text{ for every }i\in I\}$. Alternatively, if
we define $\Omega_I:A\to \Hom(I,Z_{\Omega})$ by $\Omega_I(a)(i)=\Omega(a,i)$
we can identify
\begin{equation}\label{eq:perp-subspace}
I^{\perp}=\Ker\Omega_I.
\end{equation}
We say that $I$ is isotropic if $I\subseteq I^{\perp}$. A trivial example of isotropic subspace is $I=0$.
If $I$ is isotropic and there exists some $\gamma\in I^{\perp}\setminus I$, then $I+\la\gamma\ra$
is isotropic (because $\Omega$ is skew-symmetric) and strictly bigger than $I$. Hence, any  maximal isotropic subspace $I$ satisfies
$I=I^{\perp}$.

Choose a maximal isotropic subspace $I\subseteq A$.
By (\ref{eq:perp-subspace}) we have $I=I^{\perp}=\Ker\Omega_I$, so
$$\dim I=\dim\Ker\Omega_I\geq \dim A-\dim\Hom(I,Z_{\Omega})= s-\dim I\dim Z_{\Omega},$$ and consequently
$$\dim I\geq\frac{s}{1+\dim Z_{\Omega}}=\frac{s}{1+\log_p\sharp Z_{\Omega}}\geq \frac{s}{1+\log_2\sharp Z}.$$
Since $I$ is isotropic, $B:=\pi^{-1}(I)\leq G$ is abelian, and it cannot be generated
by less than $\dim I$ elements (because $B$ surjects onto $I$).
Consequently $\dim I\leq r$,
which, combined with the previous estimates, gives
$s\leq \dim I(1+\log_2\sharp Z)\leq r(1+\log_2\sharp Z)$, so the proof of the lemma
is complete.
\end{proof}

\subsection{Finite $p$-groups and MNAS's}

\begin{lemma}
\label{lemma:MS-2} Let $p$ be a prime
and let $B\leq A$ be finite abelian $p$-groups.
Suppose that $A$ can be generated by $r$ elements.
Let $\Aut_B^0(A)\leq\Aut(A)$ denote the automorphisms of $A$
whose restriction to $B$ is the identity. Then
$$\sharp \Aut_B^0(A)\leq [A:B]^{r^2}.$$
\end{lemma}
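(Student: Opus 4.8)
The plan is to realize $\Aut_B^0(A)$ as a subset of a group of homomorphisms and then bound the cardinality of that group. First I would note that for every $\phi\in\Aut_B^0(A)$ the map $\psi_\phi\colon A\to A$ given by $\psi_\phi(a)=\phi(a)a^{-1}$ is a group homomorphism, precisely because $A$ is abelian. Since $\phi$ restricts to the identity on $B$, we have $\psi_\phi(b)=1$ for all $b\in B$, so $\psi_\phi$ kills $B$ and hence factors through the projection $\pi\colon A\to A/B$ as $\psi_\phi=\overline{\psi_\phi}\circ\pi$ for a unique $\overline{\psi_\phi}\in\Hom(A/B,A)$. The assignment $\phi\mapsto\overline{\psi_\phi}$ is injective, because $\phi$ is recovered from $\psi_\phi$ via $\phi(a)=\psi_\phi(a)\,a$. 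Thus $\sharp\Aut_B^0(A)\leq\sharp\Hom(A/B,A)$, and the whole problem reduces to estimating $\sharp\Hom(A/B,A)$.

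For the counting step I would use two elementary facts about finite abelian $p$-groups. Since $A$ is generated by $r$ elements it is isomorphic to a product of at most $r$ cyclic $p$-groups, and therefore for any $p$-power $n$ the subgroup $A[n]=\{a\in A\mid a^n=1\}$ has order at most $n^r$ (each cyclic factor $\ZZ/p^{a_i}$ contributes a subgroup of order $p^{\min(k,a_i)}\leq p^{k}$ when $n=p^{k}$). Fix generators $g_1,\dots,g_r$ of $A$; their images generate $A/B$, each $\bar g_i$ has order a power of $p$ dividing $\exp(A/B)$, and $\exp(A/B)$ divides $\sharp(A/B)=[A:B]$, so $\ord(\bar g_i)\leq[A:B]$. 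A homomorphism $\theta\colon A/B\to A$ is determined by the tuple $(\theta(\bar g_1),\dots,\theta(\bar g_r))$, and necessarily $\theta(\bar g_i)\in A[\ord(\bar g_i)]$. Hence
\[
\sharp\Hom(A/B,A)\leq\prod_{i=1}^{r}\sharp A[\ord(\bar g_i)]\leq\prod_{i=1}^{r}\ord(\bar g_i)^{r}\leq\prod_{i=1}^{r}[A:B]^{r}=[A:B]^{r^2},
\]
which combined with the previous paragraph yields the asserted bound. (If $A$ is trivial or $B=A$ the inequality is immediate.)

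I do not anticipate a serious obstacle: the argument is short and elementary. The points that need a little care are the verification that $\psi_\phi$ is genuinely a homomorphism — this is exactly where commutativity of $A$ enters, and for nonabelian $A$ a different approach would be required — together with the two estimates used in the counting step, namely $\sharp A[n]\leq n^{r}$ for an $r$-generated abelian $p$-group (an immediate consequence of the structure theorem) and $\ord(\bar g_i)\leq[A:B]$, which holds because $\bar g_i$ lies in the finite $p$-group $A/B$ of order $[A:B]$.
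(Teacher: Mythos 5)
Your proof is correct and follows essentially the same route as the paper: both arguments record each $\phi\in\Aut_B^0(A)$ by the ``difference'' $a\mapsto\phi(a)a^{-1}$, observe that it is killed by $[A:B]$ (you package this as a homomorphism $A/B\to A$, the paper writes $\phi(a_j)=a_j+d_j$ with $[A:B]\,d_j=0$), and then count using $\sharp A[n]\leq n^{r}$ for an $r$-generated abelian $p$-group. The two write-ups differ only in presentation.
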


Note that an analogous lemma can be proved for arbitrary finite
abelian groups, but for our purposes the case of $p$-groups will be
sufficient.

\begin{proof}
Denote $C=[A:B]$. Choose generators
$a_1,\dots,a_r$ of $A$. An automorphism $\phi\in\Aut(A)$ is
determined by the images $\phi(a_1),\dots,\phi(a_r)$. Suppose
that $\phi\in\Aut^0_B(A)$ and write $\phi(a_j)=a_j+d_j$
(additive notation on $A$)
for every $j$, with $d_j\in A$. For each $j$ we have
$Ca_j\in B$, so $Ca_j=\phi(Ca_j)=Ca_j+Cd_j$. It follows
that $Cd_j=0$, so $d_j$ belongs to the $C$-torsion
$A[C]\leq A$. We have $\sharp A[C]\leq {C}^r$, so the set of all possible
choices for $d_1,\dots,d_r$ has at most $({C}^r)^r={C}^{r^2}$ elements.
Hence, $\sharp\Aut^0_B(A)\leq {C}^{r^2}.$
\end{proof}

Let $G$ be a finite group, and let $A$ be an abelian normal subgroup
of $G$. The action of $G$ on itself by conjugation induces a
morphism of groups
$$c:G/A\to\Aut(A).$$
We will write that $A$ is a MNAS (of $G$) if $A$ is a maximal normal abelian subgroup of $G$.
It is well known that if $G$ is a $p$-group (for any prime $p$) and $A$ is a MNAS then $c$
is injective (see e.g. \cite[\S 5.2.3]{Rob}).

\begin{lemma}
\label{lemma:index-MNA}
Let $G$ be a finite $p$-group and let $A\leq G$ be a MNAS.
Suppose that $A$ can be generated by $r$ elements.
For every
abelian subgroup
$B\leq G$ we have 
$$[G:A]\leq [G:B]^{r^2+1}.$$
\end{lemma}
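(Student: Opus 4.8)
I would show that $B\cdot A$ is small in $G$ and that $B$ is small in $B\cdot A$, then multiply the two estimates. The second estimate is the one that uses the MNAS hypothesis together with Lemma~\ref{lemma:MS-2}.

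First I would reduce to bounding $[G:A]$ in terms of $[BA:B]=[A:A\cap B]$ and $[BA:A]$. Indeed, $[G:B]=[G:BA]\,[BA:B]$, so it suffices to prove $[G:A]\leq [BA:B]^{r^2+1}$, and then $[G:A]\leq[G:B]^{r^2+1}$ follows. Now I would introduce the conjugation morphism $c:G/A\hookrightarrow\Aut(A)$, which is \emph{injective} because $A$ is a MNAS of the $p$-group $G$. The idea is to show that the image of $BA/A$ under $c$ has small index in $c(G/A)$; then injectivity transfers this to an index bound for $BA/A$ in $G/A$, i.e. a bound on $[G:BA]$.

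The key point is that every element of $c(BA/A)$ fixes $A\cap B$ pointwise (since $A\cap B$ is central in $BA$, as $B$ is abelian and $A$ is abelian, so $B$ and $A$ both centralize $A\cap B$). Hence $c(BA/A)\subseteq\Aut^0_{A\cap B}(A)$, and by Lemma~\ref{lemma:MS-2} we get $\sharp c(BA/A)\leq [A:A\cap B]^{r^2}=[BA:B]^{r^2}$. Wait --- this bounds $c(BA/A)$ from \emph{above}, not its index from above; so instead I would argue the other way: I want to bound $[G/A:BA/A]=[c(G/A):c(BA/A)]$. Since $c$ is injective, $\sharp c(G/A)=[G:A]$, and $[G:BA]=[G:A]/[BA:A]$. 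So I actually need a lower bound on $\sharp c(BA/A)$, or equivalently a lower bound on $[BA:A]$ relative to $[A:A\cap B]$ --- but $[BA:A]=[B:A\cap B]$, which can be much smaller than $[A:A\cap B]$. So the naive route stalls.

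Instead, here is the route I would actually take. Apply Lemma~\ref{lemma:MS-2} with the roles reversed: set $B':=A\cap B$, a subgroup of the $r$-generated abelian group $A$. Every automorphism in $c(BA/A)$ restricts to the identity on $B'=A\cap B$, so $c(BA/A)\subseteq\Aut^0_{B'}(A)$; meanwhile, I claim $c(G/A)\cap\Aut^0_{B'}(A)=c(BA/A)$ --- no, that is also false in general. The correct and clean argument: consider the subgroup $H=\{g\in G: gxg^{-1}=x\ \forall x\in B'\}=C_G(B')\geq BA$ (it contains $B$ since $B$ is abelian and $A$ since $B'\leq A$ central in $A$ --- wait, $B'\leq A$ and $A$ abelian gives $A\leq C_G(B')$; and $B$ abelian gives $B\leq C_G(B')$; so $BA\leq C_G(B')$). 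Then $c(H/A)\subseteq\Aut^0_{B'}(A)$, so by Lemma~\ref{lemma:MS-2}, $[H:A]=\sharp c(H/A)\leq[A:B']^{r^2}=[BA:B]^{r^2}$. Since $BA\leq H$ we also have $[BA:A]\leq[H:A]\leq[BA:B]^{r^2}$... still wrong direction. Honestly, the mechanism must be: $[G:A]=[G:H]\,[H:A]$, bound $[H:A]\leq[BA:B]^{r^2}$ as above, and bound $[G:H]$ --- but $[G:H]=[G:C_G(B')]$, the size of the conjugacy class portion, which I do not see how to control by $[BA:B]$. The honest assessment: the main obstacle is exactly pinning down which subgroup plays the role of the ``fixer'' so that Lemma~\ref{lemma:MS-2} applies and so that the leftover index $[G:H]$ is controlled; I expect the resolution is that $H$ should be taken \emph{inside} $BA$ or that one works with $B\cap A$ replaced by a cleverly chosen characteristic-in-$A$ subgroup, and that the exponent $r^2+1$ (rather than $r^2$) comes from one extra factor of $[G:BA]\leq[G:B]$ folded in at the end. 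I would grind out this bookkeeping carefully, using injectivity of $c$ as the one non-formal input.
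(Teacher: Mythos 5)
Your proposal contains every ingredient of the paper's proof but fails to assemble them, and the culprit is the reduction you make in the very first step. You claim it suffices to prove $[G:A]\leq[BA:B]^{r^2+1}$; this stronger statement is false (take $B=A$: then $[BA:B]=1$ while $[G:A]$ can be arbitrarily large), so every subsequent attempt to control $[G:BA]$ by $[BA:B]$ was doomed, and it is this phantom target that makes you repeatedly dismiss correct inequalities as pointing in the ``wrong direction.''

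In fact you establish exactly the two estimates the paper multiplies together. Write $\pi:G\to G/A$ for the quotient map, so $\pi(B)=BA/A$. You correctly observe that $c(\pi(B))\subseteq\Aut^0_{A\cap B}(A)$, since conjugation by $b\in B$ fixes $A\cap B$ pointwise ($B$ being abelian); hence by injectivity of $c$ and Lemma \ref{lemma:MS-2},
$$[BA:A]=\sharp\pi(B)\leq\sharp\Aut^0_{A\cap B}(A)\leq[A:A\cap B]^{r^2}=[BA:B]^{r^2}\leq[G:B]^{r^2}.$$
(You derive precisely this bound, even a second time via the detour through $H=C_G(A\cap B)\geq BA$, and then discard it as useless.) The other factor is trivial: $[G:BA]\leq[G:B]$ because $B\leq BA$. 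Multiplying,
$$[G:A]=[G:BA]\cdot[BA:A]\leq[G:B]\cdot[G:B]^{r^2}=[G:B]^{r^2+1}.$$
There is no need for a lower bound on $\sharp c(BA/A)$, no need to control $[G:C_G(A\cap B)]$, and no cleverly chosen characteristic subgroup: the exponent $r^2+1$ does come from one extra factor of $[G:BA]\leq[G:B]$, exactly as you guessed at the end, but it enters through the decomposition $[G:A]=[G:BA]\,[BA:A]$ rather than through your initial (false) reduction.
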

\begin{proof}
Choose an abelian subgroup $B\leq G$.
Let $\pi:G\to G/A$ be the quotient map. Then
$[G/A:\pi(B)]=[\pi(G):\pi(B)]\leq
[G:B].$ If $b\in B$, then $c(\pi(b))\in\Aut^0_{A\cap B}(A)$ because
$B$ is abelian. Using the injectivity of $c$ and Lemma \ref{lemma:MS-2}
we have
$$\sharp\pi(B)\leq \sharp \Aut^0_{A\cap B}(A)\leq [A:A\cap B]^{r^2}\leq [G:B]^{r^2}.$$
Combining the inequalities we have:
$$[G:A]=\sharp G/A=[G/A:\pi(B)]\cdot \sharp \pi(B)\leq [G:B]\cdot [G:B]^{r^2}=[G:B]^{r^2+1}.$$
\end{proof}

\subsection{Mann--Su theorem}
\label{ss:MS}

The following classical result due to Mann and Su will play a prominent role in our arguments.

\begin{theorem}[Theorem 2.5 in \cite{MS}]
\label{thm:MS} For any closed manifold $Y$ there exists some
integer $r\in\ZZ$ depending only on $H^*(Y)$ with the property
that for any prime $p$ and any elementary $p$-group $(\ZZ/p)^s$
admitting an effective action on $Y$ we have $s\leq r$. Equivalently,
any finite abelian group acting effectively on $Y$ can be generated
by $r$ elements.
\end{theorem}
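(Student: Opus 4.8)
The plan is to reduce the statement to the cohomological finiteness that underlies Smith theory, exactly as Mann and Su do. First I would recall the key input: if $(\ZZ/p)^s$ acts effectively on a finite-dimensional space $Y$ with bounded $\ZZ/p$-cohomology, then $s$ is bounded in terms of $\dim_{\ZZ/p} H^*(Y;\ZZ/p)$ and of the dimension of $Y$. The classical way to see this is via the localization theorem in equivariant cohomology together with Smith theory: for an elementary abelian $p$-group $E=(\ZZ/p)^s$ acting on a (say, finite-dimensional, finitistic) space $Y$, one has the Borel construction $Y_E = EE\times_E Y$ and the fibration $Y\to Y_E\to BE$. The crucial point is that $H^*(BE;\ZZ/p)$ is a polynomial (tensor exterior, for $p$ odd) algebra on $s$ generators, so its Krull dimension is exactly $s$; localization identifies the Krull dimension of $H^*_E(Y;\ZZ/p)=H^*(Y_E;\ZZ/p)$ with $s$ as well (the localized cohomology is supported on the fixed-point set, but the ring-theoretic dimension is still $s$), while Leray–Hirsch / the Serre spectral sequence bounds the size of $H^*_E(Y;\ZZ/p)$ as a module over $H^*(BE;\ZZ/p)$ by $\dim_{\ZZ/p}H^*(Y;\ZZ/p)$. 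A module of finite type over a polynomial ring in $s$ variables that has $\dim_{\ZZ/p}H^*(Y;\ZZ/p)$ generators in bounded degrees forces a bound on $s$; concretely, counting dimensions in total degree $\leq N$ grows like $N^s$ on the $BE$ side but is controlled on the $Y_E$ side, yielding $s\leq$ (a function of $\dim_{\ZZ/p}H^*(Y;\ZZ/p)$). This is Theorem 2.5 of \cite{MS}, and I would simply quote it.

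The second step is to pass from $\ZZ/p$-cohomology of $Y$ to $H^*(Y)=H^*(Y;\ZZ)$. Since $Y$ is a closed manifold, $H^*(Y;\ZZ)$ is finitely generated, and the universal coefficient theorem gives $\dim_{\ZZ/p}H^i(Y;\ZZ/p)\leq \rk H^i(Y;\ZZ) + (\text{number of cyclic summands of }H^i(Y;\ZZ)\text{ and }H^{i+1}(Y;\ZZ))$, which is bounded by a quantity depending only on $H^*(Y;\ZZ)$ and not on $p$. Therefore the integer $r$ produced in the first step can be taken to depend only on $H^*(Y)$, as claimed. This covers the statement about elementary abelian $p$-groups.

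Finally I would deduce the statement about arbitrary finite abelian groups. Let $G$ be a finite abelian group acting effectively on $Y$, and write $G\cong \prod_p G_p$ with $G_p$ its $p$-part. Each $G_p$ acts effectively (a nontrivial element of $G_p$ cannot act trivially since the whole action is effective). If $G_p$ needs $d_p$ generators, then $G_p$ contains a subgroup isomorphic to $(\ZZ/p)^{d_p}$ (take $G_p[p]$, the $p$-torsion), which acts effectively on $Y$, so $d_p\leq r$ by the previous step. Hence every $G_p$ is generated by at most $r$ elements, and by the structure theorem for finite abelian groups $G\cong\prod_p G_p$ is generated by $\max_p d_p\leq r$ elements. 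This gives the equivalent reformulation and completes the proof.

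The main obstacle is purely expository rather than mathematical: the localization/Smith-theory input behind the first step is a genuinely nontrivial theorem, so the honest thing is to cite \cite{MS} for it and only carry out the elementary reductions (universal coefficients to remove the $p$-dependence, and $p$-torsion plus the structure theorem to handle general finite abelian groups). In fact, for the present paper this is exactly how the result is used, so no more than the citation plus these two reductions is needed.
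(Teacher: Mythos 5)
Your proposal is correct and matches the paper's treatment: the paper states this result as Theorem 2.5 of Mann--Su and gives no proof, exactly as you ultimately propose to do, and your two elementary reductions (universal coefficients to make $r$ depend only on $H^*(Y)$ rather than on the mod $p$ cohomology, and passing to the $p$-torsion subgroup $G_p[p]\cong(\ZZ/p)^{d_p}$ plus the structure theorem to handle general finite abelian groups) are both sound and are implicitly the content of the "equivalently" clause. The sketch of the localization/counting argument behind the cited theorem is a reasonable outline but is not needed, since the citation carries that weight.
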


\begin{lemma}
\label{lemma:proceedings}
Given a closed manifold $X$ and natural numbers $C_0,C_1$ there exists a constant $C$
with the following property. Suppose that $G$ is a finite group sitting in an exact sequence
$$1 \rightarrow G_0 \rightarrow G \rightarrow G_1 \rightarrow 1$$
satisfying $\sharp G_0<C_0$, and suppose that there exists an abelian subgroup $B \leq G_1$ satisfying $[G_1:B] \leq C_1$.
If there exists a CTE action of $G$ on $X$ then there is
an abelian subgroup $A \leq G$ satisfying $[G:A]\leq C$.
\end{lemma}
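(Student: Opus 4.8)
The plan is to prove the statement by passing from $G$ to a chain of subgroups of bounded index, ending with an abelian one; throughout, ``bounded'' will mean bounded in terms of $X$, $C_0$ and $C_1$ only.

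\emph{First} I would reduce to the case where $G_1$ is abelian. Let $\pi\colon G\to G_1$ be the projection in the given exact sequence and put $H=\pi^{-1}(B)$; then $[G:H]=[G_1:B]\le C_1$, the subgroup $G_0$ is normal in $H$, and $H/G_0\cong B$ is abelian. An abelian subgroup of $H$ of bounded index in $H$ has bounded index in $G$, so we may replace $G$ by $H$ and assume $G/G_0$ is abelian. \emph{Next} I would pass to the centralizer $G'=C_G(G_0)$. The conjugation homomorphism $G\to\Aut(G_0)$ has kernel $G'$, so $[G:G']\le\sharp\Aut(G_0)<C_0!$. Now $G'\cap G_0=Z(G_0)$, this intersection is central in $G'$, and by the second isomorphism theorem $G'/Z(G_0)\cong G'G_0/G_0$ embeds in the abelian group $G/G_0$, hence is abelian. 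Writing $Z:=Z(G_0)$ (so $\sharp Z<C_0$) and $Q:=G'/Z$, we are reduced to a finite group $G'$ fitting in a central extension $1\to Z\to G'\to Q\to 1$ with $Q$ finite abelian.

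\emph{The heart of the argument} is to bound the number of generators of $Q$, and this is where the topology of $X$ enters. Since $G$ acts effectively on the closed manifold $X$, the Mann--Su theorem (Theorem \ref{thm:MS}) gives an integer $r$ depending only on $H^*(X)$ such that every abelian subgroup of $G$, hence of $G'$, is generated by $r$ elements; applying Lemma \ref{lemma:rang-del-quocient} to $1\to Z\to G'\to Q\to 1$, the group $Q$ is generated by $s:=[r(\log_2(\sharp Z)+1)]\le[r(\log_2 C_0+1)]$ elements, a bound depending only on $X$ and $C_0$. \emph{Finally} I set $n:=\sharp Z$. As $Z$ is central in $G'$ and $Q$ is abelian, the commutator descends to a well-defined alternating biadditive pairing $\omega\colon Q\times Q\to Z$. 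Let $G''\le G'$ be the preimage of $nQ:=\{nq\mid q\in Q\}$; since $Q$ is $s$-generated, $Q/nQ$ is a quotient of $(\ZZ/n)^{s}$, so $[G':G'']=[Q:nQ]\le n^{s}<C_0^{\,s}$. For $a,b\in G''$ with $\bar a=nq_a$, $\bar b=nq_b$ one has $[a,b]=\omega(nq_a,nq_b)=\omega(q_a,q_b)^{n^{2}}=1$, because $\omega(q_a,q_b)\in Z$ and $Z$ has exponent dividing $n$; thus $G''$ is abelian. Collecting the three index bounds, $[G:G'']\le C_1\cdot C_0!\cdot C_0^{\,s}=:C$, which depends only on $X$, $C_0$, $C_1$, and $A:=G''$ works.

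I expect the main obstacle to be precisely the control of the number of generators of $Q=G'/Z(G_0)$: without an \emph{a priori} bound on it the index $[Q:nQ]$ — and hence the resulting Jordan-type constant — is uncontrolled, and it is only through Mann--Su together with Lemma \ref{lemma:rang-del-quocient} that this bound is obtained from data depending on $X$ alone. A secondary point requiring care is checking that each reduction (to $G_1$ abelian, to $C_G(G_0)$, and to the preimage of $nQ$) costs only a factor depending on $C_0$, $C_1$ or $X$, so that the final constant $C$ has the required dependence.
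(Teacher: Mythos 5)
Your proof is correct and follows the paper's argument almost verbatim: reduce to $\pi^{-1}(B)$, pass to the centralizer $G'$ of $G_0$, obtain a central extension $1\to Z\to G'\to Q\to 1$ with $Q$ abelian, and bound the number of generators of $Q$ via Theorem \ref{thm:MS} together with Lemma \ref{lemma:rang-del-quocient}. The only difference is that where the paper concludes by citing \cite[Lemma 2.2]{M1}, you prove that final step directly via the commutator pairing $\omega\colon Q\times Q\to Z$ and the passage to the preimage of $nQ$, which is a correct and self-contained substitute with the same index bounds.
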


\begin{proof}
Denote by $\pi:G\to G_1$ the projection.
Substituting $G$ by $\pi^{-1}(B)$ 
we may assume without loss of generality that $G_1$ is abelian.
Let $G'$ be the centralizer of $G_0$ inside G. Since $\sharp G_0 \leq C_0$ we have $[G:G'] \leq C_0!$. Define $Z = G_0 \cap G'$ and $G_1' = \pi(G') \leq G_1$. Clearly, $[G_1:G_1'] \leq C_0!$ and $\sharp Z \leq C_0$, and we have an exact sequence
$$1 \rightarrow Z \rightarrow G' \rightarrow G_1' \rightarrow 1,$$
where $Z$ is central in $G'$. Let $r$ be the number given by Theorem \ref{thm:MS} applied to $X$.
By Lemma \ref{lemma:rang-del-quocient}, $G_1'$ can be generated by $[r(\log_2(\sharp Z) + 1)] \leq [r(\log_2 C_0 + 1)]$ elements. Therefore we can apply \cite[Lemma 2.2]{M1} to obtain the result.
\end{proof}

\section{Linearization of finite group actions}
\label{s:linearization}

The following is a classical theorem of Camille Jordan (see \cite{J}, and \cite{CR,M1}
for modern proofs).

\begin{theorem}[Jordan]
\label{thm:classical-Jordan}
For any natural number $n$ there exists a constant $C_n$ such that every finite subgroup $G \leq \GL(n,\mathbb{C})$ has an abelian subgroup $A \leq G$ satisfying $[G:A] \leq C_n$.
\end{theorem}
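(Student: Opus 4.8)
The plan is to prove this by the classical ``subgroup generated near the identity'' argument. First I would reduce to the unitary case: averaging the standard Hermitian inner product over $G$ produces a $G$-invariant one, so after conjugating we may assume $G\leq\U(n)$. Equip $\U(n)$ with the operator norm $\|\cdot\|$; this norm is invariant under left and right multiplication by unitary matrices (hence under conjugation), and it satisfies the elementary commutator estimate $\|[g,h]-I\|\leq 2\|g-I\|\,\|h-I\|$ for $g,h\in\U(n)$, which follows from the identities $[g,h]-I=(gh-hg)g^{-1}h^{-1}$ and $gh-hg=(g-I)(h-I)-(h-I)(g-I)$. Fix $\ve_n>0$ with $\ve_n<1/2$ and $\ve_n<2\sin(\pi/n)$ (this second condition guarantees that $1$ is the only $n$-th root of unity $\mu$ with $|\mu-1|<\ve_n$), chosen so that $n\mapsto\ve_n$ is non-increasing. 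Set $S=\{g\in G:\|g-I\|<\ve_n\}$ and $H=\la S\ra$.

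The first step is to bound $[G:H]$ by a constant depending only on $n$. If $gH\neq g'H$ then $g^{-1}g'\notin H\supseteq S$, so $\|g-g'\|=\|g^{-1}g'-I\|\geq\ve_n$; thus any set of coset representatives for $G/H$ is $\ve_n$-separated inside the compact group $\U(n)$, and the maximal cardinality $N(n)$ of such a separated set is finite and depends only on $n$. Hence $[G:H]\leq N(n)$.

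The main point is that $H$ is abelian, which I would prove by induction on $n$. The case $n=1$ is immediate. For $n\geq 2$: if every element of $H$ is a scalar matrix then $H$ is a finite group of scalars, hence cyclic. Otherwise $S$ must contain a non-scalar element (products of scalar matrices are scalar, so an all-scalar $S$ would generate an all-scalar $H$); choose $g_0\in H$ non-scalar with $m:=\|g_0-I\|$ minimal among non-scalar elements of $H$, so that $m<\ve_n$. For $h\in S$ the commutator estimate gives $\|[g_0,h]-I\|\leq 2m\ve_n<m$, so $[g_0,h]$ is not a non-scalar element of $H$; being a commutator of unitaries it is a scalar $\mu I$ with $\mu^n=\det[g_0,h]=1$ and $|\mu-1|<m<2\sin(\pi/n)$, which forces $\mu=1$. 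Therefore $g_0$ commutes with every element of $S$, hence with all of $H=\la S\ra$, so $g_0\in Z(H)$. Now decompose $\CC^n=\bigoplus_\lambda V_\lambda$ into the eigenspaces of $g_0$: there are at least two summands since $g_0$ is non-scalar, each has dimension $<n$, and each is $H$-invariant because $g_0$ is central. The restriction of $H$ to $V_\lambda$ is a finite subgroup of $\U(\dim V_\lambda)$ generated by elements within $\ve_n\leq\ve_{\dim V_\lambda}$ of the identity (operator norms do not increase under restriction to an invariant subspace), hence abelian by the inductive hypothesis; since $\bigoplus_\lambda V_\lambda$ is a faithful representation of $H$, this realizes $H$ as a subgroup of a product of abelian groups, so $H$ is abelian.

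Taking $A=H$ then yields an abelian subgroup with $[G:A]\leq N(n)=:C_n$, which proves the theorem. The step I expect to be the main obstacle is proving $H$ abelian, and within it the scalar case: the minimal non-identity element of $H$ need not be central when it happens to be a scalar, so one must track commutators only modulo scalars — this is exactly what forces the refined choice $\ve_n<2\sin(\pi/n)$ and the induction on dimension.
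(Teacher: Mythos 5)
Your proof is correct. The paper does not actually prove Theorem \ref{thm:classical-Jordan}: it treats it as classical and refers to \cite{J} and to \cite{CR,M1} for modern proofs, so there is no in-paper argument to compare against line by line. What you give is essentially the standard Frobenius--Bieberbach ``neighbourhood of the identity'' proof that those references contain: reduce to $\U(n)$ by averaging, use the conjugation-invariance of the operator norm and the commutator estimate $\|[g,h]-I\|\leq 2\|g-I\|\,\|h-I\|$, bound $[G:H]$ by an $\ve_n$-separation count in the compact group $\U(n)$, and show $H=\la S\ra$ is abelian. Your handling of the two delicate points is right: the minimal non-scalar element $g_0$ is forced into the centre because its commutators with generators land strictly closer to $I$ and are therefore scalars, and the condition $\ve_n<2\sin(\pi/n)$ together with $\det[g_0,h]=1$ rules out a nontrivial scalar. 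The only thing I would make explicit is the precise induction statement: what you use on each eigenspace $V_\lambda$ is that \emph{any} finite subgroup of $\U(k)$ generated by \emph{some} set of elements within $\ve_k$ of the identity is abelian (not just the subgroup generated by \emph{all} such elements), since $S|_{V_\lambda}$ need not exhaust the small elements of $H|_{V_\lambda}$; your inductive step does prove this stronger statement, so the argument is sound, but the hypothesis should be phrased that way from the start.
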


The next lemma follows from the results in \cite[VI.2]{Bre}.

\begin{lemma}
\label{lemma:lin-fixed-point}
Let $X$ be a connected $4$-manifold and let $G$ be a finite group acting smoothly and
effectively on $X$. Suppose that $X^G \neq \emptyset$. Then, for every $p \in X^G$ the linearization
of the $G$-action at $T_pX$ defines an embedding $G\hookrightarrow \GL(T_pX)$.
In particular, we can identify $G$ with a subgroup of $\GL(4, \mathbb{R})$.
\end{lemma}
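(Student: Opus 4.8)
The statement to prove is: if a finite group $G$ acts smoothly and effectively on a connected $4$-manifold $X$ and fixes a point $p$, then the linearized action at $T_pX$ gives an injection $G \hookrightarrow \GL(T_pX) \cong \GL(4,\RR)$. The plan is to reduce the global injectivity statement to a purely local one near $p$, and then to exploit the fact that a diffeomorphism of a connected manifold which is the identity on a nonempty open set must be the identity everywhere.

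First I would recall the equivariant tubular neighbourhood / linearization theorem near a fixed point: by averaging a Riemannian metric over $G$ we may assume $G$ acts by isometries, and then the exponential map $\exp_p : T_pX \to X$ (with respect to the $G$-invariant metric) is $G$-equivariant on a small ball, intertwining the linear $G$-action on $T_pX$ with the given action on a neighbourhood $U$ of $p$. This is exactly what is packaged in \cite[VI.2]{Bre}. Concretely, for $g\in G$ one has $g\circ\exp_p = \exp_p\circ\, d g_p$ on a sufficiently small ball around $0$ in $T_pX$.

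Now suppose $g\in G$ lies in the kernel of the linearization, i.e. $dg_p = \id_{T_pX}$. Then the displayed identity shows that $g$ restricts to the identity on the neighbourhood $U=\exp_p(B)$ of $p$. Since $X$ is connected and $g$ is a diffeomorphism agreeing with $\id_X$ on the nonempty open set $U$, the coincidence set $\{x\in X \mid g(x)=x \text{ and } dg_x = \id\}$ is open (by the same local linearization argument applied at any point of this set, or simply because two smooth maps agreeing to first order on an open set agree on a neighbourhood) and closed (as the coincidence set of continuous maps together with their derivatives), hence all of $X$. Therefore $g=\id_X$, and effectiveness forces $g=1$. This proves the linearization map $G\to\GL(T_pX)$ is injective. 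Choosing a basis of $T_pX$ identifies $\GL(T_pX)$ with $\GL(4,\RR)$, giving the final assertion.

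The only delicate point is justifying that a diffeomorphism of a connected manifold which is the identity to first order on an open set is globally the identity; the cleanest route is to observe that the set where $g$ and $\id_X$ agree together with their differentials is both open (via the equivariant exponential chart, since on such a chart $g$ is conjugate to its linear part, which is the identity) and closed, and then invoke connectedness. Everything else is routine once the $G$-invariant metric and the equivariant exponential map are in place.
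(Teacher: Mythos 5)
Your proof is correct and is precisely the standard equivariant-exponential-map linearization argument from \cite[VI.2]{Bre} that the paper itself cites in lieu of a proof, so you and the paper are taking the same route. One minor caveat: your parenthetical alternative justification for openness of the coincidence set (``two smooth maps agreeing to first order on an open set agree on a neighbourhood'') is not a valid general principle, but your primary justification --- applying the equivariant exponential chart for $\langle g\rangle$ at each point where $g$ fixes the point with trivial differential --- is sound, so the open-and-closed argument goes through.
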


Combining Theorem \ref{thm:classical-Jordan} and Lemma \ref{lemma:lin-fixed-point}
and taking $C=C_4$ we obtain:

\begin{lemma}
\label{lemma:linearization-Jordan}
There is a constant $C$ with the following property.
Let $X$ be a connected $4$-manifold and let
$G$ be a finite group acting smoothly and effectively on $X$ with $X^G \neq \emptyset$.
There exists an abelian subgroup $A \leq G$ such that $[G:A] \leq C$.
\end{lemma}

\begin{lemma}
\label{lemma:lin-invariant-surface}
Let $X$ be a connected $4$-manifold and let $G$ be a finite group acting smoothly and effectively on $X$. Suppose that $G$ preserves a connected embedded surface $\Sigma\subset X$.
Let $N=TX|_{\Sigma}/T\Sigma$ be the normal bundle of $\Sigma$.
\begin{enumerate}
\item Linearizing the action
along $\Sigma$ we obtain an effective action of $G$ on $N$ by bundle automorphisms.
\item $G$ sits in an exact sequence $1 \rightarrow G_0 \rightarrow G \rightarrow G_\Sigma \rightarrow 1$, where $G_0$ fixes $\Sigma$ pointwise and is either cyclic or dyhedral, and $G_\Sigma$ acts effectively on $\Sigma$. If in addition $X$ is oriented and $G$ acts on $X$ preserving the orientation
then $G_0$ is cyclic.
\end{enumerate}
\end{lemma}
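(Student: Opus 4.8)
The plan is to analyze the action of $G$ on the normal bundle $N\to\Sigma$, which is a real rank-$2$ vector bundle over the connected closed surface $\Sigma$. Part (1) is immediate: linearization along an invariant submanifold gives an action by bundle automorphisms (this is the content of Section \ref{s:linearization}, in particular the general principle behind Lemma \ref{lemma:lin-invariant-surface} itself and Lemma \ref{lemma:lin-fixed-point}), and effectiveness is inherited because a diffeomorphism fixing $\Sigma$ and acting trivially on $N$ must be the identity near $\Sigma$, hence the identity on the connected manifold $X$ by the identity theorem for isometries/smooth maps with prescribed $1$-jet along a submanifold (or: the fixed locus of such a diffeomorphism is open and closed). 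So the substance is in part (2).

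For part (2), I would first define $G_0\le G$ to be the subgroup of elements acting trivially on $\Sigma$, so that $G_0$ is normal (it is the kernel of the restriction map $G\to\Diff(\Sigma)$), and $G_\Sigma:=G/G_0$ acts effectively on $\Sigma$ by construction, giving the exact sequence. The key point is to identify $G_0$. Since $G_0$ fixes $\Sigma$ pointwise, its action on $N$ is fiberwise linear, i.e. $G_0$ acts on each fiber $N_x\cong\RR^2$, and by effectiveness of the $N$-action (part (1)) together with the fact that $G_0$ fixes the base, $G_0$ embeds into $\GL(N_x)\cong\GL(2,\RR)$ for any $x\in\Sigma$ (the homomorphism $G_0\to\GL(N_x)$ is injective because an element in its kernel fixes $\Sigma$ pointwise and acts trivially on one fiber, hence on all fibers by continuity/connectedness of $\Sigma$ and local constancy of the kernel, and then is trivial on $X$ as above). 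A finite subgroup of $\GL(2,\RR)$ is conjugate into $\O(2)$, and finite subgroups of $\O(2)$ are cyclic or dihedral; this gives the claimed dichotomy for $G_0$. If moreover $X$ is oriented and $G$ preserves the orientation, then each $g\in G_0$ acts on $TX|_\Sigma=T\Sigma\oplus N$ preserving the orientation and trivially on $T\Sigma$ (since it fixes $\Sigma$ pointwise, $dg$ is the identity on $T\Sigma$), hence preserves the orientation of the fibers $N_x$; so $G_0\hookrightarrow\SO(2)$, which is abelian, and a finite subgroup of $\SO(2)$ is cyclic.

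The one genuinely delicate point, which I expect to be the main obstacle, is the injectivity of $G_0\to\GL(N_x)$ — equivalently, that a nontrivial $g\in G_0$ cannot act trivially on the normal bundle $N$. The naive argument "trivial $1$-jet along $\Sigma$ implies trivial near $\Sigma$" is false for general diffeomorphisms, so one must use finite order: a finite-order diffeomorphism fixing $\Sigma$ pointwise with trivial derivative along $\Sigma$ can be linearized (averaged) near $\Sigma$ by the Bochner linearization / equivariant tubular neighborhood theorem, and its linearization is the identity, so it is the identity on a neighborhood of $\Sigma$; the set of points fixed by $g$ is then open and closed in the connected manifold $X$, hence $g=\id$. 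This is exactly the kind of linearization input that Section \ref{s:linearization} is built to provide, so I would invoke the relevant statement there (the equivariant tubular neighborhood theorem, as in \cite[VI.2]{Bre}) rather than reprove it. With that in hand, the rest is the elementary classification of finite subgroups of $\O(2)$ and $\SO(2)$.
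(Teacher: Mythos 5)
Your proposal is correct and follows essentially the same route as the paper: the paper also defines $G_0$ as the pointwise stabilizer of $\Sigma$, uses the linearization-at-a-fixed-point lemma (Lemma \ref{lemma:lin-fixed-point}, i.e.\ Bredon VI.2) to embed $G_0$ into the orthogonal group of a fiber, and concludes via the classification of finite subgroups of $\O(2)$ and $\SO(2)$. The only cosmetic difference is that the paper fixes a $G$-invariant Riemannian metric at the outset, so that $G_0$ lands directly in $\O(4,\RR)$ in block-diagonal form, whereas you embed $G_0$ into $\GL(N_x)$ and then conjugate into $\O(2)$; the delicate injectivity point you isolate is exactly what Lemma \ref{lemma:lin-fixed-point} supplies.
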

\begin{proof}
Since $G$ acts smoothly on $X$ preserving $\Sigma$, there is a naturally
induced action of $G$ on $N$.
Let $G_0 \leq G$ be the subgroup of elements of $G$ that fix $\Sigma$ pointwise.
Since $G_0$ is normal in $G$ we have an exact sequence:
$$1 \rightarrow G_0 \rightarrow G \rightarrow G_{\Sigma}:=G/G_0 \rightarrow 1$$
Since $G$ preserves $\Sigma$ we obtain an effective action of  $G_{\Sigma}$ on $\Sigma$.
Take a $G$-invariant Riemannian metric on $X$. Let $x\in\Sigma$ be any point, and choose
an orthogonal basis $e_1,\dots,e_4$ of $T_xX$ with $e_1,e_2\in T_x\Sigma$.
The pair $e_3,e_4$ is mapped to a basis of $N_x$ via the projection map
$T_xX\to N_x=T_xX/T_x\Sigma$.
Expressing the linearization of the action of $G_0$ in terms of the basis
$(e_i)$ we obtain a morphism $G_0\to\O(4,\RR)$ which by Lemma \ref{lemma:lin-fixed-point} is injective. The image of any element of $G_0$ is of the form
$$\left(\begin{array}{cc} \Id_2 & 0 \\ 0 & M\end{array}\right)$$
where $\Id_2\in\SO(2,\RR)$ is the identity and $M\in\O(2,\RR)$. This proves (1).
We thus get a monomorphism
$\iota:G_0\hookrightarrow\O(2,\RR)$ and hence $G_0$ being finite is cyclic or dihedral.
If $X$ is oriented and the action of $G$ on $X$ is orientation preserving
then $\det M=1$, so $\iota(G_0)\leq \SO(2,\RR)$ and hence $G_0$ is cyclic.
This finishes the proof of (2).
\end{proof}

Assume for the rest of this section that $X$ is an oriented closed $4$-manifold.

\begin{lemma}
\label{lemma:orientable-normal-bundle}
Suppose that $\Sigma\subset X$ is a connected embedded surface and that
one of the following two assumptions holds true:
\begin{enumerate}
\item there exists a finite cyclic group $G$ with more than two elements acting
smoothly and effectively on $X$ and fixing $\Sigma$ pointwise;
\item there exists a prime $p>2$ and a finite $p$-group $G$ acting smoothly and effectively
on $X$, preserving $\Sigma$ and inducing a noneffective action on $\Sigma$.
\end{enumerate}
Then $\Sigma$ is orientable.
\end{lemma}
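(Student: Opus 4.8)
The plan is to reduce case (2) to case (1) and then, in case (1), to produce a canonical orientation on the normal bundle $N=TX|_\Sigma/T\Sigma$. Since $X$ is oriented, $TX|_\Sigma=T\Sigma\oplus N$ is orientable, so orientability of $N$ forces orientability of $T\Sigma$, i.e. of $\Sigma$.

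For the reduction, suppose we are in case (2). By Lemma \ref{lemma:lin-invariant-surface}(2) there is an exact sequence $1\to G_0\to G\to G_\Sigma\to 1$ in which $G_0$ is the subgroup fixing $\Sigma$ pointwise and $G_\Sigma$ acts effectively on $\Sigma$; the hypothesis that the induced action on $\Sigma$ is noneffective says precisely that $G_0\neq 1$. Being a nontrivial $p$-group with $p>2$, $G_0$ contains an element $g$ of order $p$, and then $C=\langle g\rangle$ is cyclic of order $p>2$, acts smoothly and effectively on $X$ (as a subgroup of $G$), and fixes $\Sigma$ pointwise. Thus $(X,\Sigma,C)$ satisfies the hypotheses of case (1), and it suffices to treat that case.

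So assume $G$ is finite cyclic of order $n>2$, acting smoothly and effectively on $X$ and fixing $\Sigma$ pointwise, and fix a generator $g$. I would first choose a $G$-invariant Riemannian metric on $X$, so that $N$ is identified with $(T\Sigma)^{\perp}\subseteq TX|_\Sigma$ and $G$ acts on $N$ by fibrewise isometric bundle automorphisms (the action of Lemma \ref{lemma:lin-invariant-surface}(1)). Fix $x\in\Sigma\subseteq X^G$. By Lemma \ref{lemma:lin-fixed-point} the linearization at $x$ embeds $G$ into $\GL(T_xX)$; since $g$ fixes $T_x\Sigma$ pointwise this restricts to an embedding $G\hookrightarrow\O(N_x)\cong\O(2,\RR)$. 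The key elementary point is that a finite cyclic subgroup of $\O(2,\RR)$ of order greater than $2$ must lie in $\SO(2,\RR)$, since its generator cannot be a reflection (reflections have order $2$). Hence $g$ acts on $N_x$ as a rotation of order exactly $n>2$, so $g|_{N_x}\neq\pm\Id$; in particular $g|_{N_x}$ has no real eigenvalue.

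Finally I would build the orientation. For $x\in\Sigma$ and any $v\in N_x\setminus\{0\}$, the vectors $v$ and $g|_{N_x}v$ are linearly independent, and since $g|_{N_x}$ is a plane rotation the orientation of $N_x$ determined by the ordered basis $(v,g|_{N_x}v)$ does not depend on $v$; define $o(x)$ to be this orientation. Taking a continuous nowhere-zero local section $v$ of $N$ near any point shows $x\mapsto o(x)$ is locally constant, hence a global orientation of $N$. (Equivalently, $g|_N-(g|_N)^{-1}$ is a nowhere-zero endomorphism of $N$ whose square is a negative function times the identity, so normalizing it yields an almost complex structure on $N$ and exhibits $N$ as a complex line bundle.) Once $N$ is orientable, orientability of $\Sigma$ follows: $w_1(T\Sigma)+w_1(N)=w_1(TX|_\Sigma)=0$ and $w_1(N)=0$, so $w_1(T\Sigma)=0$. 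The only step requiring genuine care is the pointwise analysis of the $G$-action on $N_x$, in particular the claim that $g$ acts there as a rotation different from $\pm\Id$ — which is exactly where the hypotheses $n>2$ in (1) and $p>2$ in (2) are used; the reduction and the globalization of the orientation are routine.
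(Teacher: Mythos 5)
Your proof is correct and follows essentially the same route as the paper: reduce case (2) to case (1) by extracting a cyclic subgroup of odd prime order inside the pointwise stabilizer of $\Sigma$, then use that the generator acts on each normal fibre with no real eigenvalue to orient the normal bundle and conclude via orientability of $TX|_\Sigma$. The only (cosmetic) differences are that the paper orients $N$ by complexifying and identifying $N$ with the eigenbundle $N^+$, whereas you orient it directly via the bases $(v,gv)$, and that your reduction in case (2) bypasses the cyclic-or-dihedral dichotomy of Lemma \ref{lemma:lin-invariant-surface} by simply picking an element of order $p$.
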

\begin{proof}
Let $N$ be the normal bundle of $\Sigma$.
Suppose that assumption (1) holds true, so that $G$ is cyclic.
Let $\gamma$ be a generator of $G$, and let $d=\sharp G>2$ be its order.
Take any point $x\in\Sigma$. The eigenvalues of the action of $\gamma$ on the fiber $N_x$
are primitive $d$-roots of unit, which are not real because $d>2$. Hence they are of the
form $\zeta,\ov{\zeta}=\zeta^{-1}$. These eigenvalues are independent of $x$ because $\Sigma$ is connected.
Let $N_\mathbb{C} = N\otimes \mathbb{C}$ and define
$$N^\pm = \{ w \in N_\mathbb{C} \mid \gamma\cdot w = \zeta^{\pm 1}w\}.$$
Then $N_\mathbb{C} = N^+ \oplus N^-$
and $N^+$ and $N^-$ are complex line bundles preserved by the action of $G$ on $N_{\CC}$.
Composing the inclusion $N\hookrightarrow N_{\CC}$, $v\mapsto v\otimes 1$, with the
projection $N_{\CC}\to N^+$ we obtain an isomorphism of real vector bundles
$N\to N^+$ which can be used to transport the complex structure on $N^+$ to $N$. Hence $N$ is orientable and since $TX|_{\Sigma}$ is also orientable, we conclude that
$T\Sigma\simeq TX|_{\Sigma}/N$ is orientable as well.

Now suppose that assumption (2) holds true.
Let $G_0$ be the normal subgroup of $G$ consisting of the elements of $G$ fixing $\Sigma$ pointwise.
By Lemma \ref{lemma:lin-invariant-surface}, $G_0$ acts effectively on $N$ by bundle automorphisms and is cyclic or dyhedral.
The action of $G$ on $\Sigma$ is not effective, so $G_0$ is a nontrivial $p$-group with $p>2$ and hence it
has more than two elements and cannot be dihedral, so it is cyclic. Applying the case (1) to the action of
$G_0$ we conclude that $\Sigma$ is orientable.
\end{proof}

\begin{lemma}
\label{lemma:non-orientable}
Let $\Sigma\subset X$ be a connected non-orientable embedded surface.
There exists a constant $C>0$, depending only on $X$ and the genus of $\Sigma$,
such that any finite group $G$ acting smoothly and in a CTE way on $X$ and preserving $\Sigma$
has an abelian subgroup $A \leq G$ satisfying $[G:A]<C$.
\end{lemma}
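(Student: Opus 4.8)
The plan is to reduce to the situation where $G$ acts effectively on $\Sigma$, and then to bound the order of the kernel of the action on $\Sigma$ together with applying Jordan's theorem for surfaces (dimension $2$). First I would invoke Lemma~\ref{lemma:lin-invariant-surface}(2): since $G$ preserves $\Sigma$ we obtain an exact sequence $1\to G_0\to G\to G_\Sigma\to 1$ where $G_0$ fixes $\Sigma$ pointwise, acts effectively on the normal bundle $N$ of $\Sigma$, and $G_\Sigma$ acts effectively on $\Sigma$. The key point is that $G_0$ must be \emph{small}: by Lemma~\ref{lemma:lin-invariant-surface} $G_0$ is cyclic or dihedral, but if $G_0$ were a large cyclic group (more than two elements) then by case (1) of Lemma~\ref{lemma:orientable-normal-bundle} the surface $\Sigma$ would be orientable, contradicting the hypothesis. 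A dihedral $G_0$ embeds in $\O(2,\RR)$, and its cyclic part is likewise constrained: if that cyclic rotation subgroup had more than two elements we could again apply the case (1) argument of Lemma~\ref{lemma:orientable-normal-bundle} to that rotation subgroup (which fixes $\Sigma$ pointwise and acts on $N$ with non-real eigenvalues), forcing $\Sigma$ orientable. Hence $|G_0|\leq 4$ (at worst the Klein four-group or the dihedral group of order $\leq 4$), in any case bounded by an absolute constant $C_0$.

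Next I would bound $\alpha(G_\Sigma)$. Since $G_\Sigma$ acts effectively on the closed surface $\Sigma$, the diffeomorphism group $\Diff(\Sigma)$ is Jordan by the two-dimensional case (cited in the paper as \cite{M1}); more precisely, there is a constant $C_1$, depending only on the genus of $\Sigma$ (equivalently on the topology of $\Sigma$, since $\Diff(\Sigma)$ has a Jordan constant determined by $\Sigma$), such that $G_\Sigma$ has an abelian subgroup $B\leq G_\Sigma$ with $[G_\Sigma:B]\leq C_1$. Now I would apply Lemma~\ref{lemma:proceedings} with the closed manifold $X$, the bounds $C_0$ on $\sharp G_0$ and $C_1$ on $[G_\Sigma:B]$, and the CTE action of $G$ on $X$: this directly yields an abelian subgroup $A\leq G$ with $[G:A]\leq C$, where $C$ depends only on $X$, $C_0$ and $C_1$ — hence only on $X$ and the genus of $\Sigma$, as required.

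The main obstacle I anticipate is the first step: proving cleanly that $|G_0|$ is bounded by an absolute constant. The subtlety is that $G_0$ is only known a priori to be cyclic or dihedral (of unbounded order), and one must exploit non-orientability of $\Sigma$ to rule out the large cases. The argument is essentially the contrapositive of Lemma~\ref{lemma:orientable-normal-bundle}(1) applied not to $G_0$ itself but to its unique subgroup of rotations (the index-$\leq 2$ cyclic subgroup when $G_0$ is dihedral): that rotation subgroup fixes $\Sigma$ pointwise and, if it has order $>2$, acts on the fibers of $N$ with a pair of complex-conjugate primitive roots of unity, giving a complex line bundle structure on $N$ and hence an orientation of $T\Sigma\simeq TX|_\Sigma/N$ — a contradiction. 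So the rotation subgroup has order $\leq 2$, whence $|G_0|\leq 4$. One small care is needed to confirm that this rotation subgroup is itself honestly fixing $\Sigma$ pointwise (it does, being a subgroup of $G_0$) so that case (1) of Lemma~\ref{lemma:orientable-normal-bundle} applies verbatim. Everything after that is a routine assembly of the cited lemmas.
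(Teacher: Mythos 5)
Your proposal is correct and follows essentially the same route as the paper's proof: the exact sequence $1\to G_0\to G\to G_\Sigma\to 1$ from Lemma~\ref{lemma:lin-invariant-surface}, the bound on $G_0$ via Lemma~\ref{lemma:orientable-normal-bundle}, Jordan for surfaces (Lemma~\ref{lemma:Jordan-superficies}), and finally Lemma~\ref{lemma:proceedings}. The only (harmless) difference is that the paper observes that a CTE action on the closed oriented $X$ is orientation preserving, so $G_0$ is already cyclic and hence has at most two elements, whereas you retain the dihedral possibility and bound $\sharp G_0\leq 4$ by applying the same orientability argument to its rotation subgroup.
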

\begin{proof}
By Lemma \ref{lemma:lin-invariant-surface}, $G$ sits in an exact sequence $1 \rightarrow G_0 \rightarrow G \rightarrow G_\Sigma \rightarrow 1$, where $G_0$ fixes $\Sigma$ pointwise and acts effectively on the fibers of the normal bundle $N$, and $G_{\Sigma}$ acts effectively on $\Sigma$.
Since $G$ acts on $X$ preserving the orientation, $G_0$ is cyclic. By Lemma \ref{lemma:orientable-normal-bundle}, $G_0$ has at most $2$ elements, for otherwise $\Sigma$ would be orientable.  By Lemma \ref{lemma:Jordan-superficies} below, there is an abelian subgroup $B\leq G_\Sigma$ satisfying $[G_{\Sigma}:B]\leq C_0$, where $C_0$ depends only on the genus of $\Sigma$. According to Lemma \ref{lemma:proceedings} this implies the existence of
an abelian subgroup $A\leq G$ satisfying $[G:A]\leq C$ for some constant $C$ depending only on $C_0$
and $X$.
\end{proof}

\section{Surfaces and line bundles}
\label{s:surfaces-line-bundles}

\begin{lemma}
\label{lemma:Jordan-superficies}
Let $\Sigma$ be a closed connected surface. 
There is a constant $C$, depending only on the genus of $\Sigma$, such that any finite subgroup
$G<\Diff(\Sigma)$ has an abelian subgroup $A\leq G$ satisfying $[G:A]\leq C$.
\end{lemma}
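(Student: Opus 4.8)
The statement is the classical fact that the mapping class group action on a surface makes $\Diff(\Sigma)$ a Jordan group, with a Jordan constant depending only on the genus; so the plan is to split according to the sign of the Euler characteristic $\chi(\Sigma)$ and invoke standard structure theory. The three cases are $\chi(\Sigma)>0$ (the sphere $S^2$ and the projective plane $\RP^2$), $\chi(\Sigma)=0$ (the torus $T^2$ and the Klein bottle), and $\chi(\Sigma)<0$ (all remaining surfaces). In each case I reduce to a question about finite subgroups of a Lie group and then apply Theorem~\ref{thm:classical-Jordan} of Jordan.

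\emph{Cases $\chi(\Sigma)\geq 0$.} For $S^2$ one uses that every finite subgroup of $\Diff(S^2)$ is conjugate to a subgroup of $\O(3)$ (a finite group action on $S^2$ is smoothable-equivalent to an orthogonal one, e.g. by averaging a metric and using uniformization, or by the classical classification of finite subgroups of $\Diff^+(S^2)$), hence Jordan with a universal constant by Theorem~\ref{thm:classical-Jordan}; for $\RP^2$, lift to the double cover $S^2$ and use Lemma~\ref{lemma:covering-lemma} (or argue directly that $\Diff(\RP^2)\simeq \O(3)/\{\pm 1\}$ up to homotopy). For $T^2$, a finite group $G<\Diff(T^2)$ acts on $H_1(T^2)\simeq\ZZ^2$, giving a morphism $G\to\GL(2,\ZZ)$; its kernel acts trivially on homology and (by the classical fact recalled in the introduction for $b_1=0$, or by an elementary argument on $T^2$ using that a homologically trivial finite action on $T^2$ is conjugate into the translation subgroup $T^2$) is abelian, while the image is bounded by Minkowski's theorem (Lemma~\ref{lemma:minkowski} supplies the analogous statement), so $G$ has an abelian subgroup of bounded index. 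The Klein bottle reduces to $T^2$ via its orientation double cover and Lemma~\ref{lemma:covering-lemma}.

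\emph{Case $\chi(\Sigma)<0$.} Here $\Sigma$ (or its orientation double cover, after applying Lemma~\ref{lemma:covering-lemma} in the non-orientable case, reducing to an orientable surface of genus $\geq 2$) carries a hyperbolic metric. By Nielsen realization (or more elementarily: a finite group $G<\Diff(\Sigma)$ fixes a point in Teichm\"uller space by the negative curvature of the Weil--Petersson or Teichm\"uller metric, or one simply averages over $G$ to get a $G$-invariant conformal class and invokes uniformization), $G$ is conjugate to a group of isometries of some hyperbolic metric on $\Sigma$. The isometry group of a closed hyperbolic surface is finite, and by the Hurwitz-type bound its order is at most $84(g-1)$ where $g$ is the genus; hence $\sharp G\leq 84(g-1)$, which gives $[G:\{1\}]\leq 84(g-1)$, a bound depending only on the genus. (Alternatively, a finite group of isometries embeds in the compact Lie group of isometries of the universal cover, hence into $\GL(n,\RR)$ for some fixed $n$, and Theorem~\ref{thm:classical-Jordan} applies; but the Hurwitz bound is cleaner.)

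\emph{Main obstacle.} The only genuinely nontrivial input is the reduction, in each case, from an arbitrary \emph{smooth} finite group action to a \emph{geometric} (orthogonal, isometric, or linear-on-homology) one: for $S^2$ this is the smoothability of finite actions, for $T^2$ it is the rigidity of homologically trivial finite actions, and for $\chi<0$ it is averaging a metric plus uniformization (equivalently Nielsen realization). All three are classical; once one has the geometric model, the Jordan property with a genus-dependent constant is immediate from Theorem~\ref{thm:classical-Jordan} and the Hurwitz bound. I would cite the relevant sources (e.g. \cite{M1} for dimension $2$, where exactly this lemma is proved) rather than reprove these facts, since the present paper treats the surface case as known background.
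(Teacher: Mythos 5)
Your proposal is correct and matches the paper's approach: the paper reduces to orientable $\Sigma$ via Lemma \ref{lemma:covering-lemma} and then simply cites \cite[Theorem 1.3 (1)]{M1}, which is exactly the classical case analysis you spell out (conjugation into $\O(3)$ for $S^2$, translations plus Minkowski for $T^2$, averaging a metric plus uniformization and the Hurwitz bound for $\chi(\Sigma)<0$). One tiny slip: in the torus case your appeal to ``the classical fact recalled in the introduction for $b_1=0$'' is inapplicable since $b_1(T^2)=2$, but the alternative you give there (a homologically trivial finite action on $T^2$ is conjugate into the translation subgroup, hence abelian) is the right argument, so nothing is lost.
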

\begin{proof}
By Lemma \ref{lemma:covering-lemma}
it suffices to consider the
case of orientable $\Sigma$, and this is proved in \cite[Theorem 1.3 (1)]{M1}.
\end{proof}

\begin{lemma}
\label{lemma:Jordan-fixed-point}
Let $\Sigma$ be a closed connected surface satisfying $\chi(\Sigma)\neq 0$. There exists
a constant $C$, depending only on the genus of $\Sigma$, such that for every
finite group $G$ acting smoothly on $\Sigma$ there exists some point
$x\in\Sigma$ such that $[G:G_x]\leq C$.
\end{lemma}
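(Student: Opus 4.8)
Let $\Sigma$ be a closed connected surface with $\chi(\Sigma)\neq 0$. We must find a constant $C$, depending only on the genus of $\Sigma$, such that every finite group $G$ acting smoothly on $\Sigma$ has a point $x\in\Sigma$ with $[G:G_x]\leq C$.

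\medskip

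The plan is to extract a point from the $G$-orbit structure using the equivariant Euler characteristic, exactly the kind of averaging argument that makes $\chi(\Sigma)\neq 0$ do the work. First I would reduce to the orientable case by Lemma~\ref{lemma:covering-lemma}: if $\Sigma$ is nonorientable, pass to the orientation double cover $\widetilde\Sigma\to\Sigma$, which still has nonzero Euler characteristic (it is $2\chi(\Sigma)$) and bounded genus; a finite group acting on $\Sigma$ need not lift to $\widetilde\Sigma$, but the subgroup of index at most $2$ acting through orientation-preserving elements does, and a bound on stabilizers upstairs gives a bound downstairs (losing a factor of $2$ and a further factor from the deck group). So assume $\Sigma$ is orientable, hence $\Sigma\simeq S^2$ since $\chi(\Sigma)>0$ forces genus $0$ (the cases $\chi=0$, i.e.\ $T^2$, are excluded by hypothesis, and $\chi<0$ cannot occur for closed orientable surfaces with $\chi\neq 0$ other than... wait --- actually $\chi(\Sigma)$ can be negative for higher genus, so I would \emph{not} reduce to $S^2$). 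Let me instead keep $\Sigma$ general orientable of genus $g$ with $\chi(\Sigma)=2-2g\neq 0$.

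\medskip

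The main step: average the Euler characteristic over $G$. Since the action is smooth, for each $g\in G$ the fixed set $\Sigma^g$ is a compact submanifold (a finite union of points and circles) with a well-defined Euler characteristic, and the Lefschetz fixed point theorem gives $\chi(\Sigma^g)=L(g)$, the Lefschetz number of $g$ acting on $H^*(\Sigma;\QQ)$. Summing over $g\in G$ and dividing by $\sharp G$ yields
$$\frac{1}{\sharp G}\sum_{g\in G}\chi(\Sigma^g)=\frac{1}{\sharp G}\sum_{g\in G}L(g)=\dim H^*(\Sigma;\QQ)^G\geq \dim H^0(\Sigma;\QQ)^G=1$$
when $g=e$ contributes $\chi(\Sigma)$, but more usefully: by the Burnside/orbit-counting principle applied degreewise, $\frac{1}{\sharp G}\sum_g \operatorname{tr}(g^*|H^i)=\dim (H^i)^G$, so the alternating sum equals $\chi(\Sigma/G)$ when the action is nice, or at least a bounded nonnegative integer. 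The key point I want is simpler and more robust: \emph{there exists $x\in\Sigma$ whose $G$-orbit is short}. For this, consider the quotient orbifold $\Sigma/G$; it has $\chi^{\mathrm{orb}}(\Sigma/G)=\chi(\Sigma)/\sharp G$ by multiplicativity of the orbifold Euler characteristic, but also $\chi^{\mathrm{orb}}(\Sigma/G)=\chi(\Sigma/G)-\sum_{x}(1-1/e_x)$ where the sum runs over the finitely many singular points $x$ of $\Sigma/G$ with isotropy order $e_x$. Rearranging,
$$\sum_{x\text{ singular}}\Bigl(1-\frac{1}{e_x}\Bigr)=\chi(\Sigma/G)-\frac{\chi(\Sigma)}{\sharp G}.$$
The underlying surface $\Sigma/G$ has genus at most $g$, so $\chi(\Sigma/G)\leq 2$, hence the left side is at most $2+|\chi(\Sigma)|$ (using $\sharp G\geq 1$), a bound depending only on $g$. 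If $\sharp G$ is large, then either $\chi(\Sigma/G)\leq 2$ still but the number of singular points is bounded, or... this is where I pin things down: the number of $G$-orbits with nontrivial isotropy is at most $2+|\chi(\Sigma)|$ plus a term --- actually each singular point contributes at least $1/2$ to the sum, so there are at most $2(2+|\chi(\Sigma)|)$ singular orbits. Now take any such singular orbit $G\cdot x$; its length is $\sharp G/e_x$. I need one orbit that is \emph{short}, i.e.\ $e_x$ large, which is the opposite. So instead: the complement of the singular orbits in $\Sigma$ is where $G$ acts freely, and it is an open surface with Euler characteristic $\chi(\Sigma)-\sum(\text{contributions})$ which is $\sharp G\cdot\chi(\Sigma/G)_{\mathrm{reg}}$, a multiple of $\sharp G$.

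\medskip

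Let me state the clean argument I would actually write. The Riemann--Hurwitz-type formula above shows the number $k$ of points in $\Sigma$ with nontrivial $G$-stabilizer that map to distinct points downstairs --- i.e.\ the number of singular \emph{orbits} --- satisfies $k\leq C_0(g)$ for a constant depending only on $g$ (since each contributes $\geq 1/2$ and the total is $\leq 2+|\chi(\Sigma)|$, giving $k\le 4+2|\chi(\Sigma)|=4+2|2-2g|$). If $k\geq 1$, pick such an orbit $G\cdot x$; among the (at most $C_0(g)$) singular orbits there is one, say $G\cdot x_0$, whose isotropy $e_{x_0}$ is \emph{maximal}, and then $e_{x_0}$ could still be small --- so this does not immediately give a short orbit either. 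The resolution: use that $\sum_{x}(1-1/e_x)\geq k/2$ together with $\sum_x(1-1/e_x)\leq 2+|\chi(\Sigma)|$ is about orbits, but what bounds $\max e_x$ below is nothing. I think the correct and honest path is: \emph{reduce to $G$ abelian} first using Lemma~\ref{lemma:Jordan-superficies} (which gives an abelian $B\leq G$ of index $\leq C_1(g)$), then for abelian $G$ acting effectively on a surface of genus $g$ a classical fact bounds $\sharp G$ linearly in $g$ \emph{unless} the action has a fixed point (think of rotations of $S^2$, where $G$ cyclic fixes two points; a finite abelian group acting on $S^2$ is cyclic and fixes the poles). The hard part --- and the crux of the whole lemma --- is precisely this structural input: for $G$ abelian acting smoothly and effectively on a closed orientable surface with $\chi\neq 0$, either $\sharp G$ is bounded by a function of $g$, or $G$ is cyclic and has a fixed point. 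Granting that, if $\sharp G\leq C_2(g)$ take $x$ arbitrary with $G_x=1$ giving $[G:G_x]=\sharp G\leq C_2(g)$; if $G$ fixes a point take that point. Then undo the reduction: the abelian $B\leq G$ fixes some $x_0$ with $[B:B_{x_0}]\leq C_2(g)$, so $G_{x_0}\supseteq B_{x_0}$ and $[G:G_{x_0}]\leq[G:B_{x_0}]=[G:B][B:B_{x_0}]\leq C_1(g)C_2(g)=:C$. I expect the abelian structural fact to be the main obstacle to writing cleanly; in practice I would cite it to \cite{M1} (where Jordan-type results for $\mathrm{Diff}(\Sigma)$ and fixed points of abelian actions on surfaces with $\chi\ne 0$ are developed), or derive it from the orbifold Euler characteristic computation above specialized to $G$ cyclic.
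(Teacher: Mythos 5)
Your final argument reduces everything to a ``structural fact'' --- for $G$ abelian acting smoothly and effectively on a closed orientable surface with $\chi\neq 0$, either $\sharp G$ is bounded in terms of the genus or $G$ is cyclic with a fixed point --- and you then do not prove it, deferring to a vague citation or to the orbifold computation that you yourself showed does not directly yield a short orbit. This is a genuine gap, and moreover the fact is \emph{false} as stated: the group $\ZZ/n\times\langle -\mathrm{Id}\rangle\subset \O(3)$ acting on $S^2$ is abelian, effective, of unbounded order $2n$, non-cyclic for $n$ even, and has empty global fixed-point set because the antipodal map acts freely. (The parenthetical ``a finite abelian group acting on $S^2$ is cyclic and fixes the poles'' is also false: the Klein four-group in $\SO(3)$ has no global fixed point.) The conclusion of the lemma still holds for these examples --- the north pole has stabilizer of index $2$ --- but that is precisely the point: the statement you actually need from the abelian case is the lemma itself restricted to abelian groups, so the reduction via Lemma \ref{lemma:Jordan-superficies} buys nothing until that case is genuinely established. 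The gap is fillable: after passing to the orientation-preserving subgroup $G'\leq G$ (index $\leq 2$), any nontrivial element of finite order has exactly two fixed points on $S^2$ by the Lefschetz formula (the fixed points are isolated and each has index $+1$), an abelian group permutes this two-point set, so a subgroup of index $\leq 2$ fixes a point; but none of this is in your write-up.

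For comparison, the paper's proof avoids the abelian reduction entirely: it reduces to orientable $\Sigma$, disposes of genus $\geq 2$ by the Hurwitz bound $\sharp G\leq 168(g-1)$ (so \emph{any} point works), and for $S^2$ passes to the orientation-preserving subgroup $G'$ of index $\leq 2$, observes that such an action has only isolated fixed points, and invokes \cite[Theorem 1.4]{M4} to produce $x$ with $[G':G_x]\leq C$. Your genus-$\geq 2$ case is implicitly the same (bounded order), and your reduction to the orientable case is sound, but the $S^2$ case --- which is where all the content of the lemma lives --- is not proved in your proposal.
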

\begin{proof}
Again by Lemma \ref{lemma:covering-lemma} it suffices to consider the
case of orientable $\Sigma$. Choose an orientation of $\Sigma$.
If the genus $g$ of $\Sigma$ is $2$ or bigger
then any finite group acting effectively
on $\Sigma$ has at most $168(g-1)$ elements
(see e.g. \cite[Theorem 1.3 (2)]{M1}) and this immediately implies the lemma.
Now suppose that $\Sigma\cong S^2$ and let $G$ be a finite group acting smoothly on
$\Sigma$. The subgroup $G'\leq G$ of elements which act preserving the orientation
satisfies $[G:G']\leq 2$. An orientation preserving action on a surface has
isolated fixed points, so by \cite[Theorem 1.4]{M4} there exists some $x\in\Sigma$
such that $[G':G_x]\leq C$, where $C$ is independent of $G$. This proves the lemma.
\end{proof}

\begin{lemma}
\label{lemma:orientable-2-group}
Let $L$ be an oriented rank $2$ real vector bundle over a manifold $\Sigma$,
and let $G$ be a finite group acting on $L$ by orientation preserving
vector bundle automorphisms, lifting an arbitrary smooth action on $\Sigma$.
Then $L$ admits a $G$-invariant complex structure.
\end{lemma}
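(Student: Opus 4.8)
The plan is to reduce the problem to averaging. Since $L$ is an oriented rank $2$ real vector bundle over $\Sigma$, its structure group reduces to $\SO(2,\RR)\cong\U(1)$, and in fact an oriented rank $2$ real bundle carries an (essentially unique up to homotopy) complex line bundle structure: the point is that $\SO(2,\RR)$ acting on $\RR^2$ is exactly $\U(1)$ acting on $\CC$. So the orientation picks out a preferred "rotation by $90^{\circ}$" endomorphism $J_0$ of $L$ fiberwise, characterized by the property that on each fiber $(v,J_0v)$ is a positively oriented basis and $|J_0v|=|v|$ with respect to some fixed metric. However $J_0$ need not be $G$-invariant, because $G$ acts by bundle automorphisms covering a nontrivial action on $\Sigma$, and these automorphisms, although orientation-preserving, need not be $J_0$-linear. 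The idea is to correct $J_0$ by averaging over $G$.

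First I would choose a $G$-invariant fiberwise metric on $L$: start with any metric and average it over the finite group $G$ (using that $G$ acts on $L$ covering an action on $\Sigma$, the pullback of a metric by $g\in G$ is again a metric, and the average is $G$-invariant). Next, using this invariant metric together with the orientation of $L$, define the fiberwise complex structure $J_0$ as above; concretely, on each fiber $L_x$, $J_0$ is the unique orthogonal endomorphism with $J_0^2=-\Id$ that sends any oriented orthonormal frame to itself rotated by $90^{\circ}$. This $J_0$ is a smooth bundle endomorphism with $J_0^2=-\Id$. Now the crucial observation: each $g\in G$ is orientation-preserving, so the pulled-back complex structure $g^*J_0$ (defined by $(g^*J_0)_x = (g|_{L_x})^{-1}\circ J_0\circ(g|_{L_x})$, interpreting $g$ as a bundle map $L\to L$ over $\sigma_g:\Sigma\to\Sigma$) again satisfies $(g^*J_0)^2=-\Id$, is orthogonal for the invariant metric, and—because $g$ preserves both the orientation and the metric—induces the \emph{same} orientation on each fiber as $J_0$. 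On an oriented $2$-plane with a fixed inner product there is exactly one orthogonal complex structure compatible with the orientation: the $90^{\circ}$ rotation. Hence $g^*J_0=J_0$ for all $g$, i.e.\ $J_0$ is already $G$-invariant, and we are done.

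The main (and essentially only) subtlety to get right is the uniqueness statement: on a $2$-dimensional oriented inner product space there is a unique orthogonal almost complex structure inducing the given orientation. This is elementary—if $J$ is orthogonal with $J^2=-\Id$ on $\RR^2$, then for any unit vector $e_1$, $Je_1$ is a unit vector orthogonal to $e_1$, so $Je_1=\pm e_2$ where $(e_1,e_2)$ is a fixed oriented orthonormal basis, and the sign is determined by whether $(e_1,Je_1)$ is positively oriented—but one must check this does not depend on the choice of $e_1$, which follows because rotations act transitively on unit vectors and commute with $J$ in the relevant sense. Once this linear-algebra fact is in hand, the rest is routine: one should just remark that in dimension $2$ orientation-preserving orthogonal maps are precisely the $J_0$-linear ones, so there is in fact nothing to average—the orientation alone, once a $G$-invariant metric is fixed, produces the $G$-invariant complex structure. (If one prefers to avoid the uniqueness argument, an alternative is to average $J_0$ itself: set $\overline J=\frac{1}{\sharp G}\sum_{g\in G}g^*J_0$; this is $G$-invariant and anticommutes with nothing automatically, so one then has to renormalize using polar decomposition, $J:=\overline J(-\overline J^2)^{-1/2}$, which works because $-\overline J^2$ is positive-definite; but the uniqueness route above is cleaner.)
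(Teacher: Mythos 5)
Your proposal is correct and follows essentially the same route as the paper: average an arbitrary euclidean structure to get a $G$-invariant fiberwise metric, then take the unique orthogonal complex structure compatible with the orientation (the fiberwise $90^{\circ}$ rotation), whose $G$-invariance follows from uniqueness since $G$ preserves both the metric and the orientation. The extra detail you supply on the uniqueness of the compatible orthogonal complex structure on an oriented $2$-plane is exactly the "one checks immediately" step of the paper's proof.
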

\begin{proof}
Choosing an arbitrary euclidean structure on $L$ and averaging over the action of $G$
we obtain a $G$-invariant euclidean structure on $L$. There is a unique vector bundle isomorphism
$I:L\to L$ lifting the identity on $\Sigma$ such that
for any $\lambda\in L$ the two vectors $\lambda,I\lambda$ are perpendicular and $\lambda\wedge I\lambda$ is compatible with the orientation of $L$. One checks immediately that $I$ is a $G$-invariant complex
structure on $L$.
\end{proof}

\begin{lemma}
\label{lemma:commuten-infinitesimalment}
Let $E\to\Sigma$ be a rank $2$ real vector bundle over a connected surface.
Suppose that the total space of $E$ is oriented.
Let $\Aut^+(E)$ be the group of vector bundle automorphisms of $E$, lifting arbitrary
diffeomorphisms of $\Sigma$, and preserving the orientation of $E$.
Let $G<\Aut^+(E)$ be a finite group and suppose that $\alpha\in G$ lifts the trivial action on $\Sigma$.
\begin{enumerate}
\item If $\Sigma$ is not orientable then $\alpha$ commutes with all elements of $G$;
\item if $\Sigma$ is orientable then $\alpha$ commutes with the elements of $G$ that act orientation preservingly on $\Sigma$.
\end{enumerate}
\end{lemma}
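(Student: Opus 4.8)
The plan is to average a $G$-invariant Euclidean metric on $E$ and then analyse $\alpha$ fibrewise. Since $\alpha$ lifts the trivial action on $\Sigma$, it restricts over each $x\in\Sigma$ to a linear automorphism $\alpha_x$ of the fibre $E_x$, and invariance of the metric gives $\alpha_x\in\O(E_x)$. Next I would compute the differential of $\alpha$, regarded as a diffeomorphism of the total space of $E$, at a point $0_x$ of the zero section: with respect to the canonical splitting $T_{0_x}E=T_x\Sigma\oplus E_x$ it equals $\id_{T_x\Sigma}\oplus\alpha_x$, so its determinant is $\det\alpha_x$; since $\alpha\in\Aut^+(E)$ preserves the orientation of the total space this forces $\det\alpha_x>0$, hence $\alpha_x\in\SO(E_x)$. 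Using that $\alpha$ has finite order $N$ (it lies in the finite group $G$), we get $\alpha_x^N=\id$ for all $x$. Thus $x\mapsto\alpha_x$ is a continuous section of the bundle $\SO(E)\to\Sigma$ with fibre $\SO(2)$, associated to the orthonormal frame bundle of $E$ by the conjugation action of $\O(2)$ on $\SO(2)$ --- which is trivial on $\SO(2)$ and is inversion on the other component --- and this section takes its finitely many values among the $N$-th roots of unity. I would also record that, since the total space of $E$ is orientable, $w_1(E)=w_1(T\Sigma)$.

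For statement (1), suppose $\Sigma$ is non-orientable. Then $w_1(E)=w_1(T\Sigma)\neq 0$, so there is a loop along which the monodromy of the bundle $\SO(E)$ is the inversion $\sigma\mapsto\sigma^{-1}$; hence a single-valued section must satisfy $\alpha_x=\alpha_x^{-1}$ at every point, i.e. $\alpha_x\in\{\id_{E_x},-\id_{E_x}\}$. Since the locus $\{v\in\SO(E):v^2=\id\}$ is precisely the disjoint union of the two global sections $\id_E$ and $-\id_E$, and $\Sigma$ is connected, the section $x\mapsto\alpha_x$ must be one of these: $\alpha=\id_E$ or $\alpha=-\id_E$. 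Both are central in the whole automorphism group of $E$, since for any $\beta$ covering some $g\in\Diff(\Sigma)$ one has $\beta\circ(\pm\id_E)=\pm\beta=(\pm\id_E)\circ\beta$. Hence $\alpha$ commutes with all of $G$.

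For statement (2), suppose $\Sigma$ is orientable; fix an orientation of $\Sigma$, which together with the given orientation of the total space of $E$ orients the fibres of $E$ consistently and trivializes $\SO(E)$ as $\Sigma\times\SO(2)$. The section $x\mapsto\alpha_x$ then becomes a continuous map $\Sigma\to\SO(2)$ taking values among the $N$-th roots of unity, hence (by connectedness of $\Sigma$) constant: $\alpha$ is fibrewise the rotation $R_\theta$ by a fixed angle $\theta$. Now take $\beta\in G$ covering an orientation-preserving $g\in\Diff(\Sigma)$. For the $G$-invariant metric $\beta_x\colon E_x\to E_{g(x)}$ is an isometry, and the zero-section derivative computation again, now combined with $\det(dg_x)>0$, shows that $\beta_x$ preserves the chosen fibre orientations. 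The point I would invoke is that an orientation-preserving isometry of oriented Euclidean planes conjugates the rotation $R_\theta$ to the rotation $R_\theta$ --- immediate, since such isometries form a torsor over the abelian group $\SO(2)$. Therefore $\beta_x\circ\alpha_x\circ\beta_x^{-1}=\alpha_{g(x)}$ for every $x$, which is exactly the identity $\alpha\beta=\beta\alpha$ in $\Aut^+(E)$.

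The step I expect to need the most care is the non-orientable case: one must argue precisely that $\SO(E)$ is a genuinely twisted circle bundle over the connected non-orientable base $\Sigma$ and that its only continuous global sections are the two $\ZZ/2$-valued sections $\pm\id_E$. In statement (2) the one subtlety is to see where orientation-preservation of $g$ enters: conjugation of $R_\theta$ by an orientation-reversing isometry yields $R_{-\theta}$, so the restriction to elements acting orientation-preservingly on $\Sigma$ cannot be dropped.
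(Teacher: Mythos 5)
Your proof is correct and follows the same basic strategy as the paper's: average a $G$-invariant metric, observe that $\alpha$ is a fibrewise rotation (the determinant computation at the zero section showing $\alpha_x\in\SO(E_x)$ is exactly the right justification for why "preserving the orientation of the total space" controls the fibrewise behaviour), and use finite order plus connectedness of $\Sigma$ to force rigidity. The difference is one of packaging: in case (2) the paper first installs a $G$-invariant complex structure (its Lemma \ref{lemma:orientable-2-group}) and says $\alpha$ is multiplication by a constant in $\CC^*$, which is literally your "constant rotation $R_\theta$ commutes with orientation-preserving isometries"; in case (1) the paper quotes Lemma \ref{lemma:orientable-normal-bundle} to conclude $\alpha^2=\Id$, whereas you re-derive that fact directly from $w_1(E)=w_1(T\Sigma)\neq 0$ and the inversion monodromy on the bundle of fibrewise rotations. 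Your route is self-contained and buys nothing essentially new, but it is a clean substitute for the eigenvalue/complex-structure argument behind Lemma \ref{lemma:orientable-normal-bundle}. One small imprecision in your closing remarks: it is not true that the only continuous global sections of $\SO(E)\to\Sigma$ are $\pm\Id_E$ (there are plenty of non-constant-angle sections); what is true, and what your argument actually uses, is that the only sections of \emph{finite order} are $\pm\Id_E$, since finite order makes the section locally constant in any local oriented trivialization, and then the orientation-reversing monodromy forces $\alpha_x=\alpha_x^{-1}$.
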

\begin{proof}
The case $\alpha=\Id_E$ being trivial, we may assume that $\alpha\neq\Id_E$.
Let $G_0\leq G$ be the subgroup of elements lifting the identity map on $\Sigma$.
We have $\alpha\in G_0$.
Applying Lemma \ref{lemma:lin-invariant-surface} to the zero section
of $E$ we conclude that $G_0$ is cyclic.

Suppose that $\Sigma$ is not orientable. Then by Lemma \ref{lemma:orientable-normal-bundle} $G_0$ has at most two elements, so
$\alpha$ has order $2$. Since $\alpha\in\Aut^+(E)$, the action of $\alpha$ on the fibers of $E$ is multiplication by $-1$, and this implies that $\alpha$ commutes with all elements of $\Aut^+(E)$.

Now suppose that $\Sigma$ is orientable. Then, since the total space of $E$
is orientable, $E$ is also orientable. Choose an orientation of $E$. We may replace for our purposes the group $G$ by its intersection with the elements of
$\Aut^+(E)$ that act on $\Sigma$ orientation preservingly. These elements preserve
the orientation of $E$ as a vector bundle. By Lemma \ref{lemma:orientable-2-group} there is a $G$-invariant complex structure on $E$, so we can look at $E$ as a complex line bundle.
Since $\alpha$ lifts the identity on $\Sigma$, its action is given by multiplication
by a smooth map $f:\Sigma\to\CC^*$, so $\alpha(\lambda)=f(\pi(\lambda))\lambda$
for every $\lambda\in E$, where $\pi:E\to\Sigma$
is the projection. Since $\alpha$ has finite order, there is some integer $k$ such that $f(x)^k=1$ for every $x\in\Sigma$.
This implies that $f$ is constant because $\Sigma$ is connected, and this implies that $\alpha$
commutes with all elements of $G$.
\end{proof}

\begin{lemma}
\label{lemma:line-bundle-on-torus}
Let $L\to T^2$ be a complex line bundle and let
$\Aut(L)\subset\Diff(L)$  denote the group of line bundle
automorphisms of $L$, lifting arbitrary diffeomorphisms of
$T^2$. Let $G<\Aut(L)$ be a finite subgroup.
\begin{enumerate}
\item There is an abelian subgroup $A\leq G$ satisfying
$[G:A]\leq 12\max\{1,|\deg L|\}$.
\item There is a nilpotent subgroup $N\leq G$ of class at most $2$ such that
$[G:N]\leq 12$ and $[N,N]$ is cyclic and acts trivially on $T^2$.
\end{enumerate}
\end{lemma}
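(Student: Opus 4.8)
The plan is to reduce everything to the structure of a finite group acting on a complex line bundle over the torus, using the exact sequence of Lemma \ref{lemma:lin-invariant-surface} applied to the zero section and the special feature that $\Diff^+(T^2)$ has a very restricted finite subgroup structure. First I would invoke Lemma \ref{lemma:Jordan-fixed-point}'s companion — actually, the cleaner route — let $G_\Sigma \le \Diff(T^2)$ denote the image of $G$ under the projection forgetting the bundle structure, i.e. the action on the base. The key elementary fact about $T^2$ is that every finite subgroup of $\Diff(T^2)$ has an abelian subgroup of index at most $12$; more precisely, for the orientation-preserving part the action on $H_1(T^2)=\ZZ^2$ factors through a finite subgroup of $\GL(2,\ZZ)$, whose finite subgroups have order dividing $12$ (they are cyclic of order $1,2,3,4,6$ or dihedral of order $\le 12$), while the orientation-reversing part contributes at most an extra factor of $2$ — but in fact by the Nielsen realization / flat torus picture one gets exactly the bound $12$ with an \emph{abelian} (rotation) subgroup after passing to the kernel of the $H_1$-action, since a finite group acting trivially on $H_1(T^2)$ and preserving orientation acts by translations and is therefore abelian. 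So there is $G' \le G$ with $[G:G'] \le 12$ and $q(G') \le \Diff(T^2)$ abelian, indeed acting by translations.

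Next I would analyze $G'$ via the exact sequence
$$1 \to G'_0 \to G' \xrightarrow{q} q(G') \to 1,$$
where $G'_0$ is the subgroup acting trivially on the base $T^2$. Since $G < \Aut(L)$ and $L$ carries the complex structure, $G'_0$ acts on $L$ fiberwise, so each $\alpha \in G'_0$ acts by multiplication by a smooth map $T^2 \to \CC^*$ of finite order, hence by a \emph{constant} in the roots of unity (connectedness of $T^2$, exactly as in the proof of Lemma \ref{lemma:commuten-infinitesimalment}(2)). Therefore $G'_0$ is cyclic and central in $G'$: it commutes with all bundle automorphisms lifting \emph{any} diffeomorphism of $T^2$. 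This immediately gives statement (2): set $N = G'$. Then $[G:N] \le 12$; the sequence above exhibits $N$ as a central extension of the abelian group $q(N)$ by the cyclic central $N_0 := G'_0$; a central extension of an abelian group is nilpotent of class at most $2$, with $[N,N] \subseteq Z(N)$; moreover $[N,N]$ maps trivially to the abelian $q(N)$, so $[N,N] \subseteq N_0$, hence $[N,N]$ is cyclic and acts trivially on $T^2$. That is precisely (2).

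For statement (1) I would extract an abelian subgroup from $N$ at the cost of a factor measuring the "size" of the extension, which is where the degree of $L$ enters. The commutator pairing $\Omega: q(N) \times q(N) \to N_0$, $\Omega(\bar a, \bar b) = [a,b]$, is a skew-symmetric bihomomorphism into the cyclic group $N_0$ (well-defined since $N_0$ is central), and $N$ has an abelian subgroup of index equal to the size of a "maximal isotropic-type" quotient — concretely, $\ker(\Omega) := \{\bar a : \Omega(\bar a, \cdot) \equiv 1\}$ pulls back to an abelian $A \le N$ with $[N:A] = [q(N):\ker\Omega]$. The hard part — and the crux of the whole lemma — is bounding $[q(N):\ker\Omega]$ by $\max\{1,|\deg L|\}$ (up to the already-collected factor $12$). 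This is a Chern–Weil / Atiyah-type computation: for the action of a translation subgroup on $L$, the obstruction to simultaneously linearizing (equivalently, the failure of commutativity of lifts) is detected by pairing $c_1(L)$ with the fundamental class of $T^2$, i.e. by $\deg L$. I would make this precise by choosing a $G'$-invariant connection on $L$ (average an arbitrary one), noting that for translations $t_v$ the lift is unique up to the central $N_0$ and the commutator $[t_v, t_w]$ equals $\exp$ of the curvature integrated over the parallelogram spanned by $v,w$, whose total over a fundamental domain is $2\pi i \deg L$; hence $\Omega$ factors through the quotient of $q(N)$ by a subgroup of index dividing $|\deg L|$ (and $\ker\Omega = q(N)$ entirely when $\deg L = 0$). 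Combining, $[G:A] \le [G:N]\,[N:A] \le 12\max\{1,|\deg L|\}$, which is (1).

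I expect the degree estimate in the last paragraph to be the only real obstacle; the rest is the standard "central extension of abelian group" bookkeeping already used repeatedly in Section \ref{s:abelian-groups}. One should double-check the orientation-reversing contributions to $q(N)$ (the argument above implicitly used that after passing to index $12$ the base action is by translations, hence orientation-preserving); if orientation-reversing elements survive one reroutes through Lemma \ref{lemma:commuten-infinitesimalment}(1) or absorbs another bounded factor, but the stated constant $12$ suggests the translation reduction is clean.
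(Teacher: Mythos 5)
Your argument for statement (2) is correct and is essentially the paper's: the paper also passes to a subgroup of index at most $12$ whose image in $\Diff(T^2)$ is abelian (it does this via the orientation\-/preserving subgroup together with \cite[Lemma 2.5]{M5}, rather than via the kernel of the action on $H_1(T^2)$ and the order bound for finite subgroups of $\GL(2,\ZZ)$, but this is the same idea), and then observes that the commutator subgroup lies in the central cyclic group of constant fiberwise scalars, exactly as you do via Lemma \ref{lemma:commuten-infinitesimalment}.

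For statement (1) the paper simply invokes \cite[Proposition 2.10]{M5}, so you are reconstructing that proof, and your reconstruction has a genuine gap precisely at the step you yourself flag as ``the crux.'' First, a bookkeeping error: the pullback to $N$ of the radical $\ker\Omega=\{\bar a:\Omega(\bar a,\cdot)\equiv 1\}$ is indeed abelian, but its index is $[q(N):\ker\Omega]$, and in the extremal (Heisenberg) case this is $(\deg L)^2$, not $|\deg L|$: if $q(N)\simeq(\ZZ/d)^2$ carries the nondegenerate pairing $\Omega((a,b),(c,e))=e^{2\pi\imag d^{-1}(ae-bc)}$ (which is what the curvature computation produces for $d=\deg L$), then $\ker\Omega$ is trivial and your $A$ has index $d^2$ in $N$. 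To get the stated bound one must pull back a \emph{maximal isotropic} subgroup for $\Omega$, whose index is $\sqrt{[q(N):\ker\Omega]}$, not the radical. Second, bounding that index by $|\deg L|$ is not a formal consequence of ``total curvature $=2\pi\imag\deg L$'': one needs the additional observation that the commutator pairing $e^{2\pi\imag d\,\omega(\tilde v,\tilde w)}$ is only well defined on the finite translation group $\Lambda=q(N)$ if $d\cdot\tilde v\in\ZZ^2$ for every lift $\tilde v$ of every $v\in\Lambda$, i.e.\ that $\Lambda$ is forced to lie in the $d$-torsion of $T^2$; only then does a maximal isotropic have index at most $|d|$. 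Your phrase ``$\Omega$ factors through the quotient of $q(N)$ by a subgroup of index dividing $|\deg L|$'' conflates these two points and, as written, would yield the bound $12\max\{1,(\deg L)^2\}$ rather than $12\max\{1,|\deg L|\}$. The idea (degree of $L$ controls the Heisenberg obstruction to abelianizing lifts of translations) is the right one, but the quantitative step is exactly the content of \cite[Proposition 2.10]{M5} and is not established by the sketch.
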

\begin{proof}
Let $G_0\leq G$ be the group of elements acting on $T^2$ preserving the orientation.
We have $[G:G_0]\leq 2$.
Statement (1) follows from applying \cite[Proposition 2.10]{M5} to $G_0$.
Let us prove (2). Let $\rho:\Aut(L)\to\Diff(T^2)$ be defined by restricting to
the zero section.
By \cite[Lemma 2.5]{M5} there is an abelian subgroup $B\leq\rho(G_0)$
satisfying $[\rho(G_0):B]\leq 6$. Hence $N=\rho^{-1}(B)$ satisfies
$[G_0:N]\leq 6$. Now $[N,N]$ acts trivially on $T^2$, so $[[N,N],N]=1$
follows from (2) in Lemma \ref{lemma:commuten-infinitesimalment}.
Since the action of $N$ on
the total space of $L$ preserves the orientation,
Lemma \ref{lemma:lin-invariant-surface} implies that $[N,N]$ is cyclic.
\end{proof}

\begin{lemma}
\label{lemma:line-bundle-Jordan}
Let $\Sigma$ be a closed and connected surface, and let $L \rightarrow \Sigma$ be a complex line bundle.
Let $\Aut(L)\subset\Diff(L)$  denote the group of line bundle
automorphisms of $L$, lifting arbitrary diffeomorphisms of
$\Sigma$. Suppose that at least one of the following two conditions holds true.
\begin{enumerate}
\item $\chi(\Sigma)\neq 0$, or
\item $L$ is trivial.
\end{enumerate}
Then there is a constant $C$, depending only on the genus of $\Sigma$,
such that any finite subgroup $G<\Aut(L)$ has an abelian subgroup
$A\leq G$ satisfying $[G:A]\leq C$.
\end{lemma}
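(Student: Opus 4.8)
The plan is to reduce to the previous lemmas in this section according to the topological type of $\Sigma$, handling the three possible cases ($\chi(\Sigma)>0$, $\chi(\Sigma)<0$, and $\chi(\Sigma)=0$ with $L$ trivial) separately, so that in each case either a strong bound on $\sharp G$ or an already-proven Jordan-type statement applies. First, by Lemma \ref{lemma:covering-lemma} applied to the orientation double cover (and its pullback of $L$), it suffices to treat orientable $\Sigma$; note that triviality of $L$ and the vanishing or sign of $\chi$ are preserved or controlled under passing to this cover, and a line bundle on the oriented cover pulled back from a trivial bundle downstairs is again trivial. So assume $\Sigma$ is a closed orientable surface of genus $g$.

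The main case split is then: if $g\geq 2$, then every finite group acting effectively on $\Sigma$ has order at most $84(2g-2)$ by Hurwitz's theorem, and since $\Aut(L)\to\Diff(\Sigma)$ has kernel consisting of bundle automorphisms lifting the identity — a group which by Lemma \ref{lemma:lin-invariant-surface} (applied to the zero section) is cyclic — we get that $G$ sits in an extension of a group of bounded order by a cyclic group; here I would invoke Lemma \ref{lemma:proceedings} (with $G_1$ the image in $\Diff(\Sigma)$, which has an abelian subgroup of index $\leq C$ by Lemma \ref{lemma:Jordan-superficies}, and $G_0$ the cyclic kernel — but $G_0$ need not have bounded order, so more precisely one argues as in Lemma \ref{lemma:line-bundle-on-torus}(2): the kernel is central after restricting to the orientation-and-rotation-preserving part, using Lemma \ref{lemma:commuten-infinitesimalment}(2), and then Lemma \ref{lemma:rang-del-quocient} together with Theorem \ref{thm:MS} and \cite[Lemma 2.2]{M1} finishes). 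If $g=0$, i.e. $\Sigma\cong S^2$, then $\chi(\Sigma)\neq 0$ and $L$ is some $\oO(n)$; here I would cite the relevant result of \cite{M5} on automorphism groups of line bundles over $S^2$ — the structure is exactly that of a subgroup of (a central extension of) a finite subgroup of $\SO(3)$ acting on $\CP^1$, so Jordan with a universal constant. If $g=1$, i.e. $\Sigma\cong T^2$, then $\chi(\Sigma)=0$ so by hypothesis $L$ is trivial, hence $\deg L=0$, and Lemma \ref{lemma:line-bundle-on-torus}(1) gives an abelian subgroup of index $\leq 12\max\{1,|\deg L|\}=12$.

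The hard part is the genus $\geq 2$ case, because there the kernel of $\Aut(L)\to\Diff(\Sigma)$ is a cyclic group of unbounded order and one cannot simply apply Lemma \ref{lemma:proceedings} as a black box; one must first arrange, at the cost of bounded index, that this cyclic kernel becomes \emph{central} in $G$. This is precisely where Lemma \ref{lemma:commuten-infinitesimalment}(2) enters: after replacing $G$ by the subgroup acting orientation-preservingly on $\Sigma$ (index $\leq 2$) and choosing a $G$-invariant complex structure on $L$ via Lemma \ref{lemma:orientable-2-group}, the kernel elements act fiberwise by constant scalars and hence are central. Once centrality is secured, the quotient by the kernel is a finite subgroup of $\Diff(\Sigma)$, which is Jordan with constant depending only on $g$ by Lemma \ref{lemma:Jordan-superficies}, and then one applies Lemma \ref{lemma:rang-del-quocient} (to bound the number of generators of the relevant abelian quotient using Theorem \ref{thm:MS}) followed by \cite[Lemma 2.2]{M1} to produce the desired abelian subgroup of bounded index in $G$. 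Note the final constant depends only on $g$, since the Hurwitz bound, the constant in Lemma \ref{lemma:Jordan-superficies}, and the Mann--Su number for $\Sigma$ all depend only on $g$.
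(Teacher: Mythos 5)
Your plan is correct in outline but follows a genuinely different route from the paper. The paper's proof of the case $\chi(\Sigma)\neq 0$ is a two-line reduction: since $G$ preserves the zero section, Lemma \ref{lemma:Jordan-fixed-point} produces a point $x\in\Sigma$ with $[G:G_x]$ bounded in terms of the genus, and then Lemma \ref{lemma:linearization-Jordan} applied to the action of $G_x$ on the $4$-dimensional total space of $L$ (where $x$ is a fixed point) gives an abelian subgroup of universally bounded index via classical Jordan. In particular $S^2$ and genus $\geq 2$ are treated uniformly, with no need to analyze the extension $1\to G_0\to G\to G_\Sigma\to 1$. Your approach instead works with that extension throughout; it is more hands-on and arguably more informative about the group structure, but longer. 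The case $\chi(\Sigma)=0$, $L$ trivial is handled identically in both (reduce to $T^2$ by the covering lemma, then quote Lemma \ref{lemma:line-bundle-on-torus}(1)).

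Two loose ends in your write-up. First, in the genus $\geq 2$ case the machinery you cite at the end is the wrong way around: Lemma \ref{lemma:rang-del-quocient} and \cite[Lemma 2.2]{M1} (as used in Lemma \ref{lemma:proceedings}) require the \emph{central subgroup} to have bounded order and the quotient to be abelian with boundedly many generators, whereas in your situation the central cyclic kernel is unbounded and it is the quotient that is bounded (by Hurwitz). Fortunately no machinery is needed there: once the kernel is central and $[G':G_0]\leq 168(g-1)$, the center of $G'$ contains $G_0$ and hence has index at most $168(g-1)$, and it is abelian. Also note that Theorem \ref{thm:MS} cannot be applied directly to $G$ here, since $G$ acts effectively only on the non-closed total space of $L$, not on $\Sigma$. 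Second, the genus $0$ case is only a citation gesture, and \cite{M5} is about $T^2\times S^2$, so it is not the right reference; either use the paper's fixed-point-plus-linearization argument, or complete your extension argument by observing that the orientation-preserving image in $\Diff(S^2)$ has a cyclic subgroup of universally bounded index and that a central extension of a cyclic group by a central cyclic group is abelian. Both gaps are readily repaired, so I would call this a correct alternative proof strategy rather than a flawed one.
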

\begin{proof}
Suppose first that $\chi(\Sigma)\neq 0$
and that $G$ is a finite group acting effectively on $L$ by complex line bundle automorphisms.
Then $G$ preserves the zero section of $L$, which we identify with $\Sigma$,
and hence by Lemma \ref{lemma:Jordan-fixed-point}
there exists some point $x\in\Sigma$ satisfying $[G:G_x]\leq C_0$ for some $C_0$ depending only
on the genus of $\Sigma$. Applying now Lemma \ref{lemma:linearization-Jordan} to the action of
$G_x$ on the total space of $L$ we conclude the proof.

It suffices now to consider the case in which $L$ is trivial and $\chi(\Sigma)=0$.
By Lemma \ref{lemma:covering-lemma} 
we need only consider the case $\Sigma\cong T^2$,
which follows from (1) in Lemma \ref{lemma:line-bundle-on-torus}.
\end{proof}

\section{Non-free actions}
\label{s:non-free-actions}

In this section $p$ denotes a prime number.

The following result is well known,
see for example \cite[Lemma 5.1]{M7}.

\begin{lemma}
\label{lemma:betti-fixed-point-set} Let $G$ be a finite
$p$-group acting on a manifold $X$. Then
$$\sum_{j\geq 0}b_j(X^G;\ZZ/p)\leq \sum_{j\geq 0}b_j(X;\ZZ/p),$$
where $b_j$ denotes the $j$-th Betti number.
\end{lemma}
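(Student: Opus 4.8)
The statement is P.\,A.\ Smith's classical inequality, and the plan is the standard Smith-theoretic argument: first reduce to $G=\ZZ/p$, then exploit the structure of the mod-$p$ chain complex of $X$ as a module over the group ring $\FF_p[\ZZ/p]$.

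\textbf{Reduction to $G=\ZZ/p$.} A nontrivial finite $p$-group has nontrivial center, so $G$ admits a chain of subgroups $1=G_0\vt G_1\vt\cdots\vt G_k=G$ with each $G_i$ normal in $G_{i+1}$ and $G_{i+1}/G_i\cong\ZZ/p$. For each $i$, the set $X^{G_i}$ is a closed smooth submanifold of $X$ (it is totally geodesic for a $G$-invariant metric), it is preserved by $G_{i+1}$ because $G_i\vt G_{i+1}$, the subgroup $G_i$ acts trivially on it, and hence $G_{i+1}/G_i$ acts smoothly on $X^{G_i}$ with $(X^{G_i})^{G_{i+1}/G_i}=X^{G_{i+1}}$. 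Therefore, once the inequality is known for $\ZZ/p$-actions, applying it repeatedly along the chain gives, using $X^G=(X^{G_{k-1}})^{\ZZ/p}$,
$$\sum_j b_j(X^G;\ZZ/p)\le\sum_j b_j(X^{G_{k-1}};\ZZ/p)\le\cdots\le\sum_j b_j(X;\ZZ/p).$$

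\textbf{The case $G=\ZZ/p$.} Fix a generator $T$ and work over $\FF_p$; in $\FF_p[G]$ put $\tau=T-1$ and $\sigma=1+T+\cdots+T^{p-1}$, so that $\tau^p=0$, $\tau\sigma=0$ and $\tau^{p-1}=\sigma$. Choose a $G$-equivariant smooth triangulation of $X$, so that $X^G$ is a subcomplex and $G$ acts freely on the simplices outside it; write $C_*=C_*(X;\FF_p)$, $C_*^G=C_*(X^G;\FF_p)$, and $D_*=C_*/C_*^G$, which is then a \emph{free} $\FF_p[G]$-module in each degree. The rank-one computation $\Ker(\tau|_{\FF_p[G]})=\sigma\,\FF_p[G]=\operatorname{im}(\sigma|_{\FF_p[G]})$ and $\Ker(\sigma|_{\FF_p[G]})=\tau\,\FF_p[G]=\operatorname{im}(\tau|_{\FF_p[G]})$ passes degreewise to $D_*$. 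Introduce the Smith subcomplexes $C_*^{(i)}=\tau^i C_*+C_*^G\subseteq C_*$ for $0\le i\le p-1$, so $C_*^{(0)}=C_*$ and $C_*^{(p-1)}=\sigma C_*+C_*^G$. The module identities produce, for each $i$, short exact sequences of chain complexes relating $C_*^{(i-1)}$, $C_*^{(i)}$, $C_*^{(p-1)}$ and $C_*^G$; taking mod-$p$ homology and counting dimensions in the associated long exact sequences (this is precisely the classical Smith machinery; see Bredon, \emph{Introduction to Compact Transformation Groups}, Ch.\ III, or \cite{M7}) yields the inequality
$$\dim_{\FF_p}H_*(X;\FF_p)\ \ge\ \dim_{\FF_p}H_*(X^G;\FF_p)\ +\ \dim_{\FF_p}H_*(C^{(p-1)}/C^G;\FF_p)$$
after collapsing the intermediate terms $H_*(C^{(i)})$ by iterating over $i=1,\dots,p-1$. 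Dropping the nonnegative second summand gives $\sum_j b_j(X^G;\ZZ/p)\le\sum_j b_j(X;\ZZ/p)$.

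\textbf{Main obstacle.} The only genuinely technical point is the chain-level bookkeeping in the $\ZZ/p$ case: arranging the Smith exact sequences so that the unwanted homology groups telescope away when one iterates in $i$. This is entirely classical, so in the write-up I would simply cite Bredon (or \cite{M7}, as the authors do in the excerpt) rather than reproduce it; the rest — the use of the nontrivial center for the reduction, and the fact that a smooth finite-group action admits an equivariant triangulation with $X^G$ a subcomplex — is routine.
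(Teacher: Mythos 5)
The paper gives no proof of this lemma --- it is stated as well known and simply cites \cite[Lemma 5.1]{M7} --- and your argument is exactly the classical Smith-theoretic proof underlying that reference: reduce to $\ZZ/p$ along a normal series with cyclic quotients, then run the Smith special-homology exact sequences for $\tau=T-1$ and $\sigma=1+T+\cdots+T^{p-1}$. Your sketch is correct (the finiteness of the total mod-$p$ Betti numbers needed for the counting holds in the paper's setting of closed manifolds), and deferring the telescoping bookkeeping to Bredon is consistent with how the paper itself treats the statement.
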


\begin{lemma}
\label{lemma:dos-plans} Let $p$ be an odd prime, and let
$U,V\in\SO(4,\RR)$ be two commuting matrices of order $p$. If
$\Ker(U-1)\neq \Ker(V-1)$ and both kernels are nonzero
then $\Ker(UV-1)=0$.
\end{lemma}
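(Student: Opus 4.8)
The plan is to put $U$ and $V$ into a common block form on $\RR^4$. The first step I would carry out is the auxiliary fact: \emph{if $g\in\SO(4,\RR)$ has order $p$ and $\Ker(g-1)\neq 0$, then $\dim_{\RR}\Ker(g-1)=2$.} This follows by complexifying: since $g^p=1$, the matrix $g$ is diagonalisable over $\CC$ with eigenvalues among the $p$-th roots of unity, the non-real ones occurring in complex-conjugate pairs. Because $p$ is odd, $-1$ is not a $p$-th root of unity, so every eigenvalue other than $1$ occurs together with its (distinct) conjugate; hence $4-m$ is even, where $m$ is the multiplicity of the eigenvalue $1$. Since $m\geq 1$ by hypothesis and $m\neq 4$ (otherwise $g=\Id$, contradicting $\ord(g)=p$), we must have $m=2$.

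Next I would apply this to $U$. Set $W=\Ker(U-1)$ and $W'=W^{\perp}$, so that $\RR^4=W\oplus W'$ with $\dim W=\dim W'=2$ and $U|_{W}=\Id$. Since $V$ commutes with $U$ it preserves $W=\Ker(U-1)$, and being orthogonal it also preserves $W'$; hence $V=(V|_{W})\oplus(V|_{W'})$ with $V|_{W},V|_{W'}$ elements of $\O(2)$ whose order divides the odd number $p$. As every element of $\O(2)\setminus\SO(2)$ is a reflection (of order $2$), both $V|_{W}$ and $V|_{W'}$ lie in $\SO(2)$, i.e.\ are rotations of their respective planes. Therefore $\Ker(V-1)=\Ker(V|_{W}-1)\oplus\Ker(V|_{W'}-1)$, where each summand is either $0$ or the entire plane; since $\Ker(V-1)\neq 0$ and $V\neq\Id$, exactly one of $V|_{W},V|_{W'}$ equals the identity.

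If it were $V|_{W}$, then $\Ker(V-1)=W=\Ker(U-1)$, contradicting the hypothesis; so $V|_{W'}=\Id$ and $V|_{W}$ is a nontrivial rotation. Then $(UV)|_{W}=U|_{W}\,V|_{W}=V|_{W}$ is a nontrivial rotation, hence fixes no nonzero vector, while $(UV)|_{W'}=U|_{W'}\,V|_{W'}=U|_{W'}$ satisfies $\Ker(U|_{W'}-1)=W'\cap\Ker(U-1)=W^{\perp}\cap W=0$; so $\Ker(UV-1)=0$, as wanted. I expect the only point requiring care to be the $\SO(2)$-versus-$\O(2)$ dichotomy: the oddness of $p$ is exactly what prevents $V|_{W}$ or $V|_{W'}$ from being a reflection, and without it the lemma is false (e.g.\ $U=\diag(1,1,-1,-1)$ and $V=\diag(1,-1,1,-1)$ commute, lie in $\SO(4,\RR)$, have order $2$ and distinct nonzero $1$-eigenspaces, yet $UV=\diag(1,-1,-1,1)$ has a nonzero fixed vector).
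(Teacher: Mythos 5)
Your proof is correct and follows essentially the same route as the paper's: both decompose $\RR^4$ as $\Ker(U-1)\oplus\Ker(U-1)^{\perp}$, use the oddness of $p$ to force each restriction to be a rotation (which also gives $\dim\Ker(U-1)=\dim\Ker(V-1)=2$), deduce that $V$ must be trivial on $\Ker(U-1)^{\perp}$ and nontrivial on $\Ker(U-1)$, and conclude that $UV$ acts as a nontrivial rotation on each of the two planes. The only cosmetic difference is that you argue via the block decomposition of $V$ and the $\SO(2)$-versus-$\O(2)$ dichotomy, whereas the paper first classifies the $U$-invariant $2$-planes as $\Ker(U-1)$ and its orthogonal complement.
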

\begin{proof}
Let $A=\Ker(U-1)$ and $B=\Ker(V-1)$. By assumption $A\neq 0\neq B$.
Since $U$ and $V$ have odd nontrivial order we necessarily have $\dim A=\dim B=2$,
and $U$ (resp. $V$) acts as a nontrivial rotation on $A^{\perp}$ (resp. $B^{\perp}$).
One easily checks that the only two dimensional subspaces of $\RR^4$ that are
preserved by $U$ are $A$ and $A^{\perp}$. Since $U$ and $V$ commute, $U$ preserves
$B$, so the only possibilities are $B=A$ (which is ruled out by our assumptions)
or $B=A^{\perp}$ (and consequently $B^{\perp}=A$). Then $UV$ preserves both $A$ and
$A^{\perp}$ and its restriction to each of them is a nontrivial rotation. Hence $UV$ has no nonzero
fixed vector.
\end{proof}

\subsection{The set $W(X,A)$}
\label{ss:def-W(X,A)}

Let $X$ be a closed $4$-manifold and let $A$ be a finite abelian group acting
smoothly and effectively on $X$. Define the following subset of $X$:
$$W(X,A)=\bigcup_{a\in A\setminus\{1\}}X^a.$$
The set $W(X,A)$ will appear several times in our arguments, especially in situations where
no element of $A$ has an isolated fixed point. Assuming this condition, the following
lemma gives what for us will be the most important properties of $W(X,A)$.

\begin{lemma}
\label{lemma:sense-punts-fixos-aillats}
Let $X$ be a closed, connected and oriented $4$-manifold.
There exists a constant $C$ with the following property.
Let $p$ be any prime.
Suppose that $A$ is a finite abelian $p$-group acting in a smooth and CTE way on $X$, and that
there exists no $a\in A$ for which the fixed point set $X^a$ has an isolated point.
Let $W=W(X,A)$. Then
\begin{enumerate}
\item $W\subset X$ is a possibly disconnected embedded closed surface, and each
connected component of $W$ is a connected component of $X^a$ for some $a\in A\setminus\{1\}$,
\item $X^a$ is equal to the union of some connected components of $W$ for each $a\in A\setminus\{1\}$,
\item $\sharp\pi_0(W)\leq C$, and each connected component of $W$ has genus at most $C$.
\end{enumerate}
\end{lemma}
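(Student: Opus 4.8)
The plan is to exploit the hypothesis that no element of $A$ has an isolated fixed point together with the classification of possible local models of the action of a $p$-group near a fixed point, and then apply the Mann--Su theorem (Theorem \ref{thm:MS}) and Lemma \ref{lemma:betti-fixed-point-set} to bound everything in terms of $H^*(X;\ZZ/p)$ uniformly in $p$. First I would observe that for any $a\in A\setminus\{1\}$ the fixed point set $X^a$ is a union of finitely many isolated points and finitely many disjoint embedded closed surfaces (this is the standard fact recalled after Lemma \ref{lemma:minkowski}, using that the action is orientation preserving and $X$ is a $4$-manifold); by hypothesis there are no isolated points, so $X^a$ is an embedded closed surface. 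This already handles part (1) and part (2): $W$ is a finite union of embedded closed surfaces, each connected component of $W$ is a connected component of some $X^a$, and each $X^a$ is a union of connected components of $W$. The only subtlety is to check that distinct surfaces $X^a$, $X^b$ intersect cleanly so that $W$ is again an embedded closed surface; I would handle this by noting that at any point $x\in X^a\cap X^b$ the subgroup generated by $a,b$ acts linearly on $T_xX\cong\RR^4$, and since there is a two-dimensional fixed subspace for both $a$ and $b$, either the fixed subspaces coincide (so the surfaces agree near $x$) or, after possibly passing to suitable elements, one is in the situation of Lemma \ref{lemma:dos-plans}, which would force $x$ to be isolated in some $X^c$, contradicting the hypothesis; hence locally $W$ is a single embedded surface.

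For part (3), the bound on $\sharp\pi_0(W)$ follows immediately from Lemma \ref{lemma:betti-fixed-point-set} once we know $W=X^{A'}$-type sets are controlled: more precisely, pick $p$-subgroups realizing the various components. A cleaner route: each connected component $\Sigma$ of $W$ is a connected component of $X^a$ for some $a$ of order $p$ (replace $a$ by a suitable power), and $X^a=X^{\la a\ra}$; by Lemma \ref{lemma:betti-fixed-point-set} applied to the cyclic group $\la a\ra$ of order $p$ acting on $X$, we get $\sum_j b_j(X^a;\ZZ/p)\leq \sum_j b_j(X;\ZZ/p)=:b$. But a priori there could be many different elements $a$ giving many different fixed surfaces. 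To bound the total number of components I would argue that if $W$ had more than $N$ components for $N$ large (depending only on $b$ and the Mann--Su constant $r$ for $X$), then one can find an elementary abelian subgroup $E\cong(\ZZ/p)^k\leq A$ with $k>r$ by a counting/pigeonhole argument on which components are fixed by which elements, using that the component-wise isotropy data is encoded by a linear-algebra structure over $\ZZ/p$; this contradicts Theorem \ref{thm:MS}. Concretely, for each component $\Sigma_i$ let $H_i=\{a\in A : \Sigma_i\subseteq X^a\}$, a subgroup; since $\Sigma_i$ is a surface fixed pointwise by $H_i$ and the action is effective and orientation-preserving, $H_i$ acts faithfully on the normal bundle $N_i$, hence by Lemma \ref{lemma:lin-invariant-surface}(2) (applied to the zero section, noting $H_i$ acts trivially on $\Sigma_i$) $H_i$ is cyclic. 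The quotients $A/H_i$ then must be "large and varied" if there are many components, and one extracts a big elementary abelian subgroup acting effectively.

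The genus bound is the same circle of ideas: if $\Sigma$ is a component of $W$ with large genus, then $\Sigma$ is fixed pointwise by a nontrivial cyclic $p$-group $H\cong\ZZ/p$, and $\sum_j b_j(\Sigma;\ZZ/p)\leq \sum_j b_j(X^H;\ZZ/p)\leq \sum_j b_j(X;\ZZ/p)=b$ by Lemma \ref{lemma:betti-fixed-point-set}; since $b_1(\Sigma;\ZZ/p)\geq 2g(\Sigma)$ (for $p$ odd; for $p=2$ one still has $b_1(\Sigma;\ZZ/2)\geq g(\Sigma)$ in the nonorientable case), this forces $g(\Sigma)\leq b/2$. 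Thus all three parts are controlled by the single constant $C$ depending only on $\sup_p\sum_j b_j(X;\ZZ/p)$ (which is finite and bounded by $\sum_j b_j(X;\QQ)$ plus torsion contributions, hence depends only on $X$) and the Mann--Su constant $r(X)$.

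The main obstacle I expect is the uniform bound on $\sharp\pi_0(W)$: unlike the genus bound, which is a direct application of the Smith-theory inequality to a single cyclic subgroup, bounding the number of components requires simultaneously controlling infinitely many fixed-surface sets $X^a$ as $a$ ranges over $A\setminus\{1\}$ and $p$ ranges over all primes. The key technical point is to set up the right linear-algebra bookkeeping — essentially, to show that ``many components'' forces a large elementary abelian subgroup acting effectively on $X$, contradicting Mann--Su — and to make the incidence structure between components of $W$ and elements of $A$ transparent enough that this extraction goes through cleanly for all $p$ at once, including the awkward prime $p=2$ where surfaces may be nonorientable and Lemma \ref{lemma:dos-plans} does not apply.
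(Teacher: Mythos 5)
Parts (1) and (2) of your argument, and the genus bound in (3), follow the paper's proof essentially verbatim: no isolated fixed points plus Lemma \ref{lemma:dos-plans} applied to the linearization at a putative intersection point gives the clean decomposition of $W$ into components of the various $X^a$, and Lemma \ref{lemma:betti-fixed-point-set} applied to a single cyclic subgroup $\la a\ra$ bounds the genus. That much is fine.

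The genuine gap is the bound on $\sharp\pi_0(W)$, exactly where you yourself locate the ``main obstacle,'' and the route you propose to close it does not work. Your plan is to show that many components force an elementary abelian subgroup of rank $>r$ acting effectively, contradicting Theorem \ref{thm:MS}. But the components of $W$ are indexed (with multiplicity at most $D$ each, by Lemma \ref{lemma:betti-fixed-point-set}) by the order-$p$ cyclic subgroups of $A$ with nonempty two-dimensional fixed locus, and a rank-$r$ elementary abelian $p$-group already contains $(p^r-1)/(p-1)\sim p^{r-1}$ distinct subgroups of order $p$ --- all inside a group that acts effectively and satisfies Mann--Su. So nothing in your bookkeeping prevents $W$ from having on the order of $Dp^{r-1}$ components, which is unbounded as $p\to\infty$; no larger elementary abelian subgroup is ever produced. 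The paper closes this gap with a different mechanism: since the action of the $p$-torsion $\Gamma\leq A$ is free on $X\setminus W$, a Mayer--Vietoris argument in equivariant cohomology gives $H^6_{\Gamma}(X;\ZZ/p)\simeq H^6_{\Gamma}(W;\ZZ/p)$; the Serre spectral sequence for $X_\Gamma\to B\Gamma$ together with Mann--Su bounds $\dim H^6_\Gamma(X;\ZZ/p)$ above by a constant $C'$ depending only on $X$; and each $\Gamma$-orbit of components of $W$ contributes at least one dimension to $H^6_\Gamma(W;\ZZ/p)$ because its pointwise stabilizer $\Gamma_{i0}$ is nontrivial, so $H^6(B\Gamma_{i0};\ZZ/p)\neq 0$. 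This localization step is the essential input missing from your sketch, and I do not see how to replace it by pigeonhole on stabilizers.

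A smaller point: you correctly flag that Lemma \ref{lemma:dos-plans} is stated only for odd $p$, but then leave the case $p=2$ unaddressed in the clean-intersection step of part (1); whatever resolution you adopt there, it needs to be spelled out rather than deferred to a closing caveat.
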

\begin{proof}
Let $\Gamma\leq A$ be the $p$-torsion and define:
$$F=\{(a,Z)\mid a\in \Gamma\setminus\{1\},\,Z\text{ connected
component of $X^a$}\}.$$
Since for every $a\in A\setminus\{1\}$ there exists some $r=p^s$
such that $a^r$ has order $p$, and clearly $X^a\subseteq X^{a^r}$, we have
\begin{equation}
\label{eq:formula-W}
W=\bigcup_{(a,Z)\in F}Z.
\end{equation}
By assumption, for each $(a,Z)\in F$, $Z$
is a connected and embedded surface in $X$. We claim that
for every two elements $(a,Z),(a',Z')\in F$ either $Z=Z'$ or $Z\cap
Z'=\emptyset$.
Indeed, if $Z\cap Z'\neq\emptyset$ but $Z\neq
Z'$, then $Z\cap Z'$ is a proper submanifold of $Z$. Applying
Lemma \ref{lemma:dos-plans} to the linearisation of the action
of $\la a,a'\ra$ at some point in $Z\cap Z'$ we would then
deduce the existence of $a''\in A$ with an isolated fixed
point, contradicting our assumption. This proves the claim, and the claim
immediately implies (1).

To prove (2) take an arbitrary $a\in A\setminus\{1\}$ and choose
as before some $r$ such that $a^r$ has order $p$. Since all connected
components of $X^a$ have dimension $2$ and the same happens with $X^{a^r}$,
we conclude that $X^a$ is the union of some (maybe all) connected components of
$X^{a^r}$. Combining this with formula (\ref{eq:formula-W}) we deduce statement (2).

We next prove (3).
Let $U\subset X$ be a $\Gamma$-invariant tubular neighborhood of $W$
and let $V = X \setminus W$. Consider the Mayer--Vietoris sequence with $\ZZ/p$ coefficients
applied to the covering $X=U\cup V$:
$$\cdots \rightarrow  H_\Gamma^n(U \cap V;\ZZ/p)
\rightarrow H_\Gamma^{n+1}(X;\ZZ/p)
\rightarrow
\begin{array}{c}
H_\Gamma^{n+1} (U;\ZZ/p) \\
\oplus \\
H_\Gamma^{n+1}(V;\ZZ/p)
\end{array}
\rightarrow H_\Gamma^{n+1} (U \cap V;\ZZ/p) \cdots$$
Since the action of $\Gamma$ on $V$ (hence also on $U \cap V$) is free, we have
$$H_\Gamma^*(V;\ZZ/p) \simeq H^*(V/\Gamma;\ZZ/p)$$
(and similarly for $U \cap V$). Since $V/\Gamma$ and $(U \cap V)/\Gamma$ are $4$-manifolds, for $n>4$ we have $H_\Gamma^n(V;\ZZ/p) = H_\Gamma^n(U\cap V;\ZZ/p) = 0$. Therefore, the above exact sequence gives us isomorphisms
$$H_\Gamma^6(X;\ZZ/p) \simeq H_\Gamma^6(U;\ZZ/p) \simeq H_\Gamma^6(W;\ZZ/p)$$
Considering the Serre spectral sequence for the fibration $X_G \rightarrow X$, we obtain
\begin{equation}
\label{eq:Serre}
\dim H_\Gamma^6(X;\ZZ/p) \leq \sum_{i+j=6}  \dim H^i(X;\ZZ/p) \otimes H^j(\Gamma;\ZZ/p).
\end{equation}
By Theorem \ref{thm:MS} the $p$-rank of $\Gamma$ is bounded above by a constant depending only on
$X$, so it follows from (\ref{eq:Serre}) that
\begin{equation}
\label{eq:C-prima}
\dim H_\Gamma^6(X;\ZZ/p)\leq C',
\end{equation}
where $C'$ only depends on $X$.

Let $D =\max_p  \sum_{j\geq 0}b_j(X;\ZZ/p)$, where $p$ runs over the set of all primes.
If $a,b\in\Gamma$ and $Z$ is a connected component of $X^a$ then $bZ$ is a
connected component of $X^{bab^{-1}}=X^a$. Since, by Lemma \ref{lemma:betti-fixed-point-set},
$X^a$ has at most $D$
connected components, and each connected component of $W$ is a connected
component of $X^a$ for some $a\in\Gamma$,
the orbits of the action of $\Gamma$ on $\pi_0(W)$
have at most $D$ elements.
Let $Z_1,\dots,Z_l$ be a collection of connected components of $W$ such that
$W$ is the disjoint union of the $\Gamma$-orbits of $Z_1,\dots,Z_l$. Then
\begin{equation}
\label{eq:cota-l}
l\geq\frac{\sharp\pi_0(W)}{D}.
\end{equation}
Define for each $i$ the following two subgroups of $\Gamma$:
\begin{align*}
\Gamma_i &=\{a\in\Gamma\mid aZ_i=Z_i\},\\
\Gamma_{i0}&=\{a\in\Gamma\mid ax=x\text{ for every $x\in Z_i$}\}.
\end{align*}
Remark that $\Gamma_{i0}$ is not the trivial group, since by assumption
$Z_i$ is a connected component of $X^a$ for some $a\in\Gamma\setminus\{1\}$.
Fix some model $E\Gamma\to B\Gamma$ for the universal principal $\Gamma$-bundle.
The inclusion $$E\Gamma\times_{\Gamma_i}Z_i\to E\Gamma\times_{\Gamma_i}(\Gamma Z_i)$$ followed
by the projection $E\Gamma\times_{\Gamma_i}(\Gamma Z_i)\to E\Gamma\times_{\Gamma}(\Gamma Z_i)$
gives a homeomorphism
$$E\Gamma\times_{\Gamma_i}Z_i\cong E\Gamma\times_{\Gamma}(\Gamma Z_i),$$
which induces an isomorphism
\begin{equation}
\label{eq:iso-cohom-equiv}
H_{\Gamma_i}^*(Z_i;\ZZ/p)\simeq H_{\Gamma}^*(\Gamma Z_i;\ZZ/p).
\end{equation}
Hence
\begin{equation}
\label{eq:hac-sis}
H^6_{\Gamma}(X;\ZZ/p)\simeq\bigoplus_{i=1}^l H_{\Gamma}^6(\Gamma Z_i;\ZZ/p)\simeq
\bigoplus_{i=1}^l H_{\Gamma_i}^6(Z_i;\ZZ/p).
\end{equation}
The action of $\Gamma_i$ on $Z_i$ descends to an action of $J_i:=\Gamma_i/\Gamma_{i0}$
on $Z_i$. We claim that $J_i$ acts freely on $Z_i$. This is equivalent to the statement that
if an element $b\in\Gamma$ preserves $Z_i$ and fixes some point of $Z_i$ then it necessarily fixes all points of $Z_i$; this is true because $X^b$ is a possibly disconnected embedded surface (without
isolated points, as we are assuming at this point) and because $X^b\cap Z_i\neq \emptyset$ implies
$Z_i\subseteq X^b$ by Lemma \ref{lemma:dos-plans} (see the argument after formula
(\ref{eq:formula-W})).
Now, an argument similar to the one that led to the isomorphism (\ref{eq:iso-cohom-equiv}) combined
with K\"unneth implies
$$H^6_{\Gamma_i}(Z_i;\ZZ/p)\simeq H^6_{\Gamma_{i0}}(Z_i/J_i;\ZZ/p)\simeq
\bigoplus_{u+v=6}H^u(B\Gamma_{i0};\ZZ/p)\otimes H^v(Z_i/J_i;\ZZ/p),$$
where the second term is the equivariant cohomology of the trivial action of $\Gamma_{i0}$
on $Z_i/J_i$. The rightmost term in the previous formula contains the summand
$$H^6(B\Gamma_{i0};\ZZ/p)\otimes H^0(Z_i/J_i;\ZZ/p),$$
which is nonzero because $\Gamma_{i0}$ is not the trivial group, and hence is of the form
$(\ZZ/p)^s$ for some $s>0$.
It then follows from (\ref{eq:hac-sis}) that
$\dim H^6_{\Gamma}(X;\ZZ/p)\geq l$. Using (\ref{eq:C-prima}) we get $C'\geq l$, so
using (\ref{eq:cota-l}) we obtain
$$\sharp\pi_0(W)\leq C'D.$$
Finally, if $Z$ is a connected component of $W$ then by (1) there exists some $a\in A\setminus\{1\}$
such that $Z$ is a connected component of
$X^a$. Then Lemma \ref{lemma:betti-fixed-point-set} implies that
$b_0(Z;\ZZ/p)+b_1(Z;\ZZ/p)+b_2(Z;\ZZ/p)\leq D$, which implies that the
genus of $Z$ is bounded above by a constant depending on $X$.
\end{proof}

\subsection{Normal abelian $p$-subgroups}

\begin{lemma}
\label{lemma:punt-fix-aillat}
Let $X$ be a closed, connected and oriented $4$-manifold.
There exists a constant $C$ with the following property.
Suppose that $G$ is a finite group acting in a smooth and CTE way on $X$,
let $p$ be any prime and let $A\leq G$ be a normal abelian $p$-subgroup.
If there exists some $a\in A$ and an isolated point in $X^a$ then there
is an abelian subgroup $B\leq G$ satisfying $[G:B]\leq C$ and $X^B\neq\emptyset$.
\end{lemma}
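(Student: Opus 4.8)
The plan is to use the normality of $A$ to reduce to a setting where a bounded-index subgroup of $G$ globally fixes a point, and then invoke the linearization argument of Lemma~\ref{lemma:linearization-Jordan}. Throughout, let $D=\max_p\sum_{j\ge 0}b_j(X;\ZZ/p)$ (a finite number depending only on $X$), and let $r$ be the Mann--Su constant for $X$ given by Theorem~\ref{thm:MS}.

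First I would fix an isolated point $x\in X^a$ and pass to the stabilizer $A_x=\{b\in A\mid bx=x\}$, which contains $a$. Every $b\in A$ maps $X^a$ homeomorphically onto $X^{bab^{-1}}=X^a$ (since $A$ is abelian) and preserves the set of isolated points, so the $A$-orbit of $x$ consists of isolated points of $X^a=X^{\langle a\rangle}$; as $\langle a\rangle$ is a $p$-group, Lemma~\ref{lemma:betti-fixed-point-set} bounds the number of connected components of $X^{\langle a\rangle}$ by $D$, whence $[A:A_x]\le D$. Since $A$ acts effectively it is generated by $r$ elements by Theorem~\ref{thm:MS}, and hence so is its subgroup $A_x$ (subgroups of $r$-generated finite abelian groups are $r$-generated, because passing to $p$-torsion can only decrease the rank). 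These two facts are exactly the hypotheses of Lemma~\ref{lemma:quatre-u-dos} with $G_0=A$, so that lemma provides a constant $C_1$, depending only on $r$ and $D$ and hence only on $X$, with $[G:N_G(A_x)]\le C_1$.

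Next I would replace $G$ by $G'=N_G(A_x)$ (absorbing the factor $C_1$ into the final constant). Now $A_x$ is a normal abelian $p$-subgroup of $G'$ which, unlike $A$, actually fixes $x$. Consequently $X^{A_x}$ is $G'$-invariant, because $gX^{A_x}=X^{gA_xg^{-1}}=X^{A_x}$ for every $g\in G'$; it contains $x$; and $x$ is an isolated point of it, since $X^{A_x}\subseteq X^a$. Applying Lemma~\ref{lemma:betti-fixed-point-set} to the $p$-group $A_x$ shows that $X^{A_x}$ has at most $D$ connected components, so its set $I$ of isolated points is a nonempty $G'$-invariant set with $\sharp I\le D$. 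Hence the stabilizer $G'_x$ of $x$ in $G'$ satisfies $[G':G'_x]=\sharp(G'\cdot x)\le D$ and fixes $x$, so by Lemma~\ref{lemma:linearization-Jordan} there is an abelian subgroup $B\le G'_x$ with $[G'_x:B]\le C_0$ (the constant of that lemma) and $x\in X^B$. Multiplying the three indices yields an abelian subgroup $B\le G$ with $X^B\ne\emptyset$ and $[G:B]\le C_1DC_0=:C$, as required.

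The genuine difficulty is locating the right invariant set: the isolated fixed point is attached to the single element $a$, not to all of $A$, so $x$ need not lie in the $G$-invariant set $X^A$, and the obvious alternative $\bigcup_{a'}X^{a'}$ over the $G$-conjugacy class of $a$ has far too many components to stabilize with bounded index. The remedy is the two-step passage, first to $A_x$ and then to $N_G(A_x)$, and it is precisely Lemma~\ref{lemma:quatre-u-dos} that guarantees the second step costs only bounded index. Everything else is routine; the one point requiring care is that all the component counts, and therefore all the indices, are bounded uniformly in the prime $p$ and in the action, which is exactly what Lemma~\ref{lemma:betti-fixed-point-set} and Theorem~\ref{thm:MS} provide.
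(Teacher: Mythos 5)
Your proposal is correct and follows essentially the same route as the paper's proof: bound $[A:A_x]$ by the component count of $X^a$ via Lemma \ref{lemma:betti-fixed-point-set}, pass to $N_G(A_x)$ with bounded index using Mann--Su together with Lemma \ref{lemma:quatre-u-dos}, stabilize an isolated point of $X^{A_x}$ at bounded cost, and finish with Lemma \ref{lemma:linearization-Jordan}.
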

\begin{proof}
Suppose that $a\in A$ and that $X^a$ contains an isolated point. Let
$S\subset X$ be the set of isolated points of $X^a$.
Let $$D =\max_p  \sum_{j\geq 0}b_j(X;\ZZ/p),$$ where $p$ runs over the set of all primes.
Applying Lemma \ref{lemma:betti-fixed-point-set} to
the action of $\la a\ra$ on $X$ we deduce that $\sharp S\leq D$.
Take any point $s\in S$. Since $A$
is abelian, the action of any $a'\in A$ preserves $X^a$ and hence $S$.
Consequently, the stabilizer $A_0$ of $s$ in $A$ satisfies
$[A:A_0]\leq D$.
Let $G_0\leq G$ be the normalizer of $A_0$. Combining Theorem \ref{thm:MS} with
Lemma \ref{lemma:quatre-u-dos} 
we conclude that $[G:G_0]\leq C_1$ for some constant
$C_1$ depending only on $X$.
Applying Lemma
\ref{lemma:betti-fixed-point-set} to the action of $A_0$ on $X$
we deduce that $X^{A_0}$ contains at most $D$ isolated points.
Since $G_0$ normalizes $A_0$, its action on $X$ preserves the
set of isolated fixed points of $A_0$. Hence there is a
subgroup $G_1\leq G_0$ satisfying $[G_0:G_1]\leq  D$ and
preserving (hence, fixing) one of the isolated fixed points of
$A_0$.
By Lemma \ref{lemma:linearization-Jordan},
$G_1$ contains an abelian subgroup $B\leq G_1$ satisfying
$[G_1:B]\leq C_2$, where $C_2$ is a universal constant.
It follows that $[G:B]\leq DC_1C_2$, so we are done.
\end{proof}

\begin{lemma}
\label{lemma:superficie-no-orientable}
Let $X$ be a closed, connected and oriented $4$-manifold.
There exists a constant $C$ with the following property.
Suppose that $G$ is a finite group acting in a smooth and CTE way on $X$,
let $p$ be any prime and let $A\leq G$ be a normal abelian $p$-subgroup. Suppose that:
\begin{enumerate}
\item there is no $a\in A$ such that $X^a$ has an isolated fixed point,
\item there exists some $a\in A$ such that $X^a$ has a connected component
$Z$ which is a nonorientable surface;
\end{enumerate}
then there is an abelian subgroup $B\leq G$ satisfying $[G:B]\leq C$
and an element $b\in B$ such that $X^b$ has $Z$ as one of its connected components.
\end{lemma}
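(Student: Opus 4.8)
The plan is to leverage hypothesis (1) to get tight control over the subgroup of elements fixing $Z$ pointwise — via Lemma \ref{lemma:orientable-normal-bundle} this subgroup must be tiny — and then to combine this with Lemma \ref{lemma:non-orientable} to build the desired abelian subgroup.

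\emph{Step 1: locating $Z$ and bounding its stabiliser.} Since no $a\in A$ has an isolated fixed point, Lemma \ref{lemma:sense-punts-fixos-aillats} applies to $W=W(X,A)$: the number $\sharp\pi_0(W)$ is bounded by a constant $C_1$ depending only on $X$, every connected component of $W$ has genus at most $C_1$, and by parts (1)--(2) of that lemma every connected component of $X^a$ (for $a\in A\setminus\{1\}$) is a connected component of $W$. In particular the surface $Z$ of hypothesis (2) is a connected component of $W$, hence of genus at most $C_1$. Because $A\trianglelefteq G$, for every $g\in G$ and $a\in A\setminus\{1\}$ we have $gX^a=X^{gag^{-1}}$ with $gag^{-1}\in A\setminus\{1\}$, so $g$ preserves $W$; thus $G$ permutes $\pi_0(W)$, and the setwise stabiliser $G_Z=\{g\in G\mid gZ=Z\}$ satisfies $[G:G_Z]\leq\sharp\pi_0(W)\leq C_1$. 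Note that $G_Z$ inherits a smooth CTE action on $X$ preserving $Z$.

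\emph{Step 2: the elements fixing $Z$ pointwise.} Let $G_{Z,0}\leq G_Z$ be the (normal) subgroup of elements fixing $Z$ pointwise. As the $G$-action is CTE it preserves the orientation of $X$, so Lemma \ref{lemma:lin-invariant-surface}(2) shows $G_{Z,0}$ is cyclic. If $\sharp G_{Z,0}>2$, then Lemma \ref{lemma:orientable-normal-bundle}(1), applied to the (effective) action of $G_{Z,0}$ on $X$, would force $Z$ to be orientable, contradicting (2); hence $\sharp G_{Z,0}\leq 2$. Put $A_Z=A\cap G_{Z,0}$. By (2) there is $a\in A\setminus\{1\}$ with $Z$ a connected component of $X^a$, so $a$ fixes $Z$ pointwise and $a\in A_Z$; thus $A_Z$ is nontrivial, and since $A_Z\leq G_{Z,0}$ we get $A_Z=\langle b\rangle\cong\ZZ/2$ for some $b\neq 1$. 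A one-line computation using $A\trianglelefteq G$ (for $g\in G_Z$, $g^{-1}Z=Z$, so $gag^{-1}$ fixes $Z$ pointwise and lies in $A$) shows $A_Z\trianglelefteq G_Z$; being normal of order $2$, $A_Z$ is central in $G_Z$, so $b$ commutes with every element of $G_Z$.

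\emph{Step 3: assembling the abelian subgroup.} Apply Lemma \ref{lemma:non-orientable} to the smooth CTE action of $G_Z$ on $X$ preserving the nonorientable surface $Z$ of genus at most $C_1$: this yields an abelian subgroup $A'\leq G_Z$ with $[G_Z:A']\leq C_2$, where $C_2$ depends only on $X$ (taking the maximum over the finitely many possible genera $\leq C_1$). Set $B=A'\langle b\rangle$. Since $b$ is central in $G_Z$ and $A'\leq G_Z$, the subgroup $B$ is abelian, and $[G:B]\leq[G:G_Z]\,[G_Z:A']\leq C_1C_2=:C$, which depends only on $X$. Finally $b\in B$, $b\neq 1$, and $b$ fixes $Z$ pointwise; since the action is effective, $X^b\neq X$, so $X^b$ is a disjoint union of isolated points and embedded closed surfaces, and the connected closed $2$-dimensional submanifold $Z\subseteq X^b$ must be one of those surfaces, i.e. a connected component of $X^b$. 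So $B$ and $b$ have the required properties.

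The crux — the only step that is not bookkeeping — is Step 2: hypothesis (1) is precisely what allows Lemma \ref{lemma:orientable-normal-bundle} to force the pointwise stabiliser $G_{Z,0}$, and hence $A_Z$, to have order at most $2$. Without this the kernel $G_{Z,0}$ could be an arbitrarily large cyclic group and neither Lemma \ref{lemma:non-orientable} nor Lemma \ref{lemma:proceedings} would be available. The secondary points to verify carefully are the normality (hence centrality) of $A_Z$ in $G_Z$ and the final claim that $Z$ is an entire connected component of $X^b$ rather than a proper subset of one.
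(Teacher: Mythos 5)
Your proof is correct and follows essentially the same route as the paper's: locate $Z$ as a connected component of $W(X,A)$ via Lemma \ref{lemma:sense-punts-fixos-aillats}, pass to the stabiliser of $Z$ (index at most $\sharp\pi_0(W)$), apply Lemma \ref{lemma:non-orientable} there, and adjoin the element fixing $Z$ pointwise. The only cosmetic difference is that the paper proves abelianness of the enlarged subgroup by citing Lemma \ref{lemma:commuten-infinitesimalment}(1), whereas you inline the same underlying fact (by Lemma \ref{lemma:orientable-normal-bundle} the pointwise stabiliser of a nonorientable component has order at most $2$, so $\langle b\rangle$ is normal of order $2$, hence central); note only that your closing remark misattributes hypothesis (1), which is what licenses Lemma \ref{lemma:sense-punts-fixos-aillats} in Step 1, not the order-$2$ bound in Step 2.
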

\begin{proof}

Let $C_1$ be the constant given by Lemma \ref{lemma:sense-punts-fixos-aillats} and
let $W=W(X,A)$.
Since $A$ is normal in $G$, the action of $G$ on $X$ preserves $W$.
By (2) in Lemma \ref{lemma:sense-punts-fixos-aillats},
$Z$ is a connected component of $W$.
By (3) in Lemma \ref{lemma:sense-punts-fixos-aillats},
$W$ contains at most $C_1$ connected components
and the genus of $Z$ is not bigger than $C_1$.

Let $G_0\leq G$ be the subgroup of elements that preserve $Z$. We have
$[G:G_0]\leq C_1$. By Lemma \ref{lemma:non-orientable}
there is an abelian subgroup
$B_0\leq G_0$  satisfying $[G_0:B_0]\leq C_2$, where $C_2$ depends only on $C_1$ and $X$, hence only $X$.
Let $a\in A$ be an element whose fixed point set contains $Z$ as a connected component and let $B=\la a,B_0\ra$.
Let $N\to Z$ be the normal bundle. There is a natural morphism $B\to\Aut(N)$ which is injective
by (1) in Lemma \ref{lemma:lin-invariant-surface}. Its image is contained in $\Aut^+(N)$,
the automorphisms preserving the orientation of the total space of $N$ (which is orientable
because $X$ is). By (1) in Lemma \ref{lemma:commuten-infinitesimalment},
it follows that $B$ is abelian. Since $B_0\leq B$, we have
$$[G:B]\leq [G:B_0]=[G:G_0][G_0:B_0]\leq C_1C_2,$$
so the proof of the lemma is complete.
\end{proof}

\begin{lemma}
\label{lemma:Jordan-nonfree}
Let $X$ be a closed, connected and oriented $4$-manifold.
There exists a constant $C$ with the following property.
Suppose that $G$ is a finite group acting in a smooth and CTE way on $X$,
let $p$ be any prime and let $A\leq G$ be a normal abelian $p$-subgroup.
If the action of $A$ on $X$ is not free, then at least one of these
statements holds true:
\begin{enumerate}
\item there exists an abelian subgroup $G_0 \leq G$ such that $[G:G_0]\leq C$,
\item there exists an embedded connected orientable surface $Z\subset X$ of
genus not bigger than $C$ preserved by a subgroup $G_0\leq G$ that satisfies
$[G:G_0]\leq C$.
\end{enumerate}
\end{lemma}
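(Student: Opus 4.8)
The plan is to argue by a case distinction on the fixed-point behaviour of the elements of $A$, invoking one of the preceding lemmas in each case; the constant $C$ in the statement will be taken to be the maximum of the finitely many constants that appear along the way (all of which depend only on $X$). Since the action of $A$ on $X$ is not free, there is some $a\in A\setminus\{1\}$ with $X^a\neq\emptyset$; throughout we will use freely that, $A$ being a finite $p$-group acting on an oriented $4$-manifold preserving the orientation, each $X^a$ with $X^a\neq X$ is a disjoint union of isolated points and embedded closed surfaces.

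First suppose that some $a\in A$ has an isolated point in $X^a$. Then Lemma \ref{lemma:punt-fix-aillat} directly yields an abelian subgroup $B\leq G$ with $[G:B]$ bounded by a constant depending only on $X$ and with $X^B\neq\emptyset$, which gives conclusion (1). So assume from now on that no $a\in A$ has an isolated fixed point. Then Lemma \ref{lemma:sense-punts-fixos-aillats} applies with $W:=W(X,A)$: the set $W$ is a (possibly disconnected) embedded closed surface, nonempty because $A$ does not act freely, it has at most $C_1$ connected components and each of them has genus at most $C_1$, where $C_1$ depends only on $X$; moreover every connected component of $W$ is a connected component of $X^a$ for some $a\in A\setminus\{1\}$. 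If some connected component $Z$ of $W$ is nonorientable, then by this last property $Z$ is a nonorientable component of $X^a$ for some $a\in A\setminus\{1\}$, so hypotheses (1) and (2) of Lemma \ref{lemma:superficie-no-orientable} are satisfied, and that lemma produces an abelian subgroup $B\leq G$ of index bounded in terms of $X$ alone; this is again conclusion (1).

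It remains to treat the case in which every connected component of $W$ is orientable. Pick any connected component $Z$ of $W$; it is an embedded connected orientable surface of genus at most $C_1$. Because $A$ is normal in $G$, for every $g\in G$ and $a\in A\setminus\{1\}$ we have $g\cdot X^a=X^{gag^{-1}}$ with $gag^{-1}\in A\setminus\{1\}$, hence $g\cdot W=W$; thus $G$ acts on the finite set $\pi_0(W)$, which has at most $C_1$ elements. Letting $G_0\leq G$ be the stabiliser of the component $Z$ under this action, we get $[G:G_0]\leq C_1$ and $G_0$ preserves $Z$, which is exactly conclusion (2). Taking $C$ to be the maximum of $C_1$ and of the constants supplied by Lemmas \ref{lemma:punt-fix-aillat} and \ref{lemma:superficie-no-orientable} completes the argument.

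I do not expect a serious obstacle here: the substantive work has already been carried out in the earlier lemmas, in particular in Lemma \ref{lemma:sense-punts-fixos-aillats} (boundedness of the number and genera of the components of $W$) and in Lemmas \ref{lemma:punt-fix-aillat} and \ref{lemma:superficie-no-orientable}. The only points requiring genuine care are verifying that the case distinction (isolated fixed point / nonorientable component of $W$ / all components of $W$ orientable) is exhaustive, and checking the $G$-invariance of $W$, which is where normality of $A$ in $G$ enters.
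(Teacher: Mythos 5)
Your proposal is correct and follows essentially the same route as the paper's proof: the same three-way case distinction (isolated fixed point, nonorientable component, all components of $W$ orientable), the same appeals to Lemmas \ref{lemma:punt-fix-aillat}, \ref{lemma:superficie-no-orientable} and \ref{lemma:sense-punts-fixos-aillats}, and the same use of normality of $A$ to get $G$-invariance of $W$ and a stabiliser of bounded index. The only cosmetic difference is that you phrase the second case in terms of a nonorientable component of $W$ rather than of some $X^a$, which is equivalent by Lemma \ref{lemma:sense-punts-fixos-aillats}.
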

\begin{proof}
Let $C_1,C_2,C_3$ be the constants given by Lemmas \ref{lemma:sense-punts-fixos-aillats},
\ref{lemma:punt-fix-aillat} and \ref{lemma:superficie-no-orientable} respectively.
Let $C=\max\{C_1,C_2,C_3\}$.

If there exists some $a\in A$ such that $X^a$ has an isolated fixed point then
we can apply Lemma \ref{lemma:punt-fix-aillat} and conclude the existence of
an abelian subgroup $G_0\leq G$ satisfying $[G:G_0]\leq C_2$.
If there is no $a\in A$ such that $X^a$ contains an isolated point, and there
is some $b\in A$ such that $X^b$ has a connected component
which is a nonorientable surface then by Lemma \ref{lemma:superficie-no-orientable}
there is an abelian subgroup $G_0\leq G$ satisfying $[G:G_0]\leq C_3$.

Now suppose that for every $a\in A\setminus\{1\}$ the fixed point set $X^a$ is an
embedded orientable surface.
Then $W:=W(X,A)$
is nonempty because by assumption the action of $A$ on $X$ is not free.
By our assumptions and Lemma \ref{lemma:sense-punts-fixos-aillats}, $W$ is a possibly disconnected embedded
orientable surface, and $W$ has at most $C_1$ connected components. Let $Z\subseteq W$ be any connected component. Since $A$ is a normal subgroup of $G$, the action of $G$ on $X$ preserves $W$.
The subgroup $G_0\leq G$ preserving $Z$ satisfies $[G:G_0]\leq\sharp\pi_0(W)\leq C_1$.
By Lemma \ref{lemma:sense-punts-fixos-aillats} the genus of $Z$ is at most $C_1$.
\end{proof}

\section{Diffeomorphisms normalizing an action of $\ZZ/p$ or $\ZZ/p^2$}
\label{s:diffeomorphisms-normalizing}

The following
basic fact will be used in this section and in several other results on free actions to be proved in Subsection \ref{ss:fixed-points-rotation}: suppose that a finite
group $\Gamma$ acts freely and orientation preservingly on an oriented, closed and connected $4$-manifold $X$; then
the Borel construction $X_\Gamma$ is homotopy equivalent to $X/\Gamma$, which
is an orientable, closed and connected $4$-manifold; consequently,
\begin{equation}
\label{eq:coh-eq-accio-lliure}
H_\Gamma^4(X;A)\simeq A,\qquad H_\Gamma^k(X;A)=0\text{ if $k>4$},
\end{equation}
for every abelian group $A$.

\renewcommand{\b}{\operatorname{b}}

\begin{lemma}
\label{lemma:Gamma-automorphism} Let $X$ be a closed, connected and oriented
$4$-manifold. Let $\Gamma=(\ZZ/p)^r$,
where $r=1$ or $2$. Suppose that $\Gamma$ acts on $X$ in a
smooth and CTE
way, and that there exists an automorphism
$\phi\in\Aut(\Gamma)$ and a diffeomorphism $\psi\in\Diff(X)$
acting trivially on $H^*(X)$ in such a way that the diagram
$$\xymatrix{\Gamma\times X\ar[d]_{\phi\times\psi}\ar[r] & X \ar[d]^{\psi} \\
\Gamma\times X\ar[r] & X,}$$ in which the horizontal arrows are
the maps defining the action of $\Gamma$ on $X$, is commutative.
If the order of $\phi$ is not divisible by $p$ and is bigger
than $4$ then the action of $\Gamma$ on $X$ is not free.
\end{lemma}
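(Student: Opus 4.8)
The plan is to argue by contradiction: assume $\Gamma$ acts freely on $X$ and derive a contradiction from $n:=\ord(\phi)>4$ and $p\nmid n$. I would first dispose of $p=2$ as vacuous: since $r\le 2$, $\Aut(\Gamma)$ is either trivial or $\GL(2,\FF_2)\cong S_3$, and neither has an element of order $>4$. So assume $p$ odd; then $p\nmid n$ makes $\phi$ semisimple over $\FF:=\ov{\FF_p}$ with eigenvalues roots of unity. If $\Gamma$ acts freely, then $Y:=X/\Gamma$ is a closed connected oriented $4$-manifold, the Borel construction $X_\Gamma=E\Gamma\times_\Gamma X$ is homotopy equivalent to $Y$, and by (\ref{eq:coh-eq-accio-lliure}) $H^k_\Gamma(X;\FF)\cong H^k(Y;\FF)=0$ for $k>4$. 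The pair $(\phi,\psi)$ induces (after choosing a $\phi$-twisted-equivariant lift of $B\phi$ to $E\Gamma$) a self-map $\Phi$ of $X_\Gamma$ preserving the Borel fibration $X\to X_\Gamma\to B\Gamma$, covering $B\phi$ and restricting to $\psi$ on the fibre; hence $\Phi^*$ acts on the Serre spectral sequence of this fibration, and since the $\Gamma$-action and $\psi$ are cohomologically trivial, $E_2^{i,j}=H^i(B\Gamma;\FF)\otimes H^j(X;\FF)$ with $\Phi^*$ acting as $(B\phi)^*\otimes\operatorname{id}$.

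Because $\gcd(n,p)=1$, the operator $\Phi^*$ on each page has order dividing $n$ and is diagonalizable over $\FF$, so the whole spectral sequence splits as a direct sum, over the characters $\chi$ of $\la\phi\ra\cong\ZZ/n$, of spectral sequences $(E_r)_\chi\Rightarrow H^*(Y;\FF)_\chi$. Using the standard description of $H^*((\ZZ/p)^r;\FF)$ as an exterior-times-polynomial algebra together with the $\phi$-action on its degree-one part, I would check that every character of $\la\phi\ra$ occurs in $H^i(B\Gamma;\FF)$ for arbitrarily large $i$; hence for each nontrivial $\chi$ the term $(E_2)_\chi$ is nonzero in arbitrarily large total degree, while $(E_\infty)_\chi=H^*(Y;\FF)_\chi$ vanishes above degree $4$. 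So enough differentials $d_r$ ($r\ge 2$) must be nonzero on $(E_r)_\chi$ to annihilate its high-degree part. Here four-dimensionality enters: a nonzero $d_r$ must have $r\le 5$ (the fibre $X$ has cohomology only in degrees $0,\dots,4$), and $d_r$ raises the base-degree by $r$ while preserving $\phi$-weights, so it needs $\chi$ to occur in both $H^i(B\Gamma;\FF)$ and $H^{i+r}(B\Gamma;\FF)$ for some $2\le r\le 5$.

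The point of the hypothesis $n>4$ is to obstruct exactly this. In the rank-$1$ case each $H^i(B\ZZ/p;\FF)$ is one-dimensional and carries the character $\chi^{\lceil i/2\rceil}$ of a primitive generator $\chi$, and $\lceil(i+r)/2\rceil-\lceil i/2\rceil\in\{1,2,3\}$ for $2\le r\le 5$, which is never $\equiv 0\bmod n$ once $n\ge 4$; hence no differential survives on any nontrivial isotypic part, so $(E_\infty)_\chi=(E_2)_\chi$ is nonzero in arbitrarily large degree — the desired contradiction. For $r=2$ I would reduce to this: if $\phi$ leaves invariant a line $L\le\Gamma$ on which it acts with order $\ge4$, then $(L,\phi|_L,\psi)$ satisfies the hypotheses in rank $1$ and $L$ acts freely, so we are done; and $n>4$ forces such an $L$ to exist unless $\phi$ is irreducible over $\FF_p$, or $\phi$ is $\FF_p$-diagonalizable of order exactly $6$ with eigenvalue-orders $\{2,3\}$. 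These two residual rank-$2$ cases must be handled directly, by listing which characters of $\la\phi\ra$ occur in each $H^i(B\Gamma;\FF)$ and checking that the constraint $r\le 5$ still leaves no way for the spectral sequence to kill the nontrivial isotypic components in all high degrees.

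The hard part will be precisely this last step. In the residual rank-$2$ cases some short differentials on nontrivial isotypic parts \emph{are} allowed by weight considerations (e.g. for $n=6$ a suitable character occurs in $H^2$ and in $H^5$), so one cannot merely claim all differentials vanish; instead one has to bound how much of $(E_2)_\chi$ can be destroyed — for instance via the identity $B_\chi(t)\,p_X(t)=P_{Y,\chi}(t)+(1+t)S_\chi(t)$ with $S_\chi$ a nonnegative power series (here $B_\chi$, $p_X$, $P_{Y,\chi}$ are the Poincar\'e series of the $\chi$-part of $H^*(B\Gamma;\FF)$, of $H^*(X;\FF)$, and of $H^*(Y;\FF)_\chi$), combined with Poincar\'e duality on $Y$, which pairs the $\chi$- and $\chi^{-1}$-isotypic parts — and then show these constraints are incompatible as soon as $n>4$. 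Carrying out this bookkeeping is what pins down the exact value "$4$'' in the statement, and is where I expect the real difficulty to lie.
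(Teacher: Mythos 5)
Your $r=1$ argument is essentially the paper's: the differentials commute with $\phi^*\otimes\id$, the weight on $H^i(B\ZZ/p)$ is $\zeta^{\lceil i/2\rceil}$, a differential $d_s$ with $2\le s\le 5$ shifts the exponent by $1$, $2$ or $3$, and $\ord(\zeta)\ge 4$ forces degeneration, contradicting $H^{>4}$ of the free quotient vanishing (the paper phrases the contradiction via $\dim H^4_\Gamma(X;\ZZ/p)\ge 2$, but that is the same mechanism). Your preliminary remarks (the case $p=2$ is vacuous, the reduction to a $\phi$-invariant line when one exists with $\phi$ acting on it with order $\ge 4$) are correct.

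The genuine gap is the $r=2$ case. Your reduction to rank $1$ leaves exactly the two situations you name ($\phi$ irreducible over $\FF_p$, and $\phi$ diagonalizable with eigenvalue orders $\{2,3\}$, $n=6$), and for these you offer only a program — isotypic Poincar\'e series, Poincar\'e duality on $Y$ pairing the $\chi$- and $\chi^{-1}$-parts — together with the admission that "the real difficulty" lies there. That is not a proof; as you yourself observe, weight considerations alone do not kill all the relevant differentials in these cases, so some quantitative bookkeeping is unavoidable, and you have not done it. The paper closes this gap by a different and complete argument that needs no case split: working over $\overline{\ZZ/p}$ (so irreducibility over $\FF_p$ is irrelevant), the weights of $\phi^*$ on $E_2^{4,0}=H^4(B\Gamma;\ZZ/p)$ are $\alpha^2,\alpha\beta,\beta^2,\alpha^2\beta,\alpha\beta^2$, while the only differentials that can reach $E^{4,0}$ are $d_2^{2,1},d_3^{1,2},d_4^{0,3}$, whose sources carry weights in $\{1,\alpha,\beta,\alpha\beta\}$; since $\dim E_\infty^{4,0}\le 1$ and $\dim E_2^{4,0}=5$, at least four of the five weights must coincide with weights in that set, and an elementary lattice computation (two independent relations $\alpha^u\beta^v=\alpha^{u'}\beta^{v'}=1$ with determinant $d$, $0<|d|\le 4$) yields $\alpha^d=\beta^d=1$, hence $(\phi^*)^d$ unipotent and therefore trivial, so $\ord(\phi)\le 4$. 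If you check that argument against your residual case $n=6$, eigenvalues of orders $2$ and $3$, you will find only three of the five weights land in $\{1,\alpha,\beta,\alpha\beta\}$, so $\dim E_\infty^{4,0}\ge 2$ — i.e., the contradiction you were missing is obtained in one line from $E^{4,0}$ alone, without any global Poincar\'e-series analysis.
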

\begin{proof}
The commutative diagram in the statement of the lemma gives the
following commutative diagram involving the Borel construction
of $X$:
$$\xymatrix{X_{\Gamma}\ar[r]\ar[d] & B\Gamma \ar[d] \\
X_{\Gamma}\ar[r] & B\Gamma,}$$ in which the left (resp. right) hand side vertical
arrow is induced by $(\phi,\psi)$ (resp. $\phi$).
The previous diagram implies the existence of an automorphism of the Serre
spectral sequence with
coefficients in $\ZZ/p$ for the fibration $X_{\Gamma}\to B\Gamma$
which is given, at the level of the second
page, by the morphism
$$\phi^*\otimes\psi^*:H^{\sigma}(B\Gamma;\ZZ/p)\otimes H^{\tau}(X;\ZZ/p)
\to H^{\sigma}(B\Gamma;\ZZ/p)\otimes H^{\tau}(X;\ZZ/p).$$
Crucially, $\phi^*\otimes\psi^*$ commutes
with all the differentials of the spectral sequence.
Since by assumption the action of $\Gamma$ on $X$ is CTE, we have $\psi^*=\id$.

Suppose from now on that $\Gamma$ acts freely on $X$.
Denote the Serre spectral sequence for the fibration $X_{\Gamma}\to B\Gamma$ by $\{(E_u^{\sigma,\tau},d_u^{\sigma,\tau})\}$.

We consider separately the cases $r=1$ and $r=2$.

Consider first the case $\Gamma=\ZZ/p$,
and suppose that $\phi$ acts on $\ZZ/p$ as multiplication by some
$\zeta\in (\ZZ/p)^*$. Then $\phi^*$ acts on
$H^1(B\Gamma;\ZZ/p)=\Hom(H_1(B\Gamma),\ZZ/p)$ as multiplication
by $\zeta$. Let $\b:H^*(B\Gamma;\ZZ/p)\to H^{*+1}(B\Gamma;\ZZ/p)$ be
the Bockstein morphism. To compute the action on higher cohomology
groups, note that if $\theta\in H^1(B\Gamma;\ZZ/p)$ is a
generator then $\b(\theta)$ is a generator of
$H^2(B\Gamma;\ZZ/p)$. By the naturality of $\b$, $\phi^*$ acts on $H^2(B\Gamma;\ZZ/p)$ as
multiplication by $\zeta$. More generally, for any natural
number $k$ and any $\epsilon\in\{0,1\}$,
$\theta^{\epsilon}\b(\theta)^k$ is a generator of
$H^{2k+\epsilon}(B\Gamma;\ZZ/p)$, which implies that the action
of $\phi^*$ on $H^n(B\Gamma;\ZZ/p)$ is given by multiplication
by $\zeta^{[(n+1)/2]}$, where $[t]$ denotes the integral
part of $t$.

Now suppose that the order of $\zeta$ is bigger than $4$. Then
in particular the elements
$1,\zeta,\zeta^2,\zeta^3\in (\ZZ/p)^*$ are pairwise
distinct. This implies that the differentials $d_2,d_3,d_4,d_5$
in the spectral sequence are identically zero, because they
commute with $\phi^*\otimes\id$. Since $E_2^{\sigma,\tau}=0$
for every $\tau>4$, the vanishing of $d_2,\dots,d_5$ implies that
the spectral sequence degenerates. In particular
\begin{align*}
\dim H_{\Gamma}^4(X;\ZZ/p) &=\dim E_2^{0,4}+\dim E_2^{1,3}+\dim E_2^{2,2}+\dim E_2^{3,1}+
\dim E_2^{4,0} \\
&=\sum_{j=0}^4 b_j(X;\ZZ/p)\geq 2,
\end{align*}
and this contradicts (\ref{eq:coh-eq-accio-lliure}).

We next consider the case $\Gamma=(\ZZ/p)^2$.
Suppose that $\alpha,\beta$
are the eigenvalues of $\phi^*$ acting on $H^1(B\Gamma;\ZZ/p)$
(in general $\alpha,\beta$ live in an algebraic extension $\overline{\ZZ/p}$ of
the field $\ZZ/p$, which we assume to be fixed for the arguments that follow).
We want to compute the action of $\phi^*$ on
$E_4^{4,0}\simeq H^4(B\Gamma;\ZZ/p)$. Take any basis
$(\theta_1,\theta_2)$ of $H^1(B\Gamma;\ZZ/p)$. Arguing as in
our discussion about $H^*(B\ZZ/p;\ZZ/p)$ and using K\"unneth we
deduce that $(\b(\theta_1),b(\theta_2),\theta_1\theta_2)$ is a
basis of $H^2(B\Gamma;\ZZ/p)$. Hence if we denote
$$W=H^1(B\Gamma;\ZZ/p)$$
then we can identify in a natural way
(in particular, {\it as representations of
$\la\phi^*\ra$}) $$H^2(B\Gamma;\ZZ/p)\simeq
W\otimes\Lambda^2W.$$
Similarly,
$(\theta_1\b(\theta_1),\theta_2\b(\theta_1),\theta_1\b(\theta_2),\theta_2\b(\theta_2))$
is a basis of $H^3(B\Gamma;\ZZ/p)$, hence
$$H^3(B\Gamma;\ZZ/p)\simeq
W\otimes W$$ canonically.
Similar arguments lead to the following natural isomorphism:
$$H^4(B\Gamma;\ZZ/p)\simeq S^2 W\oplus W\otimes
\Lambda^2W.$$
Accordingly, the eigenvalues of the action of
$\phi^*$ on $H^4(B\Gamma;\ZZ/p)$ are given by
\begin{equation}
\label{eq:eigenvalues}
\alpha^2,\,\alpha\beta,\,\beta^2,\,\alpha^2\beta,\,\alpha\beta^2.
\end{equation}
Of course, it may happen that these eigenvalues are not pairwise distinct; in general, the number of times that a given element $\lambda\in\overline{\ZZ/p}$ appears in the list (\ref{eq:eigenvalues}) is equal to
the dimension of $\Ker(\phi^*-\lambda\id_{H^4(B\Gamma;\ZZ/p)})$.

Since $\dim H^4_{\Gamma}(X;\ZZ/p)=1$ we must have $\dim E^{4,0}_{\infty}\leq 1$.
We have $\dim E^{4,0}_2=5$, hence the dimensions of the images of the differentials
$$d_2^{2,1}:E_2^{2,1}\to E_2^{4,0},\quad
d_3^{1,2}:E_3^{1,2}\to E_3^{4,0},\quad
d_4^{0,3}:E_4^{0,3}\to E_4^{4,0}$$
have to add up at least 4. The weights of the action of $\phi$ on
$E_2^{2,1},E_3^{1,2},E_4^{0,3}$ are the same as the weights of the
action on $H^0(B\Gamma;\ZZ/p)\oplus H^1(B\Gamma;\ZZ/p)\oplus H^2(B\Gamma;\ZZ/p)$, namely $1,\alpha,\beta,\alpha\beta$.
It follows that at least $4$ of the elements in (\ref{eq:eigenvalues})
must belong to the set $\{1,\alpha,\beta,\alpha\beta\}$.
Let us reformulate our last statement in algebraic terms.
Define the following subsets of $\ZZ^2$:
$$S=\{(2,0),(1,1),(0,2),(2,1),(1,2)\},
\qquad
T=\{(0,0),(1,0),(0,1),(1,1)\}.$$
We then have:
\begin{enumerate}
\item[($\star$)] there exists a subset $S'\subseteq S$ such
that $S\setminus S'$ contains at most one element, and a map $f=(f_u,f_v):S'\to T\subset\ZZ^2$ with the property that for every $(u,v)\in S$ we have
$\alpha^u\beta^v=\alpha^{f_u(u,v)}\beta^{f_v(u,v)}.$
\end{enumerate}
Let $R=\{f(w)-w\mid w\in S'\}\subset\ZZ^2$.
We claim that $R$ contains two linearly independent elements of $\ZZ^2$. First note that
$R\neq \{0\}$ for otherwise we would have $f(w)=w$ for all $w$, which is not compatible
with ($\star$) because $S\cap T$ contains a unique element. We also cannot have
$R\subset\ZZ w$ for any $w\in\ZZ^2$, because for every $w\in\ZZ^2$
the intersection $S\cap (T+\ZZ w)$ contains at most $3$ elements,
as one readily checks by plotting the elements of $S$ and $T$; hence $R\subset \ZZ w$ would again contradict ($\star$), so the
claim is proved.

Suppose $(u,v),(u',v')\in R$ are linearly independent, so that
$d:=uv'-u'v$ is nonzero. Since $S,T\subset\{0,1,2\}^2$,
we have $u,v,u',v'\in\{0,1,2\}$ and hence $|d|\leq 4$.
The equalities
$\alpha^u\beta^v=\alpha^{u'}\beta^{v'}=1$
imply that
\begin{equation}
\label{eq:vaps-1}
\alpha^d=\alpha^{uv'-u'v}=(\alpha^u\beta^v)^{v'}(\alpha^{u'}\beta^{v'})^{-v}=1=
(\alpha^u\beta^v)^{u'}(\alpha^{u'}\beta^{v'})^{-u}=
\beta^{vu'-v'u}=\beta^d.
\end{equation}
Consequently, the eigenvalues $\alpha^d,\beta^d$ of
$(\phi^*)^d\in\Aut(H^1(B\Gamma;\ZZ/p))$ are equal to one, so
$(\phi^*)^d$ is a unipotent automorphism. The order of a
unipotent automorphism of a vector space over $\ZZ/p$ is
necessarily a power of $p$. Since the order of $(\phi^*)^d$ is
prime to $p$, it follows that $(\phi^*)^d$ is the identity.
Hence $\phi^*$ is an automorphism of $H^1(B\Gamma;\ZZ/p)$ of
order at most $4$. Since there is a natural isomorphism
$H^1(B\Gamma;\ZZ/p)\simeq\Hom(\Gamma,\ZZ/p)$, the fact that
$(\phi^*)^d$ is trivial implies that $\phi^d\in\Aut(\Gamma)$ is
trivial, so the order of $\phi$ is at most $4$.
\end{proof}

\section{Finite groups acting smoothly on 4-Manifolds with $b_2=0$}
\label{s:b-2-zero}

The goal of this section is to prove the following:

\begin{theorem}
\label{thm:b-2-zero} Suppose that $X$ is a closed connected
$4$-manifold satisfying $b_2(X)=0$. Then $\Diff(X)$ is
Jordan.
\end{theorem}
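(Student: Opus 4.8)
The strategy is to follow the route sketched in the introduction: show that $\Diff(X)$ is Jordan by combining the Mann--Su theorem with the theorem of Muchnik--Turull (the main result of \cite{MT}) that reduces the Jordan property to controlling the subfamily $\tT(\gG)$ of groups fitting in an extension $1\to P\to G\to Q\to 1$ with $P,Q$ of prime power order. So the goal splits into two parts: (a) prove that the family $\pP$ of finite $p$-subgroups of $\Diff(X)$ satisfies $\jJ(C,d)$ for some $C$ and $d$; (b) upgrade from $\pP$ to $\tT(\gG)$ using the main result of Section \ref{s:diffeomorphisms-normalizing}.

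\textbf{Reductions.} First I would reduce to the case in which the action is CTE and orientation preserving. By Lemma \ref{lemma:minkowski} one passes to a subgroup of bounded index acting cohomologically trivially, and since $X$ is a closed $4$-manifold with $b_2(X)=0$ we have $\chi(X)=2-2b_1(X)+b_2(X)=2-2b_1(X)$; by Theorem \ref{thm:non-Jordan}(1) we may assume $\chi(X)=0$, hence $b_1(X)=1$ and $H^1(X;\QQ)\simeq\QQ$, so $H^1(X;\ZZ)\simeq\ZZ$ (after killing torsion, or working mod torsion). Next, as explained in the introduction, a CT action of a finite group $G$ fixing a generator of $H^1(X)$ admits an equivariant classifying map $c\colon X\to S^1$ for a character $\rho\colon G\to S^1$, the \emph{rotation morphism}; this should be set up (it is promised in Subsection \ref{ss:rotation}) and then one studies $\Ker\rho$ and $\rho(G)$ separately.

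\textbf{The main step: $p$-groups.} Fix a prime $p$ and a finite $p$-group $G<\Diff(X)$ acting in a CTE way, with rotation morphism $\rho\colon G\to S^1$; note $\rho(G)$ is a finite cyclic $p$-group. The key claim to prove is: \emph{an abelian $p$-group $G$ acting freely on $X$ with $\rho(G)$ trivial contains a cyclic subgroup of bounded index.} When $\rho(G)$ is trivial, the equivariant map $c$ descends to show $c\colon X/G\to S^1$ and the free action means $X_G\simeq X/G$ is a closed oriented $4$-manifold fibering (up to homotopy) over $S^1$; feeding this into the equivariant cohomology / Serre spectral sequence and comparing with the cohomology of $BG$ for $G=(\ZZ/p)^s$ forces $s$ bounded — this is essentially where Lemma \ref{lemma:Gamma-automorphism} machinery and the computation of $H^*(B(\ZZ/p^r)^d;\ZZ/n)$ from the appendix get used. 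For general (non-free) $p$-groups one applies Lemma \ref{lemma:Jordan-nonfree} to a MNAS $A\trianglelefteq G$ (which is normal abelian): either $G$ already has a bounded-index abelian subgroup, or $A$ preserves an embedded orientable surface $Z$ of bounded genus, and then one uses Lemma \ref{lemma:index-MNA} together with the surface results (Lemma \ref{lemma:Jordan-superficies}, Lemma \ref{lemma:line-bundle-Jordan}, Lemma \ref{lemma:commuten-infinitesimalment}) to bound $[G:A]$ and hence get $\jJ(C,d)$ for $\pP$; here $d$ is the Mann--Su constant $r$ from Theorem \ref{thm:MS}.

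\textbf{From $\pP$ to $\Diff(X)$.} Having $\jJ(C,d)$ for $\pP$, the main result of Section \ref{s:diffeomorphisms-normalizing} (the technical lemma about diffeomorphisms normalizing a $\ZZ/p$ or $\ZZ/p^2$ action, i.e. Lemma \ref{lemma:Gamma-automorphism} packaged appropriately) yields $\jJ(C',d)$ for $\tT(\gG)$; then Theorem \ref{thm:TM} (the Muchnik--Turull result) gives $\jJ(C'',d)$ for all of $\gG$, and finally the Mann--Su theorem (in the form that $\jJ(C'',d)$ for all finite subgroups is equivalent to Jordan-ness) concludes that $\Diff(X)$ is Jordan. \textbf{The main obstacle} I expect is the free-action step: proving that a free abelian $p$-group action with trivial rotation morphism has a cyclic subgroup of bounded index. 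This is where one must genuinely exploit that $X$ is a $4$-manifold with $b_1=1$ and $b_2=0$ — the Euler characteristic and signature both vanish, so all the easy obstructions are unavailable, and one is forced into a delicate analysis of the Serre spectral sequence of $X_G\to BG$ using the ring structure of $H^*(BG;\ZZ/p)$ to show the fibration cannot degenerate unless the $p$-rank is small.
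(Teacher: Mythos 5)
Your proposal follows essentially the same route as the paper: reduce via Minkowski and the $\chi\neq 0$ case to CTE actions with $b_1(X)=1$, introduce the rotation morphism, prove that free abelian $p$-group actions with trivial rotation morphism have cyclic subgroups of bounded index via the Serre spectral sequence of $X_G\to BG$ (plus the appendix computation), handle non-free MNAS's with the surface/line-bundle lemmas, and then pass from $\pP$ to $\tT(\gG)$ and invoke the main result of \cite{MT} together with Mann--Su. The one substantive ingredient your outline glosses over is the paper's Lemma \ref{lemma:cyclic-by-cyclic}, which controls how the quotient $G/\Ker\rho$ and the conjugation action on a free cyclic subgroup interact (again via an equivariant spectral-sequence argument), and which is what actually assembles the free case into Lemma \ref{prop:p-groups}; but the overall architecture is the same.
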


Let $X$ be a closed connected $4$-manifold satisfying $b_2(X)=0$. To prove Theorem
\ref{thm:b-2-zero} we only need to consider the case $\chi(X)=0$, for if
$\chi(X)\neq 0$ then $\Diff(X)$ is Jordan by the main result in \cite{M4}.
By Lemma \ref{lemma:covering-lemma} we may also assume that $X$ is orientable, so the Betti
numbers of $X$ are $b_0(X)=b_4(X)=1$ and $b_1(X)=b_3(X)=1$.
Let $T$ be the torsion of $H_1(X)$. By the universal coefficient theorem the torsion of $H^2(X)$ is isomorphic to $T$, and by Poincar\'e duality we have
$H^3(X)\simeq H_1(X)$, so the torsion of $H^3(X)$ is also isomorphic to $T$. Hence we have
\begin{equation}
\label{eq:torsio-cohomologia}
H^0(X)\simeq H^1(X)\simeq H^4(X)\simeq\ZZ,
\quad
H^2(X)\simeq T,
\quad
H^3(X)\simeq \ZZ\oplus T.
\end{equation}
Assuming these conditions, we will prove Theorem \ref{thm:b-2-zero} in
Subsection \ref{ss:proof-thm:b-2-zero} below, after introducing a number of preliminary results.
The manifold $X$ will be fixed in the entire section.

\subsection{Rotation morphism}
\label{ss:rotation}

The following construction is used in \cite{M1}. We explain it
here in slightly more intrinsic terms.
Let $e:\RR\to S^1$ be the map $e(t)=e^{2\pi\imag t}$.
Fix a generator $\theta\in H^1(X)$.

Suppose that $\phi\in\Diff(X)$ has finite order and acts
trivially on $H^1(X)$. By the standard averaging trick, we may
then take a $\phi$-invariant 1-form $\alpha\in\Omega^1(X)$
representing $\theta$. Take any $x\in X$, choose a path
$\gamma:[0,1]\to X$ from $x$ to $\phi(x)$ (which means as usual
that $\gamma(0)=x$ and $\gamma(1)=\phi(x)$) and define
$$\rho(\phi)=e\left(\int_{\gamma}\alpha\right)\in S^1.$$
This is clearly independent of the choice of the path $\gamma$.
It is also independent of the choice of $x$. Indeed, if $y\in
X$ denotes another point we may take a path $\eta$ from $y$ to
$x$ and take, as a path from $y$ to $\phi(y)$, the
concatenation of the paths $\eta$, $\gamma$, and $\phi\circ
\eta_{-1}$, where $\eta_{-1}(t)=\eta(1-t)$. The resulting
integral of $\alpha$ is equal to
$$\int_\eta\alpha+\int_\gamma\alpha+\int_{\phi\circ\eta_{-1}}\alpha=
\int_\eta\alpha+\int_\gamma\alpha-\int_{\phi\circ\eta}\alpha
=\int_\eta\alpha+\int_\gamma\alpha-\int_{\eta}\alpha=\int_\gamma\alpha,$$
where the second inequality follows from the assumption that
$\alpha$ is $\phi$-invariant. Finally, we prove that
$\rho(\phi)$ is independent of the choice of $\alpha$. To see
this, suppose that $\beta$ is another $\phi$-invariant $1$-form
representing $\theta$. Then $\beta=\alpha+df$ for some function
$f$. We claim that $f$ is $\phi$-invariant. Indeed, the fact
that both $\alpha$ and $\beta$ are $\phi$-invariant implies
that $\phi^*df=df$, so $d(\phi^*f-f)=0$ and hence $\phi^*f=f+c$
for some constant $c$. Writing $c=\phi^*f-f$ and evaluating at
a point where $f$ attains its maximum (resp. minimum) we
conclude that $c\leq 0$ (resp. $c\geq 0$), so $c=0$. Now we
have, by Stokes's theorem,
$$\int_\gamma\alpha-\int_\gamma\beta=\int_\gamma
df=f(\phi(x))-f(x)=0.$$

We now prove that if $G$ is a finite group acting smoothly on
$X$ and trivially on $H^1(X)$ then the map
$$\rho:G\to S^1$$
is a morphism of groups. Since $G$ is finite we may take a
$G$-invariant $1$-form $\alpha$ representing $\theta$. Let
$x\in X$ be any point, let $g_1,g_2\in G$, and let $\gamma_1$
(resp. $\gamma_2$) be a path from $x$ to $g_1x$ (resp. from $x$
to $g_2x$). The concatenation $\zeta$ of $\gamma_2$ and
$g_2\gamma_1$ is a path from $x$ to $g_2g_1x$. Hence
$$\int_\zeta\alpha=\int_{\gamma_2}\alpha+\int_{g_2\gamma_1}\alpha=
\int_{\gamma_2}\alpha+\int_{\gamma_1}\alpha,$$ where the second
equality follows from the fact that $\alpha$ is $G$-invariant.
It now follows that
$$\rho(g_2g_1)=\rho(g_2)\rho(g_1).$$

\begin{lemma}
Let a finite group $G$ act smoothly on $X$ and trivially on
$H^1(X)$, and assume that $\rho(G)=1$. Let $\pi:Z\to X$ be the
abelian universal cover of $X$. There exists a smooth action of
$G$ on $Z$ lifting the action on $X$, in the sense that
$\pi(g\cdot z)=g\cdot \pi(z)$ for every $g\in G$ and $z\in Z$.
\end{lemma}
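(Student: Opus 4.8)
The plan is to construct the lifted action explicitly from a $G$-invariant primitive of a $1$-form representing $\theta$. As in the construction of $\rho$, fix a $G$-invariant closed $1$-form $\alpha\in\Omega^1(X)$ with $[\alpha]=\theta\in H^1(X)$. Since $\theta$ generates $H^1(X)\cong\ZZ$, the homomorphism $\pi_1(X)\to\ZZ$ it determines kills $[\pi_1(X),\pi_1(X)]=\pi_1(Z)$, so $\pi^*\theta=0$ in $H^1(Z)$ and we may write $\pi^*\alpha=d\bar h$ for a smooth function $\bar h\colon Z\to\RR$, unique once normalized at a base point $z_0$. Two identities then follow by direct computation: for a deck transformation $\delta\in H_1(X)$ one has $\bar h\circ\delta-\bar h\equiv\la\theta,\delta\ra\in\ZZ$ (so the torsion subgroup $T\le H_1(X)$ acts on $\bar h$ trivially, while $\delta\mapsto\la\theta,\delta\ra$ maps onto $\ZZ$); and, since $\pi_1(Z)$ is characteristic in $\pi_1(X)$, every $g\in G$ admits lifts $\bar g\in\Diff(Z)$, any such lift satisfies $\bar h\circ\bar g-\bar h=\text{const}$ (because $\bar g^*\pi^*\alpha=\pi^*g^*\alpha=\pi^*\alpha$), and that constant reduces mod $\ZZ$ exactly to $\rho(g)$.

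Here the hypothesis $\rho(G)=1$ enters decisively: it forces $\bar h\circ\bar g-\bar h\in\ZZ$ for every lift $\bar g$ of every $g\in G$, and composing with a suitable deck transformation (possible because $\delta\mapsto\la\theta,\delta\ra$ is onto $\ZZ$) yields a lift $\tilde g$ of $g$ with $\bar h\circ\tilde g=\bar h$ exactly. Let $\widehat G\le\Diff(Z)$ be the set of all $\bar h$-preserving diffeomorphisms of $Z$ lifting some element of $G$. If $\bar g_1,\bar g_2\in\widehat G$ lift $g_1,g_2$, then $\bar g_1\bar g_2$ lifts $g_1g_2$ and preserves $\bar h$, so $\widehat G$ is a subgroup; by the above it surjects onto $G$, with kernel the $\bar h$-preserving deck transformations, namely $\{\delta\in H_1(X):\la\theta,\delta\ra=0\}=T$. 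Thus $1\to T\to\widehat G\to G\to 1$ is exact, and when $H_1(X)$ is torsion free this already provides a section $G\cong\widehat G\le\Diff(Z)$, i.e.\ the desired lifted action.

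When $T\ne 0$ it remains to split this extension. The conjugation action of $\widehat G$ on $T$ is the natural action of $G$ on the torsion subgroup $T\le H_1(X)$; for cohomologically trivial $G$-actions — the only ones that matter, after Lemma \ref{lemma:minkowski} — this action is trivial, so the extension is central and its splitting obstruction lies in $H^2(G;T)$, which vanishes in the cases where the lemma is applied (for instance whenever $\sharp G$ is prime to $\sharp T$, which covers the $p$-group applications of Section \ref{s:b-2-zero}). A splitting is precisely a smooth $G$-action on $Z$ lifting the given one.

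The substantive step, and the one using both hypotheses, is the second paragraph: lifting each individual $g$ is automatic, but assembling the lifts into a homomorphism $G\to\Diff(Z)$ is an extension-splitting problem whose free-rank part is obstructed in general and is killed exactly by $\rho(G)=1$ together with the uniqueness of the $\bar h$-preserving lift; the residual finite $T$-part is comparatively soft. The only delicate points are the verification of the two identities for $\bar h$ and the well-definedness of $\widehat G$ as a subgroup.
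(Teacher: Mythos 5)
Your reduction to the central extension $1\to T\to\widehat G\to G\to 1$ is set up correctly, but the proof does not close: the lemma asserts a lifted action for \emph{every} finite $G$ acting smoothly on $X$, trivially on $H^1(X)$ and with $\rho(G)=1$, whereas you only obtain one when this extension splits, and you give no argument for the splitting beyond the remark that the obstruction in $H^2(G;T)$ vanishes ``in the cases where the lemma is applied'', e.g.\ when $\gcd(\sharp G,\sharp T)=1$. That is not a proof of the statement, and the factual claim about the applications is also wrong: the lemma feeds (via Lemma \ref{cor:d-2-vanishes}) into Lemmas \ref{lemma:rank-2-trivial-rotation} and \ref{lemma:cyclic-by-cyclic}, where $G$ is a $p$-group and $p$ is precisely allowed to divide $\sharp T_p$ --- the whole point of the parameter $t$ there is that the $p$-torsion of $H_1(X)$ is nontrivial. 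A second gap: to conclude that the extension is central you need $G$ to act trivially on the torsion subgroup $T\leq H_1(X)$, but the lemma only assumes triviality on $H^1(X)\simeq\ZZ$, which carries no information about the action on $T$; invoking Lemma \ref{lemma:minkowski} imports a hypothesis the statement does not make.

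The paper's proof avoids the extension problem altogether. It models $Z$ as pairs $(x,\gamma)$, with $\gamma$ a path from $x_0$ to $x$, modulo the relation identifying paths with the same endpoint and the same value of $\int\alpha$ for a $G$-invariant representative $\alpha$ of $\theta$; it then chooses for each $g$ a path $\eta_g$ from $x_0$ to $gx_0$ with $\int_{\eta_g}\alpha=0$ (possible exactly because $\rho(g)=1$) and defines $g\cdot[(x,\gamma)]=[(gx,\eta_g*(g\gamma))]$. Since the equivalence class records only $\int\alpha$ and each $\eta_g$ contributes $0$, this formula is a group action on the nose --- there is nothing left to split. In your language, the cover the paper actually constructs is the one on which your function $\bar h$ separates the points of each fiber, i.e.\ the infinite cyclic cover with deck group $\ZZ=H_1(X)/T$, so $T$ never enters. (Read literally, the paper's explicit model of $Z$ is this intermediate cover rather than the full universal abelian cover, which is consistent with the difficulty you ran into: for the honest universal abelian cover the $T$-extension you identify is genuinely present.) To repair your argument you should either descend to this intermediate cover, where your $\widehat G\to G$ is already an isomorphism, or actually prove the splitting of $1\to T\to\widehat G\to G\to 1$, which is not automatic.
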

\begin{proof}
Fix some base point $x_0\in X$. Choose a $1$-form $\alpha$
representing $\theta$. We can identify
$$Z=\{(x,\gamma)\mid x\in X,\,\gamma\text{ path from $x_0$ to
$x$}\}/\sim,$$ where the equivalence relation $\sim$ identifies
$(x,\gamma)$ with $(x',\gamma')$ if and only if $x=x'$ and
$\int_{\gamma}\alpha=\int_{\gamma'}\alpha$. The later equality
is independent of the choice of $\alpha$. Let us assume from
now on that $\alpha$ is $G$-invariant.

Choose, for every $g\in G$, a path $\eta_g$ from $x_0$ to
$g\cdot x_0$ satisfying $$\int_{\eta_g}\alpha=0.$$ This is
possible because $\rho(g)=1$. Define an action of $G$ on $Z$ as
follows. If $[(x,\gamma)]\in Z$ and $g\in G$ then set
$g\cdot[(x,\gamma)]=[(g\cdot x,g\sharp \gamma)],$ where $g\sharp
\gamma=\eta_g*(g\cdot \gamma)$ and the symbol $*$ denotes
concatenation of paths. Since $\alpha$ is $G$-invariant and
$\int_{\eta_g}\alpha=0$, we have
$$\int_{g\sharp\gamma}\alpha=\int_{\gamma}\alpha.$$
This implies that $g_1\cdot(g_2\cdot [(x,\gamma)]) = g_1g_2 \cdot [(x, \gamma)]$ for every
$g_1,g_2\in G$, which combined with some trivial checks implies
that we have defined an action of $G$ on $Z$ lifting the action
on $X$.
\end{proof}

\begin{lemma}
\label{cor:d-2-vanishes} Suppose that a finite group $G$ acts
smoothly and in a CTE way on $X$, and suppose also that
$\rho(G)=1$. For any abelian group $A$ the
differential
$$d_2^{0,2}:E_2^{0,1}=H^1(X;A)\to E_2^{2,0}=H^2(BG;A)$$
in the second page of the Serre spectral sequence for the
fibration $X_G\to BG$ with coefficients in $A$ vanishes identically.
\end{lemma}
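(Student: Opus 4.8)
The plan is to deduce the vanishing from the preceding lemma: since $\rho(G)=1$, the $G$-action on $X$ lifts to the abelian universal cover $\pi\colon Z\to X$. First I would reduce the statement to a claim about the fibre restriction map. Writing $\xi\colon X\hookrightarrow X_G$ for the inclusion of a fibre, no nonzero differential enters or leaves the spot $(0,1)$ after $d_2$, so $E_\infty^{0,1}=\Ker d_2^{0,1}$; and the edge homomorphism identifies this group with the image of $\xi^*\colon H^1(X_G;A)\to H^1(X;A)$ (equivalently, use the five–term exact sequence of low degrees, noting $H^1(X;A)^G=H^1(X;A)$ since the action is CT). Hence it is enough to show that every class in $H^1(X;A)$ extends over $X_G$.

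I would obtain such extensions as follows. Identify $H^1(X;A)=\Hom(H_1(X),A)$ (the $\Ext$ term vanishes because $H_0(X)=\ZZ$ is free), and recall that the deck group $\Delta$ of $Z\to X$ is $H_1(X)$, so $H^1(X;A)=\Hom(\Delta,A)$. By the preceding lemma there is a smooth $G$-action on $Z$ lifting the one on $X$, and I claim it commutes with $\Delta$: for $g\in G$ and $\delta\in\Delta$ the diffeomorphism $\tilde g\,\delta\,\tilde g^{-1}$ again covers $\id_X$, hence lies in $\Delta$, and the resulting action of $G$ on $\Delta\cong H_1(X)$ is the induced action on homology, which is trivial because the action is CT. Thus $G$ and $\Delta$ act jointly on $Z$. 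Given $u\in\Hom(\Delta,A)$, the associated $A$-cover $P_u=(Z\times A)/\Delta$ (with $\Delta$ acting by deck transformations on $Z$ and through $u$ on $A$) inherits a $G$-action, lifting the one on $X$ and commuting with the deck $A$-action; applying $EG\times_G(-)$ yields an $A$-cover $EG\times_G P_u\to EG\times_G X=X_G$, classified by a class $\hat u\in H^1(X_G;A)$ that restricts on the fibre to $u$. This makes $\xi^*$ surjective, so $d_2^{0,1}$ vanishes for every $A$. For the part of $H^1(X;A)$ coming from $\theta$ the argument is more transparent: if $\alpha$ is a $G$-invariant representative of $\theta$, then $\rho(G)=1$ makes $c\colon X\to S^1$, $c(x)=e(\int_{x_0}^x\alpha)$, literally $G$-invariant, so $c$ factors through $X_G$ and $\theta$ lifts to $H^1(X_G;\ZZ)$, whence $d_2^{0,1}(\theta)=0$.

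The step I expect to be the main obstacle is the basepoint bookkeeping behind the two facts used above: that the lifted $G$-action on $Z$ can really be taken to commute with the deck group $\Delta$ — this is precisely where cohomological triviality enters, through triviality of the induced $G$-action on $H_1(X)$ — and that the $A$-cover of $X_G$ produced by $EG\times_G(-)$ genuinely restricts to $P_u$ on the fibre $X$, i.e. that the classifying homomorphism $\pi_1(X_G)\to\Delta$ of $EG\times_G Z\to X_G$ restricts on $\pi_1(X)\leq\pi_1(X_G)$ to the abelianization. Once these are in place, surjectivity of $\xi^*$, and hence the lemma, follows uniformly in $A$.
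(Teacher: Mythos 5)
Your reduction to the surjectivity of the fibre restriction $\xi^*\colon H^1(X_G;A)\to H^1(X;A)$ is correct, and your route is genuinely different in form from the paper's (which maps the Serre spectral sequence of $X_G\to BG$ to that of $Z_G\to BG$ and invokes $H^1(Z;A)=0$). But there is a real gap, and it sits exactly at the step you flagged, in the sentence ``the deck group $\Delta$ of $Z\to X$ is $H_1(X)$''. The cover $Z$ supplied by the preceding lemma is \emph{not} the cover associated to $[\pi_1(X),\pi_1(X)]$: in its explicit model two paths are identified as soon as they have the same endpoint and the same $\int\alpha$, so the deck group is the image of $\theta\colon H_1(X)\to\ZZ$, i.e.\ $\ZZ$, not $H_1(X)\cong\ZZ\oplus T$. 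Your lifted $G$-action does commute with this copy of $\ZZ$, and the covers $P_u$ you build from it realise precisely the classes in the image of $\Hom(\ZZ,A)\to\Hom(H_1(X),A)$ --- the multiples of the reduction of $\theta$, which is the ``transparent'' case you treat at the end. They give you nothing on the complementary summand $\Hom(T,A)\subseteq H^1(X;A)$, where $T$ is the torsion of $H_1(X)$.

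To handle that summand your method would need a lift of the $G$-action to the genuine abelianisation cover, i.e.\ a splitting of the central extension $1\to H_1(X)\to\pi_1(X_G)/[\pi_1(X),\pi_1(X)]\to G\to1$. Its class lies in $H^2(G;\ZZ)\oplus H^2(G;T)$; the hypothesis $\rho(G)=1$ kills only the first component, and nothing in the hypotheses controls the second. This is not a repairable piece of bookkeeping: for $X=S^1\times L(p;q)$ with $G=\ZZ/p$ acting trivially on $S^1$ and by deck transformations of a covering $L(p;q)\to L(p^2;q')$, the action is free, CTE and has $\rho(G)=1$, yet $X_G\simeq S^1\times L(p^2;q')$ and the restriction $\Hom(\ZZ/p^2,\ZZ/p)\to\Hom(\ZZ/p,\ZZ/p)$ is the zero map, so $\xi^*$ is not surjective for $A=\ZZ/p$ and $d_2^{0,1}$ is nonzero on $\Hom(T,A)$; the commuting lift you hope for cannot exist there. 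For what it is worth, the paper's own one-line proof stumbles at the same place: the assertion $H^1(Z;A)=0$ fails whenever $\Hom(H_1(Z),A)\neq0$, as in this example where $Z\simeq\RR\times L(p;q)$. Both arguments, yours and the paper's, really only establish the vanishing of $d_2^{0,1}$ on the subgroup of $H^1(X;A)$ generated by the image of $\theta$.
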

\begin{proof}
Let $\pi:Z\to X$ be as in the previous lemma, and take a lift of
the action of $G$ on $X$ to an action on $Z$. We have a
Cartesian diagram of fibrations
$$\xymatrix{Z_G \ar[r]^{\pi} \ar[d] & X_G \ar[d] \\
BG\ar@{=}[r] & BG}.$$ The vanishing of $d_2^{0,2}$ follows from
the naturality of the Serre spectral sequence and the fact that
$H^1(Z;A)=0$.
\end{proof}

\subsection{Fixed points and the rotation morphism}
\label{ss:fixed-points-rotation}

Fix a prime $p$ for the present subsection, and suppose that
$T$ (which, recall, is the torsion of $H_1(X)$) has $ap^t$ elements,
where $a\geq 1,t\geq 0$ are integers and $p$ does not divide $a$. Let $T_p$
be the $p$-part of $T$, i.e., the subgroup of elements whose order is a power of $p$.
We have
$$\sharp T_p=p^t.$$

The proof of the following theorem is given in the appendix.

\begin{theorem}
\label{thm:appendix}
Let $a,b$ be natural numbers and let $c=\min\{a,b\}$.
For any natural number $d$, any nonnegative integer $k$ and any prime $p$ we have
$$H^k((\ZZ/p^a)^d;\ZZ/p^b)\simeq (\ZZ/p^c)^{\left(k+d-1 \atop d-1\right)},$$
where we consider on the coefficient group $\ZZ/p^b$ the trivial $(\ZZ/p^a)^d$-module
structure.
\end{theorem}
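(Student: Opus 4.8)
The plan is to reduce the computation to the integral homology of $\Gamma:=(\ZZ/p^a)^d$ and to the classical mod-$p$ cohomology of $\Gamma$. Concretely, I would first pin down the isomorphism type of $H_*(\Gamma;\ZZ)$, then run the universal coefficient theorem to see that for $k\ge1$ the group $H^k(\Gamma;\ZZ/p^b)$ has the form $(\ZZ/p^c)^{n_k}$ for an exponent $n_k$ that does not depend on $b$, and finally identify $n_k$ by specialising the coefficients to $\ZZ/p$, where the answer is the familiar count $\binom{k+d-1}{d-1}$.

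For the integral homology I would start from the standard computation for a cyclic group, $H_0(\ZZ/p^a;\ZZ)\cong\ZZ$, $H_j(\ZZ/p^a;\ZZ)\cong\ZZ/p^a$ for odd $j$, and $H_j(\ZZ/p^a;\ZZ)=0$ for even $j>0$. Since $B\Gamma$ is the $d$-fold product of $B(\ZZ/p^a)$, an induction on $d$ using the K\"unneth formula over $\ZZ$ — together with the elementary facts $\Tor_1^\ZZ(\ZZ,-)=0$ and $\ZZ/p^a\otimes_\ZZ\ZZ/p^a\cong\ZZ/p^a\cong\Tor_1^\ZZ(\ZZ/p^a,\ZZ/p^a)$ — shows that
$$H_k(\Gamma;\ZZ)\cong\ZZ^{\delta_{k,0}}\oplus(\ZZ/p^a)^{m_k}$$
for suitable integers $m_k\ge0$ with $m_0=0$. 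The delicate point is that one really needs the \emph{splitting} in the K\"unneth theorem (available because $\ZZ$ is a PID and the relevant chain complexes are free): an extension of one finite direct sum of copies of $\ZZ/p^a$ by another need not itself be a direct sum of copies of $\ZZ/p^a$, so without the splitting one would obtain only a two-step filtration of $H_k(\Gamma;\ZZ)$ with quotients of the right type.

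Next I would feed this into the split universal coefficient sequence
$$0\to\Ext^1_\ZZ\bigl(H_{k-1}(\Gamma;\ZZ),\,\ZZ/p^b\bigr)\to H^k(\Gamma;\ZZ/p^b)\to\Hom\bigl(H_k(\Gamma;\ZZ),\,\ZZ/p^b\bigr)\to0,$$
using $\Ext^1_\ZZ(\ZZ,-)=0$ together with $\Hom(\ZZ/p^a,\ZZ/p^b)\cong\ZZ/p^c\cong\Ext^1_\ZZ(\ZZ/p^a,\ZZ/p^b)$ (recall $c=\min\{a,b\}$). Combined with the shape of $H_*(\Gamma;\ZZ)$ this yields $H^0(\Gamma;\ZZ/p^b)\cong\ZZ/p^b$ and $H^k(\Gamma;\ZZ/p^b)\cong(\ZZ/p^c)^{n_k}$ for every $k\ge1$, where $n_k:=m_{k-1}+m_k$. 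Crucially $n_k$ depends only on $\Gamma$, not on $b$: running the identical argument with $\ZZ/p$-coefficients (so that $c$ becomes $1$) gives $H^k(\Gamma;\ZZ/p)\cong(\ZZ/p)^{n_k}$, whence $n_k=\dim_{\ZZ/p}H^k(\Gamma;\ZZ/p)$. Finally $H^*(\ZZ/p^a;\ZZ/p)$ is one-dimensional in every degree — immediate from the cyclic case and the universal coefficient theorem — so the K\"unneth formula over the field $\ZZ/p$ gives $\dim_{\ZZ/p}H^k(\Gamma;\ZZ/p)=\#\{(i_1,\dots,i_d)\in\ZZ_{\ge0}^d:i_1+\dots+i_d=k\}=\binom{k+d-1}{d-1}$. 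Hence $H^k(\Gamma;\ZZ/p^b)\cong(\ZZ/p^c)^{\binom{k+d-1}{d-1}}$ for all $k\ge1$, while $H^0(\Gamma;\ZZ/p^b)\cong\ZZ/p^b$, which is the displayed formula precisely when $b\le a$.

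I expect the only genuine difficulty to be the homogeneity issue flagged in the second paragraph: extracting from the K\"unneth machinery that each $H_k(\Gamma;\ZZ)$ with $k\ge1$ is an honest direct sum of copies of the single group $\ZZ/p^a$, rather than merely an extension with sub- and quotient group of that type. Once this is secured, everything else is routine manipulation of $\Hom$, $\Ext$ and $\Tor$ of cyclic $p$-groups together with the classical mod-$p$ count.
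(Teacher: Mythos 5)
Your proposal is correct and follows essentially the same route as the paper's appendix: first show via K\"unneth over $\ZZ$ that the integral (co)homology of $(\ZZ/p^a)^d$ is $\ZZ$ in degree $0$ and a direct sum of copies of $\ZZ/p^a$ in positive degrees, then use the universal coefficient theorem to get $H^k(\cdot;\ZZ/p^b)\simeq(\ZZ/p^c)^{n_k}$ with $n_k$ independent of $b$, and finally identify $n_k$ by the mod-$p$ field K\"unneth count $\binom{k+d-1}{d-1}$. The only cosmetic difference is that the paper runs the induction on integral \emph{cohomology} and passes to homology via UCT afterwards, whereas you induct on homology directly; your remark that the $k=0$ case forces $b\leq a$ is a correct (and harmless) observation about the statement.
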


By Poincar\'e duality, (\ref{eq:torsio-cohomologia}) implies
$$H_0(X)\simeq H_3(X)\simeq H_4(X)\simeq \ZZ$$
and
$$H_1(X)\simeq\ZZ\oplus T,\qquad H_2(X)\simeq T.$$
Let $r$ be an integer satisfying $r>t$. Then we have (see (\ref{eq:ext}) in the Appendix)
$$\Hom(T,\ZZ/p^r)\simeq T_p,\qquad \Ext(T,\ZZ/p^r)\simeq T_p.$$
Using the universal coefficient theorem (see (\ref{eq:ucf-a}) in the Appendix) we compute
\begin{equation}
\label{eq:coh-coeff-finits-1}
H^0(X;\ZZ/p^r)\simeq H^1(X;\ZZ/p^r)\simeq H^4(X;\ZZ/p^r)
\simeq\ZZ/p^r,
\end{equation}
\begin{equation}
\label{eq:coh-coeff-finits-2}
H^2(X;\ZZ/p^r)\simeq\Hom(T,\ZZ/p^r)\oplus\Ext(\ZZ\oplus T,\ZZ/p^r)\simeq T_p\oplus T_p,
\end{equation}
\begin{equation}
\label{eq:coh-coeff-finits-3}
H^3(X;\ZZ/p^r)\simeq\Hom(Z,\ZZ/p^r)\oplus\Ext(T,\ZZ/p^r)\simeq\ZZ/p^r\oplus T_p.
\end{equation}

\begin{lemma}
\label{lemma:rank-2-trivial-rotation}
Let $r$ be the least integer bigger than $5t/3$.
No smooth CTE action of $\Gamma:=(\ZZ/p^r)^2$ on $X$ satisfying $\rho(\Gamma)=1$
is free.\end{lemma}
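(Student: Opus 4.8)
The plan is to argue by contradiction, using only the subgroup $\Gamma_1=(\ZZ/p)^2<\Gamma$ — so the particular value of $r$ will play no role. Assume there is a smooth, free, CTE action of $\Gamma$ on $X$ with $\rho(\Gamma)=1$. Restricting to $\Gamma_1$ we get a smooth free CTE action of $\Gamma_1$ on $X$ with $\rho(\Gamma_1)=1$, since the rotation morphism of $\Gamma_1$ is the restriction of that of $\Gamma$. Because $\Gamma_1$ acts freely and, being CT, orientation preservingly on the closed, connected, oriented $4$-manifold $X$, the quotient $Y:=X/\Gamma_1$ is again a closed, connected, oriented $4$-manifold and $X_{\Gamma_1}\simeq Y$. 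By multiplicativity of the Euler characteristic, $\chi(X)=p^2\,\chi(Y)$, so $\chi(Y)=0$ (we are in the case $\chi(X)=0$). Writing $c_i=\dim_{\ZZ/p}H^i(Y;\ZZ/p)$ we have $c_0=c_4=1$, Poincar\'e duality over the field $\ZZ/p$ gives $c_1=c_3$, and $\chi(Y)=0$ forces
\[
c_2=c_1+c_3-c_0-c_4=2c_1-2 .
\]

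Next one extracts the needed information from the Serre spectral sequence $E_2^{\sigma,\tau}=H^\sigma(B\Gamma_1;H^\tau(X;\ZZ/p))\Rightarrow H^{\sigma+\tau}(Y;\ZZ/p)$, whose local system is trivial because the action is CT. By Theorem \ref{thm:appendix} (with $a=b=1$, $d=2$) one has $\dim H^\sigma(B\Gamma_1;\ZZ/p)=\sigma+1$; and the universal coefficient theorem applied to the groups $H_*(X)$ computed above gives $\dim H^\tau(X;\ZZ/p)=1,\,1+s,\,2s,\,1+s,\,1$ for $\tau=0,1,2,3,4$, where $s$ is the $p$-rank of $T$.

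Now comes the use of $\rho(\Gamma_1)=1$: it forces the relevant low-degree entries to survive untouched to $E_\infty$. By Lemma \ref{cor:d-2-vanishes} the differential $d_2^{0,1}\colon E_2^{0,1}\to E_2^{2,0}$ vanishes; since $d_2$ also vanishes on the bottom row $E_2^{*,0}$ and is a derivation, and $E_2^{1,1}$ is spanned by products of classes from $E_2^{1,0}$ and $E_2^{0,1}$, also $d_2^{1,1}\colon E_2^{1,1}\to E_2^{3,0}$ vanishes. Moreover the entries $(0,1)$, $(1,0)$, $(1,1)$, $(2,0)$ support no other nonzero differential, incoming or outgoing: all candidates originate or terminate in a column with negative second index, apart from the already-vanishing $d_2^{0,1}$ into $(2,0)$ and $d_2^{1,1}$ out of $(1,1)$. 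Hence $E_\infty^{0,1}=E_2^{0,1}$, $E_\infty^{1,0}=E_2^{1,0}$, $E_\infty^{1,1}=E_2^{1,1}$, $E_\infty^{2,0}=E_2^{2,0}$, of dimensions $1+s$, $2$, $2(1+s)$, $3$. From the filtration of $H^1(Y;\ZZ/p)$ we get $c_1=\dim E_\infty^{1,0}+\dim E_\infty^{0,1}=3+s$, so by the first paragraph $c_2=2c_1-2=4+2s$; but from the filtration of $H^2(Y;\ZZ/p)$,
\[
c_2=\dim E_\infty^{2,0}+\dim E_\infty^{1,1}+\dim E_\infty^{0,2}\ \ge\ 3+2(1+s)+0=5+2s,
\]
which contradicts $c_2=4+2s$.

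The main obstacle is the spectral-sequence bookkeeping in the third paragraph — pinning down exactly which $E_\infty$ entries coincide with their $E_2$ counterparts — because this is precisely where the hypothesis $\rho(\Gamma_1)=1$ is used, through Lemma \ref{cor:d-2-vanishes} together with the derivation property of $d_2$ and its vanishing on the base; everything else (the Euler-characteristic and Poincar\'e-duality identities, the dimension counts via the universal coefficient theorem and Theorem \ref{thm:appendix}) is routine. Note that this argument makes no use of $r$, so it in fact proves the stronger statement that no CTE action of $(\ZZ/p)^2$ on $X$ with trivial rotation morphism can be free; the weaker formulation stated in the lemma is all that is needed for the applications in this section.
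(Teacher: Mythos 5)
Your proof is correct, but it follows a genuinely different route from the paper's. The paper keeps the full group $(\ZZ/p^r)^2$ and runs the Serre spectral sequence with $\ZZ/p^r$ coefficients, comparing the cardinality of $E_2^{4,0}$ with the sizes of the sources of the three differentials that can hit it ($E_2^{2,1}$, $E_2^{1,2}$, $E_2^{0,3}$) and using $\sharp H^4_\Gamma(X;\ZZ/p^r)=p^r$ for a free action; the inequality $5r-4t-(r+t)>r$ is exactly where the hypothesis $r>5t/3$ enters, and Lemma \ref{cor:d-2-vanishes} is used only to kill $d_2^{2,1}$. You instead pass to the elementary subgroup $(\ZZ/p)^2$, work with $\ZZ/p$ coefficients, and trade the degree-$4$ obstruction for a low-degree one: multiplicativity of $\chi$ under free quotients plus Poincar\'e duality for $Y=X/\Gamma_1$ pins $c_2=2c_1-2$, while degeneration of the relevant $E_2$-entries in total degrees $1$ and $2$ (which I have checked: the only differentials in question are $d_2^{0,1}$, killed by Lemma \ref{cor:d-2-vanishes}, and $d_2^{1,1}$, killed by the derivation property since $E_2^{1,1}=E_2^{1,0}\cdot E_2^{0,1}$) forces $c_1=3+s$ and $c_2\geq 5+2s$, a contradiction. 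What your approach buys is a strictly stronger statement — no free CTE action of $(\ZZ/p)^2$ with trivial rotation morphism exists at all — which makes $r$ irrelevant and would in fact sharpen Lemma \ref{lemma:subgrup-ciclic-index-petit} (the group $A$ there would be forced to be cyclic, not merely to contain a cyclic subgroup of index $\leq p^{(R-1)(r_p-1)}$). Both arguments rest on the same two implicit inputs, so you are not assuming anything the paper does not: Lemma \ref{cor:d-2-vanishes}, and the identification of the $E_2$-page with $H^*(B\Gamma_1;\ZZ/p)\otimes H^*(X;\ZZ/p)$ with trivial coefficient module, which the paper also takes for granted for CT actions.
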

\begin{proof}
Suppose that $\Gamma$ acts smoothly, freely, and in a CT way on $X$.
By (\ref{eq:coh-eq-accio-lliure}) we have
\begin{equation}
\label{eq:coh-accio-lliure}
H^4(X_{\Gamma};\ZZ/p^r)\simeq\ZZ/p^r.
\end{equation}
The entries in the second page
of the Serre spectral
sequence $\{(E_s^{ij},d_s^{ij})\}$ for the
fibration $X_{\Gamma}\to B\Gamma$ with coefficients in $\ZZ/p^r$
take the form
$$E_2^{i,j}=H^i(\Gamma;H^j(X;\ZZ/p^r)),$$
where $\Gamma$ acts trivially on $H^j(X;\ZZ/p^r)$.
By Theorem \ref{thm:appendix} and (\ref{eq:coh-coeff-finits-1})--(\ref{eq:coh-coeff-finits-3})
the matrix $(\log_p\sharp E_2^{ij})_{ij}$ has the following entries (note that $r>t$):
$$\begin{array}{|c|c|c|c|c|c|c}
0 & 0 & 0 & 0 & 0 & 0 & \dots \\
\hline
r & 2r & 3r & 4r & 5r & 6r & \dots \\
\hline
\mathbf{r+t} & 2(r+t) & 3(r+t) & 4(r+t) & 5(r+t) & 6(r+t) & \dots \\
\hline
2t & \mathbf{4t} & 6t & 8t & 10t & 12t & \dots \\
\hline
r & 2r & \mathbf{3r} & 4r & 5r & 6r & \dots \\
\hline
r & 2r & 3r & 4r & \mathbf{5r} & 6r & \dots \\
\hline
\end{array}
$$
The isomorphism (\ref{eq:coh-accio-lliure}) implies that
\begin{equation}
\label{eq:tamany-E-infinit}
\sharp E_{\infty}^{4,0}\leq p^r.
\end{equation}
Now assume that $\rho(\Gamma)=1$.
By Lemma \ref{cor:d-2-vanishes} we have $d_2^{0,1}=0$, which implies by
the multiplicativity of the Serre spectral sequence that
$d_2^{2,1}=0$.
Hence, the only way the cardinal of $E_*^{4,0}$ can drop from $p^{5r}$
to an integer not bigger than $p^r$ is by quotienting through the
images of the differentials
$$d_2^{1,2}:E_3^{1,2}\to E_3^{4,0}=E_2^{4,0},\qquad
d_4^{0,3}:E_4^{0,3}\to E_4^{4,0}.$$
More precisely, we can estimate
$$\sharp E_5^{4,0}\geq \sharp E_2^{4,0}(\sharp E_3^{1,2})^{-1}(\sharp E_4^{0,3})^{-1}
\geq \sharp E_2^{4,0}(\sharp E_2^{1,2})^{-1}(\sharp E_2^{0,3})^{-1}=p^{5r-4t-(r+t)}=p^{4r-5t}>p^r,$$
where the last inequality follows from $r>5t/3$.
This contradicts (\ref{eq:tamany-E-infinit}), so the proof of the lemma is complete.
\end{proof}

\begin{lemma}
\label{lemma:subgrup-ciclic-index-petit}
There exists a constant $C$, independent of $p$,
such that any finite abelian $p$-group $A$
acting freely and in a smooth and CT way on $X$ and satisfying $\rho(A)=1$
has a cyclic subgroup $A_c\leq A$ satisfying
$[A:A_c]\leq C$.
\end{lemma}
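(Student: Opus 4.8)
The plan is to combine the Mann--Su theorem with Lemma \ref{lemma:rank-2-trivial-rotation}, the crucial point being that the exponent $t$ of $p$ in $\sharp T$ vanishes for all but finitely many primes, so that the integer $r$ appearing in that lemma equals $1$ for all but finitely many $p$; this is what ultimately makes the constant independent of $p$.

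First I would record that, since the free action of $A$ on $X$ is in particular effective, Theorem \ref{thm:MS} provides a constant $r_0$, depending only on $H^*(X)$, such that $A$ can be generated by $r_0$ elements. Writing $A\cong\bigoplus_{i=1}^k\ZZ/p^{a_i}$ with $k\leq r_0$ and $a_1\geq a_2\geq\dots\geq a_k\geq 1$, I would take $A_c\leq A$ to be the cyclic subgroup corresponding to the first summand, so that $[A:A_c]=p^{a_2+\dots+a_k}$; the task is then reduced to bounding $a_2+\dots+a_k$.

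The main step is to bound $a_2$. Writing $\sharp T=ap^t$ with $p\nmid a$ and letting $r$ be the least integer bigger than $5t/3$, as in Lemma \ref{lemma:rank-2-trivial-rotation}, I claim that $a_2\leq r-1$. Indeed, if $a_2\geq r$ then $a_1\geq a_2\geq r$, so $A$ contains a subgroup $\Gamma$ isomorphic to $(\ZZ/p^r)^2$; the restriction to $\Gamma$ of the action of $A$ on $X$ is smooth, free (hence effective), cohomologically trivial (being a restriction of a CT action), and satisfies $\rho(\Gamma)=1$ since $\rho(A)=1$, which contradicts Lemma \ref{lemma:rank-2-trivial-rotation}. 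Hence $a_i\leq r-1$ for every $i\geq 2$, and therefore
$$[A:A_c]=p^{a_2+\dots+a_k}\leq p^{(r_0-1)(r-1)}.$$

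Finally I would make this estimate uniform in $p$. When $p\nmid\sharp T$ one has $t=0$, so $r=1$ and the estimate forces $A=A_c$ to be cyclic. Only finitely many primes $p_1,\dots,p_m$ divide $\sharp T$, and for those the exponents $t_j$ and the integers $r_j$ depend only on $X$, so
$$C:=\max\bigl\{1,\ p_1^{(r_0-1)(r_1-1)},\ \dots,\ p_m^{(r_0-1)(r_m-1)}\bigr\}$$
is a constant depending only on $X$ and satisfies $[A:A_c]\leq C$ in all cases. The only point I expect to require real care is this last one --- that $r$ is not an absolute constant but depends on $p$, equalling $1$ outside a finite set of exceptional primes; the check that the restricted action on $\Gamma$ meets the hypotheses of Lemma \ref{lemma:rank-2-trivial-rotation} is routine.
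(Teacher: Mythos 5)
Your proof is correct and follows essentially the same route as the paper's: split off the largest cyclic summand of $A$, bound the remaining exponents by $r_p-1$ via Lemma \ref{lemma:rank-2-trivial-rotation} applied to a subgroup isomorphic to $(\ZZ/p^{r_p})^2$, bound the number of summands by Mann--Su, and observe that $r_p=1$ for all but the finitely many primes dividing $\sharp T$, yielding the uniform constant $\max_q q^{(R-1)(r_q-1)}$. No gaps.
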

\begin{proof}
For every prime $q$ let $t_q$ be defined by $\sharp T_q=q^{t_q}$ and
let $r_q$ be the least integer bigger than $5t_q/3$.
Let $R$ be the number resulting from applying Theorem \ref{thm:MS} to $X$. Define
$$C:=\max_q q^{(R-1)(r_q-1)}.$$
This is a finite number, because $\sharp T_q$ is different from $1$ only
for finitely many primes $q$.

Let $A$ be an abelian $p$-group acting freely, smoothly, and in a CT way
on $X$, and satisfying $\rho(A)=1$. Choose an isomorphism
$$A\simeq \ZZ/p^{e_1}\oplus\dots\oplus \ZZ/p^{e_s},$$
where $e_1\geq\dots\geq e_s\geq 1$.
By Theorem \ref{thm:MS}
we have $s\leq R$ and by Lemma \ref{lemma:rank-2-trivial-rotation}
we have $e_i<r_p$ for every $i\geq 2$. 
Define $A_c$ to be the subgroup of $A$
corresponding to the first summand $\ZZ/p^{e_1}$. Then we have
$[A:A_c]\leq p^{(R-1)(r_p-1)}\leq C.$
\end{proof}

\begin{lemma}
\label{lemma:rank-3-or-higher}
No smooth CTE action of $\Gamma=(\ZZ/p^{t+1})^3$ on $X$ is free.
\end{lemma}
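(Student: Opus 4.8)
The plan is to argue by contradiction along the lines of Lemma~\ref{lemma:rank-2-trivial-rotation}; the new feature is that when $\Gamma$ has rank $3$ the bottom row $E_*^{i,0}$ of the Serre spectral sequence is so large that no hypothesis on the rotation morphism is needed. So suppose $\Gamma=(\ZZ/p^{t+1})^3$ acts smoothly, freely and in a CTE way on $X$. Then $X/\Gamma$ is a closed connected oriented $4$-manifold and $X_\Gamma$ is homotopy equivalent to it, so by (\ref{eq:coh-eq-accio-lliure}) we have $H^k_\Gamma(X;A)=0$ for $k>4$ and $H^4_\Gamma(X;A)\simeq A$ for every abelian group $A$.

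Next I would fix $r=t+1$ (in particular $r>t$) and consider the Serre spectral sequence $\{(E_s^{i,j},d_s^{i,j})\}$ of the fibration $X_\Gamma\to B\Gamma$ with coefficients in $\ZZ/p^r$. As in the proof of Lemma~\ref{lemma:rank-2-trivial-rotation}, cohomological triviality of the action gives that $\Gamma$ acts trivially on $H^j(X;\ZZ/p^r)$, so $E_2^{i,j}=H^i(\Gamma;H^j(X;\ZZ/p^r))$ with trivial coefficients. Combining Theorem~\ref{thm:appendix} applied to $\Gamma$ (for which the multiplicities are $\binom{i+2}{2}$) with (\ref{eq:coh-coeff-finits-1})--(\ref{eq:coh-coeff-finits-3}), the four entries I care about are
$$E_2^{4,0}\simeq(\ZZ/p^{t+1})^{15},\quad E_2^{2,1}\simeq(\ZZ/p^{t+1})^{6},\quad E_2^{0,3}\simeq H^3(X;\ZZ/p^r)\simeq\ZZ/p^{t+1}\oplus T_p,\quad E_2^{1,2}=H^1(\Gamma;T_p\oplus T_p),$$
so that $\log_p\sharp E_2^{4,0}=15(t+1)$, $\log_p\sharp E_2^{2,1}=6(t+1)$, $\log_p\sharp E_2^{0,3}=2t+1$, and, decomposing $T_p$ into cyclic summands $\ZZ/p^m$ with $m\le t$ (each contributing $H^1(\Gamma;\ZZ/p^m)\simeq(\ZZ/p^m)^3$), $\log_p\sharp E_2^{1,2}=6t$.

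The key observation is that no differential leaves $E_*^{4,0}$, and the only differentials landing in it are $d_2^{2,1}\colon E_2^{2,1}\to E_2^{4,0}$, $d_3^{1,2}\colon E_3^{1,2}\to E_3^{4,0}$ and $d_4^{0,3}\colon E_4^{0,3}\to E_4^{4,0}$ (there is no $d_5$ into $E_*^{4,0}$, since its source would be $E_5^{-1,4}$). Hence $E_\infty^{4,0}$ is a quotient of $E_2^{4,0}$ by a subgroup of order at most $\sharp E_2^{2,1}\cdot\sharp E_2^{1,2}\cdot\sharp E_2^{0,3}$, so
$$\log_p\sharp E_\infty^{4,0}\ \ge\ 15(t+1)-6(t+1)-6t-(2t+1)\ =\ t+8.$$
On the other hand $E_\infty^{4,0}=F^4H^4_\Gamma(X;\ZZ/p^r)$ is a submodule of $H^4_\Gamma(X;\ZZ/p^r)\simeq\ZZ/p^{t+1}$, so $\log_p\sharp E_\infty^{4,0}\le t+1$, which is a contradiction.

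The computation is short and the gap ($t+8$ against $t+1$) is generous, so the one delicate point is the passage from cohomological triviality over $\ZZ$ to triviality of the $\Gamma$-action on $H^*(X;\ZZ/p^r)$, which I would treat exactly as in Lemma~\ref{lemma:rank-2-trivial-rotation}; apart from that the only thing one must get right is the bookkeeping of which bidegrees can map into $(4,0)$.
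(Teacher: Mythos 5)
Your proposal is correct and follows essentially the same route as the paper: the paper's proof of Lemma \ref{lemma:rank-3-or-higher} uses exactly this Serre spectral sequence with $\ZZ/p^{t+1}$ coefficients, the same four bidegrees, and the same count $15r-6r-6t-(r+t)=8r-7t=t+8>r$, likewise absorbing the $E_2^{2,1}$ term rather than invoking the rotation morphism. The only cosmetic difference is that you make explicit the observation that the rank-$3$ bottom row is large enough to dispense with the hypothesis $\rho(\Gamma)=1$, which the paper leaves implicit.
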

\begin{proof}
Suppose that $\Gamma$ acts smoothly and in a CTE way on $X$.
Let $r=t+1$.
Consider the Serre
spectral sequence $\{(E_s^{ij},d_s^{ij})\}$
for the fibration $X_{\Gamma}\to B\Gamma$ with coefficients in
$\ZZ/p^r$. The matrix $(\log_p\sharp E_2^{ij})_{ij}$ has entries
$$\begin{array}{|c|c|c|c|c|c|c}
0 & 0 & 0 & 0 & 0 & 0 & \dots \\
\hline
r & 3r & 6r & 10r & 15r & 21r & \dots \\
\hline
\mathbf{r+t} & 3(r+t) & 6(r+t) & 10(r+t) & 15(r+t) & 21(r+t) & \dots \\
\hline
2t & \mathbf{6t} & 12t & 20t & 30t & 42t & \dots \\
\hline
r & 3r & \mathbf{6r} & 10r & 15r & 21r & \dots \\
\hline
r & 3r & 6r & 10r & \mathbf{15r} & 21r & \dots \\
\hline
\end{array}
$$
Arguing as in the previous lemma we estimate
\begin{align*}
\sharp E_{\infty}^{4,0}&=\sharp E_4^{4,0}\geq \sharp E_2^{4,0}
(\sharp E_2^{2,1})^{-1}(\sharp E_3^{1,2})^{-1}(\sharp E_4^{0,3})^{-1} \\
&\geq\sharp E_2^{4,0}
(\sharp E_2^{2,1})^{-1}(\sharp E_2^{1,2})^{-1}(\sharp E_2^{0,3})^{-1} \\
& =p^{15r-6r-6t-(r+t)}=p^{8r-7t}>p^r,
\end{align*}
so we have $\sharp H_{\Gamma}^4(X;\ZZ/p^r)>p^r$, which is not compatible with the action of $\Gamma$ being free.
\end{proof}

\begin{lemma}
\label{lemma:cyclic-by-cyclic} Let $\Gamma$ be a finite $p$-group
sitting in a short exact sequence
$$1\to K\to \Gamma\stackrel{\pi}{\longrightarrow} Q\to 1$$
with $Q$ cyclic.
Suppose that $A\leq K$ is a cyclic subgroup, and that $A$ is normal
in $\Gamma$.
Assume that $\Gamma$ acts smoothly and in a CT way on $X$ and that
$\rho(A)=1$. If the action of $A$ on $X$ is free, then
there is an abelian subgroup $A'\leq \Gamma$ containing $A$ and
satisfying
$$[\Gamma:A']\leq 2p^t[K:A].$$
\end{lemma}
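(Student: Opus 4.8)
The plan is to study the action of $\Gamma$ on the free quotient $X/A$ via the rotation morphism, exploiting that $A$ is cyclic and normal. First I would observe that since $A$ acts freely, $Y:=X/A$ is a closed connected oriented $4$-manifold, and the residual action of $Q'=\Gamma/A$ on $Y$ is again smooth; moreover since $\rho(A)=1$, the generator $\theta\in H^1(X)$ descends (via the cover $X\to Y$ being finite and $A$ acting trivially on the relevant form, after averaging) to give a rotation morphism $\rho':Q'\to S^1$ for $Y$, where $Q'$ is still a $p$-group sitting in $1\to K/A\to Q'\to Q\to 1$ with $K/A$ of order $[K:A]$ and $Q$ cyclic. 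The key point is that the conjugation action of $\Gamma$ on $A\cong\ZZ/p^m$ is given by a homomorphism $\Gamma\to\Aut(A)=(\ZZ/p^m)^\times$, and I want to bound the index of its kernel $C_\Gamma(A)$ and then pass inside the kernel to an abelian subgroup.

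The main steps, in order: (i) Bound $[\Gamma:C_\Gamma(A)]$. Since $\Aut(\ZZ/p^m)$ has a cyclic subgroup of index at most $2$ (it is cyclic for $p$ odd, and $\ZZ/2\times$cyclic for $p=2$), and since $\Gamma$ is a $p$-group whose image in $\Aut(A)$ is a $p$-subgroup, for $p$ odd the image is cyclic and for $p=2$ it has order dividing $2\cdot 2^{m-2}$; but I only need a cruder bound. Actually the clean route: the image of $\Gamma$ in $\Aut(A)$ normalizes nothing extra, but I will instead argue that inside $C_\Gamma(A)$ the subgroup $A$ is central, so $C_\Gamma(A)$ is "cyclic-central-by-cyclic"; (ii) Inside $C_\Gamma(A)$, consider the preimage $\pi^{-1}$ of a suitable subgroup of $Q$ — since $Q$ is cyclic, $C_\Gamma(A)$ maps to $Q$ with cyclic image, so $C_\Gamma(A)$ is generated by $A$ together with one element $g$ lifting a generator of $\pi(C_\Gamma(A))$; as $A$ is central in $C_\Gamma(A)$ and $g$ commutes with $A$, the group $\langle A,g\rangle$ is abelian provided $g$ can be chosen to commute with itself — which is automatic — so $C_\Gamma(A)$ itself is abelian, or at worst has an abelian subgroup of small index; (iii) Assemble the index bound $[\Gamma:A']\le [\Gamma:C_\Gamma(A)]\cdot[C_\Gamma(A):A']\le 2p^{?}[K:A]$ and reconcile the exponent with the claimed $2p^t[K:A]$.

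The hard part will be getting the exponent exactly right — specifically, showing that the index of the centralizer $C_\Gamma(A)$ in $\Gamma$ is at most $2p^t$, which is where the hypotheses $\rho(A)=1$, freeness of the $A$-action, and the bound $\sharp T_p=p^t$ must enter. I expect the real input is the following: the freeness of the $A$-action forces the order of $A$ to be constrained, or rather forces $\Aut(A)$'s relevant $p$-part to be small, through the equivariant-cohomology / spectral-sequence machinery already used in Lemmas \ref{lemma:rank-2-trivial-rotation} and \ref{lemma:rank-3-or-higher}; concretely, if a large-order cyclic $p$-group inside $\Aut(A)$ acted, one would build, via the semidirect product $A\rtimes\langle\text{that element}\rangle$ acting freely, a contradiction with \eqref{eq:coh-eq-accio-lliure} exactly as in Lemma \ref{lemma:Gamma-automorphism} (the automorphism $\phi$ of $\Gamma$ there has order prime to $p$ and bigger than $4$; here the conjugation has $p$-power order, so a different pigeonhole is needed). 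So the plan is: reduce to controlling the conjugation homomorphism $\Gamma\to\Aut(A)$, bound its image by $2p^t$ using the structure of $T_p$ together with a freeness-versus-equivariant-cohomology argument, then finish by the elementary observation that the centralizer of a central cyclic subgroup modulo a cyclic quotient is abelian. I would expect steps (ii)–(iii) to be routine group theory and step (i), the index bound on the centralizer, to be the crux.
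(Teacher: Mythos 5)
Your group-theoretic assembly (steps (ii)--(iii)) is essentially sound once the non-sequitur ``so $C_\Gamma(A)$ itself is abelian'' is dropped: one only needs the single abelian subgroup $\langle A,g\rangle$ for one element $g$ centralizing $A$ and mapping onto a small-index subgroup of $Q$, and the factor $[K:A]$ in the target bound absorbs everything else in $K$. The genuine gap is step (i), which you correctly identify as the crux but do not prove, and the mechanism you sketch for it does not work. You propose to derive a contradiction from the cohomology of free quotients (as in Lemma \ref{lemma:rank-2-trivial-rotation}) applied to the semidirect product $A\rtimes\langle\gamma\rangle$ ``acting freely'' --- but only $A$ is assumed to act freely on $X$; the subgroup $\langle A,\gamma\rangle\leq\Gamma$ need not act freely, so there is no free quotient to compute with, and Lemma \ref{lemma:Gamma-automorphism} is indeed unavailable since the conjugation automorphism here has $p$-power order. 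No replacement argument is supplied.

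The paper's actual argument is different and is the heart of the lemma: conjugation by a single element $\gamma$ (chosen so that $\pi(\gamma)$ generates $Q$) induces an automorphism $\phi$ of the integral Serre spectral sequence of $X_A\to BA$ commuting with all differentials. Freeness of the $A$-action forces $E_\infty^{4,0}=0$; the hypothesis $\rho(A)=1$ kills $d_2^{2,1}$ via Lemma \ref{cor:d-2-vanishes}; the image $M$ of $d_3^{1,2}$ has order at most $\sharp H^1(BA;T_p)\leq p^t$; and $d_4^{0,3}$ must then surject onto $H^4(BA)/M$ from a source on which $\phi$ acts trivially, forcing $\phi$ to act trivially on $H^4(BA)/M$. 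Since $\phi$ acts on $H^4(BA)\simeq A$ as conjugation by $\gamma^2$, this yields $[A:A_\gamma]\leq p^t$ where $A_\gamma$ is the subgroup of $A$ fixed by $\gamma^2$, and the abelian subgroup is $A'=\langle A_\gamma,\gamma^2\rangle$ (the factor $2$ coming from passing to $\gamma^2$). Note also that your route aims at a stronger statement than needed: you want $[\Gamma:C_\Gamma(A)]\leq 2p^t$, a bound on the whole image of $\Gamma\to\Aut(A)$, whereas the paper only controls the fixed subgroup of one element $\gamma^2$; for $p=2$ the full-centralizer bound is delicate (elements acting as $-1$ on $A$ square to the identity and escape the spectral-sequence constraint entirely) and may cost an extra factor beyond $2p^t$.
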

\begin{proof}
Take an element $\gamma\in \Gamma$ such that $\pi(\gamma)$ generates
$Q$. Since $A$ is normal in $\Gamma$, conjugation defines a morphism $\Gamma\to\Aut(A)$.
Applying this to $\gamma$ we obtain, as in the proof of Lemma \ref{lemma:Gamma-automorphism},
a commutative diagram
$$\xymatrix{A\times X\ar[r]\ar[d] & X \ar[d] \\
A\times X\ar[r] & X,}$$ in which the horizontal arrows are
the maps defining the action of $A$ on $X$, the left hand
vertical arrow sends $(g,x)$ to $(\gamma g\gamma^{-1},\gamma
x)$, and the right hand side vertical arrow sends $x$ to
$\gamma x$. At the level of Borel constructions we obtain a
commutative diagram
$$\xymatrix{X_{A}\ar[r]\ar[d] & BA \ar[d] \\
X_{A}\ar[r] & BA,}$$ in which the right hand side vertical
arrow is induced by the map
$$c(\gamma):A\to A,\qquad c(\gamma)(g)=\gamma g\gamma^{-1}.$$
Consider the Serre
spectral sequence for $X_{A}\to BA$ with integer
coefficients. The previous
diagram implies the existence of an automorphism of the Serre
spectral sequence which is given, at the level of the second
page, by the morphism
$$\phi:H^{\sigma}(BA;H^{\tau}(X))\to H^{\sigma}(BA;H^{\tau}(X)).$$
Crucially, $\phi$ commutes with all the differentials of the spectral sequence.

Since $A$ acts trivially on $H^*(X)$, in order to understand $\phi$ it
will suffice for our purposes to compute
$c(\gamma)^*:H^{\sigma}(BA)\to H^{\sigma}(BA)$. Suppose
that
$$A\simeq\ZZ/p^a.$$
Then, thinking of the group structure
on $A$ in additive terms, the action of $c(\gamma)$ on $A$
is given by multiplication by some
$$\zeta\in
(\ZZ/p^a)^*.$$ We claim that $H^{\sigma}(BA)=0$ if $\sigma$
is odd and $H^{\sigma}(BA)\simeq\ZZ/p^a$ if $\sigma>0$ is
even. Furthermore if $\lambda$ is a generator of $H^2(BA)$
then $\lambda^k$ is a generator of $H^{2k}(BA)$. To prove
these claims, identify $A$ with the group $\mu_{p^a}$ of
$p^a$-th roots of units in $S^1$. Taking as a model for the
classifying space of $A$ the quotient $ES^1/\mu_{p^a}$, we
identify $BA$ with the total space of a circle bundle over
$BS^1$ whose first Chern class is $p^a$ times a generator of
$H^2(BS^1)\simeq\ZZ$. Then the claim follows from applying the
Gysin exact sequence to this bundle.
As a consequence, it suffices to understand $c(\gamma)^*$
acting on $H^2(BA)$. By the universal coefficient theorem we
have
$$H^2(BA)=\Ext^1(H_1(BA),\ZZ).$$
We have a natural identification $H_1(BA)\simeq A$, so
$c(\gamma)^*$ acts on $H_1(BA)$ as multiplication by $\zeta$.
Fix a surjection $\ZZ\to H_1(BA)$ and consider the resulting
commutative diagram with exact rows, and whose vertical arrows
are multiplication by some integer $z\in\ZZ$ representing
$\zeta\in(\ZZ/p^a)^*$:
$$\xymatrix{0\ar[r] & \ZZ \ar[r]\ar[d] & \ZZ \ar[r]\ar[d] & H_1(BA)\ar[d] \ar[r] & 0\\
0\ar[r] & \ZZ \ar[r]& \ZZ \ar[r]& H_1(BA)\ar[r] & 0}.$$
Applying $\Hom_{\ZZ}(\cdot,\ZZ)$ and its derived functors we
get a commutative diagram with exact rows from which it is easy
to conclude that $c(\gamma)^*$ acts on
$\Ext^1(H_1(BA),\ZZ)=H^2(BA)$ as multiplication by
$\zeta$. Consequently, $c(\gamma)^*$ acts on $H^4(BA)$
as multiplication by $\zeta^2$. In other words, we may identify
$H^4(BA)$ with $A$ in such a way that the action of $c(\gamma)$ on $H^4(BA)$
corresponds in $A$ with conjugation by $\gamma^2$.

Suppose from now on that $A$ acts freely on $X$, which
implies
$H^4_{A}(X)\simeq\ZZ$.
From (\ref{eq:torsio-cohomologia}) and the previous description of $H^*(BA)$ we deduce that
the left bottom corner
of the second page of the
spectral sequence with integer coefficients $\{(E_u^{\sigma,\tau},d_u)\}$ for the
fibration $X_{A}\to BA$
is isomorphic to:
$$\begin{array}{|c|c|c|c|c|c}
0 & 0 & 0 & 0 & 0 & 0 \\
\hline
H^0(BA;\ZZ) & 0 & H^2(BA;\ZZ) & 0 & H^4(BA;\ZZ) & 0 \\
\hline
H^0(BA;\ZZ\oplus T_p) & H^1(BA;T_p) & H^2(BA;\ZZ\oplus T_p) & H^3(BA;T_p) & H^4(BA;\ZZ\oplus T_p) & 0 \\
\hline
H^0(BA;T_p)  & H^1(BA;T_p)  & H^2(BA;T_p) & H^3(BA;T_p) & H^4(BA;T_p)  & 0 \\
\hline
H^0(BA;\ZZ) & 0 & H^2(BA;\ZZ) & 0 & H^4(BA;\ZZ) & 0 \\
\hline
H^0(BA;\ZZ) & 0 & H^2(BA;\ZZ) & 0 & H^4(BA;\ZZ) & 0 \\
\hline
\end{array}$$
Furthermore, the action of $\phi$ is induced in each term by the action
of $c(\gamma)$ on $BA$ and the trivial action on coefficients.

The convergence of the spectral sequence to the equivariant
cohomology implies that $E_{\infty}^{4,0}$ can be naturally
identified with a subgroup of $H^4_{A}(X)$. Since
$E_{\infty}^{4,0}$ is a quotient of $E_2^{4,0}\simeq
H^4(BA)\simeq\ZZ/p^a$ and $H^4_{A}(X)$ is torsion free, we
necessarily have $E_{\infty}^{4,0}=0$. There are only three
differentials that can contribute to kill $E_2^{4,0}$:
$$d_2^{2,1}:E^{2,1}_2\to E_2^{4,0}, \qquad
d_3^{1,2}:E^{1,2}_3\to E_3^{4,0} \qquad \text{and} \qquad
d_4^{0,3}:E_4^{0,3}\to E_4^{4,0}.$$ We have $d_2^{2,1}=0$ by
Lemma \ref{cor:d-2-vanishes} and the multiplicativity of
the spectral sequence. So we can naturally identify
$E_3^{4,0}\simeq E_2^{4,0}\simeq H^4(BA)$. We also have
$E_3^{1,2}\simeq E_2^{1,2}\simeq H^1(BA;T_p)$. Hence we can
identify the source and target of $d_3^{1,2}$ with
$$d_3^{1,2}:H^1(BA;T_p)\to H^4(BA).$$
Denote by
$$M\subseteq H^4(BA)$$
the image of $d_3^{1,2}$. We have
$$\sharp M\leq \sharp H^1(BA;T_p)\leq p^t,$$
where the second inequality follows from the universal coefficient theorem
$$H^1(BA;T_p)\simeq \Hom(H_1(BA),T_p)\oplus \Ext(H_0(BA),T_p)=
\Hom(H_1(BA),T_p),$$
the equality $\sharp T_p=p^t$, and the fact that $A$ is cyclic.

Next we can identify the source and target of $d_4^{0,3}$ with
$$d_4^{0,3}:\Ker d_3^{0,3}\to H^4(BA)/M.$$
This map has to be surjective in order for $E_4^{4,0}$ to vanish.
At this point we are going to use the fact that $d_4^{0,3}$ commutes
with the action induced by $\phi$. We can identify $\Ker d_3^{0,3}$ with
a subgroup of $H^0(BA;\ZZ\oplus T_p)$, on which the action of $\phi$ is trivial.
So in order for $d_4^{0,3}$ to be surjective the action on $H^4(BA)/M$
induced by $\phi$ has to be trivial.

Denote for convenience $N=H^4(BA)$. Since the action induced by $\phi\in\Aut(N)$
on $N/M$ is trivial, one can define a morphism (using additive notation)
$$\delta:N\to M,\qquad \delta(n)=n-\phi(n).$$
We have $\Ker\delta=N^{\phi}=\{n\in N\mid \phi(n)=n\}$ so
$$[N:N^{\phi}]=[N:\Ker\delta]\leq \sharp M\leq p^t.$$
We have seen above that we can identify $N\simeq A$ in such a way that
$\phi$ corresponds to the map $A\to A$ given by $a\mapsto \gamma^2a\gamma^{-2}$.
It thus follows that
$$A_\gamma=\{a\in A\mid a=\gamma^2a\gamma^{-2}\}$$
satisfies $[A:A_\gamma]\leq p^t$.
Let $A'\leq\Gamma$ be the subgroup generated by $A_\gamma$ and $\gamma^2$.
It follows from the definition of $A_\gamma$ that $A'$ is abelian.
Since $\pi(\gamma)$ is a generator of $Q$ we may bound
$$[\Gamma:A']\leq 2[K:A_\gamma]\leq 2[K:A][A:A_\gamma]\leq 2p^t [K:A],$$
so the proof of the lemma is complete.
\end{proof}

\subsection{CTE actions of finite $p$-groups}

\begin{lemma}
\label{lemma:Jordan-nonfree-MNA-p} There exists a constant $C$,
independent of $p$, such that for any finite $p$-group $G$ and
any smooth CTE action of $G$ on $X$ the following holds. Let $A\leq
G$ be a MNAS. If the action of $A$ on $X$ is not free, then
$[G:A]\leq C$.
\end{lemma}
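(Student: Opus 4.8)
The plan is to combine Lemma~\ref{lemma:Jordan-nonfree} with the index estimate for maximal normal abelian subgroups, Lemma~\ref{lemma:index-MNA}, using the hypothesis $b_2(X)=0$ at one crucial point. Let $r$ be the Mann--Su bound of $X$ given by Theorem~\ref{thm:MS}; since $A$ is abelian and acts effectively, it is generated by $r$ elements, so by Lemma~\ref{lemma:index-MNA} it suffices to exhibit an abelian subgroup $B\leq G$ of index bounded by a constant depending only on $X$, as then $[G:A]\leq [G:B]^{r^2+1}$. Being a MNAS, $A$ is normal in $G$ and is a $p$-group, and by hypothesis its action on $X$ is not free, so Lemma~\ref{lemma:Jordan-nonfree} applies; let $C_1$ be the constant it provides. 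In its first alternative there is already an abelian $G_0\leq G$ with $[G:G_0]\leq C_1$, and we are done by Lemma~\ref{lemma:index-MNA}.

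So assume the second alternative: there is a closed connected orientable embedded surface $Z\subset X$ of genus at most $C_1$ preserved by a subgroup $G_0\leq G$ with $[G:G_0]\leq C_1$. Let $N=TX|_Z/TZ$ be the normal bundle. Since $X$ and $Z$ are oriented, $N$ is an oriented rank-$2$ real vector bundle, and its Euler number equals $Z\cdot Z$. This is where $b_2(X)=0$ enters: by (\ref{eq:torsio-cohomologia}) the group $H^2(X;\ZZ)$ is torsion, so $\mathrm{PD}[Z]\smile\mathrm{PD}[Z]$ is a torsion element of $H^4(X;\ZZ)\cong\ZZ$, hence zero; therefore $Z\cdot Z=0$ and $N$ has vanishing Euler class, so $N$ is trivial as an oriented rank-$2$ bundle. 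Now let $G_0^+\leq G_0$ be the subgroup acting orientation-preservingly on $Z$, so $[G_0:G_0^+]\leq 2$; since $G$ preserves the orientation of $X$, the elements of $G_0^+$ also preserve the orientation of $N$. By Lemma~\ref{lemma:lin-invariant-surface}(1) the induced action of $G_0^+$ on $N$ by bundle automorphisms is effective, and by Lemma~\ref{lemma:orientable-2-group} $N$ carries a $G_0^+$-invariant complex structure $J$. As the underlying oriented real bundle is trivial, so is the complex line bundle $(N,J)$ (its first Chern class is the Euler class of $N$), and $G_0^+$ embeds into $\Aut(N,J)$. Hence Lemma~\ref{lemma:line-bundle-Jordan}(2) yields an abelian subgroup $B\leq G_0^+$ with $[G_0^+:B]\leq C''$, where $C''$ depends only on the genus of $Z$, hence only on $X$.

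Finally one assembles the estimates: $[G:B]=[G:G_0]\,[G_0:G_0^+]\,[G_0^+:B]\leq 2C_1C''$, and a second application of Lemma~\ref{lemma:index-MNA} gives $[G:A]\leq (2C_1C'')^{r^2+1}$; taking $C$ to be the larger of $C_1^{r^2+1}$ and $(2C_1C'')^{r^2+1}$ — a constant depending only on $X$, in particular independent of $p$ — completes the proof. The only delicate point, and the place where $b_2(X)=0$ is genuinely used, is the triviality of $N$: without it, when $Z$ happens to be a torus one would only have the $\deg$-dependent bound of Lemma~\ref{lemma:line-bundle-on-torus}(1), which is not uniform. Everything else is bookkeeping of indices.
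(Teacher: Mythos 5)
Your proposal is correct and follows essentially the same route as the paper: apply Lemma \ref{lemma:Jordan-nonfree} to the normal abelian $p$-subgroup $A$, in the surface case use $b_2(X)=0$ to get $Z\cdot Z=0$ and hence a trivial normal bundle, invoke Lemmas \ref{lemma:orientable-2-group} and \ref{lemma:line-bundle-Jordan}, and finish with Lemma \ref{lemma:index-MNA}. Your extra care in passing to the subgroup $G_0^+$ acting orientation-preservingly on $Z$ before applying Lemma \ref{lemma:orientable-2-group} is a reasonable refinement of a point the paper leaves implicit, and costs only a factor of $2$ in the index.
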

\begin{proof}
Let $C_0$ be the constant given by Lemma \ref{lemma:Jordan-nonfree}.
Suppose that $G$ and $A$ satisfy the hypothesis of the statement.
By Lemma \ref{lemma:Jordan-nonfree} there exists a subgroup $G_0\leq G$
satisfying $[G:G_0]\leq C_0$ and such that $G_0$ is abelian or there
exists an embedded connected oriented surface $Z\subset X$ preserved
by $G_0$ and of genus $\leq C_0$. If $G_0$ is abelian then by
Lemma \ref{lemma:index-MNA} we have
$$[G:A]\leq C_0^{r^2+1},$$
where $r$ is the constant given by Mann--Su's Theorem \ref{thm:MS}
applied to $X$.

Now assume that $G_0$ is not abelian, so that we have the surface
$Z$ at our disposal. Since $Z$ is orientable, so is its normal bundle $N\to Z$. We can identify the degree of $N$ with the self-intersection $Z\cdot Z$, which is
equal to $0$ because $b_2(X)=0$. Hence $Z$ is an oriented embedded surface with
trivial normal bundle $N\to Z$.
By Lemma \ref{lemma:orientable-2-group}, $N$ admits a $G_0$-invariant
complex structure.
By (2) in Lemma \ref{lemma:line-bundle-Jordan} there is an abelian subgroup $B\leq G_0$ satisfying
$[G_0:B]\leq C_1$, where the constant $C_1$ depends only on the genus of $Z$
and hence can be bounded above by a constant depending only on $X$. Applying again
Lemma \ref{lemma:index-MNA} we conclude that
$$[G:A]\leq (C_0C_1)^{r^2+1},$$
where $r$ is as above, so the proof of the lemma is now complete.
\end{proof}

\begin{lemma}
\label{prop:p-groups} There exists a constant $C$ such that:
for any prime $p$, any finite $p$-group $G$ and any smooth CTE action
of $G$ on $X$ there is an abelian subgroup $A\leq G$ such that
$[G:A]\leq C$.
\end{lemma}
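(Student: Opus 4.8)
The plan is to reduce, by means of the theory of maximal normal abelian subgroups (MNAS's) and the rotation morphism, to the lemmas on free actions proved above; throughout, ``bounded'' means bounded by a constant depending only on $X$. Let $G$ be a finite $p$-group acting smoothly and in a CTE way on $X$. Since the action is CTE it is trivial on $H^1(X)$, so the rotation morphism $\rho\colon G\to S^1$ of Subsection \ref{ss:rotation} is defined; put $K=\ker\rho$, so that $K\trianglelefteq G$ and $G/K$ is cyclic. If $K=1$ then $G$ is cyclic and there is nothing to prove, so assume $K\neq 1$ and let $B\leq K$ be a MNAS of the $p$-group $K$. Suppose first that $B$ does not act freely on $X$. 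Then Lemma \ref{lemma:Jordan-nonfree-MNA-p} applied to $K$ gives $[K:B]\leq C_0$, and since $B$ is generated by a bounded number of elements (Theorem \ref{thm:MS}) and $K\trianglelefteq G$, Lemma \ref{lemma:quatre-u-dos} shows that $\Gamma:=N_G(B)$ satisfies $[G:\Gamma]\leq C_1$; now $B$ is a normal abelian subgroup of the $p$-group $\Gamma$, so it is contained in some MNAS $A$ of $\Gamma$, which again does not act freely, whence Lemma \ref{lemma:Jordan-nonfree-MNA-p} applied to $\Gamma$ gives $[\Gamma:A]\leq C_2$ and hence $[G:A]\leq C_1C_2$ with $A$ abelian. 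This settles the case in which $B$ is not free.

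Assume now that $B$ acts freely. The center $Z(K)$ is characteristic in $K$, hence normal in $G$; it is contained in $B$ (because $C_K(B)=B$, as $B$ is a MNAS of the $p$-group $K$), hence abelian, free and of trivial rotation, so Lemma \ref{lemma:subgrup-ciclic-index-petit} provides a cyclic subgroup $Z_c\leq Z(K)$ with $[Z(K):Z_c]\leq C_3$. By Lemma \ref{lemma:quatre-u-dos} applied to $Z_c\leq Z(K)\trianglelefteq G$, the subgroup $\Gamma:=N_G(Z_c)$ has $[G:\Gamma]\leq C_4$, and inside $\Gamma$ the subgroup $Z_c$ is normal, cyclic, acts freely, and has trivial rotation. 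Applying Lemma \ref{lemma:cyclic-by-cyclic} to $\Gamma$ with the exact sequence $1\to\Gamma\cap K\to\Gamma\to\rho(\Gamma)\to 1$, whose quotient is cyclic, and the normal cyclic subgroup $Z_c$, produces an abelian subgroup $A'\leq\Gamma$ with
$$[\Gamma:A']\leq 2p^{t}\,[\Gamma\cap K:Z_c],$$
and here $p^{t}=\sharp T_p\leq\sharp T$ is bounded. So everything comes down to bounding $[\Gamma\cap K:Z_c]$.

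To bound $[\Gamma\cap K:Z_c]$ I would analyse the $p$-group $L:=\Gamma\cap K$, which acts in a CTE way on $X$ with trivial rotation and in which $Z_c$ is central. A MNAS $D$ of $L$ necessarily contains $Z_c$ (being central); if $D$ does not act freely, Lemma \ref{lemma:Jordan-nonfree-MNA-p} bounds $[L:D]$, while if $D$ acts freely then $D$ is abelian of trivial rotation, so Lemma \ref{lemma:subgrup-ciclic-index-petit} (together with Lemma \ref{lemma:rank-3-or-higher} to bound the rank of $D$) shows that $D$ is cyclic up to a bounded factor. In either case one then uses that $Z(K)$ centralises $L$ together with the maximality of $D$ to see that $Z(K)\cap\Gamma$, and a fortiori its bounded-index cyclic subgroup $Z_c$, sits in $L$ with bounded index, so $[\Gamma\cap K:Z_c]$ is bounded. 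Putting everything together, $[G:A']=[G:\Gamma]\,[\Gamma:A']$ is bounded, which proves the lemma.

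I expect the main obstacle to be precisely the last estimate — bounding $[\Gamma\cap K:Z_c]$, equivalently showing that, after passing to a subgroup of bounded index, the kernel of the rotation morphism becomes essentially cyclic — since a priori $\ker\rho$ may be an arbitrarily large $p$-group and the cyclic subgroup enters only through a MNAS. Making this uniform in $p$ is delicate: a naive induction on $\sharp G$ fails because the constant produced would grow with $\sharp G$, and passing to the quotient $X/Z_c$ does not help because it changes the torsion of $H_1(X)$ and hence the constants $r_p$ appearing in Lemmas \ref{lemma:rank-2-trivial-rotation} and \ref{lemma:subgrup-ciclic-index-petit}. The argument must stay at the level of $X$ itself, combining the rigidity of free abelian $p$-group actions of trivial rotation (Lemmas \ref{lemma:rank-2-trivial-rotation}, \ref{lemma:subgrup-ciclic-index-petit}, \ref{lemma:rank-3-or-higher}) with the maximality properties of MNAS's. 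Everything else — the reduction via Lemma \ref{lemma:Jordan-nonfree-MNA-p}, the splitting off of the cyclic quotient $G/\ker\rho$, and the final application of Lemma \ref{lemma:cyclic-by-cyclic} with the harmless factor $2p^{t}\leq2\sharp T$ — is routine index bookkeeping.
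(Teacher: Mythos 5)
Your reduction to the two cases (the MNAS $B$ of $K=\Ker\rho$ acting freely or not), and your treatment of the non-free case, match the paper. The free case, however, contains a genuine gap, and it is exactly the one you flag: bounding $[\Gamma\cap K:Z_c]$. The mechanism you propose for it --- passing to a MNAS $D$ of $L=\Gamma\cap K$ and then ``using that $Z(K)$ centralises $L$ together with the maximality of $D$'' to conclude that $Z_c$ has bounded index in $L$ --- does not work. Since $\Gamma=N_G(Z_c)$ can be all of $G$, $L$ can be all of $K$, while $Z(K)$ can be arbitrarily small compared with $K$; centrality only gives $Z(K)\leq D$, and the maximality of $D$ says nothing about the index of $Z_c$ in $D$ or in $L$. (Moreover, in your sub-case where $D$ is not free, $D$ is merely abelian of bounded index in $L$, and Lemma \ref{lemma:cyclic-by-cyclic} --- which needs a \emph{free cyclic} normal subgroup --- can no longer be invoked to absorb the quotient $\rho(\Gamma)$.) In effect you discard the MNAS $B$, the only subgroup you know to be both of controlled structure (cyclic up to index $C_R$ by Lemma \ref{lemma:subgrup-ciclic-index-petit}) and of controlled position in $K$, and replace it by the center, which carries no index information.

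What is missing is the control of $K/B$. The paper keeps the MNAS and uses that conjugation embeds $K/B$ into $\Aut(B)$; writing $A\leq B$ for the cyclic subgroup with $[B:A]\leq C_R$, Lemmas \ref{lemma:MS-1} and \ref{lemma:MS-2} bound $[\Aut(B):\Aut_A(B)]$ and $\sharp\Aut_A^0(B)$, and since $\Aut(A)$ is cyclic one extracts a cyclic subgroup $Q'$ of bounded index in $K/B$ normalizing $A$. Lemma \ref{lemma:cyclic-by-cyclic}, applied to the extension $1\to B\to K'\to Q'\to 1$ with the \emph{bounded} quantity $[B:A]\leq C_R$ entering its estimate, then yields an abelian (and, after Lemma \ref{lemma:subgrup-ciclic-index-petit} again, essentially cyclic) subgroup of bounded index in $\Ker\rho$; only then is Lemma \ref{lemma:cyclic-by-cyclic} applied a second time to absorb the cyclic quotient $\rho(G)$, which is the one application you do carry out. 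Without this first step your final bound $2p^t[\Gamma\cap K:Z_c]$ is not a constant and the argument does not close.
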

\begin{proof} Recall that for every prime $p$ we denote by $T_p$ the $p$-torsion
of $H_1(X)$. Since $H_1(X)$ is finitely generated, we have $\sharp T_p=1$ except
for finitely many $p$'s, so
$$C_T:=\max_p\sharp T_p$$
is finite.

Let $C_R$ be the number resulting from applying Lemma \ref{lemma:subgrup-ciclic-index-petit} to $X$.

Let $p$ be a prime. Suppose given a smooth CTE action of a finite $p$-group
$G$ on $X$ and let $\rho:G\to S^1$ be the rotation morphism. Let
$G_0=\Ker\rho$. Let $K\leq G_0$ be a MNAS.

We distinguish two cases, depending on whether the action of $K$ on $X$ is free or not.

Suppose first of all that the action of $K$ on $X$ is not free.
By Lemma \ref{lemma:Jordan-nonfree-MNA-p} we have
$[G_0:K]\leq C$. Let $G'\leq G$ be the normalizer of $K$.
Since $G_0$ is a normal subgroup of $G$,
by Theorem \ref{thm:MS} and Lemma \ref{lemma:quatre-u-dos} we have
$[G:G']\leq C'$. Let $A\leq G'$ be a MNAS containing
$K$. Then the action of $A$ on $X$ is not free, so by Lemma
\ref{lemma:Jordan-nonfree-MNA-p} we have $[G':A]\leq C''$. It
follows that $[G:A]$ is bounded above by a constant which depends
neither on $p$ nor on $G$.

Assume, for the rest of the proof, that the action of $K$ on $X$
is free. By Lemma \ref{lemma:subgrup-ciclic-index-petit}, there is
a cyclic subgroup $A\leq K$ satisfying $[K:A]\leq C_R$.

Let $Q=G_0/K$. Since $K$ is a MNAS of $G_0$, the action of $G_0$ on
$K$ given by conjugation induces an effective action of $Q$ on $K$,
which allows us to identify $Q$ with a subgroup of $\Aut(K)$.
Let
$$f:\Aut_A(K)\to\Aut(A)$$
be the restriction map (we use here and below the notation introduced in Lemma \ref{lemma:MS-1}).
Since $A$ is cyclic, $\Aut(A)$ is also cyclic. Hence $S:=f(Q\cap\Aut_A(K))$
is cyclic. Let $q\in Q\cap \Aut_A(K)$ be an element such that $f(q)$ generates
$S$. Let $Q'=\la q\ra\leq Q$. We have
$$[Q:Q']\leq [Q:Q\cap \Aut_A(K)]\cdot \sharp\Ker f
\leq [\Aut(K):\Aut_A(K)]\cdot\sharp\Ker f.$$
Applying Lemmas \ref{lemma:MS-1} and \ref{lemma:MS-2} (and noting that $\Ker f=\Aut_A^0(K)$)
we conclude that there is a constant $C_L$, depending only on $X$, such that
\begin{equation}
\label{eq:Qhat-vs-Q}
[Q:Q']\leq C_L.
\end{equation}
Let $G_0'$ be the preimage of $Q'$ via the projection $G_0\to Q$. We have a short
exact sequence
$$1\to K\to G_0'\to Q'\to 1,$$
and $A$ is normal in $G_0'$ because the elements of $Q'$ belong to $\Aut_A(K)$.
We may thus apply Lemma
\ref{lemma:cyclic-by-cyclic} and conclude
the existence of an abelian subgroup $A'\leq G_0'$ containing
$A$ and satisfying
$$[G_0':A']\leq 2C_T[K:A]\leq 2C_TC_R.$$
Hence
$$[G_0:A']\leq [G_0:G_0'][G_0':A']\leq 2C_LC_TC_R.$$
By Lemma \ref{lemma:subgrup-ciclic-index-petit} there is a
cyclic subgroup
$$A'_c\leq A'$$
satisfying $[A':A'_c]\leq C_R$.
Since
$$[G_0:A'_c]\leq 2C_LC_TC_R^2$$
gives an upper bound that depends only on $X$, applying Lemma \ref{lemma:quatre-u-dos} and
Theorem \ref{thm:MS} we conclude that the normalizer
$$G'=N_G(A'_c)$$
satisfies $[G:G']\leq C_N$ for a constant $C_N$ depending only on $X$. We have a short exact sequence
$$1\to G'\cap G_0\to G'\to G'/(G'\cap G_0)\to 1.$$
The group $G'/(G'\cap G_0)$ can be identified with a subgroup of $G/G_0\simeq\rho(G)<S^1$,
so $G'/(G'\cap G_0)$ is cyclic, and clearly $A'_c\leq G'\cap G_0$.
So we can 
apply Lemma \ref{lemma:cyclic-by-cyclic} to the inclusion $A'_c\leq G'\cap G_0$ and conclude the existence of an abelian subgroup $A''\leq G'$
satisfying
$$[G':A'']\leq 2C_T[G'\cap G_0:A'_c]\leq 2C_T[G_0:A'_c]\leq  4C_LC_T^2C_R^2$$
and hence
$$[G:A'']\leq 4C_LC_T^2C_R^2C_N.$$
This finishes the proof of the lemma.
\end{proof}

\subsection{Proof of Theorem \ref{thm:b-2-zero}}
\label{ss:proof-thm:b-2-zero}

Let $\pP$ be the collection of all finite $p$-subgroups (for
all primes $p$) of $\Diff(X)$ which act in a CT way on $X$. Let $\tT$ be the
collection of all finite subgroups $G<\Diff(X)$ which act in a CT way on
$X$ and such that there exist different primes $p,q$, a normal
Sylow $p$-subgroup $P\leq G$ and a Sylow $q$-subgroup $Q\leq G$
such that $G=PQ$ and both $P$ and $Q$ are nontrivial. By the
main theorem in \cite{MT} it suffices to prove the existence of
a constant $C$ such that any $G\in\pP\cup\tT$ has an abelian
subgroup $A\leq G$ satisfying $[G:A]\leq C$.

The existence of $C$ for elements of $\pP$ is a consequence of
Lemma \ref{prop:p-groups}.

Let $R$ be the number given by Theorem \ref{thm:MS} applied to $X$, let
$$C_A=\max\{4,\max\{ \sharp\GL(R,\ZZ/p)\mid \sharp T_p\neq 1\}\}.$$

Suppose that $G=PQ\in\tT$, with $P$ a normal $p$-subgroup
of $G$ and $Q$ a $q$-subgroup of $G$, $p\neq q$. By Lemma
\ref{prop:p-groups} there are abelian subgroups $P_0\leq P$ and
$Q_0\leq Q$ satisfying $[P:P_0]\leq C$ and $[Q:Q_0]\leq C$.
Let $Q_0'$ be the normalizer of $P_0$ in $Q_0$. Since $Q_0'=Q_0\cap N_G(P_0)$,
by Theorem \ref{thm:MS} and Lemma \ref{lemma:quatre-u-dos}, there exists a constant $C'$ such that $$[Q_0:Q_0'] \leq [G:N_G(P_0)]\leq C'.$$ Then $G_0 = P_0Q_0'$ satisfies $[G:G_0] \leq CC'$.

Conjugation gives a morphism $c:Q_0'\to \Aut(P_0)$. Let $P_0[p]$
be the $p$-torsion of $P_0$. This is a characteristic subgroup
of $P_0$, so restriction gives a natural morphism
$r:\Aut(P_0)\to\Aut(P_0[p])$. Since $Q_0'$ is a $q$-group and
$q\neq p$, $\Ker c=\Ker r\circ c$. (This is a standard fact in finite
group theory, but we sketch an argument for the reader's convenience:
if $\phi\in\Aut(P_0)$ belongs to $\Ker r$ then we may write $\phi=\Id+\psi$
using additive notation on $P_0$, where $\psi\in\Hom(P_0,P_0)$ satisfies $\psi(x)\in pP_0$
for every $x$, and hence $\psi(p^kP_0)\leq p^{k+1}P_0$ for every $k$;
using the binomial's formula and induction on $r$ we prove that $\phi^{p^r}-\Id$ sends
$P_0$ to $p^rP_0$, so if $r$ is big enough then $\phi^{p^r}=\Id$; this proves
that $\phi$ is a $p$-element in $\Aut(P_0)$ and justifies the equality $\Ker c=\Ker r\circ c$.)

To finish the proof we distinguish two cases.

Suppose first of all that $[Q_0':\Ker r\circ c]>C_A$.
We claim that in this case $\sharp T_p=1$. Indeed, otherwise we would have
$\sharp\Aut P_0[p]\leq \sharp\GL(R,\ZZ/p)\leq C_A$, which combined with
$[Q_0':\Ker r\circ c]\leq \sharp\Aut P_0[p]$ would lead to a contradiction.
Next we claim that the action of $P_0$ on $X$ is not free.
If the rank of $P_0[p]$
is $1$ or $2$ this follows from Lemma \ref{lemma:Gamma-automorphism},
and if it is $\geq 3$ then it follows from Lemma \ref{lemma:rank-3-or-higher}.
Once we know that the action of $P_0$ is not free, applying Lemma
\ref{lemma:Jordan-nonfree-MNA-p}
we conclude that $G_0$ has an abelian subgroup of bounded index.

Next suppose that $[Q_0':\Ker r\circ c]\leq C_A$.
Then the group
$Q_1=\Ker c=\Ker r\circ c$ commutes with $P_0$, so $A=P_0Q_1\leq G$ is
an abelian subgroup satisfying $[G:A]\leq C_ACC'$. This concludes
the proof of the theorem.

\section{Proofs of Theorems \ref{thm:2-step-nilpotent}
and \ref{thm:2-nilpotent-alpha}}
\label{s:proofs-main-theorems}

Assume for the entire present section that $X$ is a closed, connected and oriented $4$-manifold.
This is more restrictive than the situation considered in Theorem \ref{thm:2-step-nilpotent}, but Lemma \ref{lemma:covering-lemma} allows us to reduce the general case to this setting.
If $b_2(X)=0$ then both Theorems \ref{thm:2-step-nilpotent}
and \ref{thm:2-nilpotent-alpha} follow from Theorem \ref{thm:b-2-zero}. Hence,
we also assume in this section that $b_2(X)\neq 0$.

By Lemma \ref{lemma:minkowski}, both in Theorems \ref{thm:2-step-nilpotent}
and \ref{thm:2-nilpotent-alpha} it suffices to consider CTE actions. Indeed, for
Theorem \ref{thm:2-nilpotent-alpha} note that if $G$ is any finite group and $G'\leq G$
is a subgroup we have
$$\alpha(G')\geq\frac{\alpha(G)}{[G:G']},$$
because $[G:A]=[G:G'][G':A]$ for any subgroup $A\leq G'$ (in particular, for any abelian subgroup).

Let $D=\max_p\sum_{j\geq 0}b_j(X;\ZZ/p).$

\subsection{Commutator subgroups}

Let us denote by $\gG_0$ the collection of all finite groups $\Gamma$ such that
there exists a finite group $G$ acting smoothly and in a CTE way on $X$ and a monomorphism
$\Gamma\hookrightarrow [G,G]$.

\begin{lemma}
\label{lemma:commutator-fixed-points}
There exists a constant $C$ with the following property.
Suppose that $\Gamma\in\gG_0$ is a cyclic group of prime power order and that there exists
no $g\in\Gamma$ such that $X^g$ contains a connected component which is a nonorientable surface.
Then there exists a subgroup $\Gamma_0\leq\Gamma$ satisfying $[\Gamma:\Gamma_0]\leq C$ and $X^{\Gamma_0}\neq \emptyset$.
\end{lemma}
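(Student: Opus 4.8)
The plan is as follows. If $\Gamma$ is trivial then $X^{\Gamma}=X$ and there is nothing to prove, so assume $\Gamma\cong\ZZ/p^{k}$ with $k\geq 1$, and for $0\leq i\leq k$ let $\Gamma^{(i)}\leq\Gamma$ denote the unique subgroup of order $p^{i}$. The first step is to upgrade the action of the central extension $\widehat{\Gamma}$ on $L_{1},L_{2}$ supplied by \cite{M7} to an honest action of $\Gamma$: fixing a generator $\gamma\in\Gamma$ and a lift $\widehat{\gamma}\in\widehat{\Gamma}$, the element $\widehat{\gamma}^{\,p^{k}}$ lies in the central kernel and hence acts on each $L_{j}$ by multiplication by a scalar $\zeta_{j}\in\CC^{*}$, which is a root of unity since $X$ is connected and $\widehat{\Gamma}$ finite; replacing the action of $\widehat{\gamma}$ on $L_{j}$ by its composition with fibrewise multiplication by a $p^{k}$-th root of $\zeta_{j}^{-1}$ yields a bundle automorphism of order $p^{k}$ covering $\gamma$, i.e.\ an action of $\Gamma$ on $L_{1}$ and on $L_{2}$ lifting the given action on $X$. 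The purpose of this is the following non-freeness statement: if a nontrivial $H\leq\Gamma$ acted freely on $X$, then $X/H$ would be a closed oriented $4$-manifold, $L_{1},L_{2}$ would descend to line bundles on it, and pulling back would give $1=\la c_{1}(L_{1})c_{1}(L_{2}),[X]\ra=\sharp H\cdot\la c_{1}(\overline{L}_{1})c_{1}(\overline{L}_{2}),[X/H]\ra$, which is impossible. So no nontrivial subgroup of $\Gamma$ acts freely on $X$.

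Set $m:=\max\{i:X^{\Gamma^{(i)}}\neq\emptyset\}$, so $m\geq 1$ by the previous paragraph, and take $\Gamma_{0}:=\Gamma^{(m)}$; it remains to bound $[\Gamma:\Gamma_{0}]=p^{k-m}$ in terms of $X$. The quotient $\Gamma/\Gamma^{(m)}\cong\ZZ/p^{k-m}$ acts on $X^{\Gamma^{(m)}}$ (because $\Gamma$ normalizes $\Gamma^{(m)}$ and $\Gamma^{(m)}$ acts trivially on its own fixed set), and this action is \emph{free}: a point fixed by a subgroup of order $p^{s}$ of $\Gamma/\Gamma^{(m)}$ would be fixed by $\Gamma^{(m+s)}$, forcing $m+s\leq m$. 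Let $D:=\max_{p}\sum_{j}b_{j}(X;\ZZ/p)$. By Lemma \ref{lemma:betti-fixed-point-set}, $X^{\Gamma^{(m)}}$ has at most $D$ connected components, and, using the hypothesis that no $X^{g}$ has a nonorientable component, each such component is either an isolated point or an orientable closed surface of genus $\leq D$. If some component is an isolated point, then the nonempty $\Gamma$-invariant set $S$ of isolated points of $X^{\Gamma^{(m)}}$ satisfies $\sharp S\leq D$ and carries a free $\ZZ/p^{k-m}$-action, so $p^{k-m}\leq D$ and we are done.

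In the remaining case $X^{\Gamma^{(m)}}=\Sigma_{1}\sqcup\dots\sqcup\Sigma_{s}$ is a disjoint union of orientable surfaces. Let $\Gamma'=\Gamma^{(m+l)}$ be the stabilizer of $\Sigma_{1}$ in $\Gamma$; its orbit has at most $s\leq D$ elements, so $[\Gamma:\Gamma']\leq D$, and $\Gamma'/\Gamma^{(m)}\cong\ZZ/p^{l}$ acts on $\Sigma_{1}$ faithfully (by maximality of $m$, $\Gamma^{(m)}$ is exactly the subgroup of $\Gamma'$ fixing $\Sigma_{1}$ pointwise) and freely (a subgroup of a free action). If the genus $g_{1}$ of $\Sigma_{1}$ is $0$ then $p^{l}\mid\chi(\Sigma_{1})=2$; if $g_{1}\geq 2$ then $p^{l}\leq 168(g_{1}-1)\leq 168(D-1)$, by the bound on the order of a finite group acting effectively on such a surface (cf.\ \cite[Theorem 1.3(2)]{M1}). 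In either case $p^{k-m}=p^{k-m-l}p^{l}$ is bounded and we are done.

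The one remaining case, $\Sigma_{1}\cong T^{2}$, is where the real work lies. Here $\Gamma^{(m)}$ fixes $\Sigma_{1}$ pointwise and acts effectively, hence faithfully by rotations (part (1) of Lemma \ref{lemma:lin-invariant-surface}), on the orientable normal bundle $N_{1}\to\Sigma_{1}$, which we view as a complex line bundle via a $\Gamma'$-invariant complex structure (Lemma \ref{lemma:orientable-2-group}, possibly after passing to an index $\leq 2$ subgroup), while $\Gamma^{(m)}$ acts on $L_{1}|_{\Sigma_{1}}$ and $L_{2}|_{\Sigma_{1}}$ by characters. Choosing the exponent $t$ so that $\Gamma^{(m)}$ acts trivially on $L_{1}|_{\Sigma_{1}}\otimes L_{2}|_{\Sigma_{1}}\otimes N_{1}^{\otimes t}$ (possible since the rotation character of $N_{1}$ generates the character group of the cyclic group $\Gamma^{(m)}$), this bundle acquires a $\ZZ/p^{l}$-action lifting the free action on $T^{2}$, hence descends to $T^{2}/(\ZZ/p^{l})$; thus $p^{l}$ divides its degree, and likewise $p^{l}\mid p^{m}(\Sigma_{1}\cdot\Sigma_{1})$. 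I expect the main obstacle to be turning these (and analogous) divisibility relations on the fixed tori, combined with the localization at $X^{\Gamma^{(m)}}$ of the global identity $\la c_{1}(L_{1})c_{1}(L_{2}),[X]\ra=1$, into an \emph{absolute} bound on $p^{l}$: the point being that a fixed torus whose self-intersection and normal rotation weight, together with the weights of $L_{1},L_{2}$ along it, are all highly divisible by $p$ cannot be compatible with that intersection number being a unit.
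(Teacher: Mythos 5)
Your proposal is sound up to and including the cases where $X^{\Gamma^{(m)}}$ has an isolated point or a component of nonzero Euler characteristic: the lifting of the $\widehat{\Gamma}$-action to an honest $\Gamma$-action on $L_1,L_2$ (your root-of-unity rescaling is a valid variant of the paper's character-extension trick), the non-freeness of every nontrivial subgroup via descent of $c_2(L_1\oplus L_2)$ to the quotient, and the resulting bounds $p^{k-m}\leq D$ and $p^{l}\leq\max\{2,168(D-1)\}$ are all correct. But the argument has a genuine gap exactly where you say you "expect the main obstacle" to be: when every component of $X^{\Gamma^{(m)}}$ is a torus, you produce only the single divisibility relation $p^{l}\mid \la\alpha+\beta,[\Sigma_1]\ra+t\,(\Sigma_1\cdot\Sigma_1)$ (and its analogues), where $t$ depends on the unknown rotation weights, and no absolute bound on $p^{l}$ follows from these relations alone. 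The step of "combining with the localization at $X^{\Gamma^{(m)}}$ of the global identity $\la c_1(L_1)c_1(L_2),[X]\ra=1$" is not a routine consequence of what you have written; it is the entire technical content of the lemma, and you have not carried it out.

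For comparison, the paper avoids the filtration $\Gamma^{(i)}$ altogether. It takes $\Theta\leq\Gamma$ of order $p$, sets $Y=X^{\Theta}$, and lets $\Gamma_0$ be the stabilizer of an orientation of $Y$ (so $[\Gamma:\Gamma_0]\leq 2^{D}$). It then evaluates $\pi_*c_2^{\Theta}(E)$ mod $p$ by equivariant localization on $Y$, writing it as $\sum_Z f(Z)$ over components $Z$ of $Y$; since $c_2(E)$ generates $H^4(X)$ this sum is a unit mod $p$. The two key observations — that a $\Gamma_0$-orbit of components of size $>1$ contributes $0$ mod $p$ (since $f$ is constant on orbits whose length is a power of $p$), and that a component \emph{not} contained in $X^{\Gamma_0}$ also contributes $0$ mod $p$ (because the free action of $\Gamma_0/\Gamma_1$ on it lets $L_1|_Z$, $L_2|_Z$ and $N|_Z$ descend after twisting by characters) — then force some component to lie in $X^{\Gamma_0}$. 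It is this mod-$p$ bookkeeping of the localized contributions, not a per-torus divisibility of degrees, that converts $\la c_1(L_1)c_1(L_2),[X]\ra=1$ into a fixed point for a bounded-index subgroup; without an argument of this kind your torus case remains open.
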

\begin{proof}
We are going to prove that $C=2^D$ does the job.

Let $G$ be a finite group acting smoothly in a CTE way on $X$
and let $\Gamma\leq [G,G]$ be a cyclic subgroup of prime power order.
Since $b_2(X)\neq 0$, Poincar\'e duality implies the existence of classes $\alpha,\beta\in H^2(X)$
such that $\alpha\beta$ is a generator of $H^4(X)$. Let $L_\alpha$, $L_\beta$ be complex line bundles on $X$ with first Chern classes $\alpha,\beta$ respectively. By \cite[Theorem 6.5]{M7}
there exists a short exact sequence
$$1\to S\to\widehat{\Gamma}\stackrel{\pi}{\longrightarrow}\Gamma\to 1,$$
where $S$ is a finite cyclic group, and an action of
$\widehat{\Gamma}$ on $L_{\alpha}$ lifting the action of
$\Gamma$ on $X$. Denote by
$$\mu:\widehat{\Gamma}\times L_{\alpha}\to L_{\alpha},\qquad (h,\lambda)\mapsto h\cdot\lambda$$
the map given by this action.

Let $g\in\Gamma$ be a generator, let $\gamma\in\widehat{\Gamma}$ be a lift of $g$, and let $\Gamma'\leq\widehat{\Gamma}$ be the subgroup generated by $\gamma$.
Let $S'=S\cap\Gamma'$, so that we have an exact sequence
$$1\to S'\to\Gamma'\stackrel{\pi}{\longrightarrow}\Gamma\to 1.$$
Since the action of the elements in $S'$ on $L_{\alpha}$
lift the trivial action on $X$, it is given by a morphism of groups $\xi:S'\to S^1$. Since $\Gamma'$
is cyclic, we may choose an extension of $\xi$ to $\Gamma'$, which we denote by the same symbol $\xi:\Gamma'\to S^1$.
Now the map
$$\nu:\Gamma'\times L_{\alpha}\to L_{\alpha},\qquad \nu(h,\lambda)=\xi(h)^{-1}\mu(h,\lambda)$$
defines an action of $\Gamma'$ on $L_{\alpha}$ lifting the action of $\Gamma$ on $X$, whose restriction to $S'$ is trivial. Consequently, this action descends to an action of $\Gamma$ on $L_{\alpha}$ lifting the action of $\Gamma$ on $X$. Replacing $L_{\alpha}$ by $L_{\beta}$ we similarly obtain a lift of the action of $\Gamma$ to $L_{\beta}$.

Let $E=L_{\alpha}\oplus L_{\beta}$. This is a rank $2$ complex vector bundle with $c_2(E)=\alpha\beta$,
and the lifts of the action of $\Gamma$ to $L_{\alpha}$ and $L_{\beta}$ combine to give an action on $E$.

The argument that follows can be seen as a toy model of the proof of \cite[Theorem 1.11]{M7}.
The setting is more restricted in that it applies only to dimension $4$, but more general in
that no almost complex structure on $X$ is assumed to exist.

We first prove that the action of $\Gamma$ on $X$ is not free. Arguing by contradiction,
let us assume that it is free.
Then $X/\Gamma$ is a closed, oriented and connected $4$-manifold. Let $q:X\to X/\Gamma$ be the quotient map and let $p^r=\sharp \Gamma$, where $p$ is prime and $r\geq 1$.
Then the image of the map $q^*:H^4(X/\Gamma)\to H^4(X)$ is equal to the set of integral multiples of $p^r\,\alpha\beta$.
Using the action of $\Gamma$ on $E$ we obtain a rank $2$ complex vector bundle $E_0\to X/\Gamma$ together
with an isomorphism $E\simeq q^*E_0$. By the naturality of Chern classes this implies that $\alpha\beta=c_2(E)=q^*c_2(E_0)$,
which contradicts the previous claim on the image of $q^*:H^4(X/\Gamma)\to H^4(X)$.
Hence $X^g\neq\emptyset$ for every $g\in\Gamma$ of order $p$.

Suppose that there exists some $g\in\Gamma$ such that $X^g$ contains an isolated point. Let
$S\subseteq X^g$ be the set of isolated fixed points.
By Lemma \ref{lemma:betti-fixed-point-set} we have $\sharp S\leq D$.
Since $\Gamma$ is abelian, its action on $X$ preserves $S$. Choose some $s\in S$ and let
$\Gamma_0\leq\Gamma$ be the stabilizer of $S$. Then $[\Gamma:\Gamma_0]\leq D$ and
$s\in X^{\Gamma_0}$, so $X^{\Gamma_0}\neq\emptyset$. Hence we are done in this case.

Assume for the remainder of the proof that there exists no $g\in\Gamma$ such that $X^g$ has an isolated fixed point.

Let $g\in\Gamma$ be an element of order $p$ and let $Y=X^g$. Then $Y$ is a nonempty embedded, possibly disconnected surface. Let $\Theta\leq\Gamma$ be the subgroup generated by $g$.
Recall that $H^r(B\Theta;\ZZ/p)\simeq\ZZ/p$ for every $r\geq 0$.

In the arguments that follow we will somehow abusively denote by $c_k^{\Theta}(V)$ the image of the $k$-th Chern class of an equivariant vector bundle $V$ under the map $H^{2k}_{\Theta}(\cdot)\to H^{2k}_{\Theta}(\cdot;\ZZ/p)$ induced by the projection $\ZZ\to\ZZ/p$.
Let $\pi:X\to\{*\}$ denote the projection to a point. Since $c_2(E)$ is a generator of $H^4(X)$, we have
$$\pi_*c_2^{\Theta}(E)\neq 0,$$
where $\pi_*:H^*_{\Theta}(X;\ZZ/p)\to H^{*-4}_{\Theta}(\{*\};\ZZ/p)=H^{*-4}(B\Theta;\ZZ/p)$
is the pushforward map (see e.g. \cite[\S2.1]{M7}, but take into account that here we use coefficients in $\ZZ/p$ while the discussion in [op.cit.] uses integer coefficients). Our aim is to apply localization to relate the nonvanishing of $\pi_*c_2^{\Theta}(E)$ to the existence of points with big stabilizer, and for that we need to have an invariant orientation of $Y$.

By assumption all connected components of $Y$ are orientable. Let $\nu$ be the number of connected components of $Y$. By Lemma \ref{lemma:betti-fixed-point-set} we have $\nu\leq D$.
Let $o(Y)$ be the set of orientations of $Y$.  We have $\sharp o(Y)=2^{\nu}$, and there is a natural action of $\Gamma$ on $o(Y)$. Choose some element $o\in o(Y)$  and let $\Gamma_0\leq\Gamma$ be the stabilizer of $o$. We have $[\Gamma:\Gamma_0]\leq 2^{\nu}\leq 2^D$. If $\Gamma_0=\{1\}$ then we are done, since clearly $X^{\Gamma_0}\neq\emptyset$.

Suppose from now on that $\Gamma_0\neq\{1\}$. Then $\Theta\leq\Gamma_0$.
Let $N\to Y$ be the normal bundle of the inclusion $Y\hookrightarrow X$, and orient it in a way compatible with the orientation of $X$ and with $o\in o(Y)$ . The bundle $N$ carries a natural action of $\Gamma_0$ which preserves the orientation. Hence we may consider the equivariant Euler class $e^{\Gamma_0}(N)$, which we assume, abusively as before, to lie in $H^2_{\Gamma_0}(Y;\ZZ/p)$.
By Lemma \ref{lemma:orientable-2-group}
we may endow $N$ with a $\Gamma_0$-invariant complex structure compatible with the orientation.
Then we have $e^{\Gamma_0}(N)=c_1^{\Gamma_0}(N)$, and the same formula holds replacing $\Gamma_0$ by
any of its subgroups.

Let $\rho:Y\to\{*\}$ be the projection to a point.
Fix some monomorphism $\zeta:\Theta\to S^1$ and let $t=c_1(E\Theta\times_{\zeta}S^1)\in H^2(B\Theta;\ZZ/p)$.
By the localization formula we have
\begin{equation}
\label{eq:localization}
\pi_*c_2^{\Theta}(E)=\rho_*\left(\frac{c_2^{\Theta}(E|_Y)}{c_1^{\Theta}(N)}\right).
\end{equation}
This follows from the properties of the pushforward map listed in \cite[\S2.1]{M7}, together with the fact
that $H^4_{\Theta}(X\setminus Y;\ZZ/p)=0$, so that $c_2^{\Theta}(E)$ can be lifted to
$H^4_{\Theta}(X,X\setminus Y;\ZZ/p)$ and hence belongs to the image of $i_*:H^*_{\Theta}(Y;\ZZ/p)\to H^{*+2}_{\Theta}(X;\ZZ/p)$ (here $i:Y\hookrightarrow X$ is the inclusion). The term inside
$\rho_*(\cdot)$ in the RHS of (\ref{eq:localization}) should be understood as an element
of the localized ring $H^*_{\Theta}(Y;\ZZ/p)[t^{-1}]$.
The invertibility of $c_1^{\Theta}(N)$ inside this ring is a standard fact, but the computations below give a proof of it.

The RHS of (\ref{eq:localization}) can be written as a sum of contributions from each connected component of $Y$. We next compute in concrete (nonequivariant) terms these contributions.

Fix some connected component $Z\subseteq Y$.
Suppose the action of $\Theta$ on $L_{\alpha}|_Z$ (resp. $L_{\beta}|_Z$, $N|_Z$) is given by a character
$\zeta^{a_Z}:\Theta\to S^1$ (resp. $\zeta^{b_Z}:\Theta\to S^1$, $\zeta^{n_Z}:\Theta\to S^1$).
The integers $a_Z,b_Z,n_Z$ are of course well defined only up to multiples of $p$. With respect to the
K\"unneth isomorphism
$$H^*_{\Theta}(Z;\ZZ/p)\simeq H^*(Z;\ZZ/p)\otimes H^*(B\Theta;\ZZ/p)$$
we have
$c_1^{\Theta}(L_{\alpha}|_Z)=c_1(L_{\alpha}|_Z)+a_Zt=\alpha|_Z+a_Zt$,
$c_1^{\Theta}(L_{\beta}|_Z)=c_1(L_{\beta}|_Z)+b_Zt=\beta|_Z+b_Zt$,
and $c_1^{\Theta}(N|_Z)=c_1(N|_Z)+n_Zt$.

The fact that $\Theta$ acts effectively on $N$ (which follows from (1) in Lemma \ref{lemma:lin-invariant-surface}) implies that $n_Z$ is not divisible by $p$, so we may
choose an integer $m_Z$ such that $m_Zn_Z\equiv 1\mod p$. Then we compute
in $H^*_{\Theta}(Z;\ZZ/p)[t^{-1}]$:
$$(c_1(N|_Z)+n_Zt)^{-1}=m_Zt^{-1}(1-t^{-1}m_Zc_1(N|_Z)).$$
Hence we have:
\begin{align*}
\rho_*\frac{c_2^{\Theta}(E|_Z)}{c_1^{\Theta}(N)} &= \rho_*((\alpha|_Z+a_Zt)(\beta|_Z+b_Zt)m_Zt^{-1}(1-t^{-1}m_Zc_1(N|_Z))) \\
&=\rho_*(m_Zb_Z\alpha|_Z+m_Za_Z\beta|_Z-m_Z^2a_Zb_Zc_1(N|_Z)) \\
&= m_Zb_Z\la\alpha,[Z]\ra+m_Za_Z\la\beta,[Z]\ra-m_Z^2a_Zb_Z\la c_1(N|_Z),[Z]\ra,
\end{align*}
where $[Z]\in H_2(Z)$ denotes the fundamental class. Let use denote for convenience
$$f(Z):=m_Zb_Z\la\alpha,[Z]\ra+m_Za_Z\la\beta,[Z]\ra-m_Z^2a_Zb_Z\la c_1(N|_Z),[Z]\ra.$$
We can now translate the fact that $\pi_*c_2^{\Theta}(E)$ is nonzero into the following statement:
$$\sum_Z f(Z)\qquad\text{is not divisible by $p$},$$
where $Z$ runs over the set of connected components of $Y$.

Let us decompose $Y=Y_1\sqcup\dots\sqcup Y_s$, where for each $j$ there is a connected component $Z$ of $Y$ such that $Y_j=\bigcup_{g\in\Gamma_0}gZ$. We claim that at least for one $j$ we have $Y_j\subseteq X^{\Gamma_0}$. With this claim the proof of the lemma will be complete.
The claim is an immediate consequence of the following two observations.

If $Y_j$ contains more than one connected component then $\sum_{Z\subset Y_j}f(Z)$ is divisible by $p$. Indeed, on the one hand for every connected component $Z$ of $Y$ and any $g\in \Gamma_0$ we have $f(Z)=f(gZ)$, because $\Gamma_0$ is abelian and $\Theta\leq\Gamma_0$, and on the other hand
the cardinality of $\pi_0(Y_j)$ divides $\sharp\Gamma_0$, which is a power of $p$.

If $Y_j$ is connected but $Y_j\not\subset X^{\Gamma_0}$ then $f(Y_j)$ is divisible by $p$. To see this, let us denote $Z=Y_j$ and let $\Gamma_1\leq\Gamma_0$ be the subgroup of elements acting trivially on $Z$. Then $\Xi:=\Gamma_0/\Gamma_1$ acts on $Z$ preserving the orientation and without isolated fixed points (this follows easily from the assumption that there is no $g\in\Gamma$ such that $X^g$ has an isolated fixed point). Hence $\Xi$ acts freely on $Z$. If we now prove that the action of $\Xi$ on $Z$ lifts to actions on $L_{\alpha}|_Z$, $L_{\beta}|_Z$ and $N|_Z$ then we will deduce that $f(Z)$ is divisible by $p$, by the same arguments that we used at the beginning of the proof to justify that the action of $\Gamma$ on $X$ is not free. Since $\Gamma_1$ acts trivially on $Z$, its action on $L_{\alpha}|_Z$, $L_{\beta}|_Z$ and $N|_Z$ will be given by characters
    $\xi_{\alpha},\xi_{\beta},\nu:\Gamma_1\to S^1$ respectively. Using the fact that $\Gamma_0$ is abelian we deduce that these characters can be extended to characters of $\Gamma_0$. Denote the extensions by the same symbols. Then we may twist the action of $\Gamma_0$ on $L_{\alpha}|_Z$, $L_{\beta}|_Z$ and $N|_Z$ by $\xi_{\alpha}^{-1}$, $\xi_{\beta}^{-1}$, $\nu^{-1}$ respectively. The resulting new action will be trivial on $\Gamma_1$, and hence will define a lift of the action of $\Xi$ on $Z$ to the bundles $L_{\alpha}|_Z$, $L_{\beta}|_Z$ and $N|_Z$.
\end{proof}

\begin{lemma}
\label{lemma:commutator-p-group}
There exists a constant $C$ such that for every prime $p$ and any $p$-group $\Gamma\in\gG_0$
there exists an abelian subgroup $B\leq\Gamma$ satisfying $[\Gamma:B]\leq C$. Furthermore,
at least one of the following statements is true.
\begin{enumerate}
\item for every $b\in B$ we have $X^b\neq\emptyset$;
\item there exists some $b\in B$ such that $X^b$ has a connected component
which is a nonorientable surface of genus not bigger than $C$.
\end{enumerate}
\end{lemma}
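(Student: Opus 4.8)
The plan is to reduce everything to the single statement that a maximal normal abelian subgroup $A\trianglelefteq\Gamma$ has bounded index in $\Gamma$. First I would fix a finite $G<\Diff(X)$ acting CTE on $X$ and a monomorphism $\Gamma\hookrightarrow[G,G]$, and replace $\Gamma$ by its image, so that $\Gamma\le[G,G]\le G$ acts CTE on $X$ and every subgroup of $\Gamma$ --- in particular every cyclic one --- lies in $\gG_0$. Pick a MNAS $A\le\Gamma$; by Theorem \ref{thm:MS} it is generated by $r=r(X)$ elements, and since $\Gamma$ is a $p$-group the conjugation map $\Gamma/A\hookrightarrow\Aut(A)$ is injective, so $[\Gamma:A]\le\sharp\Aut(A)$. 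The point is that \emph{it suffices to bound $[\Gamma:A]$}: if no element of $A$ has a nonorientable fixed component, then applying Lemma \ref{lemma:commutator-fixed-points} to each cyclic subgroup $\langle a\rangle\le A$ gives a $p$-power $m_a\le C_0$ with $X^{a^{m_a}}\ne\emptyset$; letting $m^\ast$ be the largest $p$-power $\le C_0$ we get (since $m_a\mid m^\ast$) that $X^{a^{m^\ast}}\ne\emptyset$ for all $a\in A$, so $A_0=\{a^{m^\ast}:a\in A\}$ is an abelian subgroup with $[A:A_0]\le C_0^{\,r}$ satisfying (1); if instead some $a^\ast\in A$ has a nonorientable fixed component, that component has genus $\le D$ by Lemma \ref{lemma:betti-fixed-point-set} and $B=A$ satisfies (2). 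In either case, once $[\Gamma:A]$ is bounded, $[\Gamma:A_0]$ resp. $[\Gamma:A]$ is bounded, and we are done.

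To bound $[\Gamma:A]$, first suppose $A$ acts freely; then the same use of Lemma \ref{lemma:commutator-fixed-points} (the conclusion $X^{\langle a\rangle_0}\ne\emptyset$ with $\langle a\rangle_0\ne\{1\}$ being now impossible) forces $\ord(a)\le C_0$ for every $a\in A$, hence $\sharp A\le C_0^{\,r}$ and $[\Gamma:A]\le(C_0^{\,r})!$; take $B=\{1\}$. Assume now $A$ acts non-freely; since $A\trianglelefteq\Gamma$ I would run the trichotomy underlying the proof of Lemma \ref{lemma:Jordan-nonfree}. If some $a\in A$ has an isolated fixed point, Lemma \ref{lemma:punt-fix-aillat} yields an abelian $B\le\Gamma$ of bounded index with $X^B\ne\emptyset$, giving (1); if not, but some $a\in A$ has a nonorientable fixed component, Lemma \ref{lemma:superficie-no-orientable} together with Lemma \ref{lemma:sense-punts-fixos-aillats} yields an abelian $B\le\Gamma$ of bounded index with an element having a nonorientable fixed component of bounded genus, giving (2); in both cases Lemma \ref{lemma:index-MNA} then gives $[\Gamma:A]\le C^{r^2+1}$.

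There remains the case in which every $X^a$ ($a\in A\setminus\{1\}$) is a disjoint union of orientable closed surfaces, no $X^a$ has isolated points, and $W=W(X,A)\ne\emptyset$. By Lemma \ref{lemma:sense-punts-fixos-aillats}, $W$ is a disjoint union of at most $C$ embedded orientable surfaces of genus $\le C$, each a component of some $X^a$; pick one such component $Z$ and let $G_0=\mathrm{Stab}_\Gamma(Z)$, so $[\Gamma:G_0]\le\sharp\pi_0(W)\le C$. If $\chi(Z)\ne0$ then $Z\cong S^2$, and Lemma \ref{lemma:Jordan-fixed-point} followed by Lemma \ref{lemma:linearization-Jordan} produces a bounded-index abelian subgroup of $G_0$ fixing a point of $Z$; so $\Gamma$ has a bounded-index abelian subgroup, $[\Gamma:A]$ is bounded, and $A_0$ gives (1). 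The hard case is $Z\cong T^2$. After passing to the index-$\le2$ subgroup $G_0^+$ acting orientation-preservingly on $Z$, the oriented normal bundle $N\to Z$ carries a $G_0^+$-invariant complex structure (Lemma \ref{lemma:orientable-2-group}), so $G_0^+$ embeds in $\Aut(N)$ with $N$ a complex line bundle over $T^2$ of degree $Z\cdot Z$. If $Z\cdot Z=0$ then $N$ is trivial and Lemma \ref{lemma:line-bundle-Jordan}(2) gives a bounded-index abelian subgroup of $G_0^+$, finishing as above.

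The genuinely delicate case, and the one I expect to be the main obstacle, is $Z\cong T^2$ with $Z\cdot Z\ne0$. Here Lemma \ref{lemma:line-bundle-on-torus}(2) gives a subgroup $M\le G_0^+$ of index $\le12$ that is nilpotent of class $\le2$, with $[M,M]=\langle c\rangle$ cyclic and acting trivially on $Z$; then $Z$ is a connected component of $X^c$ with $Z\cdot Z\ne0$, and $\langle c\rangle$ acts effectively, hence by a scalar of order $\ord(c)$, on the fibres of $N|_Z$. The task is to bound $\ord(c)$. I would do this with the $G$-signature theorem (Section \ref{s:Atiyah-Singer}): for every $g\in\langle c\rangle\setminus\{1\}$ the $g$-signature of $X$ is bounded in absolute value by $b_2(X)$, while it equals $\sum_i (Z_i\cdot Z_i)$ times trigonometric functions of the normal rotation numbers of $g$ on the components $Z_i$ of $X^c$; comparing, the $Z$-term alone (for the element with normal rotation number $1/\ord(c)$ on $N|_Z$) grows like $|Z\cdot Z|\cdot\ord(c)$, and averaging over $\langle c\rangle$ to absorb the contributions of the other $\le D$ components (a Dedekind-sum estimate keeps these of lower order) forces $\ord(c)=\sharp[M,M]\le C(X)$. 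Finally, since $M$ acts on $Z\cong T^2$ with cyclic kernel, $d(M^{\mathrm{ab}})\le3$ by Theorem \ref{thm:MS}, so the commutator pairing $\omega\colon M^{\mathrm{ab}}\times M^{\mathrm{ab}}\to[M,M]$ has radical of index $\le(\sharp[M,M])^{3}$, whose preimage is the abelian subgroup $Z(M)$; thus $[M:Z(M)]\le C(X)^3$, $\Gamma$ has a bounded-index abelian subgroup, $[\Gamma:A]$ is bounded, and $A_0$ gives (1). Assembling the cases completes the proof.
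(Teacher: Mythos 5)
Your reduction to bounding $[\Gamma:A]$ for a MNAS $A\leq\Gamma$, the treatment of the free case, and the cases with isolated fixed points or nonorientable fixed components all track the paper's proof closely and are fine (the passage from Lemma \ref{lemma:commutator-fixed-points} to the subgroup $A_0=\{a^{m^*}\}$ is exactly the trick used in the paper). The gap is in what you correctly identify as the delicate case. Your plan to bound $\ord(c)$, where $[M,M]=\la c\ra$ acts trivially on a torus $Z$ with $Z\cdot Z\neq 0$, via the $G$-signature theorem cannot work: the formula of Theorem \ref{thm:Atiyah-Singer-G} reads $\sigma(X)=\sum_k\sin^{-2}(\theta_k/2)\,S_k\cdot S_k$, and the contributions of components with positive and negative self-intersection can cancel \emph{exactly}. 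This is not a hypothetical worry --- it is precisely what happens for the Heisenberg-type actions on $T^2\times S^2$ (the source of the non-Jordan examples in \cite{CPS,M5}): there $X^c=T_+\sqcup T_-$ with $T_\pm\cdot T_\pm=\pm n$ and equal rotation angles, so the $G$-signature identity $0=\sin^{-2}(\theta/2)\,n-\sin^{-2}(\theta/2)\,n$ holds for every $n$ and every power of $c$, and no averaging or Dedekind-sum estimate can extract a bound on $\ord(c)$. Indeed, summing the identity over $\la c\ra\setminus\{1\}$ only constrains $\sum_k S_k\cdot S_k$ (via $\sum_j\sin^{-2}(\pi ja/n)=(n^2-1)/3$), not the order of $c$ or the individual self-intersections. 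Since your argument at this point uses nothing about $\Gamma$ beyond its being a $p$-group stabilizing $Z$, it would "rule out" actions that actually exist.

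What the paper does instead in this case is purely group-theoretic plus linearization, and never bounds the order of anything acting on a normal bundle. Having arranged that every $X^a$ ($a\in A\setminus\{1\}$) is an orientable surface and that $X^a\neq\emptyset$ for all $a\in A_0$ (so $W=W(X,A)\neq\emptyset$), one passes to the subgroup $\Gamma_2\leq\Gamma$ of bounded index that preserves each connected component of $W$ \emph{together with its orientation}. The key observation is that $\Gamma_2$ \emph{centralizes} $A_0$: if $a\in A_0$ and $Z$ is a component of $X^a$, then $g\in\Gamma_2$ preserves $Z$ and acts on the normal bundle preserving orientations, while $a$ acts trivially on $Z$, so $g$ and $a$ commute by Lemmas \ref{lemma:lin-invariant-surface}(1) and \ref{lemma:commuten-infinitesimalment}. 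Then the MNAS embedding $\Gamma_2/(\Gamma_2\cap A)\hookrightarrow\Aut(A)$ lands in $\Aut^0_{A_0}(A)$, whose size is bounded by Lemma \ref{lemma:MS-2} since $[A:A_0]$ is bounded; hence $[\Gamma_2:\Gamma_2\cap A]$ is bounded and $B:=\Gamma_2\cap A_0$ is an abelian subgroup of bounded index satisfying conclusion (1). Note that this argument makes no case distinction on the genus or self-intersection of the components of $W$; your splitting by $\chi(Z)$ and $Z\cdot Z$ becomes unnecessary once the centralizing step is in place. The information that $\Gamma$ lies in a commutator subgroup enters only through Lemma \ref{lemma:commutator-fixed-points} (guaranteeing $W\neq\emptyset$ and conclusion (1)), not through any signature computation.
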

\begin{proof}
Let $p$ be any prime and let $\Gamma\in\gG_0$ be a $p$-group. Choose a MNAS $A\leq\Gamma$.
Recall that since $A\leq\Gamma$ is a MNAS, conjugation gives a monomorphism $c:\Gamma/A\hookrightarrow\Aut(A)$ (see \cite[\S 5.2.3]{Rob}).

Suppose that there exists some $a\in A$ such that $X^a$ contains an isolated fixed point. (resp. a connected component $Z$ which is a
nonorientable surface).
Then we may apply Lemma \ref{lemma:punt-fix-aillat}
(resp. Lemma \ref{lemma:superficie-no-orientable})
and conclude the existence of an abelian
subgroup $B\leq \Gamma$ satisfying $[\Gamma:B]\leq C$ (where $C$
depends only on $X$) and furthermore one of the following statements are true:
\begin{enumerate}
\item $X^B\neq\emptyset$ (this happens if we are applying
Lemma \ref{lemma:punt-fix-aillat}), or
\item there is some $b\in B$ such that
$X^b$ has $Z$ as a connected component (this happens if we are applying Lemma \ref{lemma:superficie-no-orientable}).
\end{enumerate}
So we are done in this case.

Suppose from now on that the fixed point set of every $a\in A\setminus\{1\}$
is a possibly disconnected orientable embedded surface.
Let $C_1$ be the constant given by Lemma \ref{lemma:sense-punts-fixos-aillats} and
let $W=W(X,A)$.
Since $A$ is normal in $\Gamma$, the action of $\Gamma$ on $X$ preserves $W$.
By (1) in Lemma \ref{lemma:sense-punts-fixos-aillats}, $W\subset X$ is a possibly disconnected closed embedded surface, and each connected component of $W$ is a connected component of $X^a$ for some $a\in A$.
So, by our assumption, $W$ is orientable.
By (3) in Lemma \ref{lemma:sense-punts-fixos-aillats},
$W$ contains at most $C_1$ connected components
(but beware that we have not proved that $W$ is nonempty).

Let $r$ be the number given by Theorem \ref{thm:MS} applied to $X$, so
that every finite abelian group acting effectively on $X$ can be generated by $r$ elements.

Let $C_2$ be the constant given by Lemma \ref{lemma:commutator-fixed-points}.
Let $p^k$ be the biggest power of $p$ not bigger than $C_2$.
Let $A_0\leq A$ be the image of the multiplication map $A\to A$, $a\mapsto
p^ka$ (we use additive notation on $A$).
Since $A$ can be generated by $r$ or fewer elements, we have
\begin{equation}
\label{eq:bound-A-A-0}
[A:A_0]\leq p^{kr}\leq C_2^r.
\end{equation}
Hence if $A_0=\{1\}$ then $\sharp A\leq C_2^r$, so $\sharp\Aut(A)\leq (C_2^r)!$.
Since there is a monomorphism $\Gamma/A\to\Aut(A)$, we have
$\sharp\Gamma\leq C_2^r(C_2^r)!$. Setting  $B=\{1\}$ we are done in this case.

Suppose from now on that $A_0\neq\{1\}$.
By Lemma \ref{lemma:commutator-fixed-points} we have $X^a\neq\emptyset$ for every $a\in A_0$.
Indeed, any $a\in A_0$ can be written as $a=p^kb$ for some $b\in A$, so $a$ is contained
in any subgroup $F\leq \la b\ra$ satisfying $[\la b\ra:F]\leq C_2$.
It follows that $W\neq\emptyset$.

Since $[A:A_0]$ is bounded above by a constant depending only on $X$, by Lemma \ref{lemma:quatre-u-dos} and Theorem \ref{thm:MS}
the normalizer $\Gamma_0\leq\Gamma$ of $A_0\leq A$
satisfies
$$[\Gamma:\Gamma_0]\leq C_3$$
for some constant $C_3$ that depends only on $X$.

Since the action of $\Gamma$ on $X$ preserves $W$, so does the action of $\Gamma_0$.
Let $\Gamma_1\leq\Gamma_0$ be the subgroup of elements preserving each connected component of $W$.
Then
$$[\Gamma_0:\Gamma_1]\leq C_1!.$$
Choose some orientation of $W$. The set of possible orientations of $W$ contains
$2^{\sharp\pi_0(W)}\leq 2^{C_1}$ elements, so the subgroup $\Gamma_2\leq\Gamma_1$ preserving
the orientation of $W$ satisfies
$$[\Gamma_1:\Gamma_2]\leq 2^{C_1}.$$

We claim that the elements of $\Gamma_2$ centralize $A_0$. Let $g\in\Gamma_2$ and $a\in A_0$.
Let $Z\subseteq X^a$ be a connected component. Then $Z$ is a connected
component of $W$ as well and thus $g$ preserves $Z$ and acts on $Z$ preserving the orientation, while $a$ acts trivially on $Z$. This implies that $g$ and $a$ commute, by (1) in Lemma \ref{lemma:lin-invariant-surface} and Lemma \ref{lemma:commuten-infinitesimalment}.

Let $\Aut_{A_0}^0(A)\leq\Aut_{A_0}(A)$ denote the automorphisms of $A$ which fix each element of $A_0$. From the bound (\ref{eq:bound-A-A-0}), Lemma \ref{lemma:MS-2}, and Theorem \ref{thm:MS},
we conclude that
$$\sharp \Aut_{A_0}^0(A)\leq C_4$$
for some constant $C_4$ depending only on $X$.
Using once again the fact that $A\leq\Gamma$
is a MNAS, we deduce that conjugation gives a monomorphism
$\Gamma_2/\Gamma_2\cap A\hookrightarrow\Aut(A)$. Since $\Gamma_2$ centralizes $A_0$, the image of this monomorphism lies in $\Aut_{A_0}^0(A)$. Hence
$$[\Gamma_2:\Gamma_2\cap A]=\sharp(\Gamma_2/\Gamma_2\cap A)\leq C_4.$$
Define $B:=\Gamma_2\cap A_0$. Then we have $X^b\neq\emptyset$ for every $b\in B$ and
\begin{align*}
[\Gamma:B] &= [\Gamma:\Gamma_0][\Gamma_0:\Gamma_1][\Gamma_1:\Gamma_2][\Gamma_2:\Gamma_2\cap A]
[\Gamma_2\cap A:\Gamma_2\cap A_0] \\
&\leq [\Gamma:\Gamma_0][\Gamma_0:\Gamma_1][\Gamma_1:\Gamma_2][\Gamma_2:\Gamma_2\cap A]
[A:A_0] \\
&\leq C_3C_1!2^{C_1}C_4C_2^r.
\end{align*}
The proof of the lemma is now complete.
\end{proof}

Let $C$ and $d$ be positive integers.
Recall that a collection of finite groups $\cC$ satisfies $\jJ(C,d)$
if each $G\in\cC$ has an abelian subgroup $A$ such that $[G:A]\leq C$ and
$A$ can be generated by $d$ elements.
Denote by
$\tT(\cC)$ the set of all $T \in \cC$ such that
there exist primes $p$ and $q$, a normal Sylow $p$-subgroup $P$ of $T$,
and a Sylow $q$-subgroup $Q$ of $T$, such that $T = PQ$. Note that here
$Q$ might be trivial.
The following is the main result in \cite{MT}:

\begin{theorem}
\label{thm:TM}
Let $d$ and $C_0$ be positive integers.
Let $\cC$ be a collection of finite groups which is closed under taking subgroups
and such that $\tT(\cC)$ satisfies $\jJ(C_0,d)$.
Then there exists a positive integer $C$ such that
$\cC$ satisfies $\jJ(C,d)$.
\end{theorem}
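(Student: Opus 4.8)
The plan is to combine the classification of finite simple groups (CFSG) with the structure theory of the generalized Fitting subgroup. Set $C_1:=d+\log_2 C_0$. It suffices to find, for each $G\in\cC$, an abelian subgroup of index bounded only in terms of $C_0,d$: given such a subgroup $A$, its Sylow subgroups at primes $>C_0$ are automatically generated by $\le d$ elements (the hypothesis applied to these $p$-groups forces it, since a subgroup of $p$-power index $\le C_0<p$ is everything), so shrinking the Sylow subgroups at the finitely many primes $\le C_0$ by a bounded factor and reassembling by the Chinese remainder theorem produces an abelian subgroup of $G$ of bounded index generated by $\le d$ elements.

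\emph{Step 1 (free consequences of the hypothesis).} Every $p$-subgroup of a group in $\cC$ lies in $\tT(\cC)$ (take the $q$-part trivial), hence has an abelian subgroup of index $\le C_0$ generated by $\le d$ elements; applied to elementary abelian $p$-groups this shows every $G\in\cC$ has $p$-rank $\le C_1$ for all $p$, and it shows that in any $G\in\cC$ each Sylow subgroup $F_p$ of the Fitting subgroup $F(G)$ is abelian unless $p\le C_0$. Moreover, if $q>C_0$, a $q$-group $Q$ acts on a $p$-group $P$, and $P\rtimes Q\in\cC$, then an abelian subgroup of index $\le C_0<q$ of $P\rtimes Q$ must surject onto $Q$ and centralise a subspace of $P/\Phi(P)$ of codimension $\le C_1$; hence some subgroup of $Q$ of index $\le C_0$ fixes such a subspace. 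These local facts are what will be patched together.

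\emph{Step 2 (bounding the simple groups; the main obstacle).} Using CFSG, show there is $N=N(C_0,d)$ such that every quasi-simple group occurring as a subgroup of a group in $\cC$ has order $\le N$. A nonabelian simple group is never itself of the form $PQ$ (Burnside), but every sufficiently large one contains a Frobenius group $\ZZ/p\rtimes\ZZ/q^k$ with cyclic kernel of prime order and cyclic complement of prime-power order, and of arbitrarily large order: for $\PSL(2,\FF_p)$ a Borel subgroup contains $\ZZ/p\rtimes\ZZ/q^k$ with $q^k$ the full $q$-part of $(p-1)/2$, a group with $\alpha=q^k$, so $\jJ(C_0,d)$ on $\tT(\cC)$ forces $q^k\le C_0$ for every prime $q$, hence $(p-1)/2$, and so $p$, to be bounded; this Frobenius subgroup persists in any quasi-simple cover. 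Carrying this out family by family — alternating, classical, exceptional, sporadic, using standard information on maximal tori, parabolics and Lie ranks, and using that $\cC$ is subgroup-closed — together with the bound on Schur multipliers of simple groups of bounded order, is the part that genuinely requires the classification.

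\emph{Step 3 (structural reduction).} Fix $G\in\cC$ and work with $F^*(G)=F(G)E(G)$, so that $C_G(F^*(G))\le F^*(G)$. The components of the layer $E(G)$ are subgroups of $G$, hence of bounded order by Step 2, and the $p$-rank bound caps the number of components up to the permutation action of $G$ on them (a large direct power of a fixed nonabelian component contains a $\tT$-subgroup of large $\alpha$, as in Step 2); so after passing to a subgroup of bounded index we may assume $|E(G)|$ is bounded. In $F(G)=\prod_p F_p$, each $F_p$ with $p>C_0$ is already abelian, and for each of the finitely many $p\le C_0$ we replace $F_p$ by the intersection of the $G$-conjugates of a fixed abelian subgroup of index $\le C_0$ — a $G$-normal abelian subgroup of $F_p$ of bounded index, the index bound coming from Lemmas \ref{lemma:MS-1}--\ref{lemma:quatre-u-dos} and the $p$-rank bound. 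Thus, after a further bounded-index reduction, $F(G)$ is abelian and $G$-normal. A prime-by-prime analysis of the actions of $G$ on this $F(G)$ and on the bounded quotient through which $G$ acts on $E(G)$ — applying the local statement of Step 1 at the primes $>C_0$, treating the finitely many primes $\le C_0$ directly, and using $C_G(F^*(G))\le F^*(G)$ to pin down the rest of $G$ — then yields an abelian subgroup $A\le G$ with $[G:A]$ bounded only in terms of $C_0,d$. Arranging this so that every unbounded feature of $G$ is confined to where Step 1 or Step 2 applies is the delicate point; combined with the preliminary reduction, this gives $\jJ(C,d)$ for $\cC$.
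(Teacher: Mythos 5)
This theorem is not proved in the paper at all: it is imported verbatim as the main result of \cite{MT}, so there is no internal proof to compare your argument against. Your sketch is in the same general spirit as the actual proof in \cite{MT} (a CFSG-based analysis), but as written it is an outline with its two hardest steps left undone, and it therefore does not constitute a proof.

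Concretely: Step 1 is correct and genuinely follows from the hypothesis (the $p$-rank bound $C_1$, abelianity of $F_p$ for $p>C_0$, and the $\PSL(2,\FF_p)$ computation forcing $(p-1)/2\leq C_0^{\pi(C_0)}$ are all fine). But Step 2, which you yourself describe as ``the part that genuinely requires the classification,'' is exactly the content of the theorem and is not carried out; even the one case you treat has an unaddressed issue, namely that ``this Frobenius subgroup persists in any quasi-simple cover'' is not automatic --- the preimage of $\ZZ/p\rtimes\ZZ/q^k$ in a cover with center $Z$ need not lie in $\tT(\cC)$, and $\cC$ is closed under subgroups but \emph{not} under quotients, so you cannot apply the hypothesis to $L/Z(L)$. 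This failure of quotient-closure also undermines Step 3: the ``prime-by-prime analysis'' of the action of $G$ on $F(G)$ and on $E(G)$ takes place in quotients of $G$ (e.g.\ in $G/C_G(F(G))\leq\Aut(F(G))$), where the hypothesis $\jJ(C_0,d)$ is unavailable, and you acknowledge that confining all unbounded behaviour to where Steps 1--2 apply ``is the delicate point'' without resolving it. Finally, shrinking $F_p$ to the core of an abelian subgroup destroys the self-centralizing property $C_G(F^*(G))\leq F^*(G)$ that the reduction relies on. Each of these can in principle be repaired --- this is essentially what \cite{MT} does --- but none of them is repaired here, so the proposal should be regarded as a plausible strategy rather than a proof.
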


\begin{lemma}
\label{lemma:Jordan-commutator}
$\gG_0$ satisfies the property $\jJ(C,r)$ for some constant $C$.
\end{lemma}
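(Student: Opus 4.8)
The plan is to invoke Theorem \ref{thm:TM}. First, $\gG_0$ is closed under taking subgroups, and every $\Gamma\in\gG_0$ acts in a CTE way on $X$ (through a monomorphism into some $[G,G]\leq G$), so by Theorem \ref{thm:MS} every abelian subgroup of every member of $\gG_0$ is generated by $r$ elements, where $r$ is the Mann--Su constant of $X$. Hence the ``$r$-generated'' clause in $\jJ(\cdot,r)$ is automatic, and by Theorem \ref{thm:TM} it suffices to bound, uniformly, the index of an abelian subgroup in each $T\in\tT(\gG_0)$. So fix such a $T=PQ$, with $P$ a normal Sylow $p$-subgroup and $Q$ a Sylow $q$-subgroup; the case $Q=\{1\}$ is precisely Lemma \ref{lemma:commutator-p-group}, so assume $p\neq q$.

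Apply Lemma \ref{lemma:commutator-p-group} to the $p$-group $P$ to get an abelian subgroup $P_0\leq P$ with $[P:P_0]$ bounded and whose action on $X$ is not free (some $b\in P_0\setminus\{1\}$ has $X^b\neq\emptyset$). Since $P_0$ is $r$-generated, Lemma \ref{lemma:quatre-u-dos} applied with $P\trianglelefteq T$ bounds $[T:N_T(P_0)]$; replacing $T$ by $N_T(P_0)$ and $P,Q$ accordingly---at the cost of a bounded factor in the index---I may assume $P_0\trianglelefteq T$. If this forces $\sharp P$ to be bounded I conclude at once by Lemma \ref{lemma:proceedings}, using an abelian bounded-index subgroup of $Q$ supplied by Lemma \ref{lemma:commutator-p-group}; so assume $P_0\neq\{1\}$.

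Now $P_0\trianglelefteq T$ is a normal abelian $p$-subgroup of $T$ with non-free action, so Lemma \ref{lemma:Jordan-nonfree} applies: either $T$ has an abelian subgroup of bounded index---and we are done---or there is an embedded connected orientable surface $Z\subset X$ of bounded genus preserved by a subgroup $T_0\leq T$ of bounded index. In the latter case, passing to the subgroup of $T_0$ acting on $Z$ preserving its orientation, linearizing along $Z$, and equipping the normal bundle $N\to Z$ with an invariant complex structure (Lemma \ref{lemma:orientable-2-group}, using Lemma \ref{lemma:lin-invariant-surface}(1) for effectiveness), $T_0$ becomes a finite group of complex line bundle automorphisms of $N$; if $\chi(Z)\neq 0$ or $N$ is trivial, Lemma \ref{lemma:line-bundle-Jordan} finishes the argument. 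The remaining case---$Z\cong T^2$ with $Z\cdot Z\neq 0$---is the main obstacle, because Lemma \ref{lemma:line-bundle-on-torus}(1) only bounds the index of an abelian subgroup of $T_0$ in terms of $|Z\cdot Z|$, which is not uniform.

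To handle it I would exploit that $T$ is built from prime-power pieces. After passing to further bounded-index subgroups, the image of $T_0$ in $\Diff(T^2)$ may be assumed to be a translation group $\Lambda$; and since $T_0$ has the form $P'Q'$ with $P'$ a $p$-group and $Q'$ a $q$-group, $\Lambda$ is an internal direct product $\Lambda_P\times\Lambda_Q$ of a $p$-group and a $q$-group. The skew-symmetric commutator pairing $\Lambda\times\Lambda\to(\text{fibre rotations})$ attached to the action of $T_0$ on $N$ is therefore block diagonal with respect to this splitting, because $\Lambda_P\otimes\Lambda_Q=0$. Now apply Lemma \ref{lemma:commutator-p-group} to the $p$-group $P'\in\gG_0$: it has an abelian subgroup of bounded index, whose image in $\Lambda_P$ is an isotropic subgroup of bounded index, so some maximal isotropic $I_P\leq\Lambda_P$ has bounded index (all maximal isotropics of $\Lambda_P$, which has rank $\leq 2$, having equal index); likewise one gets $I_Q\leq\Lambda_Q$. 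The preimage in $T_0$ of $I_P\times I_Q$ is then abelian of bounded index in $T_0$, hence in $T$. Assembling the constants from all cases produces the required $C_0$, completing the proof.
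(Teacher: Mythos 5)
Your proof is correct, but in the decisive step it takes a genuinely different route from the paper. Both arguments reduce via Theorem \ref{thm:TM} to $\tT(\gG_0)$, extract abelian bounded-index subgroups of $P$ and $Q$ from Lemma \ref{lemma:commutator-p-group}, pass to normalizers, and dispose of isolated fixed points and nonorientable fixed components via Lemmas \ref{lemma:punt-fix-aillat} and \ref{lemma:superficie-no-orientable} (which you do implicitly through Lemma \ref{lemma:Jordan-nonfree}). The divergence is in the remaining case: the paper never confronts a torus of large self-intersection at all. It shows directly that $P_0$ commutes elementwise with a bounded-index subgroup $Q_1\leq Q_0$, because every nontrivial $g\in P_0$ fixes pointwise each component $Z$ of $W(X,P_0)$ while $Q_1$ preserves each such $Z$ together with its orientation, so Lemma \ref{lemma:commuten-infinitesimalment} plus the injectivity of the linearization along $Z$ (statement (1) of Lemma \ref{lemma:lin-invariant-surface}) gives $[g,q]=1$; the abelian subgroup is then simply $P_0Q_1$ and the degree of the normal bundle never enters. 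You instead meet the torus with $Z\cdot Z\neq 0$ head-on and defuse it algebraically, via the commutator pairing of the central extension $1\to K\to T_0'\to\Lambda\to 1$: the pairing vanishes between the coprime Sylow parts of $\Lambda$, and each part contains an isotropic subgroup of bounded index, namely the image of an abelian bounded-index subgroup supplied by a second application of Lemma \ref{lemma:commutator-p-group}, so the preimage of their product is abelian of bounded index. This works, and it isolates exactly where coprimality of $p$ and $q$ is used; the paper's route is shorter because the geometric commutation lemma makes the normal bundle irrelevant. Two inessential imprecisions in your write-up: the appeal to ``all maximal isotropics of $\Lambda_P$ have equal index'' is not needed (the image of the abelian subgroup is already isotropic of bounded index, which is all you use), and ``translation group'' should be weakened to ``abelian image'', which is what the cited result of \cite{M5} provides and is all your argument requires.
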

\begin{proof}
By Theorem \ref{thm:TM} it suffices to prove the existence of a constant $C_0$
such that $\tT(\gG_0)$ satisfies $\jJ(C_0,r)$.

Let $\Gamma\in\tT(\gG_0)$ and write $\Gamma=PQ$, where $P\leq \Gamma$ (resp. $Q\leq \Gamma$) is a
Sylow $p$-subgroup (resp. $q$-subgroup), $p,q$ are different primes, and $P$ is a normal
subgroup of $\Gamma$. By Lemma \ref{lemma:commutator-p-group} there is an abelian subgroup
$P_0\leq P$ satisfying $[P:P_0]\leq C_1$, where $C_1$ depends only on $X$, and, furthermore, at least one of these statements is
true:
\begin{enumerate}
\item for any $g\in P_0$ we have $X^g\neq\emptyset$,
\item there is some $g\in P_0$
such that $X^g$ has a connected component which is a nonorientable surface.
\end{enumerate}
Using again Lemma \ref{lemma:commutator-p-group} we may pick an abelian subgroup
$Q'\leq Q$ satisfying $[Q:Q']\leq C_1$.
Let $Q_0\leq Q'$ be the normalizer of $P_0$ in $Q'$.
Since $Q_0=Q'\cap N_{\Gamma}(P_0)$,
by Theorem \ref{thm:MS} and Lemma \ref{lemma:quatre-u-dos}, there exists a constant $C_2$
depending only on $X$ such that $$[Q':Q_0] \leq [\Gamma:N_\Gamma(P_0)]\leq C_2.$$

By Lemmas \ref{lemma:punt-fix-aillat} and \ref{lemma:superficie-no-orientable},
if there exists some $g\in P_0$ such that $X^g$ has a connected component which is an isolated
point or a nonorientable surface then there exists an abelian subgroup $B\leq P_0Q_0$ satisfying $[P_0Q_0:B]\leq C_3$,
where $C_3$ only depends on $X$. Since $[PQ:P_0Q_0]\leq C_1^2C_2$, it follows that
$$[\Gamma:B]=[PQ:B]\leq C_1^2C_2C_3$$
and we are done in this case.

Let us assume for the remainder of the proof that
the fixed point set of every $g\in P_0\setminus\{1\}$
is a possibly disconnected orientable embedded surface.
Define $W=W(X,P_0)$.
By Lemma \ref{lemma:sense-punts-fixos-aillats},
$W$ is a possibly disconnected embedded closed surface (orientable,
by our previous assumption), for each
$g\in P_0\setminus\{1\}$ the fixed point set
$X^g$ is equal to the union of some connected components of $W$,
and $W$ has at most $C_4$ connected components, where $C_4$ only depends on $X$.
Furthermore, the genus of each connected component of $W$ is at most $C_4$.
Since $Q_0$ normalizes $P_0$, the action of $Q_0$ on $X$ preserves $W$.

Our hypothesis implies that statement (1) above holds true.
Let $Q_1\leq Q_0$ be the subgroup of those elements preserving each connected component of
$W$, and acting orientation preservingly on each connected component of $W$.
We have $[Q_0:Q_1]\leq 2^{C_4}C_4!$. We claim that if $p\in P_0$ and $q\in Q_1$ then $p$ and $q$
commute. To see this, take a connected component $Z$ of $X^p$. By (2) in Lemma \ref{lemma:sense-punts-fixos-aillats}, $Z$ is a connected component of $W$, so $Q_1$ preserves $Z$ and acts on $Z$ orientation preservingly.
Then the commutativity of $p$ and $q$ follows from (1) in Lemma \ref{lemma:lin-invariant-surface}
and from Lemma \ref{lemma:commuten-infinitesimalment}. Hence $P_0Q_1$ is abelian, and combining
our previous bounds we obtain
$$[\Gamma:P_0Q_1]\leq C_1^2C_22^{C_4}C_4!,$$
so the proof of the lemma is now complete.
\end{proof}

\subsection{Proof of Theorem \ref{thm:2-step-nilpotent}}
\label{ss:proof-thm:2-nilpotent}
Let $X$ be an oriented and connected closed $4$-manifold.
Let $r$ be the number resulting from applying Theorem \ref{thm:MS}
to $X$, so that every finite abelian group $A$ acting effectively
on $X$ can be generated by $r$ elements.

Let $G$ be a finite group acting in a smooth and CTE way on $X$.
Let $\Gamma=[G,G]$.
By Lemma \ref{lemma:Jordan-commutator} there is an abelian
subgroup $A\leq\Gamma$ satisfying
$$[\Gamma:A]\leq C_1,$$
where $C_1$ depends only on $X$.
We distinguish two cases, according to whether the action of $A$ on $X$ is free or not.

Suppose that $A$ acts freely on $X$. Let $r,C_F$ be the constants given by
Theorem \ref{thm:MS} and Lemma \ref{lemma:commutator-fixed-points} applied to $X$.
If $p$ is a prime bigger than $C_F$ and the $p$-part $A_p\leq A$ is nontrivial,
then by Lemma \ref{lemma:commutator-fixed-points} the action of $A_p$ on $X$ has
nontrivial fixed points, which contradicts the assumption that $A$ acts freely. Hence
we may write
$$A\simeq A_{p_1}\times\dots\times A_{p_s},$$
where $p_1,\dots,p_s$ are the prime numbers in $\{1,\dots,C_F\}$.
By Lemma \ref{lemma:commutator-fixed-points} the exponent of $A_{p_i}$ cannot
be bigger than $C_F$, for otherwise the action of $A_{p_i}$ would not be free.
This implies that $\sharp A_{p_i}\leq C_F^r$, and consequently
$$\sharp A\leq C_F^{rs},$$
so $\sharp\Gamma\leq C_1C_F^{rs}$. Applying Lemma \ref{lemma:proceedings} to the
exact sequence $1\to\Gamma\to G\to G/\Gamma\to 1$ we conclude the
existence of an abelian subgroup $B\leq G$ such that $[G:B]$ is bounded
above by a constant depending only on $X$. In this case we set $G_0:=B$ and we are done.

Assume, for the remainder of the proof, that $A$ does not act freely on $X$.
Let
$$G'=N_{G}(A)\leq G$$
be the normalizer of $A$ in $G$.
By Lemma \ref{lemma:quatre-u-dos}
we have
$$[G:G']\leq C_2,$$
where $C_2$ depends only on $X$.
Let $p$ be a prime such that $A_p\neq 1$ and the action of $A_p$ on $X$ is not free.
Since $A_p$ is a characteristic subgroup of $A$, $A_p$ is normal in $G'$.

If there is some $a\in A_p$ such that $X^a$ has an isolated fixed point then
by Lemma \ref{lemma:punt-fix-aillat} there is an abelian subgroup $B\leq G'$
such that $[G':B]$ is bounded above by a constant depending only on $X$, so
setting $G_0:=B$ we are done in this case. 

Now assume that there is no $a\in A_p$ such that $X^a$ has an isolated fixed point.
If there is some $b\in A_p$ such that $X^b$ has a connected component which is a nonorientable surface then by Lemma \ref{lemma:superficie-no-orientable} there is an abelian subgroup $B\leq G'$ such that $[G':B]$ is bounded above by a constant depending only on $X$, and hence setting $G_0:=B$ we are also done in this case.

At this point we may assume that for every $a\in A_p\setminus\{1\}$ the fixed point set
$X^a\subset X$ is a possibly empty embedded orientable surface and that the set
$W=W(X,A_p)$ defined in Subsection \ref{ss:def-W(X,A)} is nonempty. Let $C_3$ be the constant given by applying Lemma \ref{lemma:sense-punts-fixos-aillats} to $X$ (so $C_3$ only depends on $X$).
Then $W$ has at most $C_3$ connected components and the absolute value of the genus of each of its connected components is not bigger than $C_3$. Furthermore, since $A_p$ is a normal subgroup of $G'$ the action of $G'$ on $X$ preserves $W$.

We distinguish two cases according to whether $\chi(Z)$ vanishes for all connected components
$Z\subseteq W$ or not.

Suppose first that there is a connected component $Z\subseteq W$ such that $\chi(Z)\neq 0$.
Let $G''\leq G'$ be the subgroup of elements preserving $Z$. We have $[G':G'']\leq C_3$.
From Lemmas \ref{lemma:lin-invariant-surface} and \ref{lemma:line-bundle-Jordan}
we deduce the existence of an abelian subgroup $B\leq G''$ such that $[G'':B]$
is bounded above by a constant depending only on $X$. Hence, setting $G_0:=B$ we are done.

Finally, suppose that $\chi(Z)=0$ for all connected components
$Z\subseteq W$. Choose any connected component $Z\subset X$ and
let $G''\leq G'$ be the subgroup of elements preserving $Z$. We have $[G':G'']\leq C_3$
and by Lemmas \ref{lemma:lin-invariant-surface} and \ref{lemma:line-bundle-on-torus}
there exists a nilpotent subgroup $G_0\leq G''$ of class at most $2$ satisfying $[G'':G_0]\leq 12$
and, furthermore, $[G_0,G_0]$ is cyclic and acts trivially on $Z$. We thus have
$$Z\subseteq X^{[G_0,G_0]}\subseteq W,$$
so $X^{[G_0,G_0]}$ is a nonempty union of embedded tori because all connected components
of $W$ are orientable and have zero Euler characteristic.
Combining the previous estimates we have
$$[G:G_0]\leq [G:G'][G':G''][G'':G_0]\leq 12\,C_2C_3,$$
so the proof of the theorem is now complete.

\subsection{Proof of Theorem \ref{thm:2-nilpotent-alpha}}

Suppose that $N$ is a finite nilpotent group of class at most $2$ acting in a
smooth and CTE way on $X$.
Then $[N,N]$ is abelian and central in $N$.
The arguments in Subsection \ref{ss:proof-thm:2-nilpotent}
imply the existence of a constant $C_1$, depending only on $X$, such that if
$\alpha(N)\geq C_1$ then the group $[N,N]$ does not act freely on $X$, and
any nontrivial $g\in [N,N]$ whose action on $X$ has fixed points satisfies
(2) in the statement of Theorem \ref{thm:2-nilpotent-alpha}.
Furthermore, (3) holds
for any such $g$ (with a suitable choice of $C$ depending only on $X$) thanks to (1) in Lemma \ref{lemma:line-bundle-on-torus}.

To conclude the proof of Theorem \ref{thm:2-nilpotent-alpha} assume that
$\alpha(N)\geq C_1$ and let us prove that
there exists a nontrivial $g\in [N,N]$ which does not act freely on $X$ and
whose order satisfies $\ord(g)\geq f(\alpha(N))$
for some function $f$ depending on $X$ and satisfying $\lim_{n\to\infty} f(n)=\infty$.

We may write
$$[N,N]\simeq \Gamma_1\times\dots\times\Gamma_s,$$
where each $\Gamma_i$ is cyclic of prime power order. By Theorem \ref{thm:MS},
$s\leq C_2$, where $C_2$ depends only on $X$.
For any $g\in[N,N]$ which does not act freely on $X$ the fixed
point set $X^g$ is the disjoint union of some tori (because we are assuming $\alpha(N)\geq C_1$),
so in particular $X^g$ has no connected component which is a nonorientable surface.
Consequently, Lemma \ref{lemma:commutator-fixed-points}
implies that for every $i$ there exists some $\Gamma_i'\leq\Gamma_i$
such that $X^{\Gamma_i'}\neq\emptyset$ and $[\Gamma_i:\Gamma_i']\leq C_3$
for some constant $C_3$ depending only on $X$.
Then we have
$$\max_i\sharp\Gamma_i'\geq\frac{\max_i\sharp\Gamma_i}{C_3}\geq \frac{[N,N]^{1/C_2}}{C_3}.$$

By Lemma \ref{lemma:proceedings} there exists a function $h:\NN\to\NN$ depending only on $X$
and satisfying $\lim_{n\to\infty}h(n)=\infty$ and $\sharp[N,N]\geq h(\alpha(N))$
(just take $G=N$ and $G_0=[N,N]$, so that $G_1=N/[N,N]$ is abelian).
The function $f:\NN\to\NN$ defined as
$$f(n):=\frac{h(n)^{1/C_2}}{C_3}$$
depends only on $X$, it satisfies $\lim_{n\to\infty}f(n)=\infty$, and by the previous estimate
we have
$\max_i\sharp\Gamma_i'\geq f(\alpha(N))$, so picking some $i$ realizing the previous maximum,
any generator $g$ of $\Gamma_i'$ satisfies $\ord(g)\geq f(\alpha(N))$.

\section{Using the Atiyah--Singer $G$-signature theorem}
\label{s:Atiyah-Singer}

\begin{theorem}
\label{thm:Atiyah-Singer-G-0}
Let $X$ be a closed connected and oriented $4$-manifold satisfying $\sigma(X)\neq 0$.
If $\phi\in\Diff(X)$ has finite order and acts trivially on cohomology then $X^\phi\neq\emptyset$.
\end{theorem}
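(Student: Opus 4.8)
The plan is to argue by contradiction using the Atiyah--Singer $G$-signature theorem. Suppose that $X^\phi=\emptyset$. Since $\phi$ acts trivially on $H^*(X)$, in particular on $H^4(X)\cong\ZZ$, the diffeomorphism $\phi$ preserves the orientation of $X$, so the $G$-signature theorem applies to $\phi$ (or, if one prefers, to the finite cyclic group $\la\phi\ra$ acting on $X$).

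Recall that for an orientation-preserving finite-order diffeomorphism $\psi$ of a closed oriented $4$-manifold $X$, the $G$-signature $\operatorname{Sign}(\psi,X)$ is defined as $\Tr(\psi^*|_{H^{2,+}})-\Tr(\psi^*|_{H^{2,-}})$, where $H^2(X;\RR)=H^{2,+}\oplus H^{2,-}$ is a $\psi$-invariant orthogonal splitting into maximal positive and negative definite subspaces for the intersection form. The $G$-signature theorem expresses $\operatorname{Sign}(\psi,X)$ as a sum of local contributions indexed by the connected components of $X^\psi$: isolated fixed points contribute explicit trigonometric expressions in the rotation angles of $\psi$ on the tangent space, and $2$-dimensional fixed components contribute expressions involving their self-intersection numbers and the rotation angle of $\psi$ on the normal bundle. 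For our purpose only the qualitative consequence is needed: if $X^\psi=\emptyset$, this sum is empty, hence $\operatorname{Sign}(\psi,X)=0$.

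Applying this with $\psi=\phi$ and using the assumption $X^\phi=\emptyset$ gives $\operatorname{Sign}(\phi,X)=0$. On the other hand, since $\phi$ acts trivially on $H^*(X)$ it acts trivially on $H^2(X;\RR)=H^2(X)\otimes\RR$, so $\phi^*=\id$ there; hence $\operatorname{Sign}(\phi,X)=\dim H^{2,+}-\dim H^{2,-}=b_2^+(X)-b_2^-(X)=\sigma(X)$. Combining the two computations yields $\sigma(X)=0$, contradicting the hypothesis. Therefore $X^\phi\neq\emptyset$.

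The essential (and only deep) input is the $G$-signature theorem itself; the remaining steps are bookkeeping. The points that deserve (minor) care are: checking that $\phi$ preserves the orientation so that the theorem is applicable; invoking the theorem in exactly the form ``empty fixed set forces vanishing of the equivariant signature'' (rather than the full expression with fixed-point contributions, which we do not need); and the identification $\operatorname{Sign}(\phi,X)=\sigma(X)$ under cohomological triviality, which is immediate once one observes that $\phi^*$ is the identity on real middle cohomology. I do not anticipate any serious obstacle beyond correctly citing the $G$-signature theorem.
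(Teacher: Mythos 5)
Your proposal is correct and is exactly the paper's argument: the $G$-signature theorem forces $\operatorname{Sign}(\phi,X)=0$ when $X^\phi=\emptyset$, while cohomological triviality gives $\operatorname{Sign}(\phi,X)=\sigma(X)\neq 0$. The paper states this in one line as an immediate consequence of \cite[Theorem 6.12]{AS}; you have simply written out the same bookkeeping in full.
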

\begin{proof}
This is an immediate consequence of the $G$-signature theorem \cite[Theorem 6.12]{AS}
and the fact that $\sigma(\phi,X)=\sigma(X)\neq 0$ if $\phi$ acts trivially on cohomology.
\end{proof}

\begin{theorem}
\label{thm:Atiyah-Singer-G}
Let $X$ be a closed connected and oriented $4$-manifold. Suppose that $\phi\in\Diff(X)$ has finite order bigger than $2$ and acts trivially on cohomology, and that
the fixed point set $X^{\phi}$ has no isolated fixed points
(so all the connected components of $X^{\phi}$ are embedded surfaces). Suppose that $X^{\phi}=S_1\sqcup\dots\sqcup S_n$ with each $S_i$ connected, and that
the action of $\phi$ on the normal bundle of $S_k$ is by rotation of angle $\theta_k\in S^1$. Then
all connected components of $X^{\phi}$ are orientable and
$$\sigma(X)=\sum_{k=1}^n\sin^{-2}(\theta_k/2)\,S_k\cdot S_k.$$
\end{theorem}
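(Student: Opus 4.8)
The plan is to apply the Atiyah--Singer $G$-signature theorem \cite[Theorem 6.12]{AS} to the cyclic group $G:=\la\phi\ra$, in the same spirit as the proof of Theorem \ref{thm:Atiyah-Singer-G-0}, and to compute the local contribution of each fixed surface.

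First I would settle orientability. Each connected component $S_k\subseteq X^\phi$ is fixed pointwise by $\phi$, hence by all of $G$; since $\sharp G=\ord(\phi)>2$ and $G$ acts smoothly and effectively on $X$, Lemma \ref{lemma:orientable-normal-bundle}(1) shows that $S_k$ is orientable. The proof of that lemma (or Lemma \ref{lemma:orientable-2-group}) also provides a $G$-invariant complex structure on the normal bundle $N_k\to S_k$, for which $\phi$ acts fibrewise as multiplication by $e^{\imag\theta_k}$; by Lemma \ref{lemma:lin-invariant-surface}(1) this fibrewise action is effective, so $e^{\imag\theta_k}$ has order $\sharp G>2$ and therefore $\theta_k\notin\{0,\pi\}$, which is exactly what is needed for the right hand side of the asserted identity to make sense. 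I would also note that $\la c_1(N_k),[S_k]\ra=S_k\cdot S_k$, and that the identity does not depend on the chosen orientation of $N_k$, since reversing it replaces $\theta_k$ by $2\pi-\theta_k$ while leaving $\sin^2(\theta_k/2)$ unchanged.

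Next, the two sides of the $G$-signature theorem. Because $\phi$ acts trivially on $H^*(X)$ it restricts to the identity on each summand of the decomposition $H^2(X;\RR)=H^2_+\oplus H^2_-$ into eigenspaces of the cup-product pairing, so the equivariant signature is
$$\sigma(\phi,X)=\Tr\bigl(\phi^*|_{H^2_+}\bigr)-\Tr\bigl(\phi^*|_{H^2_-}\bigr)=\dim H^2_+-\dim H^2_-=\sigma(X).$$
On the other hand $X^\phi=S_1\sqcup\dots\sqcup S_n$ has no isolated points, so the $G$-signature theorem writes $\sigma(\phi,X)$ as the sum over $k$ of the contribution $\bigl\{\mathcal L(TS_k)\cdot\mathcal L_{\theta_k}(N_k)\bigr\}[S_k]$, where $\mathcal L_{\theta_k}(N_k)$ is the relevant twisted ($\coth$-type) multiplicative class of the normal bundle. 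As $\dim S_k=2$, the rational Pontryagin classes of $TS_k$ vanish, so $\mathcal L(TS_k)=1$; and since $c_1(N_k)^2=0$ on $S_k$, only the degree-two part of $\mathcal L_{\theta_k}(N_k)$ survives, and a direct computation identifies it with $\sin^{-2}(\theta_k/2)\,c_1(N_k)$. Hence the contribution of $S_k$ equals $\sin^{-2}(\theta_k/2)\,\la c_1(N_k),[S_k]\ra=\sin^{-2}(\theta_k/2)\,S_k\cdot S_k$, and summing over $k$ gives $\sigma(X)=\sigma(\phi,X)=\sum_{k=1}^n\sin^{-2}(\theta_k/2)\,S_k\cdot S_k$.

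The only delicate point is this last computation: fixing the normalization of $\mathcal L_{\theta_k}(N_k)$ so that the coefficient of $S_k\cdot S_k$ comes out to be precisely $\sin^{-2}(\theta_k/2)$, and verifying that no further term proportional to $\la e(TS_k),[S_k]\ra=\chi(S_k)$ appears. This is routine but easy to spoil by a stray factor of $2$ or a sign; one can double-check the final formula against the standard linear circle actions on $\CP^2$ and on $S^2\times S^2$. Everything else reduces to bookkeeping with the lemmas of Sections \ref{s:linearization} and \ref{s:surfaces-line-bundles}.
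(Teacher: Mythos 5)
Your argument is correct and follows essentially the same route as the paper: orientability via Lemma \ref{lemma:orientable-normal-bundle}(1), and then the Atiyah--Singer $G$-signature theorem with $\sigma(\phi,X)=\sigma(X)$ because $\phi$ is cohomologically trivial. The only difference is that the paper simply invokes \cite[Proposition 6.18]{AS} (which is precisely the ``delicate'' local computation you defer) and observes that its proof extends from odd order to any order greater than $2$ because the normal rotation is never $\pm 1$ --- the same point you establish via the effectiveness of the linearized action from Lemma \ref{lemma:lin-invariant-surface}(1).
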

\begin{proof}
The orientability of the connected components of $X^{\phi}$ is guaranteed
by (1) in Lemma \ref{lemma:orientable-normal-bundle}.
If the order of $\phi$ is odd then the formula for $\sigma(X)$
follows from \cite[Proposition 6.18]{AS}. For the general case
note that the proof of \cite[Proposition 6.18]{AS} works equally
well if the order of $\phi$ is even and bigger than $2$.
Indeed, in this case the normal bundle $N$ of every connected component $Y\subseteq X^{\phi}$
supports an invariant almost complex structure (by Lemma \ref{lemma:orientable-2-group},
because $Y$ is orientable and hence so is $N$) and $\phi$ acts on $N$ through multiplication
by a complex number different from $\pm 1$ (so in the notation of \cite[\S 6]{AS} we
have $N^\phi(-1)=0$).
\end{proof}

\begin{theorem}
\label{thm:signature-nonzero-jordan}
Let $X$ be a closed, connected and oriented $4$-manifold.
If $\sigma(X)\neq 0$ then $\Diff(X)$ is Jordan.
\end{theorem}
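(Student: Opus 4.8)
The plan is to argue by contradiction, combining Theorem \ref{thm:criterion-degree} with the $G$-signature formula of Theorem \ref{thm:Atiyah-Singer-G}, applied simultaneously to \emph{all} the powers of each of the diffeomorphisms it produces.

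First I would pass to the cohomologically trivial setting. Write $\Diff_{CT}(X)\le\Diff(X)$ for the (normal) subgroup of diffeomorphisms acting trivially on $H^*(X)$; its finite subgroups are precisely the finite groups acting on $X$ in a CT way. By Lemma \ref{lemma:minkowski}, if $\Diff(X)$ is not Jordan then neither is $\Diff_{CT}(X)$: any finite $G<\Diff(X)$ has a CT subgroup $G_0$ with $[G:G_0]$ bounded, and $\alpha(G)\le[G:G_0]\alpha(G_0)$. So, assuming $\Diff(X)$ is not Jordan, Theorem \ref{thm:criterion-degree} applied with $\gG=\Diff_{CT}(X)$ yields a sequence $(\phi_i)$ of CT diffeomorphisms of finite order $m_i:=\ord(\phi_i)\to\infty$, all of whose fixed loci $X^{\phi_i}=T_{i,1}\sqcup\dots\sqcup T_{i,n_i}$ are disjoint unions of embedded tori (in particular they contain no isolated points).

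Now fix $i$ with $m_i>2$, let $\phi_i$ rotate the normal bundle of $T_{i,k}$ by the angle $\theta_{i,k}=2\pi\ell_{i,k}/m_i$, and observe that effectiveness forces $\gcd(\ell_{i,k},m_i)=1$ (otherwise a nontrivial power of $\phi_i$ would be the identity on a tubular neighbourhood of $T_{i,k}$, hence on an open set). For every $j$ with $\gcd(j,m_i)=1$ the power $\phi_i^{\,j}$ is again CT, has order $m_i>2$, has fixed locus $X^{\phi_i^{\,j}}=X^{\phi_i}$ (no isolated points), and rotates the normal bundle of $T_{i,k}$ by $j\theta_{i,k}$, which is never trivial since $\gcd(j\ell_{i,k},m_i)=1$; hence Theorem \ref{thm:Atiyah-Singer-G} gives
$$\sigma(X)=\sum_{k=1}^{n_i}\sin^{-2}\!\Big(\tfrac{j\theta_{i,k}}{2}\Big)\,T_{i,k}\cdot T_{i,k}=\sum_{k=1}^{n_i}\csc^2\!\Big(\tfrac{\pi j\ell_{i,k}}{m_i}\Big)\,T_{i,k}\cdot T_{i,k}.$$
Summing over all $j\in(\ZZ/m_i)^{*}$ and using that each $\ell_{i,k}$ is a unit mod $m_i$, the coefficient of $T_{i,k}\cdot T_{i,k}$ becomes $S(m_i):=\sum_{j\in(\ZZ/m_i)^{*}}\csc^2(\pi j/m_i)$, which is independent of $k$, so that
$$\varphi(m_i)\,\sigma(X)=S(m_i)\sum_{k=1}^{n_i}T_{i,k}\cdot T_{i,k},$$
with $\varphi$ the Euler totient. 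The point of this averaging is that it removes the sign ambiguity of the self-intersections $T_{i,k}\cdot T_{i,k}$, which is exactly what prevents one from concluding from the $j=1$ identity alone.

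To finish, I would estimate $S(m_i)\ge\csc^2(\pi/m_i)=\sin^{-2}(\pi/m_i)\ge(m_i/\pi)^2$ and $\varphi(m_i)\le m_i$, so that
$$\Big|\sum_{k=1}^{n_i}T_{i,k}\cdot T_{i,k}\Big|=\frac{\varphi(m_i)\,|\sigma(X)|}{S(m_i)}\le\frac{\pi^2|\sigma(X)|}{m_i}\longrightarrow 0.$$
Since the left-hand side is an integer it vanishes for $i$ large, whence $\varphi(m_i)\sigma(X)=0$ and therefore $\sigma(X)=0$, contradicting the hypothesis; thus $\Diff(X)$ is Jordan. The routine parts are the reduction to $\Diff_{CT}(X)$ and this final estimate; the part requiring care is the middle step — checking that every power $\phi_i^{\,j}$ with $j$ coprime to $m_i$ meets the hypotheses of Theorem \ref{thm:Atiyah-Singer-G} (in particular that $\gcd(\ell_{i,k},m_i)=1$, so the rotation angles, and hence the cosecant terms, never degenerate) and correctly recognising the averaged trigonometric sum as a quantity independent of $k$.
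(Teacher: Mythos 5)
Your proof is correct, but it follows a genuinely different route from the paper's. The paper proves this theorem \emph{before} (and independently of) Theorem \ref{thm:criterion-degree}: it reruns the group-theoretic argument of Lemma \ref{lemma:Jordan-commutator} (reduction to $p$-groups and $PQ$-groups via Theorem \ref{thm:TM}, MNAS's, non-free actions), applied now to all finite subgroups rather than just commutator subgroups, with the qualitative statement of Theorem \ref{thm:Atiyah-Singer-G-0} ($\sigma(X)\neq 0$ forces every CT finite-order diffeomorphism to have fixed points) replacing Lemma \ref{lemma:commutator-fixed-points}. You instead take Theorems \ref{thm:2-step-nilpotent} and \ref{thm:2-nilpotent-alpha} as given and extract a contradiction from the quantitative $G$-signature formula of Theorem \ref{thm:Atiyah-Singer-G} by averaging over all powers $\phi_i^{\,j}$ with $j$ coprime to $m_i$; your averaging trick is a close cousin of the paper's Lemma \ref{lemma:positius-i-negatius} and its accompanying root-of-unity lemma, but pushed further so as to kill the trigonometric weights entirely and force $\sigma(X)=0$. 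Your route is cleaner once the main structure theorems are available; the paper's route has the advantage of being logically prior to Theorem \ref{thm:criterion-degree}, which matters because the proof of conclusion (5) of that theorem invokes Lemma \ref{lemma:positius-i-negatius} and hence presupposes $\sigma(X)=0$ — i.e.\ presupposes the very theorem you are proving. You avoid circularity only because you use nothing beyond conclusions (1)--(3), which follow from Theorems \ref{thm:2-step-nilpotent} and \ref{thm:2-nilpotent-alpha} alone; you should say this explicitly, or better, cite those two theorems directly rather than Theorem \ref{thm:criterion-degree}. Two small points of care: the claim $\gcd(\ell_{i,k},m_i)=1$ is best justified via Lemma \ref{lemma:lin-fixed-point} (a power of $\phi_i$ fixing $T_{i,k}$ pointwise with trivial linearization along it is the identity, since the linearization at a fixed point is injective), rather than via "identity on a tubular neighbourhood"; and the same observation shows $\theta_{i,k}\neq\pi$, so the hypotheses of Theorem \ref{thm:Atiyah-Singer-G} are indeed met for every power $\phi_i^{\,j}$ of order $m_i>2$.
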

\begin{proof}
The same argument that we used in Lemma \ref{lemma:Jordan-commutator} to prove
that the family of finite groups $\gG_0$ is Jordan works in our case if we replace
Lemma \ref{lemma:commutator-fixed-points} by Theorem
\ref{thm:Atiyah-Singer-G-0}.
\end{proof}

The following lemma is used in the proof of Theorem \ref{thm:criterion-degree}.

\begin{lemma}
\label{lemma:positius-i-negatius}
Let $X$ be a closed connected and oriented $4$-manifold satisfying $\sigma(X)=0$.
There exists a real number $\lambda>0$ with the following property.
Suppose that $\phi\in\Diff(X)$ has finite order and acts trivially on cohomology, that
the fixed point set $X^{\phi}$ has no isolated fixed points, and that all connected
components of $X^{\phi}$ (which, by assumption, are embedded surfaces) are orientable. Write
$X^{\phi}=S_1\sqcup\dots\sqcup S_n$ and define
$$\mu_M=\max_i S_i\cdot S_i,\qquad \mu_m=\min_i S_i\cdot S_i.$$
Then $\mu_M\geq -\lambda\mu_m\geq 0$ and $\mu_m\leq-\lambda\mu_M\leq 0$.
\end{lemma}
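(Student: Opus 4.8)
The plan is as follows. First, I would observe that it is enough to establish the single inequality $\mu_M\geq -\lambda\mu_m\geq 0$. Indeed, reversing the orientation of $X$ replaces $\sigma(X)$ by $-\sigma(X)=0$, leaves the hypotheses on $\phi$ unaffected (they do not involve the orientation), and interchanges $(\mu_M,\mu_m)$ with $(-\mu_m,-\mu_M)$; so the first inequality applied to $X$ with the opposite orientation is precisely the second inequality for $X$. Since the constant $\lambda$ produced below will depend only on the numbers $b_j(X;\ZZ/p)$, which are insensitive to orientation, this reduction is legitimate. Also $\ord(\phi)=1$ is impossible here, for then $X^\phi=X$ would not be a union of embedded surfaces; thus I may assume $d:=\ord(\phi)\geq 2$.

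Next I would record a uniform bound on the number $n$ of connected components of $X^\phi$. Set $D=\max_p\sum_{j\geq 0}b_j(X;\ZZ/p)$, fix a prime $p\mid d$, and let $\psi=\phi^{d/p}$, an element of order $p$ with $X^\phi\subseteq X^\psi$. Since $\psi$ is orientation preserving and $\psi\neq\id$, no component of $X^\psi$ is $4$-dimensional, so all its components have dimension $0$ or $2$; as each $S_k$ is a closed connected $2$-manifold contained in $X^\psi$, it must coincide with a $2$-dimensional component of $X^\psi$, and distinct $S_k$'s give distinct components. Hence by Lemma \ref{lemma:betti-fixed-point-set}, $n\leq\sharp\pi_0(X^\psi)\leq D$.

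The crux is to convert the $G$-signature identity into a relation with \emph{bounded} coefficients. When $d=2$ this is immediate: the $G$-signature theorem for involutions (a special case of \cite[Theorem 6.12]{AS}) gives $0=\sigma(X)=\sum_{k=1}^{n}S_k\cdot S_k$, all coefficients being $1$. When $d>2$, I would pass to a power $\phi^{j}$ with $\gcd(j,d)=1$; then $\langle\phi^{j}\rangle=\langle\phi\rangle$, so $X^{\phi^{j}}=X^\phi$ has the same components and still has no isolated fixed points, and Theorem \ref{thm:Atiyah-Singer-G} applied to $\phi^{j}$ yields
$$0=\sigma(X)=\sum_{k=1}^{n}\frac{1}{\sin^{2}(j\theta_k/2)}\,S_k\cdot S_k ,$$
where $\theta_k$ is the rotation angle of $\phi$ on the normal bundle of $S_k$, so that $j\theta_k$ is that of $\phi^{j}$. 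The plan is to choose $j$ so that $\sin^{-2}(j\theta_k/2)$ is bounded, by a constant depending only on $D$, for every $k$ with $S_k\cdot S_k\neq 0$. Write $\theta_k=2\pi m_k/d_k$, where $d_k\mid d$ is the order of $\phi$ on the normal bundle of $S_k$ (so $d_k\geq 2$, because $S_k$ is a genuine $2$-dimensional fixed component) and $\gcd(m_k,d_k)=1$. If $d_k$ is at most a fixed threshold $D_0$, then $\sin^{-2}(j\theta_k/2)=\sin^{-2}(\pi jm_k/d_k)\leq\sin^{-2}(\pi/D_0)$ for \emph{every} $j$ coprime to $d$, so nothing must be arranged for such $k$. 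If $d_k>D_0$, then, as $j$ ranges over $(\ZZ/d)^{*}$, the residue $jm_k\bmod d_k$ is equidistributed over the reduced residues modulo $d_k$; consequently the proportion of $j$ for which $jm_k\bmod d_k$ lies within $\delta d_k$ of $0$ is at most $2\delta+2^{\omega(d_k)}/\varphi(d_k)$, and since $2^{\omega(m)}/\varphi(m)\to 0$ as $m\to\infty$, taking $D_0$ large and then $\delta$ small makes this proportion smaller than $1/(2D)$. As there are at most $n\leq D$ indices with $S_k\cdot S_k\neq 0$, a union bound over them produces a single $j\in(\ZZ/d)^{*}$ that is admissible for all of them, giving $\sin^{-2}(j\theta_k/2)\leq C_0$ whenever $S_k\cdot S_k\neq 0$, with $C_0$ depending only on $D$. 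Verifying this elementary equidistribution estimate is the only non-formal point, and I expect it to be the main obstacle.

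To conclude, let $c_k$ denote the coefficient of $S_k\cdot S_k$ in the identity above ($c_k=1$ if $d=2$, with $C_0:=1$ in that case; $c_k=\sin^{-2}(j\theta_k/2)$ if $d>2$); then $c_k\geq 1$ always, and $c_k\leq C_0$ whenever $a_k:=S_k\cdot S_k\neq 0$. From $\sum_k c_k a_k=0$ with $c_k>0$ it follows that $\min_k a_k\leq 0\leq\max_k a_k$, i.e. $\mu_M\geq 0\geq\mu_m$; and, assuming $\mu_m<0$ (otherwise $\mu_M=\mu_m=0$ and there is nothing to prove),
$$C_0 D\,\mu_M\ \geq\ \sum_{a_k>0}c_k a_k\ =\ \sum_{a_k<0}c_k|a_k|\ \geq\ |\mu_m| .$$
Hence the first inequality holds with $\lambda=1/(C_0 D)$, and by the orientation-reversal reduction in the first paragraph the second holds with the same $\lambda$.
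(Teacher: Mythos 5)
Your proposal is correct and follows the same overall strategy as the paper: bound the number $n$ of components by a constant $D$ depending only on $X$ via Lemma \ref{lemma:betti-fixed-point-set}, apply the $G$-signature formula of Theorem \ref{thm:Atiyah-Singer-G} to a well-chosen power of $\phi$ so that every coefficient $\sin^{-2}(\cdot)$ is bounded by a constant $C_0$ depending only on $D$, and extract both inequalities from the identity $0=\sum_k c_k\,S_k\cdot S_k$ with $1\leq c_k\leq C_0$. The one genuine divergence is the combinatorial step that selects the exponent. The paper proves an auxiliary lemma on roots of unity (for $\theta_1,\dots,\theta_n$ primitive $k$-th roots of unity with $k\geq k_0(n)$, some exponent $a\in\{1,\dots,k\}$ keeps all $\theta_j^a$ a definite distance from $\pm 1$), proved by a simple volume count on $S^1$ over \emph{all} residues mod $k$. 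You instead restrict to exponents $j$ coprime to $d=\ord(\phi)$, which buys you that $X^{\phi^j}=X^{\phi}$ exactly — so Theorem \ref{thm:Atiyah-Singer-G} applies to $\phi^j$ with the same components and no new isolated fixed points, a point the paper's choice of arbitrary $a$ leaves implicit — at the price of counting over the reduced residues $(\ZZ/d)^*$, whence your $2\delta+O\bigl(2^{\omega(d_k)}/\varphi(d_k)\bigr)$ estimate. That estimate is correct and standard (count integers coprime to $d_k$ in an interval by inclusion--exclusion, then use $2^{\omega(m)}/\varphi(m)\to 0$), so the step you flag as the main obstacle does go through; it is simply heavier than the paper's pigeonhole. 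Your separate treatment of $\ord(\phi)=2$ via the involution signature formula (needed since Theorem \ref{thm:Atiyah-Singer-G} assumes order greater than $2$) and of small $d_k$ via the trivial bound $\sin^{-2}(\pi/D_0)$ fills in details the paper passes over; the orientation-reversal reduction is valid but not needed, as both inequalities already follow from the single identity.
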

\begin{proof}
We first prove that the number $n$ of connected components of $X^{\phi}$ is bounded
above by a constant depending only on $X$: more precisely, if we define $D=\max_p\sum_{j\geq 0}b_j(X;\ZZ/p)$ then $n\leq D/2$. Indeed, if $\phi\in\Diff(X)$ satisfies the hypothesis of
the lemma and its order is equal to $ps$, where $p$ is a prime and $s$ an integer, then
applying Lemma \ref{lemma:betti-fixed-point-set} to the fixed point set of $\phi^s$ and noting
that each connected component of $X^{\phi}$ is a connected component of $X^{\phi^s}$ we conclude
that
$$\sum_i\sum_k b_k(S_i;\ZZ/p)\leq D.$$
Since each $S_i$ contributes at least two units to the left hand side, the bound $n\leq D/2$
follows.

Once we have an upper bound on the number of connected components of $X^{\phi}$,
the proof is concluded combining Theorem \ref{thm:Atiyah-Singer-G}
and the following lemma.
\end{proof}

\begin{lemma}
Given an integer $n>0$ there exists a real number
$\delta>0$ and an integer $k_0>0$ such that for every
integer $k\geq k_0$ and any choice of primitive $k$-th roots of
unity
$$\theta_1,\dots,\theta_n\in S^1$$
there is an integer $a$ such that
$|\sin \theta_j^a|\geq\delta$ for every $j$.
\end{lemma}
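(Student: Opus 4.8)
The plan is to use a straightforward counting argument. Write each $\theta_j$ as $\theta_j=e^{2\pi\imag c_j/k}$ with $\gcd(c_j,k)=1$ — this is the one place where primitivity of the roots is essential — so that $\sin\theta_j^a=\sin(2\pi a c_j/k)=\Im(\theta_j^a)$ for every integer $a$. Fix a parameter $\eta\in(0,1)$ to be chosen at the end, and for each $j$ let $B_j\subseteq\{0,1,\dots,k-1\}$ be the set of \emph{bad} residues $a$ with $|\sin(2\pi a c_j/k)|<\eta$. Since $c_j$ is invertible modulo $k$, multiplication by $c_j$ permutes $\ZZ/k$, and hence $\#B_j$ equals $\#\{t\in\{0,\dots,k-1\}\mid|\sin(2\pi t/k)|<\eta\}$, a quantity independent of $j$.

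Next I would estimate this common cardinality. The inequality $|\sin x|<\eta$ forces $x$ to lie within $\arcsin\eta$ of an integer multiple of $\pi$; intersecting with the range $2\pi t/k\in[0,2\pi)$ leaves three clusters, around $0$, around $\pi$, and around $2\pi$, and counting the integers $t$ falling into each cluster gives $\#B_j\le\frac{2\arcsin\eta}{\pi}\,k+3$ for every $j$. Summing over $j$, the union of bad sets satisfies $\#\bigl(\bigcup_{j=1}^n B_j\bigr)\le n\bigl(\frac{2\arcsin\eta}{\pi}k+3\bigr)$.

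To conclude I would set $\delta=\eta:=\sin(\pi/(4n))$ and $k_0:=6n+1$. With this choice $\frac{2n\arcsin\eta}{\pi}=\tfrac12$, so for every $k\ge k_0$ we get $\#\bigl(\bigcup_j B_j\bigr)\le\tfrac12 k+3n<k$. Hence $\bigcup_j B_j$ is a proper subset of $\{0,\dots,k-1\}$, and any integer $a$ lying outside this union satisfies $|\sin\theta_j^a|\ge\delta$ for every $j$, which is exactly the assertion. All constants produced in this way depend only on $n$.

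The argument is elementary and I do not expect a genuine obstacle; the only step requiring some care is keeping the additive error $O(1)$ explicit in the estimate $\#\{t\mid|\sin(2\pi t/k)|<\eta\}\le\frac{2\arcsin\eta}{\pi}k+3$, since it is precisely this error term that must be dominated by the main term once $k$ is large — and this is the reason the conclusion is only claimed for $k\ge k_0$ rather than for all $k$.
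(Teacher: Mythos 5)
Your proposal is correct and follows essentially the same route as the paper: both arguments bound, for each $j$, the number of exponents $a\in\{0,\dots,k-1\}$ for which $\theta_j^a$ lands in the small neighbourhood of $\pm1$ where $|\sin|<\delta$ (using primitivity to identify this count with the number of $k$-th roots of unity, equivalently lattice points $2\pi t/k$, in that neighbourhood), show the union over $j$ of these bad sets has fewer than $k$ elements once $k\geq k_0$, and pick $a$ outside it. Only the bookkeeping differs — you count points in intervals directly with an explicit $O(1)$ error, while the paper uses a packing/volume argument on $S^1$ — and the resulting constants $\delta,k_0$ depend only on $n$ in both cases.
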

\begin{proof}
We consider the standard measure on $S^1$ of total volume $2\pi$.
For every integer $k\neq 0$ we denote by $\mu_k$ the set of all $k$-th roots
of unity, and for every $\epsilon>0$ we denote $A_{\epsilon}=
\{e^{2\pi\imag\theta}\mid |\theta|<\epsilon\}\subseteq S^1$ and
$S_{\epsilon}=A_{\epsilon}\cup(-A_{\epsilon})$.

Define $\epsilon=1/(4(n+1))$ and $k_0=4(n+1)$. Suppose that
$k\geq k_0$.
For every $\theta,\theta'\in \mu_k$ the sets
$\theta S_{1/(2k)}$ and $\theta' S_{1/(2k)}$ are disjoint.
Since $1/2k\leq 1/2k_0=\epsilon/2$, we have
$$\bigcup_{\theta\in \mu_k\cap A_{\epsilon/2}}\theta S_{1/(2k)}\subseteq A_{\epsilon}.$$
Combining this inclusion with
$\Vol(A_{\epsilon})=8\pi\epsilon=2\pi/(n+1)$ and $\Vol(\theta S_{1/(2k)})=2\pi/k$,
it follows that
$$\sharp \mu_k\cap A_{\epsilon/2}\leq \frac{2\pi/(n+1)}{2\pi/k}=\frac{k}{n+1}.$$

Let $[k]=\{1,2,\dots,k\}$.
Suppose that $\theta_1,\dots,\theta_n$ are $k$-th primitive roots of unity.
Then for every $j$ the map $e_j:[k]\to\mu_k$ defined as $e_j(a)=\theta_j^a$
is a bijection. Define $C_j=\{a\in[k]\mid \theta_j^a\in A_{\epsilon/2}\}$.
The previous estimate implies that $\sharp C_j\leq k/(n+1)$, and hence
the set $C=C_1\cup C_2\cup\dots\cup C_n$ satisfies
$\sharp C<k$. Therefore $[k]\setminus C$ is nonempty.
Take any $a\in [k]\setminus C$. For every $j$ we have
$\theta_j^a\notin A_{\epsilon/2}$, so
$$|\sin\theta_j^a|\geq\delta:=\sin 2\pi\epsilon/2=\sin \pi/4(n+1),$$
so the proof of the lemma is complete.
\end{proof}

\section{Automorphisms of almost complex manifolds: proof of Theorem \ref{thm:almost-complex}}
\label{s:almost-complex}

We will use the positivity of intersection of holomorphic curves in $4$-dimensional almost complex
manifolds. This was first stated by Gromov in \cite[2.1.$C_2$]{Gromov} and a detailed proof was given by McDuff in \cite[Theorem 2.1.1]{McDuff} (see page 36 in \cite{McDS3} for some
comments on earlier proofs).
We next give a detailed statement of the result adapted to our needs, and for the
reader's convenience we reduce its proof (using basically the same ideas as
\cite{McDuff}) to results proved in full detail in the book \cite{McDS3}.

\begin{lemma}
\label{lemma:positive-intersection}
Let $(X,J)$ be an almost complex $4$-dimensional manifold,
let $\Sigma_1,\Sigma_2$ be closed and connected Riemann surfaces,
and let $\phi_i:\Sigma_i\to X$, $i=1,2$, be $J$-holomorphic maps.
Assume that $\phi_1$ is an immersion. Let $[\Sigma_i]\in H_2(\Sigma_i)$
denote the fundamental class corresponding to the canonical orientation
as a Riemann surface.
Then
$$(\phi_1)_*[\Sigma_1]\cdot (\phi_2)_*[\Sigma_2]\geq 0$$
unless $\phi_1(\Sigma_1)=\phi_2(\Sigma_2)$.
\end{lemma}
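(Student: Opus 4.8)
The plan is to reduce the assertion to a local positivity of intersection statement, following the classical arguments of Gromov and McDuff.

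First I would dispose of the degenerate cases: if $\phi_1$ or $\phi_2$ is constant then one of the two homology classes in question vanishes and the inequality is trivial, so from now on both maps are nonconstant. Next I would prove that, when $\phi_1(\Sigma_1)\neq\phi_2(\Sigma_2)$, the intersection $C:=\phi_1(\Sigma_1)\cap\phi_2(\Sigma_2)$ is a finite set. This rests on the unique continuation property of $J$-holomorphic curves: a nonconstant $J$-holomorphic map from a closed Riemann surface has isolated critical points, and if the images of two nonconstant $J$-holomorphic maps agree on a subset having a non-isolated point then the two images coincide (see \cite[Chapter 2]{McDS3}, and \cite{McDuff}). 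Since $\Sigma_1$ and $\Sigma_2$ are compact, this forces $C$ to be finite; write $C=\{p_1,\dots,p_m\}$ with $m\geq 0$.

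The heart of the matter, which I expect to be the main obstacle, is the positivity of the local intersection index at each point $p_\ell$. Since $\phi_1$ is an immersion, I would choose local coordinates around $p_\ell$ in which the image of $\phi_1$ becomes the axis $\{w=0\}\subset\CC^2$ and $J$ agrees with the standard complex structure at the origin; every branch of $\phi_2$ through $p_\ell$ is then a $J'$-holomorphic disc $v\colon D\to\CC^2$, $v(0)=0$, for the transported almost complex structure $J'$, and $v(D)\not\subseteq\{w=0\}$ (otherwise the two images would coincide globally, by the previous step). The Carleman similarity principle then lets one write the $w$-component of $v$ as $\Phi\cdot\sigma$ with $\Phi$ continuous and invertible and $\sigma$ a genuinely holomorphic function; hence this component vanishes to a finite positive order, and the local intersection index of $\phi_1$ and $\phi_2$ at $p_\ell$ --- the sum over the branches of $\phi_2$ at $p_\ell$ of these vanishing orders --- is a positive integer. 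This is exactly the argument of \cite[Theorem 2.1.1]{McDuff}, all of whose analytic ingredients (the similarity principle, the definition and additivity of the local intersection index) are carried out in \cite{McDS3}.

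Finally, since $\phi_1$ is an immersion and the intersection of the images is the finite set $C$, standard intersection theory gives
$$(\phi_1)_*[\Sigma_1]\cdot(\phi_2)_*[\Sigma_2]=\sum_{\ell=1}^{m}\delta_\ell,$$
where $\delta_\ell>0$ is the local intersection index at $p_\ell$; if $m=0$ the sum is empty and equals $0$. In all cases the number on the left is $\geq 0$, which is what we want. As noted, the hard part is the local positivity at the $p_\ell$; once the unique continuation property and the similarity principle of \cite{McDS3} are in hand, the reductions around it are routine.
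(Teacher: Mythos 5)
Your proposal is correct and follows essentially the same route as the paper: both reduce to the dichotomy that either the intersection locus has a non-isolated point (forcing the images to coincide, via unique continuation and an open–closed/compactness argument) or it is finite, in which case the intersection number is a sum of positive local indices. The only difference is one of emphasis — the paper spells out the open–closed globalization in detail and cites \cite[Theorem 2.6.3]{McDS3} for local positivity, whereas you sketch the similarity-principle proof of local positivity and cite the globalization — but the mathematical content is the same.
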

\begin{proof}
Let $I=\{(x,y)\in\Sigma_1\times\Sigma_2\mid \phi_1(x)=\phi_2(y)\}$.
Let $I^*\subseteq I$ be the set of isolated points. Clearly, $I\setminus I^*$ is closed
in $\Sigma_1\times\Sigma_2$, and hence is compact. Let $\pi_i:\Sigma_1\times\Sigma_2\to\Sigma_i$
denote the projection. Then $\pi_i(I\setminus I^*)\subseteq\Sigma_i$ is closed for $i=1,2$.

We next prove that $\pi_i(I\setminus I^*)$ is open for $i=1,2$.
Assume that $(x,y)\in I\setminus I^*$ and that $(x_i,y_i)$ is a
sequence in $I$ converging to $(x,y)$ and satisfying $(x_i,y_i)\neq (x,y)$ for
every $i$. Choose charts $f:\Omega\to\Sigma_1$ and $g:\Omega\to\Sigma_2$ with
$f(0)=x_1$, $g(0)=x_2$, where $\Omega\subset\CC$ is an open subset containing the origin,
and define $u=\phi_1\circ f$, $v=\phi_2\circ g$. Ignoring if necessary some of the initial points in the sequence $(x_i,y_i)$, we may assume that $x_i=f(z_i)$ and $y_i=g(\zeta_i)$ for
$z_i,\zeta_i\in\Omega$. We claim that $\zeta_i\neq 0$ for infinitely many indices $i$.
Otherwise for every $i$ we would have $y_i=y$ and hence $\phi_1(x_i)=\phi_2(y_i)=\phi_2(y)=\phi_1(x)$, which
would imply that $\phi_1^{-1}(\phi_1(x))\supset\{x_1,x_2,\dots\}$ is infinite;
but this, by \cite[Lemma 2.4.1]{McDS3}, is incompatible with the assumption that
$\Sigma_1$ is compact and $d\phi_1(x)\neq 0$. Hence the claim is proved.
We are thus in a position to apply
\cite[Lemma 2.4.3]{McDS3} and conclude that $\pi_1(I\setminus I^*)$ (resp. $\pi_2(I\setminus I^*)$) contains $x$ (resp. $y$) in its interior.

To finish the proof we distinguish two possibilities.
If $I\setminus I^*\neq\emptyset$ then both projections
$\pi_1(I\setminus I^*)$ and $\pi_2(I\setminus I^*)$
are nonempty. Since these projections are open and closed and
both $\Sigma_1$ and $\Sigma_2$ are connected,
we have
$\pi_i(I\setminus I^*)=\Sigma_i$ for $i=1,2$.
The equality $\pi_1(I\setminus I^*)=\Sigma_1$ means that for each $x\in \Sigma_1$ there is some
$y\in\Sigma_2$ such that $\phi_1(x)=\phi_2(y)$, so
$\phi_1(\Sigma_1)\subseteq\phi_2(\Sigma_2)$.
Similarly (exchanging $\Sigma_1$ and $\Sigma_2$)
we have $\phi_2(\Sigma_2)\subseteq\phi_1(\Sigma_1)$.
Consequently in this case we have $\phi_1(\Sigma_1)=\phi_2(\Sigma_2)$.

The other possibility is that $I\setminus I^*=\emptyset$,
so $I=I^*$ and hence $I$ is finite. Then
\cite[Theorem 2.6.3]{McDS3} implies that
$(\phi_1)_*[\Sigma_1]\cdot (\phi_2)_*[\Sigma_2]\geq 0$.
\end{proof}

Let us now prove Theorem \ref{thm:almost-complex}.
Let $(X,J)$ be a closed almost complex $4$-manifold, and let $\gG=\Aut(X,J)$ be its
group of automorphisms. Assume that $\gG$ is not Jordan. Then, by Theorem \ref{thm:criterion-degree}
we can find some $\phi\in\gG$ of finite order such that $X^{\phi}$ has a connected component
$T$ which is an embedded torus of negative self-intersection. 
Since $\phi$ preserves $J$ and has finite order, its fixed point locus is a
(possibly disconnected) almost complex submanifold.
In particular $T$ is an almost complex submanifold of $(X,J)$ and hence can
be identified with the image of a holomorphic embedding $\psi:\Sigma\to (X,J)$
where $\Sigma$ is a closed connected Riemann surface of genus $1$.

Let $\gG_0\leq\gG$ denote the subgroup of automorphisms acting trivially on $H^*(X)$.
We claim that the elements of $\gG_0$ preserve $T$. Indeed, if
$\zeta\in\gG_0$ then applying Lemma \ref{lemma:positive-intersection} to
$\psi$ and $\zeta\circ\psi$ we conclude that $\zeta(T)=T$ because
$T\cdot T<0$.

Let $G\leq\gG$ be a finite subgroup.
By Lemma \ref{lemma:minkowski} the intersection $G_0=G\cap\gG_0$
satisfies $[G:G_0]\leq C$ for some constant $C$ depending only on $X$.
By our previous observation, every element of $G_0$ preserves $T$.
So, if we denote by $N\to T$ the normal bundle of the inclusion in $X$
then by Lemma \ref{lemma:lin-invariant-surface} the action of $G_0$ on $X$
induces a monomorphism $G_0\hookrightarrow\Aut(N)$. By Lemma 6.5 there is an abelian subgroup
$A\leq G_0$ satisfying $$[G_0:A]\leq 12\, |T\cdot T|,$$ so we have
$$[G:A]\leq 12\, C |T\cdot T|.$$
We have thus proved that $\gG$ is Jordan,
contradicting our initial assumption that it was not.

\section{Symplectomorphisms: proof of Theorem \ref{thm:symp}}
\label{s:symplectomorphisms}

By (1), (2) and (4) in Theorem \ref{thm:non-Jordan}, in order to prove Theorem \ref{thm:symp} it suffices to consider
the case where
\begin{equation}
\label{eq:condicions-X}
\chi(X)=\sigma(X)=0,\qquad b_2^+(X)=1.
\end{equation}
For the latter condition note that
$b_2(X)>0$ on any symplectic manifold, and the vanishing of the signature implies that
$b_2(X)=2b_2^+(X)$. Under these conditions we have $b_2(X)=2$ and consequently
(using the vanishing of $\chi$ and Poincar\'e duality) $b_1(X) = b_3(X) = 2$.
In particular, statement (3) in Theorem \ref{thm:symp} follows from Theorem
\ref{thm:non-Jordan}.

So throughout this section $(X, \omega)$ will denote a fixed closed symplectic $4$-manifold satisfying the previous conditions (\ref{eq:condicions-X}).

Let $J$ be any $\omega$-compatible almost complex structure on $X$. We can define the canonical bundle $K_X$ of $X$ as the complex line bundle $K_X = \bigwedge^2 T^*X$, where the complex structure is induced by $J$.
We denote by
$$K \in H_2(X)$$
the Poincar\'e dual of $c_1(K_X)$.  Since the space of $\omega$-compatible almost complex structures on $X$ is contractible, $K$ is independent of the chosen $J$.

We say that a class $A \in H_2(X)$ is representable by $J$-holomorphic curves if there is a
possibly disconnected closed Riemann surface $\Sigma$ and a
$J$-holomorphic map $\psi:\Sigma\to X$ such that $\psi_*[\Sigma]=A$.

\begin{lemma}
\label{lemma:taubes}
Suppose that $X$ is not an $S^2$-bundle over $T^2$. Then, for every $\omega$-compatible almost complex structure $J$ on $X$, $K$ or $2K$ are representable by $J$-holomorphic curves.
\end{lemma}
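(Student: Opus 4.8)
The plan is to derive the lemma from Taubes' theorems relating the Seiberg--Witten and Gromov invariants of a symplectic $4$-manifold, being careful about the extra subtleties that appear when $b_2^+(X)=1$.

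First I would reduce to the case in which $X$ is minimal and not rational or ruled. Under the standing hypotheses $\chi(X)=\sigma(X)=0$ and $b_2^+(X)=1$, the manifold $X$ must be minimal: if $X=X_{\min}\,\#\,\ell\,\overline{\CP^2}$ with $\ell\geq 1$, then $b_2^+(X_{\min})=1$ together with $\sigma(X_{\min})=\sigma(X)+\ell\geq 1$ force $b_2^-(X_{\min})=0$ and $\ell=1$, whence $\chi(X_{\min})=\chi(X)-1=-1$; but a minimal symplectic $4$-manifold has $\chi\geq 0$ unless it is an irrational ruled surface, in which case $\chi\in 4\ZZ$, so $\chi=-1$ is impossible. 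Hence $X$ is minimal. If moreover $X$ is rational or ruled, then $\chi(X)=\sigma(X)=0$ and $b_2^+(X)=1$ leave only the possibility that $X$ is an $S^2$-bundle over $T^2$, contrary to hypothesis. So from now on $X$ is minimal and not rational or ruled; by A.-K.\ Liu's theorem this gives $K\cdot[\omega]\geq 0$, and since $K\cdot K=2\chi(X)+3\sigma(X)=0$ we also have $K\cdot K=0$.

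The main case is $K\cdot[\omega]>0$, and here I would invoke Taubes directly. Fix an $\omega$-compatible almost complex structure $J$ and let ${\frak s}_{\mathrm{can}}$ be the canonical $\mathrm{Spin}^c$ structure it determines, so that $c_1({\frak s}_{\mathrm{can}})=-c_1(K_X)$. By \cite{T1}, the Seiberg--Witten invariant of ${\frak s}_{\mathrm{can}}$, computed in the chamber determined by $[\omega]$, equals $\pm 1$. By the charge-conjugation symmetry, the $\mathrm{Spin}^c$ structure $\overline{{\frak s}_{\mathrm{can}}}$, which satisfies $c_1(\overline{{\frak s}_{\mathrm{can}}})=c_1(K_X)$, has Seiberg--Witten invariant $\pm 1$ in the opposite chamber; moreover its expected dimension $\tfrac14\bigl(K\cdot K-2\chi(X)-3\sigma(X)\bigr)=0$ is strictly smaller than $b_1(X)=2$, so the wall-crossing term between the two chambers vanishes, and therefore the invariant of $\overline{{\frak s}_{\mathrm{can}}}$ in the $[\omega]$-chamber is also $\pm 1\neq 0$. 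Taubes' construction of pseudoholomorphic subvarieties out of Seiberg--Witten solutions (carried out with the perturbations $r[\omega]$, $r\gg 0$, i.e.\ in the $[\omega]$-chamber) then produces, for this $J$, a possibly disconnected and possibly multiply covered $J$-holomorphic subvariety whose homology class is $\mathrm{PD}(c_1(K_X))=K$. Since $J$ was an arbitrary $\omega$-compatible almost complex structure, $K$ is representable by $J$-holomorphic curves for every such $J$, which settles this case.

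Finally there remains the boundary case $K\cdot[\omega]=0$. Then $K$ lies in the negative-definite subspace $[\omega]^{\perp}\subset H^2(X;\RR)$ and satisfies $K\cdot K=0$, so $K$ is a torsion class; equivalently $X$ has symplectic Kodaira dimension $0$. In this situation I would appeal to the classification of minimal symplectic $4$-manifolds with torsion canonical class and $b_2^+=1$: either $K$ has order dividing $2$, so that $2K=0$ is represented by the empty curve, or a suitable represented multiple of $K$ can be read off from the torus fibration structure of $X$. I expect this last case to be the main obstacle, together with the careful bookkeeping of chambers and of the wall-crossing formula throughout the $b_2^+=1$ Seiberg--Witten argument --- precisely the places where the passage ``$\mathrm{SW}\Rightarrow\mathrm{Gr}$'' is most delicate, and where the alternative ``$K$ or $2K$'' (rather than just $K$) is genuinely needed.
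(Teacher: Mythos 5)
Your overall strategy (Taubes' nonvanishing for the canonical Spin$^c$ structure, conjugation symmetry, and wall-crossing in the $b_2^+=1$ setting) is the same as the paper's, but there is a genuine gap at the decisive step. You claim that because the expected dimension of the moduli space for $\overline{\slie_{\mathrm{can}}}$ is $0<b_1(X)=2$, the wall-crossing term vanishes, so that $SW^+(c_1(K_X))=SW^-(c_1(K_X))=\pm 1$. This is false: for $b_2^+=1$ and $b_1=2$ the wall-crossing difference at a class $\beta$ with $d(\beta)=-\la\beta,K\ra+\int_X\beta^2\geq 0$ is $w(\beta)=\int_X \alpha_1\alpha_2\cup(\beta-c_1(K_X)/2)$, which does not vanish just because the expected dimension is small. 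In particular $w(c_1(K_X))=-w(0)$, so $SW^+(c_1(K_X))=1-w(0)$, and this is zero exactly when $w(0)=1$ (as happens, e.g., for $T^2\times S^2$, where $w(0)=-\tfrac12\int_X\alpha_1\alpha_2\,c_1(K_X)=1$). That case is not vacuous for manifolds other than $S^2$-bundles over $T^2$, and it is precisely why the lemma asserts ``$K$ \emph{or} $2K$'': when $w(0)=1$ the paper computes $w(2c_1(K_X))=-3$ from the same wall-crossing formula (using $K\cdot K=0$), shows $SW^+(-c_1(K_X))=0$ because a $J$-holomorphic representative of $-K$ would force $\la[\omega],-K\ra>0$ and hence, by Liu's theorem, that $X$ is a ruled surface over $T^2$ (excluded by hypothesis), and concludes $SW^+(2c_1(K_X))=-3\neq 0$. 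Your argument never reaches the class $2K$ in the main case, so the statement you actually prove is weaker and fails exactly where the alternative is needed.

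Two secondary issues: your reduction to minimal, non-ruled $X$ is not needed (the paper makes no such reduction) and rests on unproved classification facts; and your final case $K\cdot[\omega]=0$ is left as an acknowledged sketch, whereas the paper's chamber bookkeeping handles all cases uniformly without separating out torsion canonical class.
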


\begin{proof}
Before we prove the lemma, let us recall some facts about Seiberg--Witten invariants of symplectic manifolds with $b_2^+(X)=1$.

For any closed connected $4$-manifold $X$
the set of Spin$^c$ structures on $X$ has a natural
structure of torsor over $H^2(X)$ (see e.g. \cite[\S 3.1]{Morgan}). If $\slie$ is a Spin$^c$ structure
and $\beta\in H^2(X)$ then we denote by $\beta\cdot\slie$ the Spin$^c$ structure given by the
action of $\beta$ on $\slie$.
If $(X, \omega)$ is a symplectic manifold (which we assume in all the following discussion) then there is a canonical Spin$^c$ structure on $X$, denoted by $\slie_{can}$, with determinant line bundle $K_X^{-1}$
(actually to define this structure we need to choose an almost complex structure compatible with $\omega$, but the outcome only depends on $\omega$, see e.g. \cite[\S 3.4]{Morgan}).
This Spin$^c$ structure allows us to identify $H^2(X)$ with the set of Spin$^c$ structures on $X$, by assigning to $\beta\in H^2(X)$ the Spin$^c$ structure $\beta\cdot \slie_{can}$.
In terms of this identification we can regard the Seiberg--Witten invariant as a map
$$SW:H^2(X) \to \ZZ.$$

For closed $4$-manifolds $X$ with $b_2^+(X) > 1$ the moduli spaces of Seiberg--Witten solutions for two generic pairs of metric and perturbation $(g_1, \eta_1), (g_2, \eta_2)$ can be connected by a smooth cobordism\footnote{Here and below generic means as usual that the Seiberg--Witten equations define a section of a Banach vector bundle over the parameter space
(connections) $\times$ (sections of the spinor bundle) which is transverse to the zero section,
so in particular the moduli space is a smooth manifold of the expected dimension.}. This implies that the invariant $SW$ is independent of the generic metric and perturbation chosen to define it.

When $b_2^+(X) = 1$ this is not true anymore, as there might exist generic pairs $(g_1, \eta_1)$, $(g_2, \eta_2)$ whose moduli spaces cannot be connected by any smooth cobordism.
More precisely, for any $\beta\in H^2(X)$ 
the space $\mathcal{S}_\beta$ of all pairs (metric, perturbation) 
whose moduli space of Seiberg--Witten solutions contain no reducible solution (that is, solutions $(A, \psi)$ with $\psi=0$) has two connected components. The Seiberg--Witten moduli spaces associated to two generic elements of $\mathcal{S}_\beta$ can be connected by a smooth cobordism if the two elements belong to the same connected component of $\mathcal{S}_\beta$, but
there is no reason to expect the existence of such a cobordism if they belong to different connected components. Hence, we should consider two possibly different Seiberg--Witten invariants, one for each connected component of $\mathcal{S}_\beta$.

One can prove that it is possible to label the connected components of $\mathcal{S}_\beta$ as
$\mathcal{S}_\beta^+$ and $\mathcal{S}_\beta^-$ in such a way that the following holds.
For any  metric $g$ on $X$ let us denote by $\omega_g$ the unique self-dual $g$-harmonic $2$-form of $L^2$-norm $1$ whose cohomology class belongs to the same connected component of
$H^2_+(X; \RR)\setminus\{0\}$ as $[\omega]$. Then
$(g,\pm\imag\lambda\omega_g)\in\mathcal{S}_\beta^{\pm}$
for $\lambda>0$ sufficiently big.
Hence we may encode the Seiberg--Witten invariants of $X$ through two maps
$$SW^\pm: H^2(X) \to \ZZ,$$
where $SW^\pm(\beta)$ is the invariant obtained from a generic pair
belonging to $\mathcal{S}_\beta^{\pm}$.
For further details, see Section 7.4 of \cite{Sal}.

Define
$$w(\beta) = SW^+(\beta) - SW^-(\beta).$$
This difference $w(\beta)$ can be computed by means of a wall-crossing formula. We will just decribe the relevant formula needed for our purposes. For the general form of the wall-crossing formulas we refer the reader to \cite[Theorem 9.4]{Sal}.
By \cite[Proposition 12.5]{Sal} (see \cite[Remark 13.7]{Sal}) we have
$$SW^-(\beta) = SW^+(c_1(K_X) - \beta)$$
for every $\beta$. Therefore,
\begin{equation}
\label{eq:wall-crossing}
w(\beta) = SW^+(\beta) - SW^+(c_1(K_X) - \beta).
\end{equation}
A theorem of Taubes implies that $SW^+(0) = 1$ (see \cite{T1} and \cite[Theorem 13.8]{Sal}).

For a manifold with $b_1(X)=2$, we can compute $w(\beta)$ as follows (see \cite[Definition 2.2]{McDS}).
Let $\alpha_1, \alpha_2$ be a basis of $\in H^1(X)$, and define $a = \alpha_1 \cup \alpha_2$.
Let $\beta \in H^2(X)$. Let 
$$d(\beta) = - \la \beta,  K \ra  + \int_X \beta^2.$$
Then
\begin{equation}
\label{eq:cas-b-1-igual-a-2}
w(\beta) = \int_X  a \cup (\beta - c_1(K_X)/2)\qquad\qquad \text{if $d(\beta) \geq 0$,}
\end{equation}
and $w(\beta)=0$ if $d(\beta) < 0$.

We are now ready to prove the lemma.

We claim that if $SW^+(\beta) \neq 0$, then for any $\omega$-compatible almost complex structure $J$, $PD(\beta)$ is representable by $J$-holomorphic curves. Indeed, let $g_J = \omega( \cdot, J \cdot)$ be the metric associated with $\omega$ and $J$. With respect to this metric, $\omega$ is self-dual and of positive norm. The fact that $SW^+(\beta) \neq 0$ means that for any perturbation $\eta$ satisfying $(g_J, \eta)\in{\mathcal S}_{\beta}^+$ there exists some solution to the Seiberg--Witten equations with metric $g_J$ and perturbation $\eta$ (this follows by definition for generic $\eta$ and by a compactness argument for general perturbations).
Then, the existence of the $J$-holomorphic curve representing $PD(\beta)$ follows from \cite[Theorem 1.3]{T2}.

Therefore, we only need to show that $SW^+(c_1(K_X))$ and $SW^+(2c_1(K_X))$ cannot be
both zero.

If $w(0) \neq 1$, from (\ref{eq:wall-crossing}) and $SW^+(0)=1$
we obtain $SW^+(c_1(K_X)) \neq 0$, so $K$ is representable by $J$-holomorphic curves, and we are done in this case.

Suppose for the remainder of the proof that $w(0) = 1$.
We have $d(0)=0$.
By the Hirzebruch signature theorem we have
$K \cdot K = 2 \chi(X) + 3 \sigma(X) = 0$, and this implies
$d(2c_1(K_X))=0$. We then compute, using (\ref{eq:cas-b-1-igual-a-2}),
\begin{align*}
w(2c_1(K_X)) &= \int_X a \cup 3c_1(K_X)/2  \qquad\text{because $d(2c_1(K_X))=0$}\\
&= 3\int_X a \cup c_1(K_X)/2  \\
&= -3w(0) \qquad\qquad\qquad\,\,\text{because $d(0)=0$} \\
&= -3.
\end{align*}
Hence,
$$SW^+(2c_1(K_X)) - SW^+(-c_1(K_X)) = -3.$$
We claim that $-K$ is not representable by $J$-holomorphic curves and therefore $$SW^+(-c_1(K_X))=0.$$ Indeed, if $-K$ were representable by $J$-holomorphic curves, then by the positive energy condition of $J$-holomorphic curves we would have $\langle [\omega], -K \rangle > 0$. However, Theorem B in \cite{Liu} implies that in this case $(X, \omega)$ is a ruled or rational surface or a blow up
of a ruled or rational surface. Since $b_1(X)=b_2(X)=2$, $(X, \omega)$ must be a ruled surface over $T^2$. Hence, $X$ is an $S^2$-bundle over $T^2$, contradicting the assumption of the lemma.

Therefore, $SW^+(2c_1(K_X))=-3$ and consequently $2K$ is representable by $J$-holomorphic curves, thus finishing the proof of the lemma.
\end{proof}

\begin{lemma}
\label{lemma:symplectic-torus}
Suppose that $X$ is not an $S^2$-bundle over $T^2$. Let $\phi \in \Symp(X, \omega)$ be an element of finite order acting on $X$ in a CT way, and suppose that $X^\phi$ is a disjoint union of embedded tori. Then, every connected component
$T \subseteq X^\phi$ satisfies $T \cdot T = 0$.
\end{lemma}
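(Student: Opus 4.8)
The plan is to play two facts against each other: positivity of intersections of $J$-holomorphic curves, which forces the self-intersection of each fixed torus to be $\le 0$, and the $G$-signature theorem together with $\sigma(X)=0$, which forces it to be $\ge 0$. First I would average an $\omega$-compatible almost complex structure over the finite cyclic group $\langle\phi\rangle$ (using that the space of compatible complex structures on a symplectic vector space carries well-defined barycenters) to obtain a $\phi$-invariant $\omega$-compatible $J$. Since $d\phi$ commutes with $J$ at a fixed point, the tangent space of $X^\phi$ is $J$-invariant, so each connected component $T$ of $X^\phi$ is an embedded $J$-holomorphic torus whose normal bundle $N_T$ is a complex line bundle; from $TX|_T\cong TT\oplus N_T$ and $\chi(T)=0$ I would record the adjunction identity
\[
K\cdot T=\langle c_1(K_X),[T]\rangle=-\langle c_1(TX),[T]\rangle=-(T\cdot T).
\]
If $c_1(K_X)$ is a torsion class this already gives $T\cdot T=0$ for every component, so I would assume from now on that $c_1(K_X)$ is not torsion.

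Next I would invoke Lemma \ref{lemma:taubes}: since $X$ is not an $S^2$-bundle over $T^2$, some $mK$ with $m\in\{1,2\}$ is represented by a $J$-holomorphic map $\psi\colon\Sigma\to X$, and as $mK$ is non-torsion the map $\psi$ is nonconstant on at least one component. Splitting $\Sigma$ into connected components $\Sigma_j$ and setting $\psi_j=\psi|_{\Sigma_j}$, I would apply Lemma \ref{lemma:positive-intersection} to the (embedded, hence immersed) torus $T$ against each $\psi_j$: for each $j$ either $[T]\cdot\psi_{j*}[\Sigma_j]\ge 0$, or $\psi_j(\Sigma_j)=T$, in which case $\psi_j$ is nonconstant and factors as a branched cover $\Sigma_j\to T$ of some degree $d_j\ge 1$, so that $\psi_{j*}[\Sigma_j]=d_j[T]$. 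Writing $d=\sum_{j:\psi_j(\Sigma_j)=T}d_j\ge 0$, this gives $mK-d[T]=\sum_{j:\psi_j(\Sigma_j)\ne T}\psi_{j*}[\Sigma_j]$; pairing with $[T]$ and using the adjunction identity yields
\[
0\le\sum_{j:\psi_j(\Sigma_j)\ne T}[T]\cdot\psi_{j*}[\Sigma_j]=m\,(K\cdot T)-d\,(T\cdot T)=-(m+d)\,(T\cdot T),
\]
so $T\cdot T\le 0$, and the same argument applies to every connected component of $X^\phi$.

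Finally I would close the loop with the $G$-signature theorem. Writing $X^\phi=T_1\sqcup\dots\sqcup T_n$ (all orientable by hypothesis; the case $\ord(\phi)=1$ is vacuous since $X$ is a connected $4$-manifold), Theorem \ref{thm:Atiyah-Singer-G} gives $0=\sigma(X)=\sum_{k=1}^n\sin^{-2}(\theta_k/2)\,T_k\cdot T_k$ when $\ord(\phi)>2$, where $\theta_k$ is the rotation angle of $\phi$ on the normal bundle of $T_k$; when $\ord(\phi)=2$ the classical Atiyah--Singer $G$-signature theorem \cite{AS} gives the same identity with $\theta_k=\pi$ (all coefficients equal to $1$), since $\phi$ is cohomologically trivial and has no isolated fixed points. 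Either way all coefficients are positive and all summands nonpositive, so each $T_k\cdot T_k$ vanishes, in particular $T\cdot T=0$.

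The step I expect to be the most delicate is the positivity argument: I must handle those components of the representing curve $\Sigma$ whose image is exactly $T$, for which Lemma \ref{lemma:positive-intersection} does not directly apply and which instead contribute positive multiples of $[T]$. Conceptually, the subtlety to anticipate is that positivity of intersections by itself only delivers one of the two inequalities (and the hyperbolic structure of the intersection form does not suffice for the other, as an explicit check shows), so the $G$-signature theorem is genuinely needed to obtain $T\cdot T\ge 0$. The deep inputs—Lemma \ref{lemma:taubes} (Taubes/Li--Liu) and Lemma \ref{lemma:positive-intersection} (positivity of intersections)—are where the real difficulty sits, but those are already available.
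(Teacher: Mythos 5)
Your proposal is correct and follows essentially the same route as the paper: a $\phi$-invariant compatible $J$, adjunction on the fixed tori, Lemma \ref{lemma:taubes} plus positivity of intersections (Lemma \ref{lemma:positive-intersection}) to control the sign of $T\cdot T$, and the $G$-signature theorem with $\sigma(X)=0$ to pin it down. The only difference is organizational — you prove $T\cdot T\le 0$ directly for every component and then let the signature identity force equality, whereas the paper runs the argument by contradiction starting from a component of positive self-intersection — and your handling of the curve components mapping onto $T$ and of the order-$2$ case is, if anything, slightly more careful than the paper's.
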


\begin{proof}
Choose some almost complex structure $J$ on $X$ which is $\omega$-compatible and $\phi$-invariant
(see e.g. \cite[Lemma 5.5.6]{McDS2}).
If there is some connected component $T' \subseteq X^\phi$ with negative self-intersection then
from Theorem \ref{thm:Atiyah-Singer-G} and the assumption $\sigma(X)=0$ we conclude that there is some $T \subseteq X^\phi$ with $T \cdot T > 0$.

We prove the lemma by contradiction. By the previous comment, it suffices to assume that there is a connected
component $T\subseteq X^\phi$ satisfying $T \cdot T > 0$.
Since $d \phi$ and $J$ commute, $J$ preserves the tangent bundle of $X^\phi$, and hence the tangent bundle of $T$. In particular, $T$ is $J$-holomorphic.
Denote by $[T]\in H_2(X)$ the fundamental class of $T$ corresponding to the standard
orientation as a closed Riemann surface.
We have
$$0<T \cdot T = [T]\cdot [T]=- K \cdot [T],$$
where the second equality is given by the adjunction formula.
By Lemma \ref{lemma:taubes}, $K$ or $2K$ are representable by $J$-holomorphic curves. Let $n=1$ if $K$ is representable, and let $n=2$ if $2K$ is representable and $K$ is not.

By definition there is a possibly disconnected closed Riemann surface $\Sigma$
and a $J$-holomorphic map $\psi:\Sigma\to X$ such that $nK=\psi_*[\Sigma]$.
Let $\{\Sigma_i\}$ be the connected components of $\Sigma$ and let
$A_i=\psi_*[\Sigma_i]$, so that
$nK = \sum_i A_i.$

We have  $A_i \cdot [T] \geq 0$ for all $i$.
This follows from Lemma \ref{lemma:positive-intersection}
if $\psi(\Sigma_i)\neq T$, and from the assumption $T\cdot T>0$ if $\psi(\Sigma_i)=T$
because in this case $A_i$ is a positive multiple of $[T]$.
Therefore we have
$$0 < n[T] \cdot [T]  = - nK \cdot [T] = - \sum_i A_i \cdot [T] \leq 0.$$
We have thus reached a contradiction, finishing the proof of the lemma.
\end{proof}

\begin{lemma}
\label{lemma:ruled-symplectic-jordan}
Let $X$ be an $S^2$-bundle over $T^2$. For any symplectic form $\omega$ on $X$
the symplectomorphism group $\Symp(X,\omega)$ is Jordan.
\end{lemma}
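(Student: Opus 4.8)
The plan is to deduce the lemma from the case $X\cong T^2\times S^2$, which is already settled (it is Theorems 1.1 and 1.2 in \cite{M5}, where a Jordan constant for $\Symp(T^2\times S^2,\omega)$ depending only on $\omega$ is produced), by running the argument in the proof of Lemma \ref{lemma:covering-lemma} in the symplectic category.

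So assume $X$ is the twisted $S^2$-bundle over $T^2$; as an oriented $S^2$-bundle it is classified by the nonzero element $w\in H^2(T^2;\ZZ/2)$. The key elementary observation is that $X$ is doubly covered by a manifold diffeomorphic to $T^2\times S^2$: if $\rho\colon B\to T^2$ is any connected double covering of the base (so $B$ is again a torus), then $\rho^*$ acts on $H^2(-;\ZZ/2)$ of these closed surfaces as multiplication by $\deg\rho=2$, which is $0$ in $\ZZ/2$; hence $\rho^*w=0$, the pullback bundle $X':=\rho^*X\to B$ is trivial, and $X'\cong T^2\times S^2$. Let $\pi\colon X'\to X$ be the induced double covering and set $\omega'=\pi^*\omega$, a symplectic form on $X'$.

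Next I would repeat the construction in the proof of Lemma \ref{lemma:covering-lemma} with $k=2$. Given a finite subgroup $G<\Symp(X,\omega)$, the group $G$ acts by pullback on the finite set $\Cov_2(X)$ of degree-$2$ coverings of $X$; let $G_0\leq G$ be the stabilizer of the class of $\pi$, so that $[G:G_0]\leq\sharp\Cov_2(X)$, a number depending only on $X$. As in that proof, $G_0$ lifts to a finite group $G_1$ fitting in an exact sequence $1\to\Delta\to G_1\xrightarrow{\,q\,}G_0\to 1$ in which $\Delta\cong\ZZ/2$ is the group of deck transformations of $\pi$, and $(g,\phi)\mapsto\phi$ embeds $G_1$ into $\Diff(X')$. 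The only point not already contained in Lemma \ref{lemma:covering-lemma} is that these lifts are symplectic: if $\phi\in G_1$ covers $g\in G_0$ then $\phi^*\omega'=\phi^*\pi^*\omega=\pi^*g^*\omega=\pi^*\omega=\omega'$, so in fact $G_1<\Symp(X',\omega')=\Symp(T^2\times S^2,\omega')$. By the case already treated there is a constant $C'$ depending only on $\omega'$ (hence only on $\omega$) and an abelian subgroup $A\leq G_1$ with $[G_1:A]\leq C'$; then $q(A)\leq G_0$ is abelian with $[G_0:q(A)]\leq C'$, and therefore $[G:q(A)]\leq C'\,\sharp\Cov_2(X)$. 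Since this bound is independent of $G$, it follows that $\Symp(X,\omega)$ is Jordan.

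I do not anticipate a serious obstacle: all the substantive work is contained in \cite{M5}, and the rest is a routine adaptation of Lemma \ref{lemma:covering-lemma}. The two things to verify carefully are that the twisting class is killed upon pulling the bundle back along a double cover of the base (so that $X'\cong T^2\times S^2$), and that lifts of symplectomorphisms to $X'$ are symplectomorphisms for $\pi^*\omega$; both are immediate. Note also that, in agreement with the sharpness discussion following Theorem \ref{thm:symp}, the Jordan constant obtained this way genuinely depends on $\omega$, through $\omega'$.
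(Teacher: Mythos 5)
Your argument is correct, but it is not the route the paper takes: the paper disposes of this lemma in one line by citing \cite[Corollary 1.5]{M5}, which already covers both $S^2$-bundles over $T^2$. What you do instead is deduce the twisted case from the product case $T^2\times S^2$ (Theorems 1.1 and 1.2 of \cite{M5}) by pulling back along a connected double cover $B\to T^2$ of the base, observing that $\rho^*$ kills the $\ZZ/2$-valued twisting class so that $X'\cong T^2\times S^2$, and then running the lifting construction of Lemma \ref{lemma:covering-lemma} while checking that lifts of symplectomorphisms are $\pi^*\omega$-symplectomorphisms. All the steps check out: the classifying class in $H^2(T^2;\ZZ/2)$ is natural under pullback and dies under a degree-$2$ cover by Poincar\'e duality over $\ZZ/2$; the exact sequence $1\to\Delta\to G_1\to G_0\to 1$ with $\Delta\cong\ZZ/2$ and the index bound $[G:G_0]\leq\sharp\Cov_2(X)$ go through verbatim; and the computation $\phi^*\pi^*\omega=\pi^*g^*\omega=\pi^*\omega$ is the only new ingredient needed. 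Your approach buys a self-contained reduction that only uses the $T^2\times S^2$ results of \cite{M5}, at the cost of a Jordan constant that a priori depends on $\omega$ through $\pi^*\omega$ and on $\sharp\Cov_2(X)$; the paper's citation is shorter but opaque. It is worth noting that the authors themselves sketch exactly your covering argument in the remark following Theorem \ref{thm:symp}, where they use it to estimate the optimal Jordan constant for the twisted bundle, so your proof is fully in the spirit of the paper even though it is not the proof given for this particular lemma.
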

\begin{proof}
This is \cite[Corollary 1.5]{M5}.
\end{proof}

\subsection{Proof of statements (1) and (2) in Theorem \ref{thm:symp}}

If $X$ is an $S^2$-bundle over $T^2$, this follows from Lemma \ref{lemma:ruled-symplectic-jordan}.
Assume that $(X, \omega)$ is not an $S^2$-bundle over $T^2$ and
that $\Symp(X, \omega)$ is not Jordan. Then, by Theorem \ref{thm:criterion-degree}, there is some $\phi \in \Symp(X, \omega)$ of finite order and acting in a CT way on $X$ with the property that some connected component $T$ of $X^\phi$ is diffeomorphic to a torus and has positive self-intersection. This contradicts Lemma \ref{lemma:symplectic-torus},
so the proof of the first statement of Theorem \ref{thm:symp} is complete.

Statement (2) in Theorem \ref{thm:symp} follows from combining
Theorem \ref{thm:2-step-nilpotent}, (4) in Theorem \ref{thm:2-nilpotent-alpha}, and
Lemma \ref{lemma:symplectic-torus}.

\appendix

\section{Computing $H^*((\ZZ/p^r)^d;\ZZ/n)$: proof of Theorem \ref{thm:appendix}}

If $X$ and $Y$ are topological spaces
such that $H^k(X)$ and $H^k(Y)$ are finitely generated abelian
groups for every $k$, then K\"unneth's formula gives
\begin{equation}
\label{eq:Kunneth}
H^k(X\times Y)\simeq \bigoplus_{p+q=k}H^p(X)\otimes H^q(Y)
\oplus\bigoplus_{p'+q'=k+1}\Tor(H^{p'}(X),H^{q'}(Y))
\end{equation}
(see e.g. \cite[Chap. VII, Prop. 7.6]{Dold}). The universal coefficient theorem gives
isomorphisms
\begin{equation}
\label{eq:ucf}
H^k(X)\simeq \Hom(H_k(X),\ZZ)\oplus\Ext(H_{k-1}(X),\ZZ).
\end{equation}
and
\begin{equation}
\label{eq:ucf-a}
H^k(X;\ZZ/p^b)\simeq \Hom(H_k(X),\ZZ/p^b)\oplus\Ext(H_{k-1}(X),\ZZ/p^b).
\end{equation}
Let $a,b$ be positive integers and let $c=\min\{a,b\}$.
Fix a prime $p$ and define for convenience
$$G_a=\ZZ/p^a,\qquad G_b=\ZZ/p^b,\qquad G_c=\ZZ/p^c.$$
There are non canonical isomorphisms
\begin{equation}
\label{eq:tor}
\Tor(G_a,G_a)\simeq G_a,\qquad \Tor(\ZZ,G_a)=\Tor(G_a,\ZZ)=0,
\end{equation}
and
\begin{equation}
\label{eq:ext}
\Ext(G_a,G_b)\simeq G_c,\qquad \Ext(G_a,\ZZ)\simeq G_a,\qquad \Ext(\ZZ,G_b)=0.
\end{equation}

\begin{lemma}
There exists a function $e:\ZZ_{\geq 0}\times\NN\to\ZZ_{\geq 0}$
such that
\begin{equation}
\label{eq:function-e}
H^k(G_a^d;G_b)\simeq G_c^{e(k,d)}
\end{equation}
for every $(k,d)\in\ZZ_{\geq 0}\times\NN$ (by convention $G_c^0=0$).
\end{lemma}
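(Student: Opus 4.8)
The plan is to compute the integral (co)homology of $B\Gamma$ with $\Gamma=G_a^d=(\ZZ/p^a)^d$ by induction on $d$ via the K\"unneth formula, and then to change coefficients to $G_b$ using the universal coefficient theorem.

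I would start from the (co)homology of a single factor: $H_0(BG_a;\ZZ)\simeq\ZZ$, $H_{2j-1}(BG_a;\ZZ)\simeq G_a$ and $H_{2j}(BG_a;\ZZ)=0$ for every $j\ge1$; equivalently $H^0(BG_a;\ZZ)\simeq\ZZ$, $H^{2j}(BG_a;\ZZ)\simeq G_a$ for $j\ge1$, and the odd integral cohomology of $BG_a$ vanishes. This is standard, e.g.\ from the $2$-periodic free resolution of $\ZZ$ over $\ZZ[G_a]$, or from the Gysin sequence of the circle bundle $S^1\to BG_a\to BS^1$. The key algebraic point is that the set of isomorphism types $\{0,\ZZ,G_a\}$ is closed under $\otimes_{\ZZ}$ and under $\Tor^{\ZZ}_1$, and that the type $\ZZ$ shows up among such products only as $\ZZ\otimes_{\ZZ}\ZZ$: indeed $\ZZ\otimes G_a=G_a\otimes G_a=G_a$, $\Tor(G_a,G_a)=G_a$, and $\Tor(\ZZ,-)=0$, by (\ref{eq:tor}). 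Feeding this into the split K\"unneth isomorphism (\ref{eq:Kunneth}) (and its homological analogue), applied inductively to $B\Gamma=BG_a\times\cdots\times BG_a$, yields $H_0(B\Gamma;\ZZ)\simeq H^0(B\Gamma;\ZZ)\simeq\ZZ$, while for every $k\ge1$ both $H_k(B\Gamma;\ZZ)$ and $H^k(B\Gamma;\ZZ)$ are (possibly empty) finite direct sums of copies of $G_a$. The one step that needs attention in the induction is that no $\ZZ$-summand survives in positive total degree: a tensor-product summand equals $\ZZ$ only if all its tensor factors are $\ZZ$, which forces total degree $0$, and every $\Tor$-summand is torsion.

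I would then apply the universal coefficient formula (\ref{eq:ucf-a}) with coefficients in $G_b$. Using $\Hom(\ZZ,G_b)=G_b$, $\Ext(\ZZ,G_b)=0$, together with $\Hom(G_a,G_b)\simeq\Ext(G_a,G_b)\simeq G_c$ from (\ref{eq:ext}), the previous step gives, for every $k\ge1$,
$$H^k(B\Gamma;G_b)\simeq\Hom\!\bigl(H_k(B\Gamma;\ZZ),G_b\bigr)\oplus\Ext\!\bigl(H_{k-1}(B\Gamma;\ZZ),G_b\bigr)\simeq G_c^{\,e(k,d)},$$
where $e(k,d)\ge0$ is the number of $G_a$-summands of $H_k(B\Gamma;\ZZ)$ plus the number of those of $H_{k-1}(B\Gamma;\ZZ)$; for $k=0$ one has $H^0(B\Gamma;G_b)\simeq G_b$, which fits the pattern with $e(0,d)=1$ (recall $c=\min\{a,b\}$). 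Collecting these values defines a function $e:\ZZ_{\ge0}\times\NN\to\ZZ_{\ge0}$ satisfying (\ref{eq:function-e}).

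I do not expect a serious obstacle: the argument is essentially bookkeeping once the closure of $\{0,\ZZ,G_a\}$ under $\otimes$ and $\Tor$ is noted. The two things worth checking carefully are that K\"unneth is used in its genuine split (additive) form --- legitimate since $BG_a$ has a CW model with finitely generated homology in each degree, such as an infinite lens space --- and the degree-tracking in the iterated K\"unneth described above. The remaining work in the appendix, namely extracting the explicit value $e(k,d)=\left(k+d-1\atop d-1\right)$ of Theorem \ref{thm:appendix}, amounts to carrying out this bookkeeping by hand; it is laborious but conceptually routine.
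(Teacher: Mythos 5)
Your proof is correct and follows essentially the same route as the paper's: an induction on $d$ via the split K\"unneth formula showing that the positive-degree integral (co)homology of $G_a^d$ consists of finite direct sums of copies of $G_a$ (with a single $\ZZ$ surviving only in degree $0$), followed by the universal coefficient theorem with coefficients in $G_b$ to convert each $G_a$-summand into a $G_c$-summand. The only cosmetic difference is that the paper records the multiplicities through an explicit recursion for a function $f(k,d)$ rather than your closure-of-$\{0,\ZZ,G_a\}$ bookkeeping.
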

The crucial fact here is that $e(k,d)$ is independent
of $p$ and $a,b,c$.
\begin{proof}
We first prove that there exists a function $f:\NN\times\NN\to\ZZ_{\geq 0}$
such that
\begin{equation}
\label{eq:function-f}
H^k(G_a^d)\simeq G_a^{f(k,d)}\quad\text{for every $k,d\in\NN$, and}\qquad
H^0(G_a^d)\simeq\ZZ
\end{equation}
(again we take the convention that $G_a^0=0$). We prove the existence of $f(k,d)$ using induction on $d$. First note that
setting
$$f(k,1)=\left\{\begin{array}{ll}1 &\qquad\text{if $k$ is even,}\\
0 & \qquad\text{if $k$ is odd,}\end{array}\right.$$
formulas (\ref{eq:function-f}) hold for $d=1$. For the inductive step we note
that if $BG$ denotes the classifying space of a group $G$ we have
$$BG_a^d\simeq BG_a^{d-1}\times BG_a,$$
so we can relate the cohomology of $G_a^d$ (which coincides with the singular cohomology of $BG_a^d$)
to that of $G_a^{d-1}$ and $G_a$ using (\ref{eq:Kunneth}).
To be specific, using (\ref{eq:tor}) we have, for every $k\in\NN$ and every $d\geq 2$,
$$f(k,d)=\sum_{0\leq 2l\leq k}f(k-2l,d-1)+\sum_{0<2l< k+1}f(k+1-2l,d-1),$$
where $l$ takes integer values. The first summation comes from the terms with $\otimes$ in
K\"unneth's formula (more concretely, the summand for each value of $l$ corresponds to
$H^{k-2l}(G_a^{d-1})\otimes H^{2l}(G_a)\simeq H^{k-2l}(G_a^{d-1})\simeq G_a^{f(k-2l,d-1)}$)
and the second summation comes from the terms with $\Tor$
(more concretely, the summand for each $l$ corresponds to
$$\Tor(H^{k+1-2l}(G_a^{d-1}),H^{2l}(G_a))\simeq H^{k+1-2l}(G_a^{d-1})\simeq f(k+1-2l,d-1);$$
we avoid the extreme values $2l=0$ and $2l=k+1$ because $\Tor(G_a,\ZZ)=\Tor(\ZZ,G_a)=0$).
This proves the existence of a function $f$ satisfying (\ref{eq:function-f}).

Now, using the universal coefficients theorem (\ref{eq:ucf}), the fact that the homology of a finite $p$-group
is a finite $p$-group in each degree $>0$, and (\ref{eq:ext}), we deduce that
$$H_k(G_a^d)\simeq G_a^{f(k+1,d)}\quad\text{for every $k,d\in\NN$, and}\qquad
H_0(G_a^d)\simeq\ZZ.
$$
Combining this formulas with (\ref{eq:ucf-a}) it follows that
$$H^1(G_a^d;G_b)\simeq G_c^{f(2,d)}$$
and that for every $k\geq 2$ we have
$$H^k(G_a^d;G_b)\simeq G_c^{f(k+1,d)}\oplus G_c^{f(k,d)}\simeq G_c^{f(k+1,d)+f(k,d)}.$$
Thus setting
$$e(1,d):=f(2,d),\qquad e(k,d):=f(k+1,d)+f(k,d)\text{ for every $k\geq 2$}$$
we obtain a function $e:\NN\times\NN\to\ZZ_{\geq 0}$ which
satisfies (\ref{eq:function-e}).
\end{proof}

In view of the lemma, to compute the function $e$ it suffices to consider the
case $a=b=c=1$, i.e., to compute
$$H^*((\ZZ/p)^d;\ZZ/p).$$
But this is much easier than the general case because, $\ZZ/p$ being a field,
we may apply K\"unneth's formula for fields, which does not contain $\Tor$ terms:
$$H^k(X\times Y;\ZZ/p)\simeq \bigoplus_{p+q=k}H^p(X;\ZZ/p)\otimes H^q(Y;\ZZ/p)$$
(again, under finiteness assumptions for $H^*(X;\ZZ/p)$ and $H^*(Y;\ZZ/p)$ on
each degree). This formula, together with the standard computation
\begin{equation}
\label{eq:case-a-1}
H^k(\ZZ/p;\ZZ/p)\simeq\ZZ/p\qquad\text{for every $k\geq 0$}
\end{equation}
implies the following recursion
formula for $d\geq 2$, which is much easier than the previous ones:
\begin{equation}
\label{eq:easy-recursion}
e(k,d)=e(0,d-1)+e(1,d-1)+\dots+e(k,d-1).
\end{equation}
It is now elementary to prove, e.g. using induction on $d$ (with (\ref{eq:case-a-1}) at
the initial step and (\ref{eq:easy-recursion}) at the induction step), that
$$e(k,d)=\left(k+d-1 \atop d-1\right).$$
The proof of the theorem is thus complete.


\begin{thebibliography}{ABCD}\frenchspacing\smallbreak

\bibitem{AS} M.F. Atiyah, I.M. Singer, The index of elliptic operators III, {\em Annals of Mathematics}, Second Series, {\bf 87} (3), 546-604.

\bibitem{Bre} G.E. Bredon, Introduction to compact transformation groups, {\em Pure and Applied Mathematics}, Vol. {\bf 46}, Academic Press, New York-London (1972).

\bibitem{CKS}
W. Chen, S. Kwasik, R. Schultz, Finite symmetries of $S^4$,
{\em Forum Math.} {\bf 28} (2016), no. 2, 295--310.


\bibitem{Co}
M.J. Collins,
On Jordan's theorem for complex linear groups,
{\em J. Group Theory} {\bf 10} (2007), no. 4, 411--423.


\bibitem{CR} C.W. Curtis, I. Reiner, {\em Representation Theory
    of Finite Groups and Associative Algebras}, reprint of the
    1962 original, AMS Chelsea Publishing, Providence, RI,
    2006.


\bibitem{CPS} B. Csik\'os, L. Pyber, E. Szab\'o, Diffeomorphism groups of compact
$4$-manifolds are not always Jordan, preprint {\tt arXiv:1411.7524}.

\bibitem{Dold}
A. Dold, {\em Lectures on algebraic topology}. Reprint of the 1972 edition. Classics in Mathematics. Springer-Verlag, Berlin, 1995.

\bibitem{Gh}
\'E. Ghys, The following talks:
{\em Groups of diffeomorphisms},
Col\'oquio brasileiro de matem\'aticas, Rio de Janeiro (Brasil), July 1997;
{\em The structure of groups acting on manifolds},
Annual meeting of the Royal Mathematical Society, Southampton (UK), March 1999;
{\em Some open problems concerning group actions},
Groups acting on low dimensional manifolds, Les Diablerets (Switzerland), March 2002;
{\em Some Open problems in foliation theory},
Foliations 2006, Tokyo (Japan), September 2006.

\bibitem{Gh2}
\'E. Ghys, Talk at IMPA: {\it My favourite groups}, April 2015.

\bibitem{HL} I. Hambleton, R. Lee, Finite group actions on
    $\PP_2(\CC)$, {\em J. Algebra} {\bf 116} (1988), no. 1,
    227--242.

\bibitem{J}
C. Jordan, M\'emoire sur les \'equations diff\'erentielles lin\'eaires \`a int\'egrale alg\'ebrique,
{\em J. Reine Angew. Math.} {\bf 84} (1878) 89--215.

\bibitem{Gromov}
M. Gromov, Pseudo holomorphic curves in symplectic manifolds.
{\em Invent. Math.} {\bf 82} (1985), no. 2, 307--347.


\bibitem{KM} P. Kronheimer, T. Mrowka, The genus of embedded surfaces in the projective plane, {\em Mathematical Research Letters} {\bf 1} (6) (1994), 797-808

\bibitem{LMcD} F. Lalonde, D. McDuff. J-curves and the classification of rational and ruled symplectic 4-manifolds, {\em Publ. Newton Inst.} {\bf 8} (1996), 3--42.

\bibitem{LR}
K.B. Lee, F. Raymond,
Topological, affine and isometric actions on flat Riemannian manifolds,
{\em J. Differential Geom.} {\bf 16} (1981), no. 2, 255--269.


\bibitem{Liu} A. Liu. Some new applications of general wall crossing formula, Gompf's conjecture and its applications, {\em Math. Res. Lett.} {\bf 3} (1996)

\bibitem{MS} L. N. Mann, J. C. Su, Actions of elementary
    p-groups on manifolds, {\em Trans. Amer. Math. Soc.} {\bf
    106} (1963), 115--126.

\bibitem{Mc1}
M.P. McCooey, Symmetry groups of four-manifolds, {\em Topology} {\bf 41} (2002) 835--851.

\bibitem{McDuff}
D. McDuff,
Singularities and positivity of intersections of J-holomorphic curves.
With an appendix by Gang Liu. {\em Progr. Math.} {\bf 117}, Holomorphic curves in symplectic geometry, 191--215, Birkhäuser, Basel, 1994.

\bibitem{McDS} D. McDuff, D. Salamon, A survey of symplectic 4-manifolds with $b^+=1$, {\em Turkish J. Math.} {\bf 20} (1996), no. 1, 47--60.


\bibitem{McDS2} D. McDuff, D. Salamon, Introduction to symplectic topology.
Third edition. Oxford Graduate Texts in Mathematics. Oxford University Press, Oxford, 2017.


\bibitem{McDS3} D. McDuff, D. Salamon,
J-holomorphic curves and symplectic topology.
American Mathematical Society Colloquium Publications, {\bf 52}. American Mathematical Society, Providence, RI, 2004.

\bibitem{MZ1}
M. Mecchia, B. Zimmermann,
On finite simple and nonsolvable groups acting on closed 4-manifolds,
{\em Pacific J. Math.} {\bf 243} (2009), no. 2, 357--374.

\bibitem{MZ2}
M. Mecchia, B. Zimmermann,
On finite groups acting on homology 4-spheres and finite subgroups of $\SO(5)$,
{\em Topology Appl.} {\bf 158} (2011), no. 6, 741--747.

\bibitem{MZ}
S. Meng, D.-Q. Zhang, Jordan property for non-linear algebraic groups and
projective varieties, {\em preprint} {\tt arXiv:1507.02230v2}.


\bibitem{Min} H. Minkowski, Zur Theorie der positiven quadratischen Formen, {\em Journal fur die reine und angewandte Mathematik} {\bf 101} (1887), 196-202.


\bibitem{Morgan}
J.W. Morgan, The Seiberg--Witten equations and applications to the topology of smooth four-manifolds. Mathematical Notes, {\bf 44}. Princeton University Press, Princeton, NJ, 1996.

\bibitem{M1} I. Mundet i Riera, Jordan's theorem for the
    diffeomorphism group of some manifolds {\em Proc. Amer.
    Math. Soc.} {\bf 138} (2010), 2253-2262.

\bibitem{M4} I. Mundet i Riera, Finite group actions on 4-manifolds with nonzero Euler
characteristic, {\em Math. Z.} {\bf 282} (2016), 25--42.

\bibitem{M5} I. Mundet i Riera,
    Finite groups acting symplectically on $T^2\times S^2$,
    {\em Trans. AMS}  {\bf 369} (2017), no. 6, 4457--4483.

\bibitem{M6} I. Mundet i Riera,
Non Jordan groups of diffeomorphisms and actions of compact Lie groups on manifolds,
{\it Transformation Groups} {\bf 22} (2017), no. 2, 487--501,
DOI 10.1007/s00031-016-9374-9 {\it preprint} {\tt  arXiv:1412.6964}.

\bibitem{M7} I. Mundet i Riera, Finite subgroups of Ham and
    Symp, {\em Math. Annalen} {\bf 370} (2018) 331--380,
    DOI 10.1007/s00208-017-1566-7, preprint {\tt arXiv:1605.05494}.

\bibitem{M8} I. Mundet i Riera, Isometry groups of compact Lorentz 4-manifolds are Jordan,
{\it preprint}.

\bibitem{MT} I. Mundet i Riera, A. Turull, Boosting an analogue
    of Jordan's theorem for finite groups, {\em Adv. Math.}
    {\bf 272} (2015), 820--836.

\bibitem{OS}
P. Ozsv\'ath, Z. Szab\'o,
Higher type adjunction inequalities in Seiberg--Witten theory,
{\em J. Differential Geom.} {\bf 55} (2000), no. 3, 385--440.


\bibitem{Po0} V.L. Popov, On the Makar-Limanov, Derksen
    invariants, and finite automorphism groups of algebraic
    varieties. In {\em Peter Russell’s Festschrift, Proceedings of
    the conference on Affine Algebraic Geometry held in
    Professor Russell’s honour}, 1–5 June 2009, McGill Univ.,
    Montreal., volume 54 of Centre de Recherches Math\'ematiques
    CRM Proc. and Lect. Notes, pages 289–311, 2011.

\bibitem{Po2} V.L. Popov, Jordan groups and automorphism groups
    of algebraic varieties, in Cheltsov et al. (ed.),
    {\em Automorphisms in Birational and Affine Geometry},
    Springer Proceedings in Mathematics and Statistics {\bf
    79},
    Springer (2014), 185--213.

\bibitem{PS}
Y. Prokhorov, C. Shramov, Jordan property for groups of birational selfmaps,
{\em Compos. Math.} {\bf 150} (2014), no. 12, 2054--2072.

\bibitem{PS2}
Y. Prokhorov, C. Shramov, Automorphism groups of compact complex surfaces,
{\it preprint} {\tt arXiv:1708.03566}.

\bibitem{PS3}
Y. Prokhorov, C. Shramov, Automorphism groups of Inoue and Kodaira surfaces,
{\it preprint} {\tt arXiv:1812.02400}.

\bibitem{Rob} D.J.S. Robinson, {\em A course in the theory of
    groups}, Second edition, Graduate Texts in Mathematics {\bf
    80}, Springer-Verlag, New York, 1996.

\bibitem{Ros} J.E. Roseblade, On groups in which every subgroup is subnormal, {\em J. Algebra} {\bf 2} (1965) 402--412.

\bibitem{Sal}  D. Salamon, Spin geometry and Seiberg--Witen invariants, unpublished book.

\bibitem{Sch}
R. Schultz,
Group actions on hypertoral manifolds. II,
{\em J. Reine Angew. Math.} {\bf 325} (1981), 75--86.

\bibitem{S3} J.--P. Serre, Le groupe de Cremona et ses sous-groupes finis, S\'eminaire Bourbaki,
no. 1000, Novembre 2008, 24 pp.

\bibitem{S2} J.--P. Serre, A Minkowski-style bound for the
    orders of the finite subgroups of the Cremona group of rank
    $2$ over an arbitrary field, {\em Moscow Mathematical
    Journal} {\bf 9} (2009), 193--208.

\bibitem{T1} C.H. Taubes, The Seiberg--Witten invariants and symplectic forms, {\em Math. Res. Lett.}, {\bf 1} (1994), 809-822.

\bibitem{T2} C.H. Taubes, $SW \Rightarrow Gr$: From the Seiberg--Witten equations to pseudo-holomorphic curves, {\em J. Amer. Math. Soc.}, {\bf 9} (1996), 845-918.

\bibitem{W}
D. Wilczy\'nski,
Group actions on the complex projective plane,
{\em Trans. Amer. Math. Soc.} {\bf 303} (1987), no. 2, 707--731.


\bibitem{Z2}
B.P. Zimmermann, On Jordan
    type bounds for finite
    groups acting on compact $3$-manifolds,
    {\em Arch. Math.} {\bf 103} (2014), 195--200.
\end{thebibliography}
\end{document}